\newtheorem{Prop}{Proposition}[section]
\newtheorem{Lem}[Prop]{Lemma}
\newtheorem{Thm}[Prop]{Theorem}
\newtheorem*{Thm*}{Theorem}
\theoremstyle{definition}
\newtheorem{Def}[Prop]{Definition}
\newcommand{\bZ}{\mathbb{Z}}
\newcommand{\bfV}{\mathbf{V}}
\newcommand{\bC}{\mathbb{C}}
\newcommand{\cJ}{\mathcal{J}}
\newcommand{\cN}{\mathcal{N}}
\newcommand{\sN}{\mathcal{N}}
\newcommand{\sM}{\mathcal{M}}
\newcommand{\fp}{\mathfrak{p}}
\newcommand{\sA}{\mathcal{A}}
\newcommand{\sF}{\mathcal{F}}
\newcommand{\sL}{\mathcal{L}}
\newcommand{\sI}{\mathscr{I}}
\newcommand{\sJ}{\mathcal{J}}
\newcommand{\bP}{\mathbf{P}}
\newcommand{\myi}{\mathrm{i}}
\DeclareMathOperator{\im}{im}
\DeclareMathOperator{\Hom}{Hom}
\DeclareMathOperator{\Tor}{Tor}
\numberwithin{equation}{section}
\DeclareMathOperator{\Pic}{\mathrm{Pic}}
\DeclareMathOperator{\Spec}{\mathrm{Spec}}
\DeclareMathOperator{\cPic}{\mathcal{P}ic}
\DeclareMathOperator{\Inv}{\mathcal{I}nv}
\DeclareMathOperator{\id}{\textnormal{id}}
\DeclareMathOperator{\coker}{\textnormal{coker}}
\DeclareMathOperator{\Ass}{\textnormal{Ass}}
\newcommand{\sO}{\mathcal{O}}
\newcommand{\sR}{\mathcal{R}}
\newcommand{\Nm}{\textnormal{Nm}}
\newcommand{\tor}{\textnormal{tor}}
\newcommand{\cV}{\mathcal{V}}
\newcommand{\cU}{\mathcal{U}}
\newcommand{\fsa}{f\text{-s.a}}
\newcommand{\reg}{\textnormal{reg}}
\newcommand{\pbp}{\partial\bar{\partial}}
\newcommand{\inversepii}{\frac{1}{\uppi\myi}}
\newcommand{\VCV}{\Vert\cdot\Vert}
\title{Deligne pairing for equidimensional morphisms}
\author{Shiquan Li}
\address{School of Mathematical Sciences, Peking University, Beijing 100871, China}
\keywords{Deligne pairing, equidimensional morphism, intersection theory, Arakelov theory.}
\email{lisq@pku.edu.cn}
\subjclass[2020]{14C17, 14F06, 14G40}
\date{\today}
\begin{document}

\begin{abstract}
	Let \(S\) be a noetherian normal scheme, and let \(X\to S\) be a surjective projective morphism of pure relative dimension \(d\). We construct a symmetric multi-additive functor \(\cPic(X)^{d+1} \to \cPic(S)\), and prove its functorial properties. Our construction uses Elkik's and Garc\'ia's
	ideas, as well as algebraic Hartogs' theorem. Moreover, our results can be used to define arithmetic intersection theory of hermitian line bundles for equidimensional morphisms.
\end{abstract}

\maketitle

\tableofcontents

\section{Introduction}

Intersection theory is a fundamental tool in algebraic geometry. For a \(d\)-dimensional scheme \(X\) proper over a field \(k\), we can define the intersection number \((\sL_1,\ldots,\sL_d):\Pic^d(X)\to \bZ\), which is a symmetric multi-additive group homomorphism. Then we want to generalize intersection number and construct an intersection theory for a family of schemes \(X\to S\). Let \(\cPic(X)\) denote the category of invertible \(\sO_X\)-modules with morphisms being isomorphisms (we call it a \emph{Picard category}), and similarly for \(\cPic(S)\). For a smooth projective morphism \(X\to S\) of pure relative dimension 1, i.e. a smooth projective family of curves, Deligne \cite{SGA4-3} (Formulaire 1.3.16) defined a functorial pairing \(\cPic(X)\times\cPic(X)\to\cPic(S)\). Then Deligne \cite{DelignePairingOriginProceeding} conjectured that we could construct a functorial map (which we now call \emph{Deligne pairing}) \(\cPic(X)^{d+1}\to\cPic(S)\) for a flat proper morphism \(X\to S\) of pure relative dimension \(d\). Since then, Deligne pairing has been constructed for various families \(X\to S\) by a series of work such as \cite{Elkik}, \cite{Garcia2000}, \cite{Ducrot2005}, \cite{Nakayama}, \cite{ExplicitDP}, and \cite{DP2024}. These results impose different restrictions on the family \(X\to S\). For example, \cite{Garcia2000} requires \(X\to S\) to be a surjective equidimensional projective morphism of noetherian schemes of finite Tor-dimension, and \cite{DP2024} drops the noetherian assumption and requires \(X\to S\) to be a faithfully flat locally projective morphism of finite presentation.

Deligne pairing can also be viewed as the generalization of norm functor. For algebraic integer ring extensions, we have the norm map. In EGAII \cite{EGA2} \S6.5, Grothendieck generalized the norm map to a norm functor \(\cPic(X)\to \cPic(S)\) for a flat finite morphism \(X\to S\), which is exactly the Deligne pairing for a flat proper morphism \(X\to S\) of pure relative dimension \(0\).

In EGAII \cite{EGA2} \S6.5, Grothendieck defined another norm functor \(\cPic(X)\to \cPic(S)\) for a finite morphism \(X\to S\) to a normal scheme \(S\), while the morphism is not necessarily flat. Inspired by Deligne pairing for flat morphisms, we generalize this norm functor to a symmetric multi-additive functor for equidimensional morphisms over normal schemes. We still call our construction \emph{Deligne pairing} since for flat morphisms, our construction is identical to the original Deligne pairing. 

\subsection*{Convention} All rings in this paper are commutative with identity \(1\). We sometimes use ``\(=\)" to denote a canonical isomorphism. For a scheme \(X\), we use \(X^{(1)}\) to denote the set of points of codimension 1 in \(X\), i.e. \(X^{(1)}=\{x\in X\mid \dim\sO_{X,x} = 1\}\). We always use \(d\in\bZ_{\geqslant0}\) to denote the relative dimension of a morphism. We follow the convention that \(\dim (\emptyset) = -1\). In particular, a set of non-negative dimension is tacitly nonempty. 

\subsection{Deligne pairing for equidimensional morphisms}

\begin{Def}
	Let \(f\colon X\to S\) be a morphism of schemes. We say that \(f\) is \emph{of pure relative dimension} \(d\) if for each \(x\in X\), every irreducible component of \(f^{-1}(f(x))\) has dimension \(d\). Such morphism is also called an \emph{equidimensional morphism}.
\end{Def}


For example, a flat morphism between varieties (integral schemes of finite type over a field) is equidimensional.


We state our main result as follows.

\begin{Thm}[Theorem \ref{thm:functorial EDP}, \ref{base change of normal Deligne pairing}, \ref{pull-back formula of normal Deligne pairing}, \ref{projection formula of normal Deligne pairing}, \ref{projection formula of taking closed subscheme}, \ref{birational invariance}, \ref{EDP = DP for all lb}]
	Let \(f\colon X\to S\) be a surjective projective morphism of noetherian schemes of pure relative dimension \(d\). Suppose that \(S\) is normal. Then we can define a symmetric multi-additive functor,
	\begin{equation*}
		\langle\sL_1,\sL_2,\ldots,\sL_{d+1}\rangle_{X/S}\colon\cPic(X)^{d+1}\to\cPic(S)
	\end{equation*}
	which we call Deligne pairing, and satisfies the following functorial properties:
		
	(1) Let \(g\colon S'\to S\) be a good base change (defined in Definition \ref{def: good base change}), \(X' = X\times_SS'\to S'\) and \(g'\colon X'\to X\). Then we have a natural isomorphism
	\[ \langle g'^*\sL_1,g'^*\sL_2,\ldots,g'^*\sL_{d+1} \rangle_{X'/S'}\to g^*\langle \sL_1,\sL_2,\ldots,\sL_{d+1} \rangle_{X/S}. \]
	
	(2) Suppose that \(S\) is integral. Let \(\sL_1,\ldots,\sL_d\) be invertible \(\sO_X\)-modules, and let \(\delta\) be their intersection number on the generic fiber of \(X\to S\). Let \(\sM\) be an invertible \(\sO_S\)-module. Then we have a natural isomorphism
	\[ \langle \sL_1,\ldots,f^*\sM,\ldots,\sL_d \rangle_{X/S}\to \sM^{\otimes\delta},\]
	and the isomorphism is compatible with any good base change \(S'\to S\), where \(S'\) is another noetherian integral normal scheme.
	
	(3) Let \(g\colon Y\to X\) be a surjective projective morphism of noetherian schemes of pure relative dimension \(d'\). Suppose that \(X\) and \(S\) are normal. Then we have a natural isomorphism
	\begin{multline*}
		\langle g^*\sL_1,\ldots,g^*\sL_d,\sM_{d+1},\ldots,\sM_{d+d'+1} \rangle_{Y/S}\\
		\to \langle\sL_1,\ldots,\sL_d,\langle\sM_{d+1},\ldots,\sM_{d+d'+1}\rangle_{Y/X} \rangle_{X/S},
	\end{multline*}
	and the isomorphism is compatible with any good base change \(S'\to S\).
	
	(4) Suppose that \(d\geqslant 1\), and let \(D\) be an effective divisor on \(X\) such that \(D\to S\) is of pure relative dimension \(d-1\). Then we have a natural isomorphism \[\langle\sL_1,\ldots,\sO(D),\ldots,\sL_{d}\rangle_{X/S} \to \langle\sL_{1}|_D,\ldots,\sL_{d}|_D\rangle_{D/S}, \]
	where we just delete \(\sO(D)\) in the right expression. Moreover, this isomorphism is compatible with any good base change \(S'\to S\).
	
	(5) Let \(g\colon Y\to X\) be a projective morphism. Suppose that \(Y\to S\) is a surjective projective morphism of pure relative dimension \(d\), and that there exists an open subset \(U\subseteq X\) which is dense in each fiber of \(f\) and satisfies that \(g\colon g^{-1}(U)\to U\) is an isomorphism. Then we have a natural isomorphism
	\begin{equation*}
		\langle \sL_1,\sL_2,\ldots,\sL_{d+1} \rangle_{X/S}\to \langle g^*\sL_1,g^*\sL_2,\ldots,g^*\sL_{d+1} \rangle_{Y/S}.
	\end{equation*}
	
   (6) If \(d = 0\), then \(\langle\sL\rangle_{X/S} = \Nm_{X/S}(\sL)\), where \(\Nm_{X/S}\) denotes the norm functor for a finite morphism over a normal scheme defined in \cite{EGA2} \S6.5 (which we will review in the next section).
   
   Moreover, if \(f\) is flat, then our Deligne pairing is identical to the original Deligne pairing defined by Deligne \cite{SGA4-3}, Elkik \cite{Elkik}, Garc\'ia \cite{Garcia2000}, Ducrot \cite{Ducrot2005}, Eriksson and Freixas \cite{DP2024}, etc.
\end{Thm}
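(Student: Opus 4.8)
The plan is to construct the pairing by induction on the relative dimension $d$, carrying all six functorial properties along in the induction, and to exploit throughout that a normal noetherian scheme is regular in codimension $\le 1$, together with algebraic Hartogs' theorem. The base case $d=0$ is immediate: then $f$ is finite and surjective onto the normal scheme $S$, so one sets $\langle\sL\rangle_{X/S}:=\Nm_{X/S}(\sL)$ and reads off all the assertions from Grothendieck's study of this norm functor in \cite{EGA2}~\S6.5 --- it is a functor $\cPic(X)\to\cPic(S)$, it is additive, its formation commutes with base change to normal schemes and is local on $S$, and the norm of a pulled-back bundle is a tensor power by the generic degree. The feature that makes the base case the template for everything that follows is that $\Nm_{X/S}$ lands in invertible --- not merely reflexive rank-one --- sheaves although $f$ need not be flat, a fact Grothendieck proves by descending to the discrete valuation ring case.

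For the inductive step, fix $d\ge 1$, assume the theorem in all relative dimensions $<d$, and reduce to $S$ integral by passing to connected components (integral because $S$ is normal and noetherian). Let $V\subseteq S$ be the flat locus of $f$; it is open and, by generic flatness, dense, so it contains the generic point. For every point $s\in S$ of codimension $\le 1$ the local ring $\sO_{S,s}$ is regular of dimension $\le 1$, hence of finite global dimension, so the base change $X\times_S\Spec\sO_{S,s}\to\Spec\sO_{S,s}$ is automatically of finite Tor-dimension, and likewise $f$ is flat --- in particular of finite Tor-dimension --- over $V$. Therefore Elkik's and Garc\'ia's constructions of the flat (finite Tor-dimension) Deligne pairing apply over $V$ and over each $\Spec\sO_{S,s}$, producing invertible sheaves together with all of their functorialities; the base-change property of that classical pairing along the flat maps $\Spec K(S)\to\Spec\sO_{S,s}$ and $\Spec\sO_{S,s}\hookrightarrow V$ shows these local pairings agree on overlaps, so they assemble into a coherent family of $\sO_{S,x}$-lattices $\langle\sL_\bullet\rangle_x\subseteq\langle\sL_\bullet\rangle_\eta$ indexed by the codimension-one points $x\in S^{(1)}$. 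Since $S$ is normal this datum determines a reflexive rank-one sheaf $\langle\sL_1,\dots,\sL_{d+1}\rangle_{X/S}$ on $S$, carrying the Elkik--Garc\'ia symbol formalism $\langle\ell_1,\dots,\ell_{d+1}\rangle$ over the good locus; its symmetry and multi-additivity (as a reflexive sheaf) are inherited from the classical pairing over that locus and extended over $S$ by Hartogs.

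It remains to upgrade this reflexive sheaf to an invertible one, and here the induction is used in earnest. Invertibility is local on $S$, so one may localize at an arbitrary point and, after fixing a relatively very ample bundle on $X$ and twisting by a large power of it, write $\sL_{d+1}=\sO_X(D_1)\otimes\sO_X(D_2)^{-1}$ with each $D_i$ the divisor of a general section, chosen --- by prime avoidance, there being only finitely many fibral components over the codimension-$\le 1$ points of the relevant local base to avoid --- so that $D_i\to S$ is surjective of pure relative dimension $d-1$. Multi-additivity (already available for the reflexive pairing, hence not presupposing invertibility) and the defining relation then identify $\langle\sL_1,\dots,\sL_{d+1}\rangle_{X/S}$ locally on $S$ with $\langle\sL_1|_{D_1},\dots,\sL_d|_{D_1}\rangle_{D_1/S}\otimes\langle\sL_1|_{D_2},\dots,\sL_d|_{D_2}\rangle_{D_2/S}^{-1}$, whose factors are invertible by the inductive hypothesis applied to the surjective projective morphisms $D_i\to S$ of pure relative dimension $d-1$ over the normal scheme $S$; running the induction down to $d=0$, invertibility ultimately rests on that of Grothendieck's norm functor. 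The six properties are then obtained uniformly: each asserts a canonical isomorphism of invertible sheaves on $S$, or on a good base change $S'$ --- which by the definition of good base change is again normal with the same codimension-$\le 1$ geometry --- and such an isomorphism is produced over $V$ and over $\Spec\sO_{S,s}$ for $s\in S^{(1)}$ from the corresponding property of the classical Deligne pairing, then extended uniquely over all of $S$ because homomorphisms between reflexive sheaves on a normal scheme are detected in codimension one. Property (4) is the defining relation itself; property (5) holds because a pairing of line bundles depends only on codimension-one cycles, which generically avoid the locus where $g$ fails to be an isomorphism; property (6) is the definition of the base case; and agreement with the classical pairing when $f$ is flat holds because then $V=S$.

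The step I expect to be the main obstacle is the moving lemma underlying the previous paragraph: producing, locally on $S$, effective divisors $D_i$ with enough control --- not merely over the generic point but over a neighborhood, and in particular over the codimension-$\le 1$ points --- to make the inductive reduction formula applicable; this, rather than any single functoriality, is the heart of the matter, and it is what forces the joint use of Garc\'ia's techniques and algebraic Hartogs' theorem. The remaining labour --- verifying that all the canonical isomorphisms of (1)--(5) satisfy the compatibilities that make $\langle\,\cdot\,\rangle$ a functor rather than an assignment on objects --- is routine but lengthy, and is itself reduced to the flat case plus Hartogs.
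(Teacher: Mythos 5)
The central unproven step is the one you yourself flag, and the justification you sketch for it would not suffice. Your invertibility argument needs, locally on \(S\), a decomposition \(\sL_{d+1}\cong\sO_X(D_1)\otimes\sO_X(D_2)^{-1}\) with each \(D_i\to S\) surjective of pure relative dimension \(d-1\), because only then does the inductive hypothesis apply to \(D_i/S\) and make the two factors of your difference formula invertible. But equidimensionality of \(D_i\to\Spec\sO_{S,s}\) is a condition at \emph{every} point of the local base (in particular at \(s\) itself and at all intermediate points when \(\dim\sO_{S,s}\geqslant 2\)): the chosen section must avoid the positive-dimensional irreducible components of every fiber simultaneously, an infinite family of conditions which "prime avoidance, there being only finitely many fibral components over the codimension-\(\leqslant 1\) points" cannot handle. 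Controlling \(D_i\) only in codimension \(\leqslant 1\) lets you compare with Garc\'ia's pairing there, but it does not make \(\langle\sL_1|_{D_i},\ldots,\sL_d|_{D_i}\rangle_{D_i/S}\) an instance of the inductive hypothesis, which is exactly where invertibility was supposed to come from. The statement you need is true --- it is Lemma \ref{lem:existence of f-regular section} of this paper, an application of \cite{Gabber-Liu-Lorenzini} Theorem 5.1 (a hypersurface in a high multiple of a relatively very ample system avoiding all positive-dimensional components of all fibers and all associated points) --- but it must be invoked and proved; as written, the heart of your argument is a gap.

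It is also worth recording that the paper's construction is organized precisely so as not to need any such moving lemma: invertibility is never established a posteriori. For sufficiently ample \(\sL_i\) the pairing is built as the norm \(\Nm_{Z_\cV/\cV}(\sL_{d+1}|_{Z_\cV})\) over the open subset \(\cV\) of the presentation bundle \(\bP\) where the incidence scheme is finite, and is then descended to \(S\) up to \(\xi\)-torsion via the SGA2-type Picard statement; the descended sheaf is invertible by construction, and the codimension-\(\leqslant 1\) comparison with \cite{Garcia2000} together with algebraic Hartogs is used only to prove independence of the presentation, to glue over an affine cover, and then to establish properties (1)--(5) exactly as you envisage. If you import Lemma \ref{lem:existence of f-regular section}, your induction-on-\(d\) scheme is a genuine alternative, but two further points need to be said explicitly: coherence and reflexivity of the sheaf obtained by intersecting the Garc\'ia lattices at codimension-one points (this uses that only finitely many points of \(S^{(1)}\) lie outside the flat locus, together with a coherent extension of the invertible sheaf defined there), and the fact that multi-additivity of the reflexive pairing only holds after taking reflexive hulls, since tensor products of reflexive sheaves need not be reflexive --- harmless once both factors in your decomposition are known invertible, but it is part of the argument, not a formality.
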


Now we briefly review the history of Deligne pairing\footnote{For clarity, we may omit some minor conditions of the theorems in the review.}. In \cite{EGA2} \S6.5, Grothendieck defined two kinds of norm functors \(\Nm_{X/S}\colon\cPic(X)\to\cPic(S)\) for a finite morphism \(f\colon X\to S\): one is for the case that \(f\) is flat, and the other is for the case that \(S\) is normal.

For the flat case, there is an extensive literature on the norm functor and its generalization, Deligne pairing for flat morphisms. Deligne \cite{SGA4-3} constructed this Deligne pairing for \(d=1\), and speculated a similar functor for general \(d\). We list some important results for general \(d\) below. Elkik \cite{Elkik} developed a method and constructed Deligne pairing for Cohen--Macaulay flat projective morphisms. Her student, Garc\'ia \cite{Garcia2000}, modified her method and constructed Deligne pairing for equidimensional projective morphisms of finite Tor-dimension (which implies Deligne pairing for flat projective morphisms). Moreover, Ducrot \cite{Ducrot2005} used a different method and constructed Deligne pairing for flat projective morphisms. The above constructions are all over noetherian or locally noetherian schemes. Recently, Eriksson and Freixas \cite{DP2024} modified Ducrot's method and constructed Deligne pairing for flat projective morphisms without assuming the schemes to be noetherian or locally noetherian.

For the second norm functor, on the contrary, there is few research on the topic before. Besides Grothendieck's norm functor, we only knew that N. Nakayama constructed an intersection sheaf in \cite{Nakayama}. However, he needed stronger assumptions to prove that his intersection sheaf was invertible, and he did not consider functorial properties except a base change property. In addition, his construction is only a map between Picard groups and not a functor between Picard categories. And his proof used very involved algebraic geometry and commutative algebra, while our proof is more comprehensible with explicit geometric pictures. Nakayama's results have many applications in algebraic geometry concerning Chow reductions and maximal rationally connected fibrations, which shed light on the potential applications of our results in algebraic geometry.

In this paper, based on Elkik's and Garc\'ia's ideas, we construct Deligne pairing for equidimensional morphisms and prove its functorial properties, which correspond to the functorial properties of Deligne pairing for flat morphisms. Therefore, Our result is a natural generalization of Grothendieck's second norm functor and counterpart of Deligne pairing for flat morphisms.

Now we briefly introduce our method and innovation.

\begin{Def}
	Let \(f\colon X\to S\) be a morphism of noetherian schemes. We say that \(f\) is \emph{of finite Tor-dimension} if there exists an integer \(d_0\) such that for each integer \(d>d_0\), every point \(x\in X\), and every \(\sO_{S,f(x)}\)-module \(M\), we have \(\Tor_d^{\sO_{S,f(x)}}(\sO_{X,x}, M) = 0\).
\end{Def}

Flat morphisms are of finite Tor-dimension. If \(S\) is regular, then any morphism \(f\colon X\to S\) is of finite Tor-dimension.

\begin{Def}
	Let \(f\colon X\to S\) be a projective morphism of noetherian schemes. An invertible \(\sO_X\)-modules \(\sL\) is called \(f\)-\emph{sufficiently ample} if \(\sL\) is \(f\)-ample, and there exists a locally free \(\sO_X\)-module \(\sF\) and a surjective \(\sO_X\)-module homomorphism \(\varphi\colon f^*\sF\to \sL\). We say that \((\varphi,\sF)\) is a \emph{presentation} of \(\sL\), or \(\sF\) generates \(\sL\).
\end{Def}

Locally, every invertible \(\sO_X\)-module is the difference of two \(f\)-sufficiently ample invertible \(\sO_X\)-modules, and thus we can reduce the construction of Deligne pairing for general invertible sheaves to that for sufficiently ample invertible sheaves. Then we use Elkik's and Garc\'ia's ideas. First we use the presentations to construct a projective space bundle \(\bP\) over \(S\) and canonical sections of some invertible sheaves on \(X\times_S\bP\). Then use these canonical sections to construct an invertible sheaf \(\cN\) on some open subset of \(\bP\). After that, we can apply a descent technique to descend \(\cN\) to an invertible sheaf \(\cJ\) on \(S\), which we will prove to be the Deligne pairing.

A crucial point of this method is to prove that \(\cJ\) is independent of the choice of presentations. For this goal, Elkik's and Garc\'ia's original ideas are invalid since they use the restriction of norm functor for flat finite morphisms or finite morphisms of finite Tor-dimension on a closed subscheme, while its counterpart in our case, norm functor for finite morphisms over normal schemes, behaves awful when restricting to an arbitrary closed subscheme. To solve this problem, we use algebraic Hartogs' theorem. We can assume that \(S\) is integral. Over the generic point \(\eta\) or the local ring \(A = \Spec\sO_{S,P}\) of a point \(P\) of codimension 1 in \(S\), the morphisms \(f_\eta\colon X\to \eta\) and \(f_A\colon X_A\to \Spec A\) are both of finite Tor-dimension. Thus the stalks \(\cJ_\eta\) and \(\cJ_P\), viewing as invertible sheaves on \(\eta\) and \(\Spec A\) respectively, are exactly the Deligne pairings for morphisms of finite Tor-dimension constructed by Garc\'ia in \cite{Garcia2000} \S2. Then by Garc\'ia's results in \cite{Garcia2000}, for different presentations, there are unique isomorphisms \(\psi_\eta\) and \(\psi_P\) identifying different \(\cJ_\eta\)'s and \(\cJ_P\)'s respectively, and the restriction of \(\psi_P\) on \(\eta\) is exactly \(\psi_\eta\). Therefore, by algebraic Hartogs' theorem, \(\psi_\eta\) restricts to a unique isomorphism \(\psi\), which identifies different \(\cJ\)'s for different presentations. This trick allows us to glue \(\cJ\)'s on different open subsets and finally get the Deligne pairing.

One application of our results is the construction of the canonical section of Deligne pairing using sections with purely geometric conditions. This gives a clear explanation for some notations in \cite{Zhang1996} and \cite{Moriwaki1999}.

\begin{Thm}[Theorem \ref{thm: non regular section sequence}]
	Let \(X\) and \(S\) be quasi-projective schemes over a field of characteristic 0, and let \(X\to S\) be a surjective projective morphism of pure relative dimension \(d\). Let \(\sL_1,\ldots,\sL_{d+1}\) be invertible \(\sO_X\)-modules, and for each \(1\leqslant i\leqslant d+1\), let \(s_i\) be a global section of \(\sL_i\). Suppose that \(S\) is normal, and that for each \(1\leqslant i\leqslant d\), \(Z(s_1)\cap Z(s_{2})\cdots\cap Z(s_i)\to S\) is of pure relative dimension \(d-i\). Then we can construct a canonical global section of \(\langle\sL_1,\ldots,\sL_{d+1}\rangle_{X/S}\), which we denote by \(\langle s_1,\ldots,s_{d+1}\rangle_{X/S}\). Moreover, if \(\bigcap_{i=1}^{d+1}Z(s_i)  = \emptyset\), then \(\langle s_1,\ldots,s_{d+1}\rangle_{X/S}\) is a regular section of \(\langle\sL_1,\ldots,\sL_{d+1}\rangle_{X/S}\).
\end{Thm}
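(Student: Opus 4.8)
The plan is to argue by induction on the relative dimension $d$, at each stage peeling off the zero divisor $D=Z(s_1)$ and invoking the restriction isomorphism (4) of the main theorem. Since $S$ is normal it is a finite disjoint union of integral normal schemes, over which both the pairing and all the hypotheses decompose, so we may assume $S$ is integral with generic point $\eta$. We shall use repeatedly that on an integral scheme $S$ the restriction map $\Gamma(S,\sM)\to\sM\otimes_{\sO_S}K(S)$ is injective for every invertible $\sO_S$-module $\sM$ (as $\sM$ is torsion-free); hence a global section of $\langle\sL_1,\ldots,\sL_{d+1}\rangle_{X/S}$ is determined by its restriction to the generic fiber $X_\eta\to\eta$, and it is a regular section precisely when that restriction is nonzero. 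Moreover $\eta=\Spec K(S)$ is the spectrum of a field, so $X_\eta\to\eta$ is of finite Tor-dimension and, by the construction of our pairing (whose stalk at $\eta$ is exactly Garc\'ia's pairing for morphisms of finite Tor-dimension), $\langle\sL_1,\ldots,\sL_{d+1}\rangle_{X/S}\otimes_{\sO_S}K(S)$ is the classical Deligne pairing of \cite{Garcia2000}; this is the bridge to the classical theory used below for canonicity.

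\emph{Construction of the section.} For $d=0$ the morphism $f$ is finite, part (6) gives $\langle\sL_1\rangle_{X/S}=\Nm_{X/S}(\sL_1)$, and we set $\langle s_1\rangle_{X/S}:=\Nm_{X/S}(s_1)$, the norm of the section $s_1$ supplied by Grothendieck's construction of the norm functor for finite morphisms over a normal base (\cite{EGA2} \S6.5). For $d\geqslant 1$, the hypothesis for $i=1$ ensures that $D:=Z(s_1)$ is an effective Cartier divisor with $D\to S$ of pure relative dimension $d-1$: the section $s_1$ cannot vanish at a generic point $x$ of a component of $X$, for there $\sO_{X,x}$ is Artinian, so $x$ is the generic point of a $d$-dimensional component $Z$ of the fiber $f^{-1}(f(x))$, and $x\in Z(s_1)$ would force the component of $(Z(s_1))_{f(x)}$ through $x$ to contain $Z$, hence to have dimension $\geqslant d>d-1$; one checks similarly that $s_1$ vanishes at no associated point of $\sO_X$. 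Then $s_1$ identifies $\sO_X(D)$ with $\sL_1$, and the restricted sections $s_2|_D,\ldots,s_{d+1}|_D$ of $\sL_2|_D,\ldots,\sL_{d+1}|_D$ satisfy the hypotheses of the theorem for $D/S$, because $\bigcap_{k=2}^{m+1}Z(s_k|_D)=\bigcap_{k=1}^{m+1}Z(s_k)$ has pure relative dimension $(d-1)-m$ over $S$ for $1\leqslant m\leqslant d-1$. By the inductive hypothesis we have the canonical section $\langle s_2|_D,\ldots,s_{d+1}|_D\rangle_{D/S}$, and we define $\langle s_1,\ldots,s_{d+1}\rangle_{X/S}$ to be the section of $\langle\sL_1,\ldots,\sL_{d+1}\rangle_{X/S}$ corresponding to it under the isomorphism of part (4), applied with $\sO(D)$ in the first slot and identified with $\sL_1$ via $s_1$.

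\emph{Regularity and functoriality.} Suppose $\bigcap_{i=1}^{d+1}Z(s_i)=\emptyset$. For $d=0$ this says $s_1$ is nowhere vanishing, so $\Nm_{X/S}(s_1)$ trivializes $\Nm_{X/S}(\sL_1)$ and is in particular regular. For $d\geqslant 1$ we have $\bigcap_{j=2}^{d+1}Z(s_j|_D)=\emptyset$, so by induction $\langle s_2|_D,\ldots,s_{d+1}|_D\rangle_{D/S}$ is regular, and an isomorphism of invertible sheaves carries a regular section to a regular one, so $\langle s_1,\ldots,s_{d+1}\rangle_{X/S}$ is regular. For canonicity and symmetry, the injectivity statement of the first paragraph lets us compare restrictions to $\eta$: the same induction shows $\langle s_1,\ldots,s_{d+1}\rangle_{X/S}|_\eta$ equals the classical canonical section of $\langle\sL_1,\ldots,\sL_{d+1}\rangle_{X_\eta/\eta}$ --- for $d=0$ it is $\Nm_{X_\eta/\eta}(s_1|_{X_\eta})$, and in the inductive step part (4) is compatible with the good base change $\eta\to S$, so the restriction is the image of $\langle s_2|_{D_\eta},\ldots,s_{d+1}|_{D_\eta}\rangle_{D_\eta/\eta}$, which is exactly how the classical canonical section is built. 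Since the classical section is symmetric in $s_1,\ldots,s_{d+1}$ and independent of auxiliary choices (\cite{Elkik}, \cite{Garcia2000}; cf.\ \cite{Zhang1996}, \cite{Moriwaki1999}) and sections over integral $S$ are determined by their restriction to $\eta$, the same properties pass to $\langle s_1,\ldots,s_{d+1}\rangle_{X/S}$; compatibility with a good base change $S'\to S$ follows the same way, reducing through part (4) to the norm case.

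\emph{Main obstacle.} Two points need genuine care. First, checking that $Z(s_1)$ is an honest effective Cartier divisor to which part (4) applies; this is where the equidimensionality hypothesis for $i=1$ is essential, and likely where the characteristic-zero and quasi-projectivity assumptions are invoked, to keep the successive loci $Z(s_1),\,Z(s_1)\cap Z(s_2),\ldots$ free of embedded pathologies. Second, showing the construction is symmetric, i.e.\ independent of which $s_i$ is peeled off first; here the reduction to the generic fiber together with the corresponding symmetry of the classical canonical section is the crucial input, the remaining functorialities being routine consequences of the base-change compatibility in properties (4) and (6).
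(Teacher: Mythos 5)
Your induction via part (4) hinges on the claim that $D=Z(s_1)$ is an effective Cartier divisor, i.e.\ that $s_1$ is a regular section of $\sL_1$, and this is exactly the point that fails: the hypotheses of this theorem are purely dimensional and do \emph{not} force $s_1$ to be regular (the label of the theorem is ``non regular section sequence'', and this is its whole point, as opposed to the construction \eqref{eq:section of regular sequence} for $f$-regular sequences already given earlier in Section 6). Your argument correctly rules out vanishing of $s_1$ at generic points of components of $X$, but the step ``one checks similarly that $s_1$ vanishes at no associated point of $\sO_X$'' is false for embedded associated points: such a point $x$ is not the generic point of a $d$-dimensional component of its fiber, and if $x$ lies topologically on the $(d-1)$-dimensional part of $Z(s_1)_{f(x)}$ then the condition that $Z(s_1)\to S$ has pure relative dimension $d-1$ is untouched, while $\sO_X\xrightarrow{\,\cdot s_1\,}\sL_1$ has kernel supported at $x$. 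A concrete instance: $S=\Spec k$, $d=1$, $X=\operatorname{Proj}k[X_0,X_1,X_2]/(X_1^2,X_1X_2)$ (a line with an embedded point at $[1\mathbin{:}0\mathbin{:}0]$), $\sL_1=\sO(1)$, $s_1=X_2$; the dimension hypothesis holds but $s_1$ is not regular and $Z(s_1)$ is not Cartier, so Theorem \ref{projection formula of taking closed subscheme} cannot be applied on $X/S$ and your inductive step collapses. Quasi-projectivity and characteristic $0$ do not rescue this, since nothing prevents $X$ itself from having embedded points.

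The paper's proof circumvents this by never restricting on $X/S$ at all: after reducing to $S$ integral, it takes the reduced irreducible components $X_1,\ldots,X_r$ of $X$ dominating $S$ with multiplicities $\delta(X_i)$; on each integral $X_i$ the section $s_1$ \emph{is} regular (it does not vanish at the generic point), and over the generic point $\eta$ and over each $\Spec\sO_{S,P}$ with $P\in S^{(1)}$ one uses the decomposition $\langle\sL_1,\ldots,\sL_{d+1}\rangle=\bigotimes_i\langle\cdots\rangle_{X_i,\cdot}^{\otimes\delta(X_i)}$ of \cite{YuanALB} Lemma 4.2.2 (this is where quasi-projectivity over a characteristic-$0$ field enters), restricts to $Z(s_1)\cap X_i$, applies the inductive hypothesis there, and finally glues the resulting sections $s_\eta$ and $s_P$ into a global section by the codimension-$1$/Hartogs mechanism of Proposition \ref{codim 1 miracle for line bundles}. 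Your base case $d=0$, the regularity argument when $\bigcap_i Z(s_i)=\emptyset$, and the observation that sections over integral normal $S$ are determined by their restriction to $\eta$ are all fine, but as written your construction only reproves the statement for $f$-regular sequences, not the theorem as stated; to repair it you would need the component-by-component reduction (or an equivalent device) rather than a direct application of part (4).
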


\subsection{Deligne pairing of hermitian line bundles}\label{subsec:app to arakelov theory}

For quasi-projective schemes over \(\bC\) and hermitian line bundles on \(X\), i.e. invertible sheaves endowed with smooth metrics, we prove that there is a canonical smooth metric on their Deligne pairing, and the metric is compatible with functorial properties. The results are as follows.

\begin{Thm}[Theorem \ref{thm:metric on EDP}]
	Let \(X\) and \(S\) be quasi-projective schemes over \(\bC\), and let \(f\colon X\to S\) be a surjective projective morphism over \(\bC\) of pure relative dimension \(d\). Suppose that \(S\) is normal. Let \((\sL_1,\VCV_1),(\sL_2,\VCV_2),\ldots,(\sL_{d+1},\VCV_{d+1})\) be hermitian line bundles on \(X\). Then we can endow \(\langle\sL_1,\sL_2,\ldots,\sL_{d+1}\rangle_{X/S}\) with a unique smooth metric \(\VCV\) such that
	
	(1)
	\[ c_1(\langle\sL_1,\sL_2,\ldots,\sL_{d+1}\rangle_{X/S},\VCV) = \int_{X/S}\prod_{i=1}^{d+1}c_1(\sL_i,\VCV_i).\]
	
	(2) Let \(d\geqslant 1\), and let \(s_1\) be an \(f\)-regular section of \(\sL_1\). Then
	\begin{equation*}
		\log \Vert[s_1]_{X/S}\Vert = -\int_{X/S}\log\Vert s_1\Vert_1 \prod_{i=2}^{d+1}c_1(\sL_i,\VCV_i).
	\end{equation*}
	
	(3) If \(d = 0\), then
	\[ \log \Vert\Nm_{X/S}(s)\Vert = \int_{X/S}\log\Vert s\Vert_1 \]
	for each nonzero rational section \(s\) of \(\sL_1\).
	
	(4) The natural isomorphisms \(\Sigma\)'s and \(\iota\)'s defined in \eqref{eq:additive of EDP of lb} and \eqref{eq:symmetric of EDP of lb} are isometries.
	
	(5) For each good base change (in the sense of Definition \ref{def: good base change}) \(g\colon S'\to S\), the isomorphism
	\begin{equation*}
		\langle g'^*\sL_1,g'^*\sL_2,\ldots,g'^*\sL_{d+1} \rangle_{X'/S'}\to g^*\langle \sL_1,\sL_2,\ldots,\sL_{d+1} \rangle_{X/S}.
	\end{equation*}
	is an isometry, where \(X' = X\times_SS'\), \(g'\colon X'\to X\), and for each \(1\leqslant i\leqslant d+1\), the metric on \(g'^*\sL_i\) is \(g'^*\VCV_i\).
\end{Thm}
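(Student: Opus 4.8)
The plan is to build the metric by induction on the relative dimension $d$, mirroring the inductive construction of the Deligne pairing via effective divisors, and then to deduce smoothness and the compatibilities by reducing to the case of a flat, generically smooth morphism, where everything is classical (Deligne for $d=1$, Elkik and Garc\'ia in general). First I would reduce to the case that $S$ is an integral normal variety over $\bC$: a metric is a datum on the analytification, which is local on $S$, and since $S$ is normal the connected components of $S^{an}$ correspond to the irreducible components of $S$.

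Base case $d=0$: here $f$ is finite and $\langle\sL\rangle_{X/S}=\Nm_{X/S}(\sL)$ by item (6) of the main theorem. In characteristic $0$ the map $f$ is generically \'etale; over the \'etale locus $V\subseteq S$ the analytification $X_V^{an}\to V^{an}$ is a genuine finite covering, and on $\Nm_{X/S}(\sL)|_V$ I would take the fibrewise tensor metric $\bigotimes_{x\mapsto P}\VCV_x$, which is smooth and realizes (3). Since $\Nm_{X/S}(\sL)$ is, by Grothendieck's construction, already determined by its restriction to $V$ via algebraic Hartogs, this metric admits at most one extension to $S^{an}$; one then checks that it extends continuously (via the trace formula $\log\Vert\Nm_{X/S}(s)\Vert=\int_{X/S}\log\Vert s\Vert$), the smoothness of the extension across $S^{an}\setminus V^{an}$ being part of the obstacle discussed below.

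Inductive step: given $d\geqslant1$ and hermitian line bundles $\sL_1,\dots,\sL_{d+1}$, I would work locally on $S$; after tensoring $\sL_1$ with the pullback of a sufficiently ample line bundle on $S$ equipped with any smooth metric — which by multiadditivity and item (2) changes $\langle\cdots\rangle_{X/S}$ only by a factor pulled back from $S$ — one may assume $\sL_1$ has an $f$-regular global section $s_1$ with $D=Z(s_1)\to S$ of pure relative dimension $d-1$, using the existence of regular sections in characteristic $0$ (Theorem \ref{thm: non regular section sequence}). Item (4) of the main theorem supplies a canonical isomorphism $\langle\sL_1,\dots,\sL_{d+1}\rangle_{X/S}\cong\langle\sL_2|_D,\dots,\sL_{d+1}|_D\rangle_{D/S}$ identifying $\langle s_1,\dots,s_{d+1}\rangle_{X/S}$ with $\langle s_2|_D,\dots,s_{d+1}|_D\rangle_{D/S}$ up to a canonical generator $[s_1]_{X/S}$; the target already carries the inductively built smooth metric, so I would define $\VCV$ on $\langle\sL_1,\dots,\sL_{d+1}\rangle_{X/S}$ by decreeing $\log\Vert[s_1]_{X/S}\Vert=-\int_{X/S}\log\Vert s_1\Vert_1\prod_{i=2}^{d+1}c_1(\sL_i,\VCV_i)$, i.e.\ forcing (2).

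It then remains to establish: (a) independence of the auxiliary choices ($\sL_1$, $s_1$, and the distinguished slot), which reduces to the classical change-of-section identity $\int_{X/S}\log\lvert s_1/s_1'\rvert\prod_{i\geqslant2}c_1(\sL_i)=\log\bigl\Vert\langle s_1',s_2,\dots\rangle/\langle s_1,s_2,\dots\rangle\bigr\Vert$, which I would prove by passing, via generic flatness together with a flattening good base change $g\colon S'\to S$ (an isomorphism over a dense open, over which $X\times_S S'\to S'$ becomes flat and generically smooth), to the classical setting and descending by reflexivity of the Deligne pairing; (b) the curvature formula (1), obtained by applying $\ddc$ to the definition of $\Vert[s_1]_{X/S}\Vert$, using the fibrewise Poincar\'e--Lelong identity $\ddc[-\log\Vert s_1\Vert_1]=c_1(\sL_1,\VCV_1)-\delta_D$ and the inductive curvature on $D$; (c) that the $\iota$'s and the base-change isomorphisms are isometries, which is immediate because the construction is symmetric in the $\sL_i$ and commutes with good base change; and (d) uniqueness, since (2) (resp.\ (3) when $d=0$) determines $\VCV$ from the metric on the restricted pairing, so by induction the metric is forced. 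The main obstacle is the smoothness claim itself — showing that the relative integrals $\int_{X/S}\prod c_1(\sL_i)$ and $\int_{X/S}\log\Vert s_1\Vert_1\prod_{i\geqslant2}c_1(\sL_i)$, equivalently the a priori merely continuous metric produced above over the locus where $f$ fails to be flat (the ramification locus when $d=0$), are genuinely smooth on all of $S^{an}$. Here I would again lean on the flattening and birational-invariance reductions to the flat, generically smooth case, combined with a careful local analysis near the bad locus; this is the analytic shadow of the algebraic Hartogs argument underpinning the construction of the pairing.
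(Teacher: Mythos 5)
Your overall skeleton is the same as the paper's (Elkik's idea: induct on \(d\), choose an \(f\)-regular section \(s_1\), define \(\VCV\) by forcing (2) through the isomorphism \([s_1]_{X/S}\), get (1) from Poincar\'e--Lelong, and read off uniqueness from (2)--(3)). But your reduction to the existence of an \(f\)-regular section is wrong as stated: tensoring \(\sL_1\) with \(f^*\sM\) for \(\sM\) ample on \(S\) changes nothing on the fibers, since \((\sL_1\otimes f^*\sM)|_{f^{-1}(s)}\cong\sL_1|_{f^{-1}(s)}\); if \(\sL_1\) is fiberwise negative, no twist pulled back from \(S\) produces any nonzero section on a fiber, let alone one whose zero locus is of pure relative dimension \(d-1\). (Also, Theorem \ref{thm: non regular section sequence} constructs a section of the pairing from given sections; it is not an existence statement for \(f\)-regular sections.) The paper instead first builds the metric when \(\sL_1\) admits an \(f\)-regular global section, proves that the additivity isomorphism in the first slot is an isometry, and only then extends to arbitrary \(\sL_1\) by writing it locally as a difference of \(f\)-ample bundles, whose suitable powers do admit \(f\)-regular sections by Lemma \ref{lem:existence of f-regular section} (Gabber--Liu--Lorenzini). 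Some version of this additivity-extension step is unavoidable in your plan as well.

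The second gap is your treatment of well-definedness (independence of \((\sL_1,s_1)\)) and of symmetry. A ``flattening good base change'' is not available: a flattening or resolving modification \(S'\to S\) need not be a good base change in the sense of Definition \ref{def: good base change} (codimension-one points of \(S'\) may land in arbitrary codimension in \(S\), outside \(\cV\)), and a metric does not ``descend by reflexivity'' --- you would face exactly the smooth-extension problem across the exceptional locus that you are trying to avoid; moreover the comparison integral \(\int_{X/S}\log|s_1/s_1'|\prod c_1\) involves a rational function and is not obviously meaningful or smooth. The paper argues intrinsically on \(S\): Lemma \ref{lem:existence of simultaneously regular section} produces an auxiliary section \(s_2\) (of a power of an \(f\)-very ample bundle) regular simultaneously with \(s_1\) and \(s_1'\), and Lemma \ref{lem:norm of s_2}, proved from the integration identity of \cite{YuanALB} Lemma 4.2.1 and the commutativity of restriction in two slots, shows \(\Vert[s_2]_{X/S}\Vert\) is given by the same integral formula regardless of \(s_1\), whence \(\VCV=\VCV'\). (A softer fix in your spirit --- both candidate metrics are smooth and agree with Yuan's canonical metric over the dense flat locus \(\cU\), hence agree everywhere by continuity --- would also work, but it is not what you wrote.) Similarly, the isometry of \(\gamma_{(12)}\) is not ``immediate by symmetry'': the construction privileges the first slot, and the paper needs the diagram \eqref{cd for permutation (12)} together with Lemma \ref{lem:norm of general s_2}. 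Finally, the smoothness of the relative integrals, which you correctly flag as the main analytic point, is in the paper taken from the framework of \cite{YuanALB} \S4.2.2 rather than from any flattening argument, so your proposed route there remains an unproved placeholder.
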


\begin{Thm}[Theorem \ref{DEP of isometry is isometry}]
	Let \(X\) and \(S\) be quasi-projective schemes over \(\bC\), and let \(f\colon X\to S\) be a surjective projective morphism over \(\bC\) of pure relative dimension \(d\). Suppose that \(S\) is normal. Let \((\sL_1,\VCV_1),(\sL_2,\VCV_2),\ldots,(\sL_{d+1},\VCV_{d+1}),(\sL'_1,\VCV'_1),(\sL'_2,\VCV'_2),\ldots,(\sL'_{d+1},\VCV'_{d+1})\) be invertible \(\sO_X\)-modules endowed with smooth metrics. For each \(1\leqslant i \leqslant d+1\), let \(u_i\colon \sL_i\to \sL'_i\) be an isometry. If we endow \(\langle \sL_1,\sL_2,\ldots,\sL_{d+1}\rangle_{X/S}\) and \(\langle \sL'_1,\sL'_2,\ldots,\sL'_{d+1}\rangle_{X/S}\) with the canonical metrics, then the following isomorphism of invertible \(\sO_S\)-modules
	\[ \langle u_1,u_2,\ldots,u_{d+1}\rangle_{X/S}\colon\langle \sL_1,\sL_2,\ldots,\sL_{d+1}\rangle_{X/S}\to\langle \sL'_1,\sL'_2,\ldots,\sL'_{d+1}\rangle_{X/S} \]
	is also an isometry.
\end{Thm}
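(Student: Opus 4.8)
The plan is to argue by induction on the relative dimension $d$, using at each stage the uniqueness assertion in Theorem~\ref{thm:metric on EDP}. Write $\mu$ for the canonical smooth metric on $\langle\sL_1,\ldots,\sL_{d+1}\rangle_{X/S}$ and $\mu'$ for the canonical smooth metric on $\langle\sL'_1,\ldots,\sL'_{d+1}\rangle_{X/S}$. Since $\langle-,\ldots,-\rangle_{X/S}$ is a functor on $\cPic(X)^{d+1}$, the morphism $\langle u_1,\ldots,u_{d+1}\rangle_{X/S}$ is an isomorphism of invertible $\sO_S$-modules, so $\nu := (\langle u_1,\ldots,u_{d+1}\rangle_{X/S})_*\mu$ is again a smooth metric on $\langle\sL'_1,\ldots,\sL'_{d+1}\rangle_{X/S}$. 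It then suffices to show that $\nu$ satisfies the properties that, by Theorem~\ref{thm:metric on EDP}, uniquely characterise $\mu'$; for then $\nu=\mu'$, which is precisely the statement that $\langle u_1,\ldots,u_{d+1}\rangle_{X/S}$ is an isometry.

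The basic observation is that an isometry $u_i\colon(\sL_i,\VCV_i)\to(\sL'_i,\VCV'_i)$ carries a local frame $\sigma$ of $\sL_i$ to a local frame $u_i(\sigma)$ of $\sL'_i$ with $\Vert u_i(\sigma)\Vert'_i=\Vert\sigma\Vert_i$, so the Chern forms agree, $c_1(\sL_i,\VCV_i)=c_1(\sL'_i,\VCV'_i)$ as forms on $X$; likewise the restrictions $u_i|_Z$ of the $u_i$ to any closed subscheme $Z$ are again isometries. Since pushing a metric forward along an isomorphism preserves its curvature, property~(1) of Theorem~\ref{thm:metric on EDP} for $\nu$ follows from property~(1) for $\mu$ together with the equality of Chern forms. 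For the base case $d=0$ we have $\langle u_1\rangle_{X/S}=\Nm_{X/S}(u_1)$ and $\Nm_{X/S}(u_1)(\Nm_{X/S}(s))=\Nm_{X/S}(u_1(s))$ for every nonzero rational section $s$ of $\sL_1$; every nonzero rational section of $\sL'_1$ is of this form, so using property~(3) for $\mu$ and $\Vert u_1(s)\Vert'_1=\Vert s\Vert_1$ one gets $\log\Vert\Nm_{X/S}(u_1(s))\Vert_\nu=\log\Vert\Nm_{X/S}(s)\Vert_\mu=\int_{X/S}\log\Vert s\Vert_1=\int_{X/S}\log\Vert u_1(s)\Vert'_1$, which is property~(3) for $\nu$.

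For the inductive step assume $d\geqslant 1$ and let $s'_1$ be an $f$-regular section of $\sL'_1$, so that $D:=Z(s'_1)\to S$ has pure relative dimension $d-1$. Then $s_1:=u_1^{-1}(s'_1)$ is an $f$-regular section of $\sL_1$ with the same zero divisor $D$, and since $S$ is normal the theorem holds for $D\to S$ by the inductive hypothesis; in particular $\langle u_2|_D,\ldots,u_{d+1}|_D\rangle_{D/S}$ is an isometry. Naturality, in isomorphisms of line bundles, of the restriction isomorphism in part~(4) of the main theorem gives a commutative square relating the isomorphisms $[s_1]_{X/S}$ and $[s'_1]_{X/S}$ of Theorem~\ref{thm:metric on EDP}(2), the map $\langle u_1,\ldots,u_{d+1}\rangle_{X/S}$, and the map $\langle u_2|_D,\ldots,u_{d+1}|_D\rangle_{D/S}$, i.e. $[s'_1]_{X/S}=\langle u_2|_D,\ldots,u_{d+1}|_D\rangle_{D/S}\circ[s_1]_{X/S}\circ\langle u_1,\ldots,u_{d+1}\rangle_{X/S}^{-1}$. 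Computing the norm of $[s'_1]_{X/S}$ along this factorisation — the outer two maps are isometries (the first by the inductive hypothesis, the last by the definition of $\nu$) and the middle one has $\log\Vert[s_1]_{X/S}\Vert=-\int_{X/S}\log\Vert s_1\Vert_1\prod_{i=2}^{d+1}c_1(\sL_i,\VCV_i)$ by property~(2) for $\mu$ — yields $\log\Vert[s'_1]_{X/S}\Vert_\nu=-\int_{X/S}\log\Vert s_1\Vert_1\prod_{i=2}^{d+1}c_1(\sL_i,\VCV_i)$, which equals $-\int_{X/S}\log\Vert s'_1\Vert'_1\prod_{i=2}^{d+1}c_1(\sL'_i,\VCV'_i)$ because $\Vert s_1\Vert_1=\Vert s'_1\Vert'_1$ and the Chern forms agree; this is property~(2) for $\nu$. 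Finally, properties~(4) and~(5) for $\nu$ follow formally from those for $\mu$ via the naturality of the isomorphisms $\Sigma$, $\iota$ and of the base-change isomorphism in isomorphisms of line bundles, together with the fact that restrictions and pullbacks of isometries are isometries. By uniqueness in Theorem~\ref{thm:metric on EDP}, $\nu=\mu'$, completing the induction.

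I expect the main obstacle to be the bookkeeping in the inductive step: one must check that the isomorphism $[s_1]_{X/S}$ of Theorem~\ref{thm:metric on EDP}(2) is natural in the hermitian data, so that the square with $[s'_1]_{X/S}$ genuinely commutes, and then keep careful track of which metric — $\nu$, $\mu$, or the canonical metric on the Deligne pairing over $D$ — lies on each corner, so that the three isometry inputs compose to give exactly the integral in property~(2). Setting up the case $d=0$ and the naturality of $\Sigma$, $\iota$ and of base change is comparatively routine, being a matter of unwinding constructions already in place in Theorem~\ref{thm:metric on EDP} and the main theorem. A small point to settle at the outset is that the list of properties in Theorem~\ref{thm:metric on EDP} is indeed a characterisation of the canonical metric, i.e. that it is the uniqueness half of that theorem that is being invoked here.
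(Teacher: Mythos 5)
Your proposal is correct and follows essentially the same route as the paper: induction on \(d\), the base case \(d=0\) via the norm formula for \(\Nm_{X/S}\), and the inductive step via an \(f\)-regular section, the naturality square relating \([s_1]_{X/S}\), \([u_1(s_1)]_{X/S}\), \(\langle u_1,\ldots,u_{d+1}\rangle_{X/S}\) and \(\langle u_2|_Y,\ldots,u_{d+1}|_Y\rangle_{Y/S}\), together with the equality of section norms and Chern forms under isometries. The only difference is packaging: the paper concludes directly from that square that the left vertical arrow is an isometry (so it never needs to verify properties (4)--(5) of Theorem \ref{thm:metric on EDP} for the transported metric \(\nu\) or invoke the uniqueness clause, which is exactly the bookkeeping you flag at the end), and note that both arguments quietly reduce to the case where \(\sL_1\) admits an \(f\)-regular global section by locality on \(S\), the difference trick, and the fact that the \(\Sigma\)'s are isometries compatible with the \(\langle u\rangle\)-morphisms.
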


\begin{Thm}[Theorem \ref{thm:pull-back formula of metric}]
	Let \(X\) and \(S\) be quasi-projective schemes over \(\bC\), and let \(f\colon X\to S\) be a surjective projective morphism over \(\bC\) of pure relative dimension \(d\). Suppose that \(S\) is integral and normal. Let \((\sL_1,\VCV_1),(\sL_2,\VCV_2),\ldots,(\sL_{d},\VCV_{d})\) be invertible \(\sO_X\)-modules endowed with smooth metrics, and let \((\sM,\VCV)\) be an invertible \(\sO_S\)-module endowed with smooth metrics. We endow \(f^*\sM\) with the smooth metric \(f^*\VCV\). Let \(\delta\) be the intersection number of \(\sL_1,\ldots,\sL_d\) on the generic fiber of \(f\) (if \(d = 0\), then let \(\delta\) be the degree of finite morphism \(f\)). If we endow \(\langle\sL_1,\ldots,f^*\sM,\ldots,\sL_d\rangle_{X/S}\) with the canonical metric, then the natural isomorphism
	\[ \langle\sL_1,\ldots,f^*\sM,\ldots,\sL_d\rangle_{X/S}\to \sM^{\otimes\delta} \]
	is an isometry.
\end{Thm}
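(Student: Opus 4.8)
The plan is to argue by induction on the relative dimension $d$, reducing $d$ by one via a divisor restriction and checking that the metric terms created by that operation vanish for degree reasons. Note first that ``being an isometry'' is a pointwise condition on $S$, so we may freely shrink $S$; and since the symmetry isomorphisms of the Deligne pairing are isometries by Theorem \ref{thm:metric on EDP}(4), we may assume that $f^{*}\sM$ occupies the last of the $d+1$ slots. Write $L:=\langle\sL_1,\dots,\sL_d,f^{*}\sM\rangle_{X/S}$, let $\Phi\colon L\to\sM^{\otimes\delta}$ denote the isomorphism of Theorem \ref{pull-back formula of normal Deligne pairing}, and let $\VCV_{\mathrm{can}}$ be the canonical metric on $L$ from Theorem \ref{thm:metric on EDP}; the goal is the identity $\Phi^{*}(\VCV^{\otimes\delta})=\VCV_{\mathrm{can}}$. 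For the base case $d=0$ the morphism $f$ is finite of degree $\delta$ and $L=\Nm_{X/S}(f^{*}\sM)$: for a nonzero rational section $t$ of $\sM$ the section $\Nm_{X/S}(f^{*}t)$ corresponds under $\Phi$ to $t^{\otimes\delta}$ (a formal property of the norm functor over a normal base, cf. \cite{EGA2} \S6.5), and Theorem \ref{thm:metric on EDP}(3) gives $\log\Vert\Nm_{X/S}(f^{*}t)\Vert_{\mathrm{can}}=\int_{X/S}\log\Vert f^{*}t\Vert_{f^{*}\VCV}=\int_{X/S}f^{*}\log\Vert t\Vert_{\VCV}=\delta\log\Vert t\Vert_{\VCV}$, the last equality by the change-of-variables identity $\int_{X/S}f^{*}g=(\deg f)\,g$ for the finite morphism $f$; since $\log\Vert t^{\otimes\delta}\Vert_{\VCV^{\otimes\delta}}=\delta\log\Vert t\Vert_{\VCV}$ as well, $\Phi$ preserves norms of sections and is therefore an isometry.

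Now suppose $d\geqslant 1$. Using the additivity isomorphisms (again isometries by Theorem \ref{thm:metric on EDP}(4)), the compatibility of $\Phi$ with additivity, the additivity of intersection numbers, and the freedom to shrink $S$, I would reduce to the case in which $\sL_1$ is $f$-very-ample and carries an $f$-regular global section $s_1$ whose zero scheme $D:=Z(s_1)$ is of pure relative dimension $d-1$ over $S$; the existence of such a section is the Bertini-type input underlying Theorem \ref{thm: non regular section sequence}, available because we work over $\bC$. Let $f_D\colon D\to S$ be the restriction of $f$, so that $(f^{*}\sM)|_{D}=f_D^{*}\sM$, and let $r\colon L\to L_D:=\langle\sL_2|_{D},\dots,\sL_d|_{D},f_D^{*}\sM\rangle_{D/S}$ be the isomorphism of Theorem \ref{projection formula of taking closed subscheme}. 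By Theorem \ref{thm:metric on EDP}(2) the norm of $[s_1]_{X/S}$---which records precisely the failure of $r$ to be an isometry for the canonical metrics on $L$ and $L_D$---equals $\exp\bigl(-\int_{X/S}\log\Vert s_1\Vert_1\wedge c_1(\sL_2,\VCV_2)\wedge\dots\wedge c_1(\sL_d,\VCV_d)\wedge f^{*}c_1(\sM,\VCV)\bigr)$. By the projection formula for fibre integrals this integral equals $\bigl(\int_{X/S}\log\Vert s_1\Vert_1\wedge c_1(\sL_2,\VCV_2)\wedge\dots\wedge c_1(\sL_d,\VCV_d)\bigr)\wedge c_1(\sM,\VCV)$, and the inner fibre integral is zero because its integrand is a form of degree $2(d-1)$, strictly below the real dimension $2d$ of the fibres of $f$. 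Hence $r$ is an isometry $(L,\VCV_{\mathrm{can}})\to(L_D,\VCV_{D,\mathrm{can}})$.

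Next, let $\delta'$ be the intersection number of $\sL_2|_{D},\dots,\sL_d|_{D}$ on the generic fibre of $f_D$. Since $D_{\eta}$ represents $c_1(\sL_1|_{X_{\eta}})$, the projection formula for intersection numbers on the generic fibre gives $\delta'=\delta$. The morphism $f_D$ is again surjective projective of pure relative dimension $d-1$ with $S$ integral and normal, so by the induction hypothesis the isomorphism $\Phi_D\colon L_D\to\sM^{\otimes\delta'}=\sM^{\otimes\delta}$ is an isometry, i.e. $\Phi_D^{*}(\VCV^{\otimes\delta})=\VCV_{D,\mathrm{can}}$. It remains to identify $\Phi$ with $\Phi_D\circ r$: these are two isomorphisms $L\to\sM^{\otimes\delta}$, so they differ by a unit in $\Gamma(S,\sO_S^{*})$, and over the generic point $\eta$---where $f_{\eta}$ is of finite Tor-dimension and all three isomorphisms reduce to those of Garc\'ia \cite{Garcia2000}, for which the pull-back isomorphism is known to factor through the divisor restriction---this unit is $1$; hence it is $1$ on the integral scheme $S$. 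Combining everything, $\Phi^{*}(\VCV^{\otimes\delta})=r^{*}\Phi_D^{*}(\VCV^{\otimes\delta})=r^{*}\VCV_{D,\mathrm{can}}=\VCV_{\mathrm{can}}$, which closes the induction.

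The step I expect to be the main obstacle is this last identification of $\Phi$ with $\Phi_D\circ r$: it amounts to the compatibility of the pull-back isomorphism of Theorem \ref{pull-back formula of normal Deligne pairing} with the divisor-restriction isomorphism of Theorem \ref{projection formula of taking closed subscheme}, which has to be extracted from their constructions (it is classical in the flat, finite-Tor-dimension situation treated by Garc\'ia). The other ingredients---the additivity and symmetry reductions, the existence of a good $f$-regular section over $\bC$, and the degree count forcing the defect integral to vanish---are comparatively routine, and in fact the argument runs much as in the proof of the pull-back formula on the level of line bundles, only tracking metrics.
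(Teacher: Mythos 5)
Your proposal is correct and follows essentially the same route as the paper: induction on $d$ with the same $d=0$ norm computation, reduction to an $f$-regular section $s_1$ of $\sL_1$, restriction along $Z(s_1)$ with $\log\Vert[s_1]_{X/S}\Vert=0$ by exactly your degree count (the paper phrases it as ``$c_1(\sM,\VCV)$ is a smooth form on $S$''), and invariance of $\delta$ under restriction to the divisor. The compatibility $\Phi=\Phi_D\circ r$ that you flag as the main obstacle is handled in the paper precisely as you suggest: all its canonical isomorphisms are determined by their restrictions to the generic fiber (Proposition \ref{codim 1 miracle for line bundles}), where the identity reduces to Garc\'ia's flat case.
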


The problem of constructing a canonical metric for Deligne pairing of hermitian line bundles is also raised by Deligne in \cite{DelignePairingOriginProceeding}, where he constructed the metric for flat projective morphisms of pure relative dimension 1. For general \(d\), Elkik \cite{Elkik2} constructed the metric for Cohen--Macaulay smooth projective morphisms, and Yuan \cite{YuanALB} constructed the metric for flat projective morphisms. Our theorems may be the first result on the canonical metric on Deligne pairing of hermitian line bundles for equidimensional morphisms.

We hope for a similar result over non-archimedean fields and applications of our results in number theory, arithmetic geometry, and algebraic dynamics.

\subsection{Structure of this article}

In the second section, we review the construction of norm functor for finite morphisms over normal schemes. Although we basically follow \cite{EGA2} \S6.5, we drop the noetherian assumption to prepare for the potential construction of Deligne pairing for equidimensional morphisms over non-noetherian schemes, resembling that for flat morphisms in \cite{DP2024}.

In the third section, we prove a gluing result of  different norm functors for a surjective projective morphism of pure relative dimension 1 over a noetherian normal scheme. Results in relative dimensions 0 and 1 are fundamental tools for our construction of Deligne pairing.

In the forth section, we use Elkik's and Garc\'ia's ideas to construct Deligne pairing for sufficiently ample invertible sheaves, and then extend the functor to all invertible sheaves by linearity. As we mentioned above, we first use the presentations of sufficiently ample invertible sheaves to construct a projective space bundle \(\bP\) over \(S\) and canonical sections of some invertible sheaves on \(X\times_S\bP\). Then we use these canonical sections to construct an invertible sheaf \(\cN\) on some open subset of \(\bP\). After that, we use a descent technique to descend \(\cN\) to an invertible sheaf \(\cJ\) on \(S\). Finally, we use algebraic Hartogs' theorem to show that \(\cJ\) is independent of the choice of the presentations. More precisely, for different choices, there is a unique isomorphism between them. This uniqueness allows us to glue \(\cJ\)'s on different open subsets.

In the fifth section, following the framework of Garcia \cite{Garcia2000}, we prove functorial properties of Deligne pairing. These functorial properties correspond to their counterparts of Deligne pairing for flat morphisms. We will see that the algebraic Hartogs' theorem and Garcia's results \cite{Garcia2000} play an important role in our proof, and such treatment will be used repeatedly in this paper.

For a flat morphism \(X\to S\) , if we have a suitable sequence \(s_1,s_2,\ldots,s_{d+1}\) of sections of the invertible sheaves \(\sL_1,\sL_2,\ldots,\sL_{d+1}\), then we can define a canonical section of \(\langle\sL_1,\sL_2,\ldots,\sL_{d+1}\rangle_{X/S}\), which is usually denoted by \(\langle s_1,s_2,\ldots,s_{d+1}\rangle_{X/S}\). Such section is important for analyzing positivity properties of Deligne pairing, and has many other important applications. In the sixth section, we prove similar results for Deligne pairing for equidimensional morphisms and construct the canonical section from suitable section sequences.

In the seventh section, we construct isomorphisms between Deligne pairings induced by isomorphisms between original invertible sheaves, and show that these isomorphisms are compatible with the functorial properties of Deligne pairing. Therefore, we prove that Deligne pairing is a symmetric multi-additive functor between Picard categories.

In the eighth section, we construct a canonical metric for Deligne pairing of hermitian line bundles for equidimensional morphisms. The basic idea, due to Elkik \cite{Elkik2}, is taking an \(f\)-regular section and then using induction on the relative dimension \(d\).

%

\subsection*{Acknowledgments}
The author appreciates his mentor, Professor Xinyi Yuan, for his patient guidance, constant encouragement, and financial support. The author also thanks Yinchong Song, Ziqi Guo, Binggang Qu, and Jiawei Yu for helpful discussions.

\section{Norm functor}

In this section, we recall some basic facts on the norm functor for finite morphisms over normal schemes.

\begin{Lem}\label{disjoint union of irreducible components}
	Let \(X\) be a scheme such that every point of \(X\) has a neighborhood which meets only finitely many irreducible components of \(X\). Suppose that for each \(x\in X\), the nilradical of \(\sO_{X,x}\) is a prime ideal. Then the irreducible components \(X\) are disjoint, and each irreducible component is open in \(X\).
\end{Lem}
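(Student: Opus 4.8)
The plan is to reduce the statement to the single observation that the hypothesis on nilradicals forces every point of \(X\) to lie on exactly one irreducible component, and then to upgrade this set-theoretic fact to an open decomposition using the local finiteness assumption. The key local input is the standard correspondence: for \(x\in X\), the irreducible components of \(X\) containing \(x\) are in bijection with the minimal primes of \(\sO_{X,x}\). To see this, pass to an affine open neighborhood \(\Spec A\ni x\); the irreducible components of \(X\) through \(x\) restrict to the irreducible components of \(\Spec A\) through \(x\) (via \(Z\mapsto Z\cap\Spec A\), with inverse given by closure in \(X\)), and these correspond to the minimal primes of \(A\) contained in the prime \(x\), i.e. to the minimal primes of \(A_x=\sO_{X,x}\). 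Since by hypothesis the nilradical of \(\sO_{X,x}\) is prime, it is the unique minimal prime of \(\sO_{X,x}\), so \(x\) lies on a unique irreducible component of \(X\).

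Disjointness of the irreducible components is then immediate: a point of \(Z_i\cap Z_j\) with \(Z_i\neq Z_j\) would lie on two distinct components, contradicting the uniqueness just established. For openness, fix an irreducible component \(Z\) and a point \(x\in Z\), and choose an open neighborhood \(U\) of \(x\) meeting only finitely many irreducible components, say exactly \(Z=Z_1,Z_2,\ldots,Z_n\). Writing \(U=\bigcup_\alpha(U\cap Z_\alpha)\) over all components \(Z_\alpha\) of \(X\) and discarding those disjoint from \(U\), we get \(U\subseteq Z_1\cup\cdots\cup Z_n\); meanwhile \(x\notin Z_2\cup\cdots\cup Z_n\) by uniqueness. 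Hence \(U\setminus(Z_2\cup\cdots\cup Z_n)\) is an open neighborhood of \(x\), and by disjointness it equals \(U\cap Z_1\), which is contained in \(Z_1=Z\). Thus \(Z\) is a neighborhood of each of its points, so \(Z\) is open.

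I do not expect a real obstacle here; the only point requiring care is that the bijection between components through \(x\) and minimal primes of \(\sO_{X,x}\) is genuinely local, so it is available without any noetherian hypothesis on \(X\). This is also what explains the role of the two hypotheses: the nilradical condition supplies uniqueness of the component through each point, while the finiteness condition is needed precisely so that the complementary union \(Z_2\cup\cdots\cup Z_n\) is finite — hence closed — making \(U\setminus(Z_2\cup\cdots\cup Z_n)\) open. Everything beyond this is routine point-set topology.
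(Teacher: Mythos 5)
Your proposal is correct and follows essentially the same route as the paper: identify the irreducible components through \(x\) with the minimal primes of \(\sO_{X,x}\) via an affine neighborhood, deduce that each point lies on a unique component (hence disjointness), and then use the local finiteness hypothesis to exhibit, around each point of a component \(C\), an open set meeting no other component, so that \(C\) is open. The only cosmetic difference is that you remove the finite union of the other (closed) components from \(U\), while the paper observes directly that \(C\cap U_x\) is open in \(U_x\); these are the same argument.
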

\begin{proof}
	Let \(x\in X\) and \(U = \Spec A\) be an affine open neighborhood of \(x\). Suppose that \(x\) corresponds to prime ideal \(\fp_x\subseteq A\). We know that each minimal prime of \(\sO_{X,x}\) corresponds to a minimal prime of \(A\) contained in \(\fp_x\), and then corresponds to an irreducible component of \(U\) containing \(x\). Since the nilradical of \(\sO_{X,x}\) is prime, by the above correspondence we deduce that every \(x\in X\) in contained in a unique irreducible component of any affine open neighborhood \(U\). Thus the irreducible components of \(X\) are disjoint.
	
	Let \(C\) be an irreducible component of \(X\), and \(x\in C\) be any point. Let \(U_x\) be an open neighborhood of \(x\) which meets only finitely many irreducible components \(C=C_1,C_2,\ldots, C_n\) of \(X\). Each \(C_i\bigcap U_x\) is closed in \(U_x\), and does not intersect other \(C_j\bigcap U_x\ (j\ne i)\). Then \(C\bigcap U_x\) is open in \(U_x\), and thus open in \(X\). Then we deduce that \(C\) is open in \(X\) since \(C = \bigcup_{x\in C}(C\bigcap U_x)\).
\end{proof}

\begin{Def}
	Let \(X\) be a scheme. An \emph{quasi-coherent \(\sO_X\)-algebra of finite type} on \(X\) is a quasi-coherent \(\sO_X\)-module of finite type (in the sense of \cite{EGA1} Chapter 0 \S5) such that for each open subset \(U\subseteq X\), \(\sA(U)\) is an \(\sO_X(U)\)-algebra, and the algebra structure is compatible with restriction morphisms.
\end{Def}

For simplicity, we introduce the following assumption:

\hypertarget{star assumption}{(*)} \(X\) is a normal scheme such that every point of \(X\) has a neighborhood which meets only finitely many irreducible components of \(X\), and \(\sA\) is a quasi-coherent \(\sO_X\)-algebra of finite type on \(X\).

For a scheme \(X\), if \(X\) and \(\sA = \sO_X\) satisfy \hyperlink{star assumption}{(*)}, we will simply say that \(X\) satisfies \hyperlink{star assumption}{(*)}.

For example, if \(X\) is a normal integral scheme or a normal locally noetherian scheme, then \(X\) satisfies \hyperlink{star assumption}{(*)}.

\begin{Prop}\label{raw Norm}
	Suppose that \(X\) and \(\sA\) satisfy \hyperlink{star assumption}{(*)}. Then there is a morphism \(\Nm_{\sA}\colon \sA \to \sO_X\) which satisfies the following properties:
	
	(1) for each open subset \(U\subseteq X\) and \(f,g\in\sA(U)\),
	\[\Nm_{\sA}(fg) = \Nm_{\sA}(f)\Nm_{\sA}(g); \]
	
	(2) for each open subset \(U\subseteq X\)
	\[\Nm_{\sA}(1) = 1; \]
	
	(3) for each connected open subset \(U\subseteq X\) and \(s\in\sO(U)\),
	\[ \Nm_{\sA}(s\cdot1) = s^n, \]
	where \(n\in\bZ_{\geqslant0}\) and only depends on the connected component of \(X\) that \(U\) lies in.
\end{Prop}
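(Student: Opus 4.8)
The plan is to reduce at once to the case where $X$ is integral, to define $\Nm_{\sA}$ at the generic point via the determinant of a multiplication operator, and then to use normality to see that this \emph{a priori} rational function is in fact regular. The whole statement is local on $X$, so throughout one works with sections over an open $U$.

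\emph{Reduction to the integral case.} Since $X$ is normal, each local ring $\sO_{X,x}$ is a domain, so its nilradical $(0)$ is prime; combined with the hypothesis on irreducible components, Lemma~\ref{disjoint union of irreducible components} shows that the irreducible components of $X$ are pairwise disjoint open subschemes, hence coincide with the connected components, each of which is integral and normal. As $\sA$ and $\sO_X$ are sheaves and the assertions (1)--(3) are local — statement (3) even referring to a single connected component — it suffices to construct $\Nm_{\sA}$ when $X$ is integral and normal. Write $\eta$ for the generic point and $K=\sO_{X,\eta}$ for the function field. Because $\sA$ is a finite-type $\sO_X$-\emph{module}, the stalk $\sA_\eta$ is a commutative $K$-algebra of finite dimension $n=\dim_K\sA_\eta\in\bZ_{\geqslant0}$. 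For a nonempty open $U$ (so $\eta\in U$) and $f\in\sA(U)$, let $f_\eta\in\sA_\eta$ be the germ and set $\Nm_{\sA}(f):=\det{}_K(\mu_{f_\eta})\in K$, where $\mu_{f_\eta}$ is multiplication by $f_\eta$ on $\sA_\eta$; on the empty open both $\sA$ and $\sO_X$ take the zero value and there is nothing to do.

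\emph{The norm is regular.} This is the heart of the matter. Fix $x\in U$. The stalk $\sA_x$ is a finite $\sO_{X,x}$-module which is an $\sO_{X,x}$-algebra, hence integral over $\sO_{X,x}$; so $f_x\in\sA_x$ satisfies a monic polynomial $P\in\sO_{X,x}[t]$, and pushing forward along the ring homomorphism $\sA_x\to\sA_\eta$ gives $P(f_\eta)=0$ in $\sA_\eta$. Since $\sA_\eta$ is commutative, $P(\mu_{f_\eta})=\mu_{P(f_\eta)}=0$, so the minimal polynomial of the $K$-linear operator $\mu_{f_\eta}$ divides $P$; hence every eigenvalue of $\mu_{f_\eta}$ in an algebraic closure of $K$ is a root of $P$ and therefore integral over $\sO_{X,x}$. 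As $\det{}_K(\mu_{f_\eta})$ is the product of these eigenvalues with multiplicity, it is integral over $\sO_{X,x}$, and since $X$ is normal, $\sO_{X,x}$ is integrally closed in $K$; thus $\Nm_{\sA}(f)\in\sO_{X,x}$. Letting $x$ range over $U$ and using $\sO_X(U)=\bigcap_{x\in U}\sO_{X,x}$ inside $K$, we conclude $\Nm_{\sA}(f)\in\sO_X(U)$. Compatibility with restriction maps is immediate, as $f_\eta$, and hence $\Nm_{\sA}(f)$, depends only on the image of $f$ in $\sA_\eta$.

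\emph{The listed properties.} Multiplicativity (1) follows from $\mu_{f_\eta g_\eta}=\mu_{f_\eta}\circ\mu_{g_\eta}$ and multiplicativity of $\det$; (2) from $\mu_{1}=\operatorname{id}$; and (3) from the observation that for $s\in\sO(U)$ the operator $\mu_{s\cdot1}$ is scalar multiplication by $s$ on the $n$-dimensional space $\sA_\eta$, whose determinant is $s^{n}$, where $n=\dim_K\sA_\eta$ depends only on the integral component containing the connected set $U$. I expect the only genuine obstacle to be the regularity step above; the decisive inputs there are that $\sA$ is finite \emph{as a module} — so that $\sA_x$ is integral, not merely of finite type, over $\sO_{X,x}$ — and that the local rings of $X$ are integrally closed, which is precisely where normality enters and which distinguishes this construction from the norm functor attached to a flat finite morphism.
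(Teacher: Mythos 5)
Your proposal is correct and follows essentially the same route as the paper: reduce to the integral normal case via Lemma \ref{disjoint union of irreducible components}, define \(\Nm_{\sA}\) as the determinant of multiplication on the generic (rational-function) fiber, and use integral closedness of the local rings to see the determinant is regular. The only difference is cosmetic: where the paper cites EGA for the fact that the determinant of multiplication by a section lands in the integrally closed ring, you prove it directly by the eigenvalue/integrality argument, and you check regularity stalkwise via \(\sO_X(U)=\bigcap_{x\in U}\sO_{X,x}\) instead of working with the sheaf \(\sR_X\).
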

\begin{proof}
	By Lemma \ref{disjoint union of irreducible components}, we know that the irreducible components of \(X\) are disjoint, and each irreducible component is open in \(X\). Thus sheaves on \(X\) split into sheaves on each irreducible components of \(X\). Moreover, each irreducible component \(X\) is also a connected component of \(X\). Thus we only need to construct \(\Nm_{\sA}\) on each irreducible component of \(X\). Therefore, we can assume that \(X\) is a normal integral scheme and \(\sA\) is a quasi-coherent \(\sO_X\)-algebra of finite type. It remains to construct a morphism \(\Nm_{\sA}\colon \sA \to \sO_X\) which satisfies (1), (2), and
	
	\((3)'\) for each open subset \(U\subseteq X\) and \(s\in\sO(U)\),
	\[ \Nm_{\sA}(s\cdot1) = s^n, \]
	where \(n\in\bZ_{\geqslant0}\) is a constant.
	
	Let \(\sR_X\) be the sheaf of rational functions on \(X\). Then \(\sR_X\) is a constant sheaf, and the restriction morphisms are identity. For each nonempty open subset \(U\subseteq X\), \(\sO_X(U)\) is an integral domain, \(\sR_X(U)\) is the field of fraction of \(\sO_X(U)\), and \(\sO_X(U)\) is integrally closed in \(\sR_X(U)\). Since \(\sA\) is quasi-coherent \(\sO_X\)-module of finite type, by \cite{EGA1} Chapter 1 (7.3.6), \(\sA\otimes_{\sO_X}\sR_X\) is isomorphic to \(\sR_X^n\) for some \(n\in\bZ_{\geqslant0}\).
	
	For each nonempty open subset, \(\sA\otimes_{\sO_X}\sR_X(U)\) is a \(n\)-dimensional \(\sR_X(U)\)-vector space, and each \(f\in \sA(U)\) defines a \(\sR_X(U)\)-linear homomorphism
	\[ L_f\colon \sA\otimes_{\sO_X}\sR_X(U)\to \sA\otimes_{\sO_X}\sR_X(U),\quad v\mapsto fv. \]
	Then we define
	\[ \Nm_\sA(f) = \det(L_f). \]
	Since \(\sO_X(U)\) is integrally closed, by \cite{EGA1} Chapter 1 (6.4.3), \(\det(L_f)\in \sO_X(U)\). Thus \(\Nm_{\sA}\) sends \(\sA(U)\) to \(\sO_X(U)\). Since \(\sA\) is an \(\sO_X\)-algebra, \(\Nm_\sA\) defines a morphism \(\sA\to \sO_X\). And (1), (2), \((3)'\) can be easily deduced from the properties of determinants of linear homomorphisms. Notes that the \(n\) in \((3)'\) is exactly the rank of \(\sA\otimes_{\sO_X}\sR_X\) as an \(\sR_X\)-module.
\end{proof}

For a general ringed space \((X,\sA)\), let \(\Inv(X,\sA)\) denote the category of invertible \(\sA\)-modules (defined as locally free \(\sA\)-sheaves of rank 1, following \cite{EGA1} Chapter 1 \S5.4) with morphisms being \(\sA\)-module homomorphisms. We avoid using the word ``Picard" since Picard groups and categories usually only concern isomorphisms between invertible sheaves. If \(X\) is a scheme, we abbreviate \(\Inv(X,\sO_X)\) to \(\Inv(X)\).

\begin{Def}\label{def: regular section}
	Let \((X,\sA)\) be a ringed space, \(\sL\) be an invertible \(\sA\)-module, and \(s\in\Gamma(X,\sL)\) be a global section. We say that \(s\) is \(\sA\)-\emph{regular} if the morphism \(\sA_X\xrightarrow{\cdot s}\sL\), multiplying by \(s\), is injective. If \(X\) is a scheme and \(\sA = \sO_X\), we will simply say that \(s\) is \emph{regular}.
\end{Def}

\begin{Prop}
	Suppose that \(X\) and \(\sA\) satisfy \hyperlink{star assumption}{(*)}. Then we can associate each invertible \(\sA\)-module \(\sL\) with an invertible \(\sA\)-module \(\Nm_\sA(\sL)\) such that
	
	(1) \(\Nm_{\sA}(\sA) = \sO_X\), which shows that our abuse of notation is reasonable;
	
	(2) for two invertible \(\sA\)-modules \(\sL_1\) and \(\sL_2\), \(\Nm_\sA(\sL_1\otimes_{\sA}\sL_2)\) is canonically isomorphic to \(\Nm_\sA(\sL_1)\otimes_{\sO_X}\Nm_\sA(\sL_2)\);
	
	(3) for each homomorphism \(h\colon\sL_1\to\sL_2\) of  invertible \(\sA\)-modules, we have a functorial homomorphism \(\Nm_{\sA}(h)\colon\Nm_{\sA}(\sL_1)\to\Nm_{\sA}(\sL_2)\) of  invertible \(\sO_X\)-modules;
	
	(4) if \(h\) in (3) is an isomorphism, then \(\Nm_{\sA}(h)\) is also an isomorphism;
	
	(5) if \(h\) in (4) is injective, then \(\Nm_{\sA}(h)\) is also injective. In particular, we can associate each \(\sA\)-regular section \(s\) of an invertible \(\sA\)-module \(\sL\) with a regular section \(\Nm_\sA(s)\) of \(\Nm_\sA(\sL)\).
	
	In conclusion, \(\Nm_{\sA}\) is a covariant functor from \(\Inv(X,\sA)\) to \(\Inv(X)\), and preserves tensor products and injective homomorphisms.
\end{Prop}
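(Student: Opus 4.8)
The plan is to transport the multiplicative sheaf map $\Nm_\sA\colon\sA\to\sO_X$ of Proposition \ref{raw Norm} through trivializations, and then to deduce each of the five properties from a formal property of $\Nm_\sA$ on sections. For an invertible $\sA$-module $\sL$, fix an open cover $(U_i)$ on which $\sL$ is trivial, with transition functions $g_{ij}\in\Gamma(U_i\cap U_j,\sA^\times)$ normalized so that $g_{ij}g_{jk}=g_{ik}$. Since $\Nm_\sA$ is multiplicative with $\Nm_\sA(1)=1$, each $\Nm_\sA(g_{ij})$ is a unit of $\sO_X$ (with inverse $\Nm_\sA(g_{ij}^{-1})$) and the family $(\Nm_\sA(g_{ij}))$ is again a cocycle, so it glues the sheaves $\sO_X|_{U_i}$ into an invertible $\sO_X$-module $\Nm_\sA(\sL)$. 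A routine comparison over a common refinement shows that changing the cover or the trivializations multiplies the gluing data by $\Nm_\sA$ of the comparison units, so $\Nm_\sA(\sL)$ is well defined up to a canonical isomorphism. Property (1) is the case of the tautological trivialization of $\sA$ (all $g_{ij}=1$), and property (2) follows because $\sL_1\otimes_\sA\sL_2$ has transition functions $g_{ij}g'_{ij}$ on a common cover while $\Nm_\sA(g_{ij}g'_{ij})=\Nm_\sA(g_{ij})\Nm_\sA(g'_{ij})$; one then checks this isomorphism is associative, commutative and unital.

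For properties (3) and (4) I would describe a morphism $h\colon\sL_1\to\sL_2$ of $\sA$-modules locally: on a common trivializing cover it is multiplication by elements $f_i\in\sA(U_i)$, and the overlap condition reads $f_ig_{ij}=g'_{ij}f_j$, where $g_{ij},g'_{ij}$ are the transition functions of $\sL_1,\sL_2$. Applying $\Nm_\sA$ gives $\Nm_\sA(f_i)\Nm_\sA(g_{ij})=\Nm_\sA(g'_{ij})\Nm_\sA(f_j)$, so the local maps ``multiplication by $\Nm_\sA(f_i)$'' glue to an $\sO_X$-morphism $\Nm_\sA(h)\colon\Nm_\sA(\sL_1)\to\Nm_\sA(\sL_2)$; multiplicativity of $\Nm_\sA$ on sections gives $\Nm_\sA(\id)=\id$ and $\Nm_\sA(h'\circ h)=\Nm_\sA(h')\circ\Nm_\sA(h)$, so $\Nm_\sA$ is a functor, and if $h$ is an isomorphism each $f_i$ is a unit, hence so is $\Nm_\sA(f_i)$, which gives (4).

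The step with genuine content is (5). After the reduction of Proposition \ref{raw Norm} I may assume $X$ integral and normal; working on an affine open $U=\Spec B$ inside some $U_i$, with $B$ a normal domain and fraction field $K$, the morphism $h$ becomes multiplication by an element $f\in A:=\sA(U)$, where $A$ is a finite $B$-algebra, and injectivity of $h$ amounts to $f$ being a nonzerodivisor of $A$. Since $A\otimes_BK$ is a localization of $A$ and localization is exact, multiplication by $f$ remains injective on $A\otimes_BK$, a finite-dimensional $K$-vector space; an injective endomorphism of such a space is bijective, so $\det(L_f)\neq0$. As $\det(L_f)=\Nm_\sA(f)$ and $B$ is a domain, $\Nm_\sA(f)$ is a nonzerodivisor of $\sO_X$, whence $\Nm_\sA(h)$ is injective; applying this to the map given by multiplication by an $\sA$-regular section $s$ of $\sL$ and using $\Nm_\sA(\sA)=\sO_X$ shows that the image of $1$ is a regular section $\Nm_\sA(s)$ of $\Nm_\sA(\sL)$. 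I expect the main obstacle to be exactly this passage from ``nonzerodivisor in the finite $\sA$-algebra $A$'' to ``nonzerodivisor in $\sO_X$'', which is where the finiteness of $\sA$ over $\sO_X$ and the normality (hence reducedness) hypothesis are really used; everything else is cocycle bookkeeping combined with Proposition \ref{raw Norm}.
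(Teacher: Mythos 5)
Your proposal is correct, and for the part with real content, item (5), your argument is the same as the paper's: reduce to a small affine open where both modules are trivialized, so that \(h\) is multiplication by an element \(f\) of the finite algebra; pass to the tensor product with the fraction field \(K\) of the (integrally closed) base domain, where injectivity survives by exactness of localization, conclude \(\det(L_f)=\Nm_\sA(f)\neq 0\), and use that the base is a domain (and that \(\Nm_\sA(f)\) lies in it, by Proposition \ref{raw Norm}) to get injectivity of \(\Nm_\sA(h)\); the regular-section statement then follows by applying this to \(\sA\xrightarrow{\cdot s}\sL\) together with \(\Nm_\sA(\sA)=\sO_X\). The only divergence is in (1)--(4): the paper does not construct \(\Nm_\sA(\sL)\) by hand but invokes EGA II (6.5.2)--(6.5.3), adding only the observation that the noetherian hypothesis there is used solely to make the sheaf of rational functions quasi-coherent, which assumption (*) supplies; you instead rebuild the construction via transition cocycles \(g_{ij}\in\Gamma(U_{ij},\sA)^{\times}\) and \(\Nm_\sA(g_{ij})\), with morphisms glued from local multipliers \(f_i\). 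That route is fine and self-contained, but note that what the citation buys is precisely the canonicity bookkeeping you compress into ``a routine comparison over a common refinement'': to have an actual functor \(\Inv(X,\sA)\to\Inv(X)\) (not just a map on isomorphism classes) you must fix a coherent system of identifications between the sheaves obtained from different covers/trivializations and check your \(\Nm_\sA(h)\), the isomorphism in (2), and its associativity/commutativity/unit constraints are independent of these choices; this is routine but should be recorded if you keep the hands-on construction.
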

\begin{proof}
	By Lemma \ref{disjoint union of irreducible components}, we can assume that \(X\) is a normal integral scheme.
	
	(1), (2), (3), and (4) can be directly deduced from the arguments in \cite{EGA2} (6.5.2) and (6.5.3). Although \(X\) is assumed to be normal and locally noetherian in \cite{EGA2} (6.5.2) and (6.5.3), the only application of this condition is to ensure that \(\sR_X\)---the sheaf of rational functions on \(X\)---is a quasi-coherent \(\sO_X\)-module. And this can also deduced from our assumption (*) by \cite{EGA1} Chapter 1 (7.3.3). Thus the arguments in \cite{EGA2} (6.5.2) and (6.5.3) work under our conditions.
	
	To prove (5), it suffices to check on small enough affine open subsets. Suppose that \(X = \Spec A\), \(\Gamma(X,\sA) = B\), and \(\sL_1 = \sL_2 = \sA\), where \(A\) is an integrally closed domain, and \(B\) is a finite \(A\)-algebra (in the sense of \cite{ICA} Chapter 2). Then \(h\) corresponds to an injective \(B\)-module endomorphism \(u\colon B\to B\). Let \(f = u(1)\). Then \(\Nm_{B/A}(u)\colon A\to A\) is multiplying by \(\Nm_{B/A}(f)\). Note that \(\Nm_{B/A}(f)\in A\). Let \(K\) be the field of fractions of \(A\). Then \(\Nm_{B/A}(f) = \det(u\otimes_A\id_K\colon B\otimes_A K\to B\otimes_A K)\). Since \(K\) is the localization of \(A\) at the prime ideal (0), \(u\otimes_A\id_K\) is an injective \(K\)-vector space homomorphism, and thus \(\Nm_{B/A}(f)\ne 0\). Therefore, \(\Nm_{B/A}(u)\) is injective, i.e. \(\Nm_\sA(h)\) is injective.
\end{proof}

\begin{Prop}\label{Norm of extension is power}
	Let \(X\) be a normal integral scheme and \(\sA\) be a quasi-coherent \(\sO_X\)-algebra of finite type on \(X\). Then there exists an integer \(n\) such that for each invertible \(\sO_X\)-module \(\sL\), \(\Nm_\sA(\sL\otimes_{\sO_X}\sA)\) is canonically isomorphic to \(\sL^{\otimes n}\).
\end{Prop}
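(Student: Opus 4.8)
The plan is to construct the isomorphism locally on $X$, verify that it is independent of the local choices, and glue; the key input is Proposition~\ref{raw Norm}(3) together with the integrality of $X$.

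First I would fix the value of $n$. Since $X$ is integral it is irreducible, so every nonempty open subset of $X$ is connected and $X$ has a single connected component; hence the nonnegative integer in Proposition~\ref{raw Norm}(3) is a genuine constant $n$ (namely $\rank_{\sR_X}(\sA\otimes_{\sO_X}\sR_X)$), and for every unit $a$ on a nonempty open $U\subseteq X$ one has $\Nm_\sA(a\cdot 1)=a^n$, where $a\cdot 1$ is the image of $a$ under $\sO_X\to\sA$. Next I would choose an open cover $\{U_i\}$ of $X$ trivializing $\sL$, with trivializations $\phi_i$ and transition functions $g_{ij}\in\sO_X^\times(U_i\cap U_j)$, so that $\sL$ is given by the cocycle $\{g_{ij}\}$ and $\sL^{\otimes n}$ by $\{g_{ij}^n\}$. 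Then $\sL\otimes_{\sO_X}\sA$ is an invertible $\sA$-module, trivialized over each $U_i$ by $\phi_i\otimes\id_\sA$, with transition functions $g_{ij}\cdot 1\in\sA^\times(U_i\cap U_j)$. Since the functor $\Nm_\sA$ of the preceding Proposition is obtained (as in \cite{EGA2} (6.5.2)--(6.5.3)) by applying the multiplicative map $\Nm_\sA\colon\sA\to\sO_X$ to transition functions, $\Nm_\sA(\sL\otimes_{\sO_X}\sA)$ is given by the cocycle $\{\Nm_\sA(g_{ij}\cdot 1)\}$, and because $U_i\cap U_j$ is connected this equals $\{g_{ij}^n\}$. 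Having the same cocycle as $\sL^{\otimes n}$, the module $\Nm_\sA(\sL\otimes_{\sO_X}\sA)$ is isomorphic to $\sL^{\otimes n}$ via the identity on this cocycle; call this isomorphism $u$.

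To see that $u$ is canonical I would check independence of the auxiliary choices: independence of the cover is automatic after passing to a common refinement, so it suffices to vary the trivializations. Replacing $\phi_i$ by $\phi_i\circ a_i$ with $a_i\in\sO_X^\times(U_i)$ rescales the local trivialization of $\sL^{\otimes n}$ over $U_i$ by $a_i^n$ and that of $\Nm_\sA(\sL\otimes_{\sO_X}\sA)$ over $U_i$ by $\Nm_\sA(a_i\cdot 1)=a_i^n$; the two rescalings cancel, so $u$ is unchanged. The same computation shows $u$ is functorial in $\sL$ (for isomorphisms of invertible $\sO_X$-modules) and compatible with tensor products, though only the existence of $u$ is needed for the statement.

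I expect the only genuinely delicate point to be this last independence check, i.e. the identity $\Nm_\sA(a\cdot 1)=a^n$ on the opens $U_i$ and $U_i\cap U_j$: this is exactly Proposition~\ref{raw Norm}(3), and it is precisely here that integrality of $X$ enters, both to guarantee that the exponent $n$ is a single constant and that the relevant opens are connected.
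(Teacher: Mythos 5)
Your proof is correct and is essentially the paper's argument: the paper simply says the statement is "directly implied by Proposition \ref{raw Norm}(3)", and your cocycle computation (transition functions $g_{ij}\cdot 1$ of $\sL\otimes_{\sO_X}\sA$ mapping to $\Nm_\sA(g_{ij}\cdot 1)=g_{ij}^n$, with connectedness of the opens guaranteed by irreducibility) is exactly the detail behind that one-line proof. The independence-of-trivialization check you add is a welcome, correct elaboration of why the isomorphism is canonical.
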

\begin{proof}
	Directly implied by Proposition \ref{raw Norm} (3). 
\end{proof}



\begin{Thm}\label{Norm functor}
	Suppose that \(X\) satisfies \hyperlink{star assumption}{(*)}, and let \(f\colon X'\to X\) be a finite morphism of schemes. Then we have a covariant functor 
	\[ \Nm_f\colon \Inv(X') \to\Inv(X),\]
	which preserves tensor products and injective homomorphisms.
\end{Thm}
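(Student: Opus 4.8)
The plan is to build \(\Nm_f\) as the composite of the quasi-coherent pushforward \(f_*\) with the norm functor \(\Nm_{\sA}\) of the preceding propositions, applied to the \(\sO_X\)-algebra \(\sA := f_*\sO_{X'}\).

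First, since \(f\) is finite, \(\sA = f_*\sO_{X'}\) is a quasi-coherent \(\sO_X\)-module of finite type carrying an \(\sO_X\)-algebra structure; the topological condition on irreducible components in \hyperlink{star assumption}{(*)} is inherited from \(X\), so \((X,\sA)\) satisfies \hyperlink{star assumption}{(*)}. Hence the construction established above yields a covariant functor \(\Nm_{\sA}\colon\Inv(X,\sA)\to\Inv(X)\) which preserves tensor products and injective homomorphisms.

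Second, I would show that \(f_*\) restricts to a covariant functor \(\Inv(X')\to\Inv(X,\sA)\) preserving tensor products and injective homomorphisms. As \(f\) is finite, hence affine, \(f_*\) is an exact, symmetric-monoidal equivalence from quasi-coherent \(\sO_{X'}\)-modules to quasi-coherent \(\sA\)-modules with \(f_*\sO_{X'}=\sA\); in particular \(f_*(\sF\otimes_{\sO_{X'}}\sG)\cong f_*\sF\otimes_{\sA}f_*\sG\) canonically, and \(f_*\), being left exact, carries injective \(\sO_{X'}\)-module homomorphisms to injective \(\sA\)-module homomorphisms. The one point requiring an argument is that \(f_*\sL\) is an invertible \(\sA\)-module, i.e. locally free of rank \(1\), whenever \(\sL\) is an invertible \(\sO_{X'}\)-module. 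I would check this on an affine open \(\Spec A\subseteq X\) with \(f^{-1}(\Spec A)=\Spec B\) and \(B\) finite over \(A\): there \(\sL\) corresponds to an invertible \(B\)-module \(L\) (equivalently, a finitely generated projective \(B\)-module of rank \(1\)), and \(f_*\sL\) to \(L\) regarded as a module over \(\widetilde B=\sA|_{\Spec A}\). For each prime \(\mathfrak p\) of \(A\) the ring \(B_{\mathfrak p}=B\otimes_AA_{\mathfrak p}\) is semilocal, since the fibre \(B\otimes_A\kappa(\mathfrak p)\) is a finite \(\kappa(\mathfrak p)\)-algebra; hence the invertible \(B_{\mathfrak p}\)-module \(L\otimes_AA_{\mathfrak p}\) is free of rank \(1\). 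Choosing a basis vector, lifting it to \(L\), and spreading out — which uses only that a finitely generated module vanishing at a prime vanishes on a neighborhood, and therefore needs no noetherian hypothesis — exhibits \(L\) as free of rank \(1\) over \(B\) after inverting a suitable element of \(A\setminus\mathfrak p\). By quasi-compactness of \(\Spec A\) finitely many such localizations cover it, so \(f_*\sL\) is locally free of rank \(1\) over \(\sA\).

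Finally, I would set \(\Nm_f:=\Nm_{\sA}\circ f_*\); as a composite of covariant functors each preserving tensor products and injective homomorphisms, it is a functor \(\Inv(X')\to\Inv(X)\) with the asserted properties. The main obstacle is the middle step: checking that \(f_*\sL\) is genuinely locally free of rank \(1\) over \(\sA\) without a noetherian hypothesis, where one cannot appeal to coherence of \(\sA\) and must instead route through the semilocal fibre rings \(B_{\mathfrak p}\), using that invertible modules over semilocal rings are free and that the support of a finitely generated module is closed. Everything else is formal, resting on standard properties of the affine pushforward together with the properties of \(\Nm_{\sA}\) already established.
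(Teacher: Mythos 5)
Your proposal is correct and follows essentially the same route as the paper: define \(\sA=f_*\sO_{X'}\), check that \(f_*\) lands in \(\Inv(X,\sA)\) and preserves tensor products and injections, and set \(\Nm_f=\Nm_{\sA}\circ f_*\). The only difference is that the paper cites \cite{EGA2} (6.1.2) for the invertibility of \(f_*\sL\) as an \(\sA\)-module, whereas you prove it directly via the semilocal fibre rings \(B_{\mathfrak p}\) and a noetherian-free spreading-out argument, which is a valid (and self-contained) substitute.
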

\begin{proof}
	Let \(\sA = f_*\sO_X\). Since \(f\) is a finite morphism, \(\sA\) is a quasi-coherent \(\sO_X\)-algebra of finite type. Moreover, for each invertible \(\sO_{X'}\)-module \(\sL\) on \(X'\), by \cite{EGA2} (6.1.2) we know that \(f_*\sL\) is an invertible \(\sA\)-module on \(X\). Thus \(f_*\) induces a covariant functor from \(\Inv(X')\) to \(\Inv(X, \sA)\). It is obvious that \(f_*\) preserves tensor products and injective homomorphisms. We define \(\Nm_f\) to be the composition of \(f_*\) and \(\Nm_{\sA}\), i.e.
		\[ \Nm_{f} = \Nm_{\sA}\circ f_*\colon\Inv(X')\to \Inv(X),\quad \sL\mapsto \Nm_{\sA}(f_*\sL). \]
	Since both \(f_*\) and \(\Nm_\sA\) preserves tensor products and injective homomorphisms, so does \(\Nm_f\).
\end{proof}

Sometimes we also use \(\Nm_{X'/X}\) to denote \(\Nm_f\).

\begin{Prop}\label{Norm of pull-back is power}
	Let \(X\) be a normal integral scheme, and \(f\colon X'\to X\) be a finite morphism of schemes. Then there exists an integer \(n\) such that for each invertible \(\sO_X\)-module \(\sL\), \(\Nm_f(f^*\sL) = \sL^{\otimes n}\).
\end{Prop}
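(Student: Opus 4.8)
The plan is to unwind the definition $\Nm_f = \Nm_{\sA}\circ f_*$ from Theorem \ref{Norm functor}, where $\sA := f_*\sO_{X'}$ is a quasi-coherent $\sO_X$-algebra of finite type, and then reduce the statement to Proposition \ref{Norm of extension is power} by identifying $f_*f^*\sL$ with the base-changed module $\sA\otimes_{\sO_X}\sL$.

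First, since $X$ is normal and integral it satisfies \hyperlink{star assumption}{(*)}, so $\Nm_f$ and $\Nm_{\sA}$ are defined. For an arbitrary invertible $\sO_X$-module $\sL$, the projection formula for the affine (finite) morphism $f$ yields a canonical isomorphism
\[ f_*(f^*\sL)\;\cong\;(f_*\sO_{X'})\otimes_{\sO_X}\sL\;=\;\sA\otimes_{\sO_X}\sL, \]
and this isomorphism is one of $\sA$-modules, not merely of $\sO_X$-modules. This is the one point that deserves a line of care, but it is immediate from the construction of the projection-formula map: it is built from the $\sO_{X'}$-linear, hence $f^{-1}\sA$-linear, multiplication action on $f^*\sL$, so it is compatible with the $\sA$-module structures. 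Applying the functor $\Nm_{\sA}\colon\Inv(X,\sA)\to\Inv(X)$ then gives
\[ \Nm_f(f^*\sL)=\Nm_{\sA}(f_*f^*\sL)\cong\Nm_{\sA}(\sA\otimes_{\sO_X}\sL)=\Nm_{\sA}(\sL\otimes_{\sO_X}\sA). \]

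Now Proposition \ref{Norm of extension is power} provides an integer $n$, independent of $\sL$, with $\Nm_{\sA}(\sL\otimes_{\sO_X}\sA)$ canonically isomorphic to $\sL^{\otimes n}$; concretely $n$ is the rank of $\sA\otimes_{\sO_X}\sR_X$ over the function field of $X$, i.e.\ the generic degree of $f$. Composing the displayed isomorphisms finishes the proof. There is no real obstacle here: the only thing to be slightly careful about is that the projection-formula isomorphism respects the $\sA$-module structures, so that applying $\Nm_{\sA}$ is legitimate; everything else is a direct appeal to Proposition \ref{Norm of extension is power}.
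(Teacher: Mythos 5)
Your argument is correct and is exactly the route the paper intends: its proof reads "Directly implied by Proposition \ref{Norm of extension is power}," and the implicit step is precisely your identification \(f_*f^*\sL\cong\sA\otimes_{\sO_X}\sL\) as \(\sA\)-modules via the projection formula, followed by that proposition. You have merely made the omitted details explicit.
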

\begin{proof}
	Directly implied by Proposition \ref{Norm of extension is power}. 
\end{proof}


\begin{Prop}[\cite{EGA2} (6.5.8)]\label{base change of Norm}
	Suppose that \(X\) and \(Y\) satisfy \hyperlink{star assumption}{(*)}. Let \(f\colon X'\to X\) be a finite morphism, and \(g\colon Y\to X\) be a morphism of schemes such that \(g\) restricting to each irreducible component of \(Y\) is a dominant morphism to an irreducible component of \(X\). Let \(Y' = X'\times_XY \), and consider the Cartesian diagram
	\[ \begin{tikzcd}
		X'\arrow[d, "f"] & Y'\arrow[l, "g'"']\arrow[d, "f'"]\\
		X & Y\arrow[l, "g"'].
	\end{tikzcd} \]
    Then for each invertible \(\sO_{X'}\)-module \(\sL\),
    \[ \Nm_{f'}(g'^*\sL) = g^*\Nm_f(\sL). \]
\end{Prop}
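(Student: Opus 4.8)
The plan is to reduce the statement, in the manner of \cite{EGA2} (6.5.8), to a compatibility between determinants of linear maps over fields, working throughout with explicit local frames since \(g\) need not be flat. First I would reduce to the case where \(X\) and \(Y\) are normal integral schemes and \(g\) is dominant. By Lemma~\ref{disjoint union of irreducible components} the irreducible components of \(X\) (resp. of \(Y\)) are disjoint open subschemes, and the whole Cartesian diagram together with \(\sL\), \(\Nm_f(\sL)\) and \(\Nm_{f'}(g'^*\sL)\) decomposes accordingly; restricting \(g\) to a connected component \(Y_0\) of \(Y\) yields, by hypothesis, a dominant morphism onto an irreducible component \(X_0\) of \(X\), which is open in \(X\), so only the restriction of the diagram over \(X_0\) and \(Y_0\) matters. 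Since the assertion is moreover local on \(X\) and on \(Y\), I may assume \(X=\Spec A\) and \(Y=\Spec C\) with \(A,C\) normal domains and \(A\hookrightarrow C\) injective (dominance of \(g\)), \(X'=\Spec B\) with \(B\) a finite \(A\)-algebra, \(Y'=\Spec(B\otimes_A C)\), where \(f'\) corresponds to the finite map \(C\to B\otimes_A C\) and \(g'\) to \(B\to B\otimes_A C\); here \(\Nm_{f'}\) is defined because \(Y\) is integral normal, hence satisfies \hyperlink{star assumption}{(*)} (Theorem~\ref{Norm functor}).

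The heart of the matter is the compatibility of the norm of an element with this base change. Let \(K\) and \(F\) be the fraction fields of \(A\) and \(C\); since the nonzero elements of \(A\) become units in \(F\), there is a canonical embedding \(K\hookrightarrow F\) over \(A\hookrightarrow C\). If \(n=\dim_K(B\otimes_A K)\), then \((B\otimes_A C)\otimes_C F\cong B\otimes_A F\cong (B\otimes_A K)\otimes_K F\cong F^{\,n}\), so the generic rank entering the definition of \(\Nm_{f'}\) agrees with that entering the definition of \(\Nm_f\). For \(b\in B\), multiplication by \(b\otimes1\) on \((B\otimes_A C)\otimes_C F\) is the extension of scalars along \(K\hookrightarrow F\) of multiplication by \(b\) on \(B\otimes_A K\), so, determinants being stable under field extension, \(\Nm_{B\otimes_A C/C}(b\otimes1)\)---which a priori lies in \(C\) by the integral-closure argument in Proposition~\ref{raw Norm}---has the same image in \(F\) as \(\Nm_{B/A}(b)\in A\). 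As \(C\hookrightarrow F\), this forces \(\Nm_{B\otimes_A C/C}(b\otimes1)\) to equal the image of \(\Nm_{B/A}(b)\) under \(A\to C\), i.e. \(g^*\Nm_{B/A}(b)\); in particular this holds for every unit \(b\).

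It remains to globalize into a canonical isomorphism. Pick an affine open cover \((U_i)\) of \(X\) over which \(\sL|_{f^{-1}(U_i)}\) is free with basis \(e_i\) and transition units \(g_{ij}\); then \(\Nm_f(\sL)\) is, by construction, obtained by gluing the \(\sO_{U_i}\) along \(\Nm(g_{ij})\), and hence \(g^*\Nm_f(\sL)\) is glued over \((g^{-1}(U_i))\) along \(g^*\Nm(g_{ij})\). On the other hand \(g'^*\sL\) is free over each \(f'^{-1}(g^{-1}(U_i))\) with transition units \(g'^*g_{ij}\), so \(\Nm_{f'}(g'^*\sL)\) is glued over \((g^{-1}(U_i))\) along \(\Nm(g'^*g_{ij})\). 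By the previous paragraph \(\Nm(g'^*g_{ij})=g^*\Nm(g_{ij})\), so the identity automorphisms of the \(\sO_{g^{-1}(U_i)}\) patch to an isomorphism \(\Nm_{f'}(g'^*\sL)\xrightarrow{\ \sim\ }g^*\Nm_f(\sL)\); multiplicativity of the norm shows it does not depend on the chosen trivialization and is functorial in \(\sL\).

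The only genuinely delicate step is the first reduction: because \(g\) is not assumed flat, there is no base-change isomorphism \(g^*f_*\cong f'_*g'^*\) to fall back on, and one must carry the explicit local frames through the whole argument; the dominance hypothesis is used precisely to secure \(A\hookrightarrow C\), hence the embedding \(K\hookrightarrow F\) and the equality of generic ranks on which the determinant computation rests.
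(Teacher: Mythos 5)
Your argument is correct and is essentially the one the paper relies on by citation: the paper's proof just observes that the noetherian hypothesis in \cite{EGA2} (6.5.8) is only needed to define the norm, and that proof is exactly your reduction (via Lemma \ref{disjoint union of irreducible components} and locality) to the integral affine case, the comparison of determinants of multiplication maps under the field extension \(K\hookrightarrow F\) supplied by dominance, and the gluing of the element-level identity \(\Nm(g'^*g_{ij})=g^*\Nm(g_{ij})\) through the cocycle description of \(\Nm_f(\sL)\). Only your closing aside is inaccurate: for a finite (indeed any affine) morphism \(f\), the base-change map \(g^*f_*\cong f'_*g'^*\) on quasi-coherent modules holds with no flatness assumption on \(g\); the genuine role of the dominance hypothesis is, as you in fact use it, to secure \(A\hookrightarrow C\), hence \(K\hookrightarrow F\) and the equality of generic ranks on which the determinant comparison rests.
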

\begin{proof}
	The noetherian assumption in \cite{EGA2} (6.5.8) only used for the definition of the norm functor. And its proof of such an isomorphism is applicable here.
\end{proof}

\section{Norm functor in relative dimension 1 case}

\begin{Lem}\label{unit exists}
	Let \(A\) be a noetherian integrally closed domain, let \(M,M'\) be finite \(A\)-modules, and let \(u,u'\) be \(A\)-module endomorphisms of \(M, M'\) respectively. If \(\coker(u)\cong \coker(u')\) as \(A\)-modules, then there exists a unit \(a\in A\) such that \(\det(u) = a\det(u')\).
\end{Lem}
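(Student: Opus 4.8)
The plan is to reduce the statement to an equality of $0$-th Fitting ideals and then to localise at the codimension-one points of $A$, where every module in sight becomes free. Throughout, $K=\operatorname{Frac}(A)$ and $\det(u)$ means $\det(u\otimes_A\id_K)\in K$ (likewise for $u'$). I will tacitly use — as holds in the situations where this lemma is applied — that $M$ and $M'$ are torsion-free, equivalently that $u$ and $u'$ are injective; this is genuinely needed, since over a discrete valuation ring the hypothesis $\coker(u)\cong\coker(u')$ determines the lengths of the cokernels but not of the kernels, which $\det$ also detects. Finally, note that $\det(u)=0$ iff $u\otimes_AK$ is not surjective (it is an endomorphism of a finite-dimensional $K$-vector space), iff $\coker(u)$ has positive rank; so, as $\coker(u)\cong\coker(u')$, either $\det(u)=\det(u')=0$ (and $a=1$ works) or both lie in $K^{\times}$ — assume the latter.

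The first step is the input from commutative algebra. For an endomorphism of a finite free module given in some basis by a square matrix, the $0$-th Fitting ideal of its cokernel is generated by the determinant of that matrix; and in general $\operatorname{Fitt}_0(N)$ of a finite module $N$ depends only on $N$ and is compatible with localisation, i.e. $\operatorname{Fitt}_0(N)_{\mathfrak p}=\operatorname{Fitt}_0(N_{\mathfrak p})$. Hence the hypothesis $\coker(u)\cong\coker(u')$ produces an equality of ideals of $A$:
\[ \operatorname{Fitt}_0(\coker(u))=\operatorname{Fitt}_0(\coker(u'))=:I. \]

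The second step evaluates $I$ at each height-one prime $\mathfrak p$ of $A$. There $A_{\mathfrak p}$ is a discrete valuation ring, and $M_{\mathfrak p}$ — finite and torsion-free over it — is free; fixing a basis turns $u_{\mathfrak p}$ into a square matrix $U$ over $A_{\mathfrak p}$, and since $\operatorname{Frac}(A_{\mathfrak p})=K$, the $K$-linear map $u_{\mathfrak p}\otimes_{A_{\mathfrak p}}K$ is canonically $u\otimes_AK$, so $\det U=\det(u)$ in $K$. Therefore
\[ IA_{\mathfrak p}=\operatorname{Fitt}_0(\coker(u_{\mathfrak p}))=(\det U)=(\det u)A_{\mathfrak p}, \]
and symmetrically $IA_{\mathfrak p}=(\det u')A_{\mathfrak p}$; so $(\det u)A_{\mathfrak p}=(\det u')A_{\mathfrak p}$, that is $v_{\mathfrak p}(\det u)=v_{\mathfrak p}(\det u')$, for every height-one prime $\mathfrak p$.

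The last step invokes algebraic Hartogs' theorem: since $A$ is noetherian and normal, $A=\bigcap_{\operatorname{ht}\mathfrak p=1}A_{\mathfrak p}$, and hence an element of $K^{\times}$ is a unit of $A$ exactly when its valuation at every height-one prime vanishes. Applying this to $\det(u)/\det(u')\in K^{\times}$, whose valuations all vanish by the preceding step, yields a unit $a\in A$ with $\det(u)=a\det(u')$. I expect the only step requiring genuine care to be the reduction to free modules in codimension one — i.e. the torsion-freeness hypothesis flagged at the start; once that is granted, the remainder is routine manipulation of Fitting ideals together with the Krull structure of $A$.
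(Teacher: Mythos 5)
Your proof is correct for the statement you actually prove, and it follows essentially the same route as the paper's: dispose of the degenerate case $\det(u)=\det(u')=0$, localize at the height-one primes $\fp$, where $A_\fp$ is a discrete valuation ring and the torsion-free modules become free, deduce that $\det(u)$ and $\det(u')$ generate the same ideal of $A_\fp$, and conclude from $A=\bigcap_{\operatorname{ht}\fp=1}A_\fp$ that their ratio is a unit. The only local difference is cosmetic: you read the valuation off $\operatorname{Fitt}_0$ of the cokernel, while the paper reads it off the invariant factors of $\coker(u_\fp)\cong\coker(u'_\fp)$ over the PID $A_\fp$; both yield $v_\fp(\det u)=v_\fp(\det u')$.

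The substantive divergence is the torsion-freeness you impose at the outset. The lemma as stated allows arbitrary finite modules, and the paper's proof begins by replacing $M,M'$ with $M/M_{\tor},M'/M'_{\tor}$, asserting that neither hypotheses nor conclusion change; that assertion is precisely what your opening remark distrusts, and the distrust is well founded, since the cokernel can change under this replacement and the unrestricted statement fails: over a DVR with uniformizer $t$, take $M=A\oplus A/(t)$ with $u$ the projection onto the free factor and $M'=A$ with $u'$ multiplication by $t$; both cokernels are $A/(t)$, yet $\det(u\otimes_A K)=1$ while $\det(u')=t$. Be aware, though, that in the intended application (Proposition \ref{glue lemma}) what is actually available is not torsion-freeness of the modules but injectivity of $u,u'$ (they are multiplications by regular sections), and injectivity is the right general hypothesis: after localizing at a height-one prime, an injective endomorphism restricts to a bijection of the finite-length torsion submodule, so the cokernel is unchanged by passing to $M_\fp/(M_\fp)_{\tor}$ and your free-module computation applies verbatim. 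So keep your argument, but phrase the needed extra hypothesis as injectivity of $u$ and $u'$ rather than torsion-freeness of $M$ and $M'$.
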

\begin{proof} Let \(K\) be the field of fractions of \(A\), and let \(M_{\tor},M'_{\tor}\) be the torsion submodules of \(M,M'\) respectively. Since \(M\otimes_A K\) is canonically isomorphic to \((M/M_{\tor})\otimes_A K\), if we replace \(M,M'\) by \(M/M_{\tor},M'/M'_{\tor}\) respectively, the conditions and conclusions of the lemma does not change. Thus we can assume that \(M\) and \(M'\) are torsionfree.
	
Moreover, we can assume that \(u\) and \(u'\) are injective. Let \(u_K = u\otimes_A\id_K\) and \(u'_K = u'\otimes_A\id_K\). Then \(\coker(u_K) = \coker(u)\otimes_AK\cong\coker(u')\otimes_AK = \coker(u'_K)\). Thus if one of \(u\) and \(u'\) is not injective, so is the other, and then \(\det(u) = \det(u') = 0\). In this case, we can take \(a = 1\).
	
We consider the case that \(A\) is a PID. Since \(M\) and \(M'\) are torsionfree, they are free \(A\)-modules. Since \(u\) and \(u'\) are injective, both \(\coker(u)\) and \(\coker(u')\) are torsion. Let \((q_1),(q_2),\ldots,(q_r)\) be the invariants of \(\coker(u)\) and \(\coker(u')\). Then both \(\det(u)\) and \(\det(u')\) differ from \(q_1q_2\ldots q_r\) by a unit in \(A\).  Thus there exists a unit \(a\in A\) such that \(\det(u) = a\det(u')\).

Now we consider the general case. Since \(u\) and \(u'\) are injective, both \(\det(u)\) and \(\det(u')\) are nonzero. So there exists \(a,b\in K\) such that \(\det(u) = a\det(u')\) and \(\det(u') = b\det(u)\). It suffices to prove that \(a,b\in A\). Since \(A\) is integrally closed,
\[ A = \bigcap_{\text{height}\, \fp = 1} A_{\fp}, \]
where the intersection is taken over all prime ideals \(\fp\subseteq A\) of height 1, we only need to prove that \(a,b\in A_{\fp}\) for all prime ideals \(\fp\subseteq A\) of height 1. Let \(u_\fp = u\otimes_A\id_{A_\fp}\) and \(u'_\fp = u'\otimes_A\id_{A_\fp}\). Then \(u_\fp\) and \(u'_\fp\) are injective, and \(\coker(u_\fp) = \coker(u)\otimes_A A_\fp\cong\coker(u')\otimes_A A_\fp = \coker(u'_{\fp})\). Since \(M_\fp\otimes_A K = M\otimes_A K\) and \(M'_\fp\otimes_AK = M'\otimes_AK\), we have \(\det(u_\fp) = \det(u)\) and \(\det(u'_\fp) = \det(u')\). Since \(A\) is integrally closed and the height of \(\fp\) is 1, \(A_\fp\) is a PID. Therefore, \(a,b\in A_\fp\).
\end{proof}

Let \(X\) be a scheme, let \(\sL\) be an invertible sheaf on \(X\), and let \(s\colon\sO_X\to\sL\) be a global section of \(\sL\). We denote by \(Z(s)\) the closed subscheme defined by \(s\), or more precisely, the closed subscheme defined by ideal sheaf \(\im(s\spcheck\colon \sL\spcheck\to\sO_X)\). Locally, if we identify \(\sL\) with \(\sO_X\), then the ideal sheaf of \(Z(s)\) is \(s\sO_X\).


\begin{Lem}\label{lem: finite morphism and nbhd}
	Let \(f\colon X\to S\) be a finite morphism, and let \(\sL\) be an invertible \(\sO_X\)-module. Then for every point \(t\in S\), there exists an open neighborhood \(U\) of \(t\) such that \(\sL|_{f^{-1}(U)}\cong \sO_{f^{-1}(U)}\).
\end{Lem}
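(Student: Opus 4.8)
The plan is to reduce the statement to one about a finite algebra over a local ring and then spread out.

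First I would choose an affine open neighborhood $\Spec A$ of $t$ in $S$; since $f$ is finite, hence affine, $f^{-1}(\Spec A) = \Spec B$ for a finite $A$-algebra $B$, and $\sL$ restricts to an invertible $B$-module $L$. Let $\fp\subseteq A$ be the prime corresponding to $t$. The crucial point is that $B_\fp := B\otimes_A A_\fp$ is \emph{semilocal}: being finite, hence integral, over the local ring $A_\fp$, each of its maximal ideals contracts to a maximal ideal of $A_\fp$ (a domain sitting inside a field that is integral over it must be a field), i.e.\ to $\fp A_\fp$; and such maximal ideals correspond to the maximal ideals of $B_\fp/\fp A_\fp B_\fp = B\otimes_A\kappa(t)$, which is a finite $\kappa(t)$-algebra, hence Artinian, hence has only finitely many of them.

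Next I would use that an invertible module over a semilocal ring is free, so that $L_\fp := L\otimes_A A_\fp \cong B_\fp$. To keep this self-contained, pick $\ell\in L_\fp$ whose image in $L_\fp/\mathrm{rad}(B_\fp)L_\fp \cong \prod_{i=1}^{r} L_\fp/\fm_i L_\fp$ (where $\fm_1,\dots,\fm_r$ are the maximal ideals of $B_\fp$) generates each factor; Nakayama for the Jacobson radical gives $L_\fp = B_\fp\ell$, and since $\ell$ then generates every localization $(L_\fp)_\fq$, which is free of rank $1$, its annihilator vanishes, so $L_\fp\cong B_\fp$. Because $L_\fp = (A\setminus\fp)^{-1}L$, I may take the generator to be the image of some $\ell'\in L$.

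Finally I would spread out. Let $\phi\colon B\to L$ be multiplication by $\ell'$ and $C=\coker\phi$. Then $C$ is a quotient of $L$, hence finitely generated over $B$ and therefore over $A$, and $C\otimes_A A_\fp = \coker(B_\fp\to L_\fp)=0$ since $\ell'$ generates $L_\fp$. Hence $t\notin\Supp_A C$, which is closed, so there is $g\in A\setminus\fp$ with $C\otimes_A A_g=0$. I then take $U=D(g)\subseteq\Spec A$, an open neighborhood of $t$; since $f^{-1}(U) = \Spec(B\otimes_A A_g)$, the map $\phi$ becomes a surjection of $\sO_{f^{-1}(U)}$ onto the invertible sheaf $\sL|_{f^{-1}(U)}$, and any surjection of the structure sheaf onto an invertible sheaf is an isomorphism (locally, where the target trivializes, it is a surjective endomorphism $R\to R$, i.e.\ multiplication by a unit). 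Thus $\sL|_{f^{-1}(U)}\cong\sO_{f^{-1}(U)}$. The only genuinely nontrivial ingredient is the semilocality of $B_\fp$ together with the triviality of invertible modules over semilocal rings; everything else is routine localization. I would only be careful that $A$ need not be noetherian, so I avoid claiming that submodules of finitely generated modules are finitely generated — the argument uses solely that $C$, being a \emph{quotient} of the finitely generated module $L$, is finitely generated.
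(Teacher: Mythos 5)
Your proof is correct. Every step checks out: $B_\fp$ is indeed semilocal (lying over for the integral extension $A_\fp\to B_\fp$ forces every maximal ideal to contract to $\fp A_\fp$, and $B\otimes_A\kappa(t)$ is Artinian); the Jacobson-radical Nakayama argument produces a generator of $L_\fp$, and checking injectivity of $B_\fp\to L_\fp$ locally at the primes of $B_\fp$ is legitimate since each localization of $L_\fp$ is free of rank $1$; the spreading-out step only uses that the cokernel is a quotient of the finitely generated module $L$, so your care about the non-noetherian case is well placed; and the final observation that a surjection $\sO\to\sL$ onto an invertible sheaf is an isomorphism is standard. The route is genuinely different from the paper's. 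The paper invokes EGA II (6.1.12.1) to get an open neighborhood $V$ of the whole fiber $f^{-1}(t)$ on which $\sL$ is trivial, and then uses that finite morphisms are closed: setting $U = S\setminus f(X\setminus V)$ gives an open neighborhood of $t$ with $f^{-1}(U)\subseteq V$. That argument is two lines given the citation, and its topological pushdown pattern recurs elsewhere in such arguments; yours is self-contained and essentially reproves the cited EGA statement in the affine case by pure commutative algebra, at the cost of being longer. Both are perfectly valid; if you wanted to shorten yours you could cite the freeness of invertible modules over semilocal rings rather than reproving it.
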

\begin{proof}
	By \cite{EGA2} Chapter II (6.1.12.1), there is an open neighborhood \(V\) of \(f^{-1}(t)\) such that \(\sL_V \cong\sO_V\). Let \(Z = X\setminus V\) and \(U = S\setminus f(Z)\). Then \(Z\) is a closed subset of \(X\), and thus \(U\) is an open neighborhood of \(t\). Obviously, \(U\subseteq f(V)\), and thus \(f^{-1}(U)\subseteq V\). Therefore, \(\sL|_{f^{-1}(U)}\cong \sO_{f^{-1}(U)}\).
\end{proof}

\begin{Prop}\label{glue lemma}
	Let \(X,S\) be schemes, and suppose that \(S\) is normal and locally noetherian. Let \(f\colon X\to S\) be a surjective projective morphism of pure relative dimension \(1\). Let \(\sL_1\) and \(\sL_2\) be two invertible sheaves on \(X\), and let \(s_1\) and \(s_2\) be regular sections of \(\sL_1\) and \(\sL_2\) respectively. Let \(Z_1\) and \(Z_2\) be the subscheme defined by \(s_1\) and \(s_2\) respectively. Suppose that both \(Z_1\) and \(Z_2\) are finite over \(S\), and that both \(s_1|_{Z_2}\) and \(s_2|_{Z_1}\) are regular. Then there exists a unique isomorphism
	\[\alpha\colon\Nm_{Z_1/S}(\sL_2|_{Z_1})\xrightarrow{\sim}\Nm_{Z_2/S}(\sL_1|_{Z_2})\]
	such that
	\[ \alpha(\Nm_{Z_1/S}(s_2|_{Z_1})) = \Nm_{Z_2/S}(s_1|_{Z_2}). \]
\end{Prop}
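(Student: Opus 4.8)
The plan is to convert the existence of $\alpha$ into an equality of two effective Cartier divisors on $S$, and then to verify that equality at each codimension-one point, where it reduces to an elementary length computation over a discrete valuation ring. Uniqueness is easy: since $\Nm_{Z_1/S}$ preserves injective homomorphisms, $\Nm_{Z_1/S}(s_2|_{Z_1})$ is a regular section of $\Nm_{Z_1/S}(\sL_2|_{Z_1})$; if $\alpha,\alpha'$ both satisfied the stated identity then $\alpha'^{-1}\circ\alpha$ would be multiplication by a unit of $\Gamma(S,\sO_S)$ fixing that regular section, hence the identity. So only existence remains, and by uniqueness it suffices to construct $\alpha$ locally on $S$. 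Now recall that a line bundle equipped with a regular section is canonically isomorphic, compatibly with the section, to $\sO_S$ of the divisor of the section, with its tautological section. Hence $\alpha$ exists if and only if the effective Cartier divisors $D_1:=Z(\Nm_{Z_1/S}(s_2|_{Z_1}))$ and $D_2:=Z(\Nm_{Z_2/S}(s_1|_{Z_2}))$ on $S$ coincide, and in that case $\alpha$ automatically carries $\Nm_{Z_1/S}(s_2|_{Z_1})$ to $\Nm_{Z_2/S}(s_1|_{Z_2})$.

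By Lemma \ref{disjoint union of irreducible components} I may assume $S$ is integral, hence (being normal) integral normal and noetherian. An effective Cartier divisor on such a scheme is determined by its multiplicities at the points of $S^{(1)}$: its ideal sheaf is invertible, hence reflexive, hence recovered as $j_{*}$ of its restriction to the complement of the codimension $\geqslant 2$ locus, which is algebraic Hartogs. So it is enough to fix $P\in S^{(1)}$ and compare $\operatorname{mult}_P D_1$ with $\operatorname{mult}_P D_2$.

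Put $R=\sO_{S,P}$, a discrete valuation ring with valuation $v$ and fraction field $K$, and let $W=Z_1\cap Z_2=Z_1\times_X Z_2$. Since $s_j|_{Z_i}$ is a regular section of $\sL_j|_{Z_i}$ whose zero divisor inside $Z_i$ is $W$, there is a canonical identification of $\sL_j|_{Z_i}$ with $\sO_{Z_i}(W)$ carrying $s_j|_{Z_i}$ to the tautological section. By Proposition \ref{base change of Norm} the norm functors commute with the flat base change $\Spec R\to S$; writing $M_i=\Gamma(Z_i\times_S\Spec R,\sO)$, a finite $R$-module, and using Lemma \ref{lem: finite morphism and nbhd} to trivialise the invertible ideal of $W$ in $Z_i\times_S\Spec R$ by a nonzerodivisor $g_i\in M_i$, the section $\Nm_{Z_i/S}(s_j|_{Z_i})$ becomes, over $\Spec R$, the determinant of multiplication by $g_i$ on $M_i\otimes_R K$. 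Thus $\operatorname{mult}_P$ of the divisor coming from $Z_i$ equals $\operatorname{length}_R\bigl(M_i^{\mathrm{tf}}/g_iM_i^{\mathrm{tf}}\bigr)$, where $M_i^{\mathrm{tf}}=M_i/(M_i)_{\mathrm{tor}}$ is the torsion-free quotient (free, over a discrete valuation ring). Finally, multiplication by $g_i$ is injective on the finite-length module $(M_i)_{\mathrm{tor}}$, hence bijective there, so the snake lemma gives $M_i^{\mathrm{tf}}/g_iM_i^{\mathrm{tf}}\cong M_i/g_iM_i=\Gamma(W\times_S\Spec R,\sO)$, a module independent of $i$ and $j$. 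This forces $D_1=D_2$, and hence proves the proposition.

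The step I expect to require the most care is the last one. The morphism $Z_i\to S$ need not be flat: $\Gamma(Z_i,\sO_{Z_i})$ can carry $R$-torsion, since $X$ may fail to be Cohen--Macaulay along the special fibre, so the norm functor only sees the torsion-free quotient $M_i^{\mathrm{tf}}$, and one must run the snake-lemma argument precisely to see that replacing $M_i$ by $M_i^{\mathrm{tf}}$ does not change the relevant cokernel length. Beyond that, the remaining work is bookkeeping: one must check that the canonical isomorphisms used — a line bundle with regular section versus $\sO_S$ of its divisor, $\sL_j|_{Z_i}$ versus $\sO_{Z_i}(W)$, and the base-change isomorphism of Proposition \ref{base change of Norm} — are all compatible with the distinguished sections, so that the $\alpha$ produced really satisfies $\alpha(\Nm_{Z_1/S}(s_2|_{Z_1}))=\Nm_{Z_2/S}(s_1|_{Z_2})$. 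These verifications are routine.
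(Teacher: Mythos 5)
Your argument is correct, and in substance it runs on the same engine as the paper's proof: both hinge on the observation that the two relevant cokernels are canonically the structure sheaf of $Z_1\cap Z_2$, and both use normality of $S$ to reduce the comparison of the two norm sections to codimension-one points, where a PID/DVR determinant computation (with a reduction to the torsion-free quotient, needed precisely because $Z_i\to S$ need not be flat) finishes the job. The packaging differs: the paper trivialises the bundles locally, invokes its Lemma \ref{unit exists} (isomorphic cokernels force the determinants to differ by a unit over a noetherian normal domain, proved by localising at height-one primes), defines $\alpha$ locally as multiplication by that unit, and glues using uniqueness; you instead identify each norm bundle with $\sO_S$ of the divisor of its regular norm section, so that existence of $\alpha$ compatible with the sections becomes the single global statement $D_1=D_2$, verified by the length computation $\operatorname{mult}_P D_i=\operatorname{length}_R\Gamma(W\times_S\Spec\sO_{S,P},\sO)$, which is visibly symmetric. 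Your route avoids the explicit gluing step and makes the section-compatibility automatic, at the cost of redoing the content of Lemma \ref{unit exists} by hand (your snake-lemma treatment of the torsion is in fact a cleaner justification of the paper's ``replace $M$ by $M/M_{\tor}$'' step). One small slip of no consequence: integrality plus local noetherianness does not give noetherianness of $S$, but since the equality of Cartier divisors and the Hartogs-type argument are local on $S$, nothing is affected.
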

\begin{proof}
	Since \(S\) satisfy \hyperlink{star assumption}{(*)}, we can assume that \(S\) is an integral scheme by Lemma \ref{disjoint union of irreducible components}.
	
	By Proposition \ref{Norm functor}, \(\Nm_{Z_1/S}(s_2|_{Z_1})\) and \(\Nm_{Z_2/S}(s_1|_{Z_2})\) are regular sections of \(\Nm_{Z_1/S}(\sL_2|_{Z_1})\) and \(\Nm_{Z_2/S}(\sL_1|_{Z_2})\) respectively. By looking at the generic fiber, we deduce that \(\alpha\) is unique. By uniqueness and gluing, we only need to construct \(\alpha\) on small enough affine open subsets.
	
	By Lemma \ref{lem: finite morphism and nbhd}, we can assume that \(S = \Spec A\), \(\sL_1 = \sO_{Z_1}\), and that \(\sL_2 = \sO_{Z_2}\). Then \(\Nm_{Z_2/S}(\sL_1|_{Z_2}) = \Nm_{Z_1/S}(\sL_2|_{Z_1}) = \sO_S\). Let \(\sI_1\) and \(\sI_2\) be the ideal sheaves defined by \(s_1\) and \(s_2\) respectively. We view \(s_1|_{Z_2}\) and \(s_2|_{Z_1}\) as morphisms
	\[ s_1|_{Z_2}\colon \sO_{Z_2}\to \sO_{Z_2}\quad \text{and} \quad s_2|_{Z_1}\colon \sO_{Z_1}\to \sO_{Z_1} \]
	respectively. Then
	\begin{multline*}
		\coker s_2|_{Z_1} = \sO_{Z_1}\otimes \sO_X/\sI_2 = \sO_{Z_1}\otimes \sO_{Z_2}\\
		= \sO_{Z_2}\otimes \sO_{Z_1} = \sO_{Z_2}\otimes \sO_X/\sI_1 = \coker s_1|_{Z_2}.
	\end{multline*}
    By Lemma \ref{unit exists}, there exists a unit \(a\in A\) such that \(\det(s_2|_{Z_1}) = a\det(s_1|_{Z_2})\). Then we can take \(\alpha\) to be multiplying by \(a^{-1}\).
\end{proof}

\section{Deligne pairing for equidimensional morphisms}

\subsection{Preliminary}

Let \(n\) be a positive integer, and let \(\sL_1,\sL_2,\ldots,\sL_n\) be invertible sheaves on \(X\). For each \(1\leqslant i \leqslant n\), let \(s_i\colon\sO_X\to\sL_i\) be a global section of \(\sL_i\). Let \(Z(s_1,\ldots,s_n)=\bigcap_{i=1}^n Z(s_i)\).  Locally, if we identify all \(\sL_i\)'s with \(\sO_X\), then the ideal sheaf of \(Z(s_1,\ldots,s_n)\) is \((s_1,\ldots,s_n)\).

\begin{Def}
	Let \(X\), \(\sL_i\)'s, and \(s_i\)'s be as above. For \(1\leqslant i \leqslant n\), abbreviate \(Z(s_1,\ldots,s_n)\) to \(Z_i\). We say that \(s_1,\ldots,s_n\) is a \emph{regular sequence} if \(s_1\) is a regular section of \(\sL_1\) and for each \(2\leqslant i \leqslant n\), \(s_i|_{Z_{i-1}}\) is a regular section of \(\sL_i|_{Z_{i-1}}\).
\end{Def}

Note that our definition of regular sections here is identical to that in Definition \ref{def: regular section}.

Locally, if \(X = \Spec A\) and we identify all \(\sL_i\)'s with \(\sO_X\), then \(s_1,\ldots,s_n\) is a regular sequence means that \(s_1\) is not a zero-divisor of \(A\) and for each \(2\leqslant i \leqslant n\), \(s_i\) is not a zero-divisor of \(A/(s_1,\ldots,s_{i-1})\), i.e. \(s_1,\ldots,s_n\) is an \(A\)-sequence in the sense of \cite{LAlg} Chapter IV \S A.4. 

\begin{Prop}\label{order lemma}
	Whether the sequence \(s_1,\ldots,s_n\) is regular is independent of its order.
\end{Prop}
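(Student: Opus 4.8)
The plan is to reduce to a purely local, purely algebraic statement and then invoke the standard fact that being an $A$-regular sequence is order-independent for finite modules over noetherian local rings. First I would observe that regularity of $s_1,\ldots,s_n$ is a local property on $X$: a section $t$ of an invertible sheaf is regular iff its germ $t_x$ is a nonzerodivisor in $\sO_{X,x}$ for every $x$, and the closed subscheme $Z_i = Z(s_1,\ldots,s_i)$ is cut out locally by the ideal $(s_1,\ldots,s_i)$ after trivializing the $\sL_j$. Hence, choosing an affine open $U=\Spec A$ on which all $\sL_i$ are trivialized, the statement becomes: for elements $a_1,\ldots,a_n\in A$, the property that $a_1$ is a nonzerodivisor in $A$ and $a_i$ is a nonzerodivisor in $A/(a_1,\ldots,a_{i-1})$ for $2\le i\le n$ is independent of the order of the $a_i$. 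Since $X$ is projective over a noetherian scheme $S$ (indeed one only needs $X$ locally noetherian for this proposition), we may take $A$ noetherian.

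The next step is to localize further at a prime $\fp\subseteq A$. Regularity of a sequence in $A$ can be tested after localizing at all primes: $a_i$ is a nonzerodivisor mod $(a_1,\ldots,a_{i-1})$ iff this holds in $A_\fp$ for every $\fp\in\Ass(A/(a_1,\ldots,a_{i-1}))$, and in any case a sequence $a_1,\ldots,a_n$ is $A$-regular iff $(a_1,\ldots,a_n)A_\fp$ is generated by an $A_\fp$-regular sequence for every $\fp$ (being careful: one wants the implication in the convenient direction — regularity is preserved under localization, and conversely a nonzerodivisor condition is detected on associated primes). So it suffices to treat the case of a noetherian local ring $(A,\fm)$. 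Here I would appeal directly to the classical result, e.g. \cite{LAlg} Chapter IV or the matching statement in any commutative algebra reference: in a noetherian local ring, any permutation of an $A$-regular sequence of elements of $\fm$ is again $A$-regular. (One should check that in reducing to the local case the elements land in the maximal ideal, or equivalently handle the trivial case where some $a_i$ becomes a unit, in which case $A/(a_1,\ldots,a_i)=0$ and regularity of the tail is vacuous.)

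Concretely the order of steps is: (1) reduce to the affine case $X=\Spec A$ with all $\sL_i$ trivialized, translating into the $A$-sequence condition on $a_1,\ldots,a_n\in A$; (2) reduce to it suffices to prove the statement that any transposition of two adjacent elements preserves regularity, since transpositions of adjacent elements generate the symmetric group; (3) reduce further, by localizing at primes, to the case $A$ noetherian local with the $a_i\in\fm$ (disposing of the unit case separately); (4) invoke the classical permutation-invariance of regular sequences over a noetherian local ring. I expect step (3) — the reduction to the local case — to be the main technical point, since one must argue that the nonzerodivisor conditions defining a regular sequence can be checked prime by prime and that passing to a localization does not spuriously turn some $a_i$ into a zerodivisor; this is exactly where noetherianness (via associated primes) is used. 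Everything else is bookkeeping: the translation in step (1) is immediate from the local description of $Z(s)$ recalled just before the definition of regular sequence, and step (2) is elementary group theory.
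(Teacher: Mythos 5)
Your steps (1) and (2) are fine, and the paper's own ``proof'' is nothing more than the citation you make in step (4) (\cite{LAlg} Chapter IV \S A.4), so the whole content of your proposal is the global-to-local reduction in step (3) --- and that is exactly where the argument breaks. The classical permutation-invariance theorem is a statement about a noetherian local ring with all the elements in the maximal ideal (equivalently, finite modules and elements in the Jacobson radical); it is \emph{false} for a general noetherian ring, hence false for the scheme-theoretic statement read literally. Take \(X=\bA^3_k=\Spec k[x,y,z]\), all \(\sL_i=\sO_X\), and \(s_1=x\), \(s_2=y(1-x)\), \(s_3=z(1-x)\). This is a regular sequence (\(A/(x)=k[y,z]\) where \(s_2\equiv y\); then \(k[z]\) where \(s_3\equiv z\)), but the reordering \(y(1-x),\,z(1-x),\,x\) is not: in \(A/(y(1-x))\) the class of \(y\) is nonzero and is killed by \(z(1-x)\). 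Your localization step cannot repair this. To verify that \(a_{\sigma(i)}\) is a nonzerodivisor modulo \((a_{\sigma(1)},\dots,a_{\sigma(i-1)})\) you must check the primes \(\fp\in\Ass\bigl(A/(a_{\sigma(1)},\dots,a_{\sigma(i-1)})\bigr)\), and at such a \(\fp\) some of the \emph{original} \(a_j\) may be units of \(A_\fp\) (in the example, \(x\notin(1-x)\)). Then the localized original sequence is regular only vacuously, because \(A_\fp/(a_1,\dots,a_j)=0\), and this gives no control whatsoever over the non-vacuous conditions required of the permuted sequence at \(\fp\). Your parenthetical ``the tail is vacuous'' dismisses precisely the case that destroys the argument.

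So there is a genuine gap at step (3), and no purely formal localization can close it, because the statement in the generality in which it is posed is not true. What actually makes the proposition usable in this paper is the context in which it is applied: for Garc\'ia's canonical sections \(\sigma_i\), every associated prime of every partial quotient contains all of the \(\sigma_j\) (their zero loci meet every fiber and every relevant component), so after localizing at such a prime all the elements do land in the maximal ideal and Serre's theorem applies. A correct write-up must either add such a hypothesis (e.g.\ that the \(s_i\) simultaneously vanish at a point of each irreducible component of the support of each \(\sO_{Z_{i-1}}\), or more precisely that every \(\fp\in\Ass(A/(a_{j_1},\dots,a_{j_k}))\) for every subset contains all the \(a_i\)) or verify it in the application; neither your reduction nor the bare citation does this.
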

\begin{proof}
	Refer to \cite{LAlg} Chapter IV \S A.4 p.60.
\end{proof}


%

\subsection{Deligne pairing of sufficiently ample invertible sheaves}

Let \(X\to S\) be a projective morphism of noetherian schemes, let \(\sL\) be an invertible \(\sO_X\)-module, let \(\sF\) be a locally free \(\sO_S\)-module, and let \(\varphi\colon f^*\sF\to\sL\) a surjective \(\sO_X\)-module homomorphism. Let \(\bP = \bP(\sF\spcheck)\) and \(\xi = \sO_{\bP}(1)\). Consider the following commutative diagram
\[ \begin{tikzcd}
	X_{\bP}\arrow[r,"\pi_X"]\arrow[d, "f_{\bP}"] & X\arrow[d, "f"]\\
	\bP\arrow[r, "\pi"] & S.
\end{tikzcd} \]
We have a natural surjective morphism \(\pi^*\sF\spcheck\to \xi\), which induce a natural morphism \(f_{\bP}^*(\xi\spcheck)\to f_{\bP}^*(\pi^*\sF) = \pi_X^*(f^*\sF)\). Composing with \(\pi_X^*(\varphi)\colon \pi_X^*(f^*\sF)\to\pi_X^*(\sL)\), we get a morphism \(f_{\bP}^*(\xi\spcheck)\to \pi_X^*(\sL)\). Tensoring with \(f_{\bP}^*(\xi)\), we get a morphism
\begin{equation}\label{eq:natrual section}
	\sigma\colon\sO_{X_\bP}=f_{\bP}^*(\xi\spcheck)\otimes f_{\bP}^*(\xi)\to \pi_X^*(\sL)\otimes f_{\bP}^*(\xi).
\end{equation}
We identify \(\sigma\) with a section of \(\sL\boxtimes\xi\coloneq\pi_X^*(\sL)\otimes f_{\bP}^*(\xi)\).

\begin{Prop}[Garc\'{i}a]
	Let \(\sL,\xi,\sigma\) be as above. Then \(\sigma\) is a regular section of \(\sL\boxtimes\xi\).
\end{Prop}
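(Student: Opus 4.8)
The plan is to check regularity locally on $X_\bP$, and to do this one point at a time by passing to stalks. Recall that regularity of a section $\sigma$ of an invertible sheaf means that multiplication by $\sigma$ is injective; for a section of $\sL\boxtimes\xi$ this can be tested on each stalk $\sO_{X_\bP,z}$, where after trivializing both $\sL$ and $\xi$ near $z$ the section $\sigma$ becomes an element of $\sO_{X_\bP,z}$, and we must show it is not a zero-divisor. So fix a point $z\in X_\bP$ lying over $x\in X$ and over a point $p\in\bP$ with $\pi(p)=s=f(x)$. First I would make the construction of $\sigma$ completely explicit in local coordinates: choose an affine neighborhood $\Spec A\subseteq S$ over which $\sF$ is free with basis $e_0,\ldots,e_r$, so that $\bP=\bP(\sF\spcheck)$ has the usual homogeneous coordinates and $\xi=\sO_\bP(1)$; on the standard chart where (say) the coordinate dual to $e_0$ is invertible, $\xi$ is trivialized and the tautological surjection $\pi^*\sF\spcheck\to\xi$ is given by $(a_0,\ldots,a_r)\mapsto a_0 + \sum_{j\ge 1} t_j a_j$ for coordinates $t_j$ on the chart. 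Pulling back to $X_\bP$ and pairing against $\varphi(f^*e_0),\ldots,\varphi(f^*e_r)\in\sL(X)$ (which generate $\sL$ since $\varphi$ is surjective), one finds that, after trivializing $\sL$ near $x$ by writing $\varphi(f^*e_j)=u_j\cdot(\text{local generator})$ with $u_j\in\sO_{X,x}$, the section $\sigma$ becomes
\[
\sigma = u_0 + \sum_{j=1}^{r} t_j u_j \in \sO_{X,x}[t_1,\ldots,t_r]_{\mathfrak{q}} \subseteq \sO_{X_\bP,z},
\]
where $\mathfrak{q}$ is the prime corresponding to $z$ in the polynomial ring $\sO_{X,x}[t_1,\ldots,t_r]$ (localized at the appropriate maximal/prime); the key input is only that $u_0,\ldots,u_r$ generate the unit ideal of $\sO_{X,x}$ (again by surjectivity of $\varphi$), so at least one $u_j$ is a unit.

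The second step is the algebra: I claim that a linear polynomial $u_0+\sum t_j u_j$ over a ring $R=\sO_{X,x}$ whose coefficients generate the unit ideal is a non-zero-divisor in $R[t_1,\ldots,t_r]$, and remains so after localizing at any prime $\mathfrak{q}$ of $R[t_1,\ldots,t_r]$. The first assertion follows from McCoy's theorem (a polynomial over a commutative ring is a zero-divisor iff it is annihilated by a single nonzero element of the base ring): if $c\cdot(u_0+\sum t_ju_j)=0$ with $c\in R\setminus\{0\}$ then $cu_j=0$ for all $j$, contradicting that the $u_j$ generate the unit ideal (so $1=\sum a_ju_j$ forces $c=\sum a_j(cu_j)=0$). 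Localization preserves non-zero-divisors provided the localizing set contains no zero-divisors of the element, but in fact it suffices to note that $R[t_1,\ldots,t_r]/(u_0+\sum t_ju_j)$ is a free — hence torsion-free over $R$, and flat — $R[t_1,\ldots,t_r]/(\sigma)$; more simply, since $\sigma$ is a non-zero-divisor in $R[\underline t]$, it is a non-zero-divisor in every localization, because if $s\sigma=0$ in $R[\underline t]_{\mathfrak q}$ then $v s\sigma=0$ in $R[\underline t]$ for some $v\notin\mathfrak q$, whence $vs=0$ and $s=0$ in the localization. This handles every stalk of $X_\bP$, so $\sigma$ is regular.

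The main obstacle I anticipate is purely bookkeeping: writing the tautological-surjection composite out carefully enough in the two layers of local coordinates (the chart on $\bP$ and the trivialization on $X$) so that the linear form $u_0+\sum t_ju_j$ genuinely appears, and tracking that "the $u_j$ generate the unit ideal at $x$" is exactly the surjectivity of $\pi_X^*\varphi$ at $z$. There is also a minor subtlety in that $z$ need not lie over a \emph{closed} point of $\bP$, so $\mathfrak q$ is a general prime of $\sO_{X,x}[t_1,\ldots,t_r]$ rather than a maximal ideal; but since McCoy's argument already gives regularity in the un-localized polynomial ring, the localization step is automatic and no case distinction is needed. Finally, one should remark that the whole argument is independent of which standard chart of $\bP$ contains $p$ (by symmetry of the homogeneous coordinates) and of the choice of local generator of $\sL$ (which only rescales $\sigma$ by a unit), so the local verifications patch to the global statement.
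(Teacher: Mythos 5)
Your argument is correct. The paper itself gives no proof here — it simply cites \cite{Garcia2000} \S 2.2 — and your local computation is precisely the standard argument behind that reference: on a chart of \(\bP\) and after trivializing \(\sL\) near \(x\), the section becomes the linear form \(u_0+\sum_j t_ju_j\) whose coefficients generate the unit ideal of \(\sO_{X,x}\) (this is exactly the surjectivity of \(\varphi\) at \(x\)), and McCoy's theorem plus the fact that non-zero-divisors stay non-zero-divisors under localization gives injectivity on every stalk. The only cosmetic point is that since \(\sO_{X,x}\) is local, ``the \(u_j\) generate the unit ideal'' already forces some \(u_j\) to be a unit, so one could also conclude by a direct leading-/lowest-term comparison without invoking McCoy; either way the proof is complete.
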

\begin{proof}
	Refer to \cite{Garcia2000} \S2 2.2.
\end{proof}

Now let \(f\colon X\to S\) be a surjective projective morphism of noetherian schemes of pure relative dimension \(d\). Let \(n\) be an integer between \(1\) and \(d\), and let \(\sL_1,\ldots,\sL_n\) be invertible sheaves on \(S\) such that for each \(1\leqslant i \leqslant n\), there exists a locally free \(\sO_S\)-module \(\sF_i\) and a surjection \(\varphi_i\colon f^*\sF_i\to \sL_i\). For each \(1\leqslant i\leqslant n\), let \(\bP_i = \bP(\sF\spcheck_i)\), \(\bP = \prod_{j=1}^n\bP_i\), \(\xi_i = \sO_{\bP_i}(1)\), and let \(\sigma_i\) be the regular section of \(\sL_i\boxtimes\xi_i\) as above. We may view \(\sigma_i\) as a section of \(\sL_i\boxtimes\xi_i\) on \(X_{\bP}= X\times_S\bP\) by base change. Let \(Z_i= Z(\sigma_1,\ldots,\sigma_i) = \bigcap_{j=1}^iZ(\sigma_j)\) be the closed subscheme of \(X_\bP\).

\begin{Prop}[Garc\'{i}a]\label{regular sequence on porduct}
	\(\sigma_1,\sigma_2,\ldots,\sigma_n\) is a regular sequence of \(\sL_1\boxtimes\xi_1,\sL_2\boxtimes\xi_2,\ldots,\sL_n\boxtimes\xi_n\) on \(X_{\bP}\).
\end{Prop}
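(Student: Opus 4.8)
The plan is to peel off the sections $\sigma_i$ one at a time and reduce the regularity of the whole sequence to the regularity of a \emph{single} tautological section, which is precisely the content of the preceding Proposition. The crucial point is that $\sigma_i$ ``sees'' only the factor $\bP_i$, and that the construction \eqref{eq:natrual section} of the tautological section is compatible with base change in the $X$-variable; hence, after restricting to the closed locus cut out by $\sigma_1,\dots,\sigma_{i-1}$, one again obtains a tautological section, now attached to a suitable projective scheme over $S$.

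Concretely, I would fix $i$ with $1\leqslant i\leqslant n$; by the definition of a regular sequence it suffices to show that $\sigma_i|_{Z_{i-1}}$ is a regular section, where $Z_{i-1}=Z(\sigma_1,\dots,\sigma_{i-1})$ (for $i=1$ this is just the assertion that $\sigma_1$ is regular, and $W_i$ below is all of $X$). Write $\bP^{<i}=\bP_1\times_S\cdots\times_S\bP_{i-1}$ and $\bP^{>i}=\bP_{i+1}\times_S\cdots\times_S\bP_n$, with empty products understood to be $S$, so that $X_\bP=X_{\bP^{<i}}\times_S\bP_i\times_S\bP^{>i}$. For $m<i$ the section $\sigma_m$ is pulled back from $X_{\bP^{<i}}$ along the projection $q\colon X_\bP\to X_{\bP^{<i}}$; since forming the zero scheme $Z(-)$ commutes with pullback of sections of invertible sheaves, $Z_{i-1}=q^{-1}(W_i)$ for a closed subscheme $W_i\subseteq X_{\bP^{<i}}$, i.e.
\[ Z_{i-1}=Y_i\times_S\bP_i,\qquad Y_i:=W_i\times_S\bP^{>i}. \]
Moreover $\sigma_i$ on $X_\bP$ is the pullback of the tautological section $\sigma_i^{(0)}$ of $\sL_i\boxtimes\xi_i$ on $X_{\bP_i}=X\times_S\bP_i$ attached to $\varphi_i\colon f^*\sF_i\to\sL_i$, and the composite $Z_{i-1}\hookrightarrow X_\bP\to X_{\bP_i}$ equals $h_i\times\id_{\bP_i}$, where $h_i\colon Y_i\to W_i\hookrightarrow X_{\bP^{<i}}\to X$ is the evident map. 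As each step of \eqref{eq:natrual section} is stable under base change along $h_i$, the restriction $\sigma_i|_{Z_{i-1}}=(h_i\times\id_{\bP_i})^*\sigma_i^{(0)}$ is exactly the tautological section of $\sL_i\boxtimes\xi_i$ on $Y_i\times_S\bP(\sF\spcheck_i)=Z_{i-1}$ attached to the (still surjective) pullback $h_i^*(f^*\sF_i)\twoheadrightarrow h_i^*\sL_i$. Since $W_i$ is closed in $X\times_S\bP^{<i}$, which is projective over $S$ (Segre embedding of the bundles plus projectivity of $f$), the morphism $Y_i\to S$ is again a projective morphism of noetherian schemes, so the preceding Proposition applies and shows that $\sigma_i|_{Z_{i-1}}$ is a regular section. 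This holds for every $i$, so $\sigma_1,\dots,\sigma_n$ is a regular sequence.

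I expect the only points needing genuine (though routine) care to be the two compatibility claims used above: that cutting out a zero scheme commutes with pullback of sections, and that the construction \eqref{eq:natrual section} commutes with base change in $X$. Both follow immediately from the right-exactness of $h_i^*$ and the explicit description of $\sigma$, and everything else is formal once the single-section case is in hand. (Alternatively one could induct on $n$ and use Proposition \ref{order lemma} to reorder freely, but the reduction above renders that unnecessary.)
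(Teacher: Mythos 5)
Your argument is correct. Note that the paper itself offers no proof of this proposition --- it is simply cited from Garc\'ia's Proposition 2.3.1 --- so your write-up supplies a genuine derivation where the paper has only a reference. The reduction you give is sound and is, as far as I can tell, the intended one: since each $\sigma_j$ with $j<i$ is pulled back from $X_{\bP^{<i}}$, the scheme $Z_{i-1}$ is of the form $Y_i\times_S\bP_i$ with $Y_i\to S$ projective and noetherian, the surjection $f^*\sF_i\to\sL_i$ pulls back to a surjection on $Y_i$, and the tautological-section construction \eqref{eq:natrual section} is manifestly compatible with base change in the $X$-variable, so $\sigma_i|_{Z_{i-1}}$ is itself a tautological section and the single-section proposition applies. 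The two compatibility facts you flag (formation of $Z(s)$ commutes with pullback because the image ideal of $g^*(s\spcheck)$ is $\im(s\spcheck)\cdot\sO_Y$ by right-exactness of $g^*$; the construction of $\sigma$ is a composite of pullbacks) are indeed routine, and the degenerate cases ($i=1$, or $Y_i=\emptyset$) are handled as you say. The only thing I would make explicit is that the single-section proposition is being invoked for the morphism $Y_i\to S$ with the \emph{same} locally free sheaf $\sF_i$ on $S$, so that the relevant projective bundle is still $\bP_i$ and its tautological bundle still $\xi_i$ --- but that is exactly what your identification $Z_{i-1}=Y_i\times_S\bP(\sF_i\spcheck)$ records.
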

\begin{proof}
	Refer to \cite{Garcia2000} Proposition 2.3.1.
\end{proof}

Now let \(f\colon X\to S\), \(\sF_i\), \(\varphi_i\) be as above, while \(\sL_1,\ldots,\sL_n\) are \(f\)-sufficiently ample invertible \(\sO_X\)-modules. By \cite{StackProject} Lemma 29.29.2, we know that \(f_{\bP}\colon X_\bP\to \bP\) is also of pure relative dimension \(d\). For each \(1\leqslant i \leqslant n\), Let 
\[\cV_i= \{ x\in\bP\mid f_\bP^{-1}(x)\cap Z_i \text{ is of pure dimension } d-i\}.\]


\begin{Prop}
	\(\cV_i\) is an open subset of \(\bP\).
\end{Prop}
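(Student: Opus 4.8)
The plan is to realise \(\cV_i\) as the complement of a fibre-dimension jump locus for a proper morphism, and then apply the upper semi-continuity of fibre dimension. First consider the composite \(g\colon Z_i\hookrightarrow X_\bP\xrightarrow{f_\bP}\bP\). Since \(Z_i\) is a closed subscheme of \(X_\bP\) and \(X_\bP\to\bP\) is projective, \(g\) is projective, hence proper; consequently the set \(\{x\in\bP\mid \dim g^{-1}(x)\geqslant d-i+1\}\) is closed in \(\bP\) by the upper semi-continuity of fibre dimension for proper morphisms (see e.g.\ \cite{StackProject}), so the set \(W=\{x\in\bP\mid \dim(Z_i\cap f_\bP^{-1}(x))\leqslant d-i\}\) is open. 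It then suffices to prove \(\cV_i=W\). I would deduce this from two facts about the fibre \((Z_i)_x=Z_i\cap f_\bP^{-1}(x)\) for an arbitrary \(x\in\bP\): every irreducible component of \((Z_i)_x\) has dimension \(\geqslant d-i\), and \((Z_i)_x\) is never empty. Granting these, \((Z_i)_x\) is of pure dimension \(d-i\) exactly when \(\dim(Z_i)_x\leqslant d-i\), i.e.\ exactly when \(x\in W\).

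For the dimension lower bound, I would use that \(f_\bP^{-1}(x)\) is projective over \(\kappa(x)\) and of pure dimension \(d\) (by \cite{StackProject} Lemma 29.29.2, as noted above), while \((Z_i)_x\) is cut out in \(f_\bP^{-1}(x)\), locally, by the \(i\) functions obtained from trivialising \(\sL_1\boxtimes\xi_1,\ldots,\sL_i\boxtimes\xi_i\) and restricting \(\sigma_1,\ldots,\sigma_i\). Hence, at the generic point \(w\) of any irreducible component of \((Z_i)_x\), Krull's height theorem gives \(\dim\sO_{f_\bP^{-1}(x),w}\leqslant i\); since \(f_\bP^{-1}(x)\) is of finite type over a field and equidimensional of dimension \(d\), the dimension formula applied on a \(d\)-dimensional component through \(w\) gives \(\dim\sO_{f_\bP^{-1}(x),w}+\dim\overline{\{w\}}\geqslant d\), whence \(\dim\overline{\{w\}}\geqslant d-i\).

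For the non-emptiness — equivalently, the surjectivity of \(Z_i\to\bP\) — I would use relative ampleness. Each \(\sL_j\boxtimes\xi_j=\pi_X^*\sL_j\otimes f_\bP^*\xi_j\) is \(f_\bP\)-ample: \(\sL_j\) is \(f\)-ample, so \(\pi_X^*\sL_j\) is ample relative to \(f_\bP\), and twisting by the pullback \(f_\bP^*\xi_j\) of a line bundle from the base \(\bP\) preserves relative ampleness. Restricting to \(Y=f_\bP^{-1}(x)\), each \((\sL_j\boxtimes\xi_j)|_Y\) is an ample line bundle on the projective \(\kappa(x)\)-scheme \(Y\). I would then argue by induction on \(i\): the case \(i=0\) is the surjectivity of \(f_\bP\) (a base change of the surjection \(f\)); and if \(W_0\) is an irreducible component of \(Z(\sigma_1|_Y,\ldots,\sigma_{i-1}|_Y)\), which is non-empty by the inductive hypothesis, then \(\dim W_0\geqslant d-(i-1)\geqslant 1\) by the previous paragraph (using \(i\leqslant d\)), so the section \(\sigma_i|_{W_0}\) of the ample line bundle \((\sL_i\boxtimes\xi_i)|_{W_0}\) on the positive-dimensional proper scheme \(W_0\) either vanishes identically on \(W_0\) or has a zero there — a nowhere-vanishing section would trivialise an ample line bundle, forcing \(W_0\) to be affine and proper, hence of dimension \(0\), a contradiction. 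In either case \((Z_i)_x=Z(\sigma_1|_Y,\ldots,\sigma_i|_Y)\neq\emptyset\).

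The hardest point is the non-emptiness/surjectivity of \(Z_i\to\bP\): it is here that one must exploit the geometry of the construction, namely the relative ampleness of the \(\sL_j\boxtimes\xi_j\), rather than dimension theory alone, and it is where the equidimensionality bookkeeping on the fibres is needed to keep the inductive dimension estimates valid. (The regular-sequence property of \(\sigma_1,\ldots,\sigma_i\) from the preceding proposition is not strictly necessary for this argument, although it streamlines the Krull estimate.)
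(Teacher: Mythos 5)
Your proof is correct and follows essentially the same route as the paper: rewrite \(\cV_i\) as the locus where \(\dim(Z_i\cap f_\bP^{-1}(x))\leqslant d-i\) (using that every component of the fibre of \(Z_i\) is nonempty of dimension \(\geqslant d-i\), which rests on the ampleness of the \(\sL_j\boxtimes\xi_j\) on the fibres) and then invoke upper semi-continuity of fibre dimension. The only difference is that the paper simply cites EGA IV (5.3.1.3) and (5.1.8) for the non-emptiness and the dimension lower bound, whereas you reprove these via the ``a nowhere-vanishing section of an ample line bundle on a positive-dimensional proper scheme is impossible'' argument and Krull's height theorem together with the dimension formula.
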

\begin{proof}
	Let \(x\in\bP\), and let \(k(x)\) be its residue field. By \cite{EGA2} (4.6.13), we know that for any \(i\) and subscheme \(Z\) of \(f_{\bP}^{-1}(x)\), \(\sL_i|_Z\) is \(k(x)\)-ample. By \cite{EGA4-2} (5.3.1.3) and (5.1.8), 
	we deduce that each component of \( f_\bP^{-1}(x)\bigcap Z_i\) has dimension \(\geqslant d-i\). Thus
	\begin{equation}\label{eq:fiber of Psion}
		\cV_i = \{ x\in\bP\mid \dim f_\bP^{-1}(x)\cap Z_i \leqslant d-i\}.
	\end{equation}
	By semi-continuity (\cite{EGA4-3} (13.1.5)), we know that \(\cV_i\) is an open subspace of \(\bP\).
\end{proof}

From \eqref{eq:fiber of Psion} we can see that \(\cV_1\supseteq\cV_2\supseteq\cdots\supseteq \cV_n\).

Let \(\pi\colon\bP\to S\) be the natural projection.

\begin{Prop}[Garc\'{i}a]\label{fiber dim}
	For each \(s\in S\), \(\pi^{-1}(s)\setminus\cV_n\) is a closed subset of \(\pi^{-1}(s)\) of codimension \(\geqslant d-n+2\).
\end{Prop}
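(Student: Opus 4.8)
The statement to prove is that for each $s \in S$, the complement $\pi^{-1}(s) \setminus \cV_n$ inside the fiber $\pi^{-1}(s)$ is closed of codimension $\geqslant d-n+2$. The first step is to reduce to working inside a single fiber: the fiber $\pi^{-1}(s)$ is a product of projective spaces $\prod_{i=1}^n \bP_i^{(s)}$ over the residue field $k(s)$, where $\bP_i^{(s)} = \bP(\sF_{i,s}\spcheck)$, and $f_\bP$ restricted to $X_\bP \times_\bP \pi^{-1}(s)$ is just the base change of the fiber $X_s \to \Spec k(s)$, which is equidimensional of dimension $d$. The sections $\sigma_i$ restrict to the analogous tautological sections of $\sL_{i,s} \boxtimes \xi_i$ on $X_s \times_{k(s)} \pi^{-1}(s)$. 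By the characterization \eqref{eq:fiber of Psion}, $\pi^{-1}(s)\cap\cV_i = \{x \in \pi^{-1}(s) \mid \dim f_\bP^{-1}(x)\cap Z_i \leqslant d-i\}$, so I may as well replace $S$ by $\Spec k(s)$ and assume from the outset that $S$ is the spectrum of a field $k$, $X \to \Spec k$ is equidimensional projective of dimension $d$, and $\bP = \prod \bP_i$. The claim becomes: $\bP \setminus \cV_n$ is closed of codimension $\geqslant d-n+2$ in $\bP$.

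**Main argument.** Closedness is already established ($\cV_n$ is open). For the codimension bound, the idea is a standard incidence-variety / fiber-dimension count, run inductively. Consider the incidence correspondence $I_n = \{(y, x) \in X \times \bP \mid y \in f_\bP^{-1}(x)\cap Z_n\} \subseteq X_\bP$; more precisely $I_n = Z_n$ as a subscheme of $X_\bP$. I project $Z_n$ to $X$ via $\pi_X$ and to $\bP$ via $f_\bP$. The point is to understand the locus in $\bP$ over which the $f_\bP$-fiber of $Z_n$ jumps in dimension. For a fixed $y \in X$, the fiber $\pi_X^{-1}(y) \cap Z_n$ consists of those $x = (x_1,\ldots,x_n) \in \bP$ such that $\sigma_i$ vanishes at $(y,x)$ for all $i$; by construction $\sigma_i$ vanishing at $(y, x_i)$ is, after trivializing, the condition that a certain linear form on $\bP_i = \bP(\sF_{i}\spcheck)$ determined by $\varphi_i$ and $y$ vanishes — i.e. a hyperplane condition on the factor $\bP_i$. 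So $\pi_X^{-1}(y)\cap Z_n = \prod_i H_{i,y}$ where each $H_{i,y} \subseteq \bP_i$ is a hyperplane (nonempty because $\varphi_i$ is surjective, so the linear form is not identically the trivialization datum — more carefully, $\xi_i$-twisting makes $\sigma_i$ cut out a genuine relative hyperplane, $Z(\sigma_i) \to X$ being a $\bP^{r_i-1}$-bundle-minus-nothing, in fact a projective subbundle of corank... let me instead just say $Z(\sigma_i)$ is flat over $X$ with fibers hyperplanes in $\bP_i$). Hence $\dim \pi_X^{-1}(y)\cap Z_n = \sum_i(\dim \bP_i - 1) = \dim\bP - n$, independent of $y$, and $Z_n \to X$ is equidimensional of relative dimension $\dim\bP - n$; with $\dim X = d$ this gives $\dim Z_n = d + \dim\bP - n$.

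Now I run the fiber-dimension inequality the other way. Let $W = \bP \setminus \cV_n = \{x \in \bP \mid \dim f_\bP^{-1}(x)\cap Z_n \geqslant d-n+1\}$. Over $W$, every fiber of $f_\bP|_{Z_n} \colon Z_n \to \bP$ has dimension $\geqslant d-n+1$. Therefore the dimension of $f_\bP^{-1}(W)\cap Z_n$ is $\geqslant \dim W + (d-n+1)$ (upper-semicontinuity of fiber dimension, applied to an irreducible component of $f_\bP^{-1}(W)\cap Z_n$ dominating a top-dimensional component of $W$, plus the lower bound on fiber dimension along it — here one uses that the fiber dimension is everywhere $\geqslant d-n+1$ on $W$ together with Chevalley's theorem \cite{EGA4-3} (13.1.3) to get $\dim(f_\bP^{-1}(W)\cap Z_n) \geqslant \dim W + (d-n+1)$). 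On the other hand $f_\bP^{-1}(W)\cap Z_n \subseteq Z_n$, so its dimension is $\leqslant \dim Z_n = d + \dim\bP - n$. Combining, $\dim W + d - n + 1 \leqslant d + \dim\bP - n$, i.e. $\dim W \leqslant \dim\bP - 1$. That only gives codimension $\geqslant 1$, which is too weak — so the genuinely right approach must extract more. The improvement comes from replacing the crude bound $\dim f_\bP^{-1}(x)\cap Z_n \geqslant d-n+1$ by a more refined stratification: stratify $\bP$ by the exact value $e(x) = \dim f_\bP^{-1}(x)\cap Z_n$. The key geometric input is that the generic fiber (over $\cV_n$, which is dense) has dimension exactly $d-n$, and that along $Z(\sigma_1,\ldots,\sigma_{n-1})$ the section $\sigma_n$ cuts down dimension "as expected" except over a locus one controls inductively via Proposition \ref{fiber dim} in lower relative dimension — i.e. this is really an induction on $n$. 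For the inductive step I would compare $\cV_n$ with $\cV_{n-1}$: on the fiber $f_\bP^{-1}(x)\cap Z_{n-1}$, which has pure dimension $d-n+1$ when $x \in \cV_{n-1}$, the section $\sigma_n$ restricts to the tautological section on that fiber, and the bad locus where $\sigma_n$ fails to cut dimension down by $1$ is governed — after noting $\sigma_n|$ is again of the tautological type — by the codimension-$2$ bound from the case "$n=1$, relative dimension $d-n+1$", applied fiberwise and spread out over $\bP_n$. Then $\bP\setminus\cV_n \subseteq (\bP\setminus\cV_{n-1}) \cup (\text{locus of codim} \geqslant 2 \text{ coming from the } \sigma_n \text{ step})$; the first has codimension $\geqslant d-(n-1)+2 = d-n+3$ by induction, the second has codimension $\geqslant d-n+2$, giving the claim.

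**Base case and the main obstacle.** The base case $n=1$ asserts $\bP \setminus \cV_1$ has codimension $\geqslant d+1$ in $\bP = \bP_1$, i.e. $\cV_1 = \{x \mid \dim f_\bP^{-1}(x)\cap Z(\sigma_1) \leqslant d-1\}$ omits only a set of codimension $\geqslant d+1$. Here $Z(\sigma_1) \to \bP_1$ has fibers that are hyperplane sections of the $f$-fibers; the bad locus is where this hyperplane section contains a whole $d$-dimensional component of some fiber $X_t$. Counting: the $d$-dimensional components of fibers $X_t$ of $f\colon X\to\Spec k$ form finitely many "families" (the top-dimensional components of $X$ itself, since $X$ is equidimensional of dimension $d$ — so in fact $X_t = X$ as $S = \Spec k$), and for each irreducible component $X'$ of $X$ of dimension $d$, the set of hyperplanes in $\bP_1$ containing the image of $X'$ under $y \mapsto H_{1,y}$... requires the sections to "degenerate" on all of $X'$, which happens on a linear subspace of $\bP_1$ of codimension equal to the dimension of the span of $\{$linear forms $\ell_y : y \in X'\}$, and $f$-ampleness of $\sL_1$ forces that span to have dimension $\geqslant d+1$ (separating enough points/jets on the $d$-dimensional $X'$). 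This is exactly where $f$-sufficient ampleness is used, via \cite{EGA4-2} (5.3.1.3)/(5.1.8) as cited. The main obstacle I anticipate is making the inductive structure precise — specifically, checking that when one restricts $\sigma_n$ to the fiber $f_\bP^{-1}(x) \cap Z_{n-1}$ one again gets a section of the tautological type for the morphism $(f_\bP^{-1}(x)\cap Z_{n-1}) \to \Spec k(x)$ with its own presentation, so that the codimension-$\geqslant 2$ input ($n=1$ in relative dimension $d-n+1$) applies uniformly in families over $\bP_n$; this compatibility of the construction with restriction to $Z_{n-1}$ is the technical heart, and it is precisely what Proposition \ref{regular sequence on porduct} and its proof in \cite{Garcia2000} were designed to handle, so I would lean on that machinery. (Indeed this statement is \cite{Garcia2000} and its proof should be citable more or less directly.)
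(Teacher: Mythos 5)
The paper does not actually prove this statement -- it is quoted from Garc\'ia and the ``proof'' is the citation \cite{Garcia2000} Proposition 2.4.1.(ii) -- so there is no in-paper argument to compare against; your reconstruction is, as far as I can tell, the standard argument and essentially Garc\'ia's. The structure is sound: reduce to $S=\Spec k(s)$; for the base case $n=1$ observe that $\bP_1\setminus\cV_1$ is the union, over the finitely many ($d$-dimensional) irreducible components $X'$ of $X_s$, of the projectivized kernels of $\sF_{1,s}\to H^0(X',\sL_1|_{X'})$, each a linear subspace of codimension $\geqslant d+1$; then induct via $\bP\setminus\cV_n\subseteq\bigl((\bP'\setminus\cV_{n-1})\times_k\bP_n\bigr)\cup T$ with $T$ the locus over $\cV_{n-1}$ where $\sigma_n$ kills no component of the pure $(d-n+1)$-dimensional fiber of $Z_{n-1}$, whose fibers over $\cV_{n-1}$ have codimension $\geqslant d-n+2$ in $\bP_n$, and conclude by the fiber-dimension inequality. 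The numerology $\min(d-n+3,\,d-n+2)=d-n+2$ checks out. One local imprecision: the bound $\dim\,\mathrm{im}(\sF_{1,s}\to H^0(X',\sL_1|_{X'}))\geqslant d+1$ does not come from ``separating points/jets'' (the system need not be very ample); the correct reason is that a base-point-free linear system of an ample line bundle induces a \emph{finite} morphism of $X'$ onto a $d$-dimensional subvariety of the target projective space, which therefore cannot lie in a linear subspace of dimension $<d$. With that substitution the argument is complete and correct.
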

\begin{proof}
	Refer to \cite{Garcia2000} Proposition 2.4.1.(ii).
\end{proof}

In particular, for each \(1\leqslant i\leqslant n\), \(\cV_i\ne\emptyset\).

Now let \(n=d\), \(Z = Z_d\), and \(\cV = \cV_d\). Then we have a commutative diagram
\[ \begin{tikzcd}
	Z_{\cV}\arrow[r]\arrow[d] & Z\arrow[d] \\
	X_{\cV}\arrow[r]\arrow[d, "f_{\cV}"]  & X_{\bP}\arrow[r, "\pi_X"]\arrow[d, "f_{\bP}"]  & X\arrow[d, "f"]\\
	\cV\arrow[r] & \bP\arrow[r, "\pi"]  & S,
\end{tikzcd} \]
where the three squares are all pull-back diagrams. We know that \(Z_{\cV}\to \cV\) is a projective morphism of noetherian schemes of pure relative dimension \(0\). Thus \(Z_{\cV}\to \cV\) is a finite morphism.

Now suppose that \(S\) is normal. Then \(\bP\) and \(\cV\) are also normal. For any invertible \(\sO_X\)-module \(\sL_{d+1}\), we can define
\begin{equation}\label{raw deligne pairing}
	\cN = \Nm_{Z_\cV/\cV}(\sL_{d+1}|_{Z_{\cV}})
\end{equation}
by Theorem \ref{Norm functor}.

We digress a little to discuss a descent technique. Some letters will be used again to indicate how the technique will be applied.

\begin{Def}\label{descent up to torsion}
	Let \(n\) be an integer and let \(S\) be a noetherian integral scheme. For each \(1\leqslant i\leqslant n\), let \(\bP_i\to S\) be a projective space bundle (in the sense of \cite{GTM52} Chapter 2 \S7) over \(S\) and \(\xi_i = \sO_{\bP_i}(1)\). Let \(\bP = \prod_{i=1}^{n}\bP_i\), let \(\pi\colon\bP\to S\), and let \(\cV\) be an open subset of \(\bP\) such that for each \(s\in S\), \(\dim\pi^{-1}(s) - \dim \pi^{-1}(s)\setminus\cV \geqslant 2\). We say that an invertible \(\sO_\cV\)-module \(\cN\) \emph{descends to \(S\) up to \(\xi\)-torsion} if there exist integers \(r_i\ (1\leqslant i \leqslant n)\), an invertible \(\sO_S\)-module \(\sJ\), and an \(\sO_{\cV}\)-module isomorphism
	\begin{equation}\label{eq:def od descent}
		\lambda\colon \cN\xrightarrow{\sim}\pi^*\sJ\mathop{\boxtimes}_{i=1}^n\xi_i^{r_i}.
	\end{equation}
    Sometimes we will simply say that \(\cN\) descends to \(\sJ\).
\end{Def}

\begin{Lem}\label{isom of hom}
	Let \(\bP,S,\cV\) be as in Definition \ref{descent up to torsion}. Then
	\[ \Hom_S(\sJ,\sJ') = \Hom_\bP(\pi^*\sJ,\pi^*\sJ') = \Hom_\cV(\pi^*\sJ,\pi^*\sJ'). \]
\end{Lem}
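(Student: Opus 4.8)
The plan is to turn all three terms into the global sections of a single invertible sheaf on $\bP$. Write $\sM := \sHom_{\sO_S}(\sJ,\sJ')$, which is an invertible $\sO_S$-module. Since $\pi^*$ commutes with internal $\sHom$ of finite locally free sheaves, there are canonical identifications $\Hom_\bP(\pi^*\sJ,\pi^*\sJ') = \Gamma(\bP,\pi^*\sM)$ and $\Hom_\cV(\pi^*\sJ|_\cV,\pi^*\sJ'|_\cV) = \Gamma(\cV,\pi^*\sM|_\cV)$, fitting into commuting triangles with the pullback maps from $\Hom_S(\sJ,\sJ') = \Gamma(S,\sM)$. Because $\pi\colon\bP\to S$ is a composition of projective space bundles, pushing forward one factor at a time gives $\pi_*\sO_\bP = \sO_S$, so the projection formula yields $\pi_*\pi^*\sM = \sM\otimes_{\sO_S}\pi_*\sO_\bP = \sM$ and hence $\Gamma(\bP,\pi^*\sM) = \Gamma(S,\sM)$; this proves the first equality. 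The second equality is then reduced to showing that the restriction map $\Gamma(\bP,\pi^*\sM)\to\Gamma(\cV,\pi^*\sM|_\cV)$ is bijective.

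This last point is a Hartogs-type statement that I would prove by local cohomology. Let $Z = \bP\setminus\cV$, with its reduced structure. The long exact sequence of cohomology with supports in $Z$ shows that it is enough to have $H^0_Z(\bP,\pi^*\sM) = H^1_Z(\bP,\pi^*\sM) = 0$, and by the standard relation between depth and local cohomology this holds provided that, at every point $z\in Z$, the ideal of $Z$ in $\sO_{\bP,z}$ contains a regular sequence of length $2$ (since $\pi^*\sM$ is invertible, $\pi^*\sM$-regular means $\sO_{\bP,z}$-regular). To produce such a sequence I would descend to the fibre: each $\bP_i\to S$, hence $\bP\to S$, is smooth, so for $z\in Z$ over $s=\pi(z)$ the ring $\sO_{\bP,z}$ is flat over $A:=\sO_{S,s}$ with closed fibre $\bar B:=\sO_{\bP_s,z}$ a regular local ring, and the image in $\bar B$ of the ideal of $Z$ is the ideal of $Z_s := Z\cap\pi^{-1}(s)$ at $z$.

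Now the hypothesis of Definition \ref{descent up to torsion} says precisely that $\dim Z_s\leqslant\dim\pi^{-1}(s)-2$; since $\pi^{-1}(s)=\bP_s$ is a product of projective spaces over $k(s)$, hence integral of pure dimension $\dim\pi^{-1}(s)$, every component of $Z_s$ has codimension $\geqslant 2$ in $\bP_s$, so the ideal of $Z_s$ in $\bar B$ has height $\geqslant 2$; as $\bar B$ is regular, hence Cohen--Macaulay, this ideal contains a $\bar B$-regular sequence $\bar a_1,\bar a_2$. Lifting to elements $a_1,a_2$ of the ideal of $Z$ in $\sO_{\bP,z}$, the local criterion of flatness shows that $a_1,a_2$ is an $\sO_{\bP,z}$-regular sequence, as required. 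The step I expect to be the main obstacle is exactly this passage from the fibrewise codimension hypothesis to a depth statement on the total space $\bP$; the argument above is arranged so that the whole of the needed ``$\geqslant 2$'' comes from the fibre direction, which is why no Cohen--Macaulay or normality hypothesis on $S$ is needed beyond noetherianity. When $S$ is moreover normal, so is $\bP$, and one may instead observe that $Z$ has codimension $\geqslant 2$ in $\bP$ and invoke algebraic Hartogs on $\bP$ directly.
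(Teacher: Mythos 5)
Your argument is correct, but it is worth noting that the paper does not actually give a proof here: its ``proof'' is a citation of EGA IV (21.13.2) and (21.13.4), which are precisely the projection-formula identification $\Hom_S(\sJ,\sJ')=\Hom_\bP(\pi^*\sJ,\pi^*\sJ')$ and the Hartogs-type extension across a closed subset of fibrewise codimension $\geqslant 2$. What you have written is, in substance, a self-contained proof of those two references. Your first step ($\pi_*\sO_\bP=\sO_S$ by pushing forward one projective bundle factor at a time, then the projection formula for the invertible sheaf $\sM=\sHom_{\sO_S}(\sJ,\sJ')$) is the standard argument. Your second step is the more delicate one and is handled correctly: the reduction to $H^0_Z=H^1_Z=0$ via the local cohomology exact sequence, the translation into $\mathrm{depth}_{I_Z}\sO_{\bP,z}\geqslant 2$ at every $z\in Z$, and the production of a length-two regular sequence by lifting one from the regular (hence Cohen--Macaulay) fibre $\sO_{\bP_s,z}$, where the height-$\geqslant 2$ condition is exactly the hypothesis of Definition \ref{descent up to torsion} (the fibres $\bP_s$ being integral products of projective spaces). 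The lift is legitimate because the image of $I_Z$ in $\sO_{\bP_s,z}$ is all of $I_Z\sO_{\bP_s,z}$, and the local criterion of flatness then promotes the fibrewise regular sequence to one on the total space. This buys something the paper's citation obscures: your argument makes explicit that no normality or Cohen--Macaulayness of $S$ is used, only noetherianity and flatness of $\pi$, with the entire codimension-two input coming from the fibre direction; your closing remark that normal $S$ would allow a direct appeal to algebraic Hartogs on $\bP$ is also correct, though not available in the stated generality of the definition.
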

\begin{proof}
	Refer to \cite{EGA4-4} (21.13.2) and (21.13.4).
\end{proof}

\begin{Prop}\label{uniqueness of descent}
	In Definition \ref{descent up to torsion}, all \(r_i\)'s are uniquely determined and \(\cJ\) is unique up to a unique isomorphism determined by \(\lambda\).
\end{Prop}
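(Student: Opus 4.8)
The plan is to deduce everything from the identity $\Hom_\cV(\pi^*\sJ,\pi^*\sJ') = \Hom_S(\sJ,\sJ')$ of Lemma \ref{isom of hom}, together with the fact that $\xi$-torsion summands on the fibers record the integers $r_i$.

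First I would prove uniqueness of the $r_i$. Suppose there are two descent data, say $\lambda\colon\cN\xrightarrow{\sim}\pi^*\sJ\boxtimes_{i}\xi_i^{r_i}$ and $\lambda'\colon\cN\xrightarrow{\sim}\pi^*\sJ'\boxtimes_{i}\xi_i^{r_i'}$. Composing gives an isomorphism $\pi^*\sJ\boxtimes_i\xi_i^{r_i}\xrightarrow{\sim}\pi^*\sJ'\boxtimes_i\xi_i^{r_i'}$ over $\cV$, equivalently an isomorphism $\pi^*(\sJ\otimes\sJ'^{-1})\xrightarrow{\sim}\boxtimes_i\xi_i^{r_i'-r_i}$ over $\cV$. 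To read off the $r_i$, I would restrict to a fiber: pick $s\in S$, set $P = \pi^{-1}(s) = \prod\bP_{i,s}$, and restrict the isomorphism to the open subset $P\cap\cV$, whose complement in $P$ has codimension $\geqslant 2$ by hypothesis. Since each $\bP_{i,s}$ is a projective space over $k(s)$ (a projective space bundle restricted to a point), $P$ is a product of projective spaces, which is normal with $\Pic(P) = \bigoplus_i\bZ\cdot[\xi_i|_P]$, and by algebraic Hartogs the restriction map $\Pic(P)\to\Pic(P\cap\cV)$ is injective (indeed an isomorphism, by Lemma \ref{isom of hom}-type arguments applied to $\sO$). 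On the left, $\pi^*(\sJ\otimes\sJ'^{-1})|_P$ is trivial since $\sJ\otimes\sJ'^{-1}$ is a line bundle on the point-scheme $\Spec k(s)$ up to the issue that $s$ need not be a closed point — but $\pi^*$ of anything on $S$ restricted to a fiber $P$ is pulled back from $\Spec k(s)$, hence is $\xi$-trivial, so its class in $\Pic(P)$ is $0$. Comparing classes forces $r_i' - r_i = 0$ for every $i$. Thus the $r_i$ are uniquely determined.

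Now, with $r_i = r_i'$ fixed, the two descent data give an isomorphism $\mu = \lambda'\circ\lambda^{-1}\colon\pi^*\sJ\boxtimes_i\xi_i^{r_i}\xrightarrow{\sim}\pi^*\sJ'\boxtimes_i\xi_i^{r_i}$ over $\cV$; tensoring by $\boxtimes_i\xi_i^{-r_i}$ turns this into an isomorphism $\pi^*\sJ\xrightarrow{\sim}\pi^*\sJ'$ over $\cV$. By Lemma \ref{isom of hom} this comes from a unique isomorphism $\psi\colon\sJ\xrightarrow{\sim}\sJ'$ over $S$ with $\pi^*\psi = \mu\otimes\id$, and this $\psi$ is precisely the isomorphism determined by $\lambda,\lambda'$. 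Uniqueness of $\psi$ is immediate from the injectivity half of Lemma \ref{isom of hom}. This gives the statement that $\cJ$ is unique up to a unique isomorphism, and one checks routinely that these isomorphisms are compatible with a third choice (cocycle condition), again by the injectivity in Lemma \ref{isom of hom}.

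The main obstacle I anticipate is the fiberwise step: making sure that restricting the line-bundle isomorphism to $P\cap\cV$ and then extending over the codimension-$\geqslant 2$ complement is legitimate, and that $\Pic$ of a finite product of projective spaces over a possibly-non-algebraically-closed residue field is freely generated by the $\xi_i$. For the latter, since $P = \prod_i\bP^{n_i}_{k(s)}$ is smooth, projective, with $H^1(P,\sO_P) = 0$, one has $\Pic(P) = \bigoplus\bZ\xi_i$ by the Künneth formula and the computation of $\Pic(\bP^n)$; and the restriction $\Pic(P)\to\Pic(P\cap\cV)$ is injective because a line bundle on the normal scheme $P$ trivial on the big open $P\cap\cV$ extends its trivializing section across codimension $\geqslant 2$ by algebraic Hartogs. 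With that in hand, the degree bookkeeping that pins down the $r_i$ is straightforward.
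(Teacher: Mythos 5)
Your proposal is correct and follows essentially the same route as the paper: restricting to a fiber \(\pi^{-1}(s)\), using \(\Pic(\pi^{-1}(s)\cap\cV) = \Pic(\pi^{-1}(s)) = \bZ^n\) to pin down the \(r_i\), and then invoking Lemma \ref{isom of hom} to descend the resulting isomorphism \(\pi^*\sJ\to\pi^*\sJ'\) uniquely to \(S\). You simply supply more detail than the paper does (the Hartogs/codimension-\(2\) justification for the fiberwise Picard identification and the triviality of the pulled-back factor on the fiber), all of which is sound.
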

\begin{proof}
	Let \(s\in S\), and let \(k(s)\) be its residue field. Then
	\[ \Pic(\pi^{-1}(s)\cap\cV) = \Pic(\pi^{-1}(s)) = \Pic(\bP_{k(s)}) = \prod_{i=1}^n\Pic(\bP_{i,k(s)}) = \bZ^n. \]
	And we can show that all \(r_i\)'s are uniquely determined by restricting \(\cN\) to a fiber \(\pi^{-1}(s)\) over a point \(s\in S\). The uniqueness of \(\cJ\) is induced by Lemma \ref{isom of hom}.
\end{proof}

\begin{Lem}\label{SGA Lemma}
	Suppose that for each \(s\in S\), \(\dim\pi^{-1}(s) - \dim \pi^{-1}(s)\setminus\cV \geqslant 3\). Then every invertible \(\sO_\cV\)-module descends to \(S\) up to \(\xi\)-torsion.
\end{Lem}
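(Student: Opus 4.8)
The plan is to reduce the lemma to a divisor class group computation on the normal scheme $\bP$. Uniqueness of the descent data is already Proposition~\ref{uniqueness of descent}, so only existence of $(r_i,\sJ,\lambda)$ is at issue, and this is local on $S$: over any affine open cover of $S$ the integers $r_i$ are forced, and---$S$ being integral, hence connected---independent of the chart, while the resulting $\sJ$'s glue by the uniqueness in Proposition~\ref{uniqueness of descent} together with Lemma~\ref{isom of hom}. So I would first assume $S=\Spec A$ with $A$ a normal noetherian domain; then $S$ is integral, separated, and regular in codimension one. As each $\bP_i\to S$ is smooth and $S$ is normal, $\bP$ is integral and normal, hence so is the open $\cV$, and $\pi|_\cV\colon\cV\to S$ is flat; it is moreover surjective, since the hypothesis (with the convention $\dim\emptyset=-1$) forces $\pi^{-1}(s)\cap\cV\neq\emptyset$ for every $s$, so $\pi|_\cV$ is faithfully flat. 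Writing $Z=\bP\setminus\cV$, for $z\in Z$ one has $\dim\sO_{\bP,z}=\dim\sO_{S,\pi(z)}+\dim\sO_{\pi^{-1}(\pi(z)),z}$, and inside the fibre $\pi^{-1}(\pi(z))$---a product of projective spaces over $k(\pi(z))$---the subset $Z\cap\pi^{-1}(\pi(z))$ is closed of codimension $\geqslant 3$, so $\dim\sO_{\pi^{-1}(\pi(z)),z}\geqslant 3$ (the fibre being equidimensional and catenary); thus $Z$ has codimension $\geqslant 3$ in $\bP$, and since $\sO_{\bP,z}$ is flat over the $S_2$-local ring $\sO_{S,\pi(z)}$ with regular fibre, $\operatorname{depth}\sO_{\bP,z}\geqslant 3$. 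This is where the hypothesis ``$\geqslant 3$'' enters, and it is what makes the lemma an instance of the parafactoriality/Lefschetz results of SGA\,2.

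Next I would exploit that $\bP$ is normal, integral, noetherian and separated. Excision for divisor class groups, together with $\operatorname{codim}(Z,\bP)\geqslant 2$, gives an isomorphism $\mathrm{Cl}(\bP)\xrightarrow{\ \sim\ }\mathrm{Cl}(\cV)$, $[D]\mapsto[D\cap\cV]$. Iterating the projective-bundle class group formula (at each step $\mathrm{Cl}(\bP(\mathcal{E}))=\pi^*\mathrm{Cl}(\text{base})\oplus\bZ$, cf.\ \cite{GTM52} Chapter~II) over the $n$ factors of $\bP=\bP_1\times_S\cdots\times_S\bP_n$ yields $\mathrm{Cl}(\bP)=\pi^*\mathrm{Cl}(S)\oplus\bigoplus_{i=1}^n\bZ[\bar H_i]$, where $\bar H_i$ is a Weil divisor with $\sO_\bP(\bar H_i)\cong\xi_i$ and $\pi^*$ is flat pullback. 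Combining, $\mathrm{Cl}(\cV)=(\pi|_\cV)^*\mathrm{Cl}(S)\oplus\bigoplus_i\bZ[\bar H_i\cap\cV]$ with $\sO_\cV(\bar H_i\cap\cV)\cong\xi_i|_\cV$. Given an invertible $\sO_\cV$-module $\cN$, I would use the inclusion $\Pic(\cV)\hookrightarrow\mathrm{Cl}(\cV)$ (valid since $\cV$ is normal) to write $[\cN]=(\pi|_\cV)^*[E]+\sum_i r_i[\bar H_i\cap\cV]$ for a unique Weil divisor class $[E]$ on $S$ and unique integers $r_i$, and then translate this back to an isomorphism of divisorial sheaves
\[ \cN\ \cong\ (\pi|_\cV)^*\sO_S(E)\ \otimes\ \bigotimes_{i=1}^n\bigl(\xi_i|_\cV\bigr)^{\otimes r_i}, \]
where I use that the $\xi_i$ are invertible so that tensoring against them preserves reflexivity. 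Since $\cN$ and the $\xi_i$ are invertible, $(\pi|_\cV)^*\sO_S(E)$ is invertible; as $\pi|_\cV$ is faithfully flat and $\sO_S(E)$ is coherent, descent of the property ``locally free of rank $1$'' along $\pi|_\cV$ shows $\sJ\coloneqq\sO_S(E)$ is invertible. The displayed isomorphism then serves as $\lambda$, exhibiting $\cN$ as descending to $\sJ$ up to $\xi$-torsion.

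The formal ingredients---localizing on $S$, flatness and surjectivity of $\pi|_\cV$, fppf descent of invertibility, and the fact that flat pullback sends divisorial sheaves to divisorial sheaves---I would not dwell on. The substance lies in (a) the excision isomorphism $\mathrm{Cl}(\bP)\cong\mathrm{Cl}(\cV)$, which uses normality of $\bP$ and $\operatorname{codim}(Z,\bP)\geqslant 2$; and (b) the iterated projective-bundle class group formula, in particular identifying the rank-$n$ lattice summand precisely with $\bigoplus_i\bZ\,\xi_i$ rather than some other lattice; the bookkeeping for (b) over the $S$-fibre product is where I expect the most care to be needed. Alternatively---and closer to the name of the lemma---I might invoke directly that $\bP$ is parafactorial along $Z$ (by SGA\,2, using $\operatorname{depth}\sO_{\bP,z}\geqslant 3$ for $z\in Z$ and the regular-fibre structure established above), so that $\Pic(\bP)\xrightarrow{\ \sim\ }\Pic(\cV)$, and then conclude from $\Pic(\bP)=\pi^*\Pic(S)\oplus\bigoplus_i\bZ\,\xi_i$; this trades the class-group computation for verifying the SGA\,2 hypotheses.
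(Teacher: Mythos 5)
Your route is genuinely different from the paper's: the paper proves this lemma purely by citation (\cite{SGA2} Expos\'e XII Corollary 4.9, or \cite{Elkik} I.2.6 Lemma (c)), whereas you give a self-contained argument through divisor class groups. Granting one extra hypothesis, your argument is essentially sound: if \(S\) is normal then \(\bP\) and \(\cV\) are normal, excision across the codimension \(\geqslant 2\) complement gives \(\mathrm{Cl}(\bP)\cong\mathrm{Cl}(\cV)\), the generic-fibre bookkeeping does identify \(\mathrm{Cl}(\bP)=\pi^*\mathrm{Cl}(S)\oplus\bigoplus_i\bZ\,\xi_i\) (prime divisors not dominating \(S\) are exactly preimages of prime divisors of \(S\), because the fibres are irreducible and equidimensional), and the final step --- \((\pi|_\cV)^*\sO_S(E)\) invertible on \(\cV\) plus faithful flatness and surjectivity of \(\pi|_\cV\) forces \(\sO_S(E)\) invertible --- is correct. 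Note that, as written, your argument only uses codimension \(\geqslant 2\) of \(\bP\setminus\cV\) and non-emptiness of the fibres of \(\pi|_\cV\); so in the normal case you even prove the statement with fibrewise codimension \(\geqslant 2\), which is a sign that normality is carrying real weight.

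The gap is precisely that normality is not among the hypotheses: in the setting of Definition \ref{descent up to torsion}, \(S\) is only a noetherian integral scheme, and you introduce ``\(A\) a normal noetherian domain'' silently at the first reduction. Without normality of \(S\), the scheme \(\bP\) need not be normal, and \(\mathrm{Cl}\), excision, and divisorial sheaves are not available, so your main route does not prove the lemma as stated; this more general situation is exactly what the cited SGA\,2/Elkik results handle, and it is where the hypothesis \(\geqslant 3\) (rather than \(\geqslant 2\)) is genuinely needed. Since the paper's only application, Proposition \ref{descend lemma}, has \(S\) noetherian normal integral, your proof suffices for the paper's purposes, but strictly speaking it proves a weaker lemma. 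Also, in your fallback sketch, be careful that \(\mathrm{depth}\,\sO_{\bP,z}\geqslant 3\) by itself does not yield parafactoriality: the three-dimensional quadric cone \(k[x,y,z,w]/(xy-zw)\) is a hypersurface of depth \(3\) whose punctured spectrum has Picard group \(\bZ\neq 0\). What is needed is the relative parafactoriality theorem for flat morphisms whose fibres are regular of codimension \(\geqslant 3\) along the closed set --- which is exactly the content of the results the paper cites, so that ``alternative'' is not an alternative but the paper's own proof.
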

\begin{proof}
	Refer to \cite{SGA2} Expos{\'e} XII Corollary 4.9 or \cite{Elkik} I.2.6 Lemma (c).
\end{proof}

Now we go back to our original problem. We assume that \(S\) is a noetherian normal integral scheme, and that \(\sL_{d+1}\) in \eqref{raw deligne pairing} is also \(f\)-sufficiently ample.

\begin{Prop}\label{descend lemma}
	The \(\cN\) defined by \eqref{raw deligne pairing} descends to \(S\) up to \(\xi\)-torsion.
\end{Prop}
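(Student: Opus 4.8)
The plan is to exploit the hypothesis that $\sL_{d+1}$ is itself $f$-sufficiently ample: it supplies a canonical section that, after one $\xi$-twist, trivializes $\cN$ along the fibres of an enlarged projective bundle, and then $\cN$ descends. Fix a presentation $\varphi_{d+1}\colon f^*\sF_{d+1}\to\sL_{d+1}$, set $\bP_{d+1}=\bP(\sF_{d+1}\spcheck)$, $\xi_{d+1}=\sO_{\bP_{d+1}}(1)$, and let $\sigma_{d+1}$ be the canonical regular section of $\sL_{d+1}\boxtimes\xi_{d+1}$ on $X_{\bP_{d+1}}$. Put $\bP'=\bP\times_S\bP_{d+1}=\prod_{i=1}^{d+1}\bP_i$, with $q\colon\bP'\to\bP$ and $\pi'\colon\bP'\to S$; set $\cV'=q^{-1}(\cV)$ and let $Z'=Z(\sigma_1,\dots,\sigma_d)$ on $X_{\bP'}$ (the base change of $Z$), so $Z'_{\cV'}\to\cV'$ is finite, the fibrewise condition cutting out $\cV$ being unchanged under the flat base change $\cV'\to\cV$. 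Arguing as in Proposition \ref{regular sequence on porduct}, $\sigma_{d+1}$ restricts to a regular section of $(\sL_{d+1}\boxtimes\xi_{d+1})|_{Z'_{\cV'}}$. Since $\Nm_{Z'_{\cV'}/\cV'}$ preserves regular sections, commutes with the dominant base change $\cV'\to\cV$ (Proposition \ref{base change of Norm}), and sends pullbacks to powers (Proposition \ref{Norm of pull-back is power}), the section $\tau:=\Nm_{Z'_{\cV'}/\cV'}\bigl(\sigma_{d+1}|_{Z'_{\cV'}}\bigr)$ is a regular section of
\[
\Nm_{Z'_{\cV'}/\cV'}\bigl((\sL_{d+1}\boxtimes\xi_{d+1})|_{Z'_{\cV'}}\bigr)\;=\;q^*\cN\otimes(\xi_{d+1}|_{\cV'})^{\otimes\delta},
\]
where $\delta$ is the generic degree of $Z'_{\cV'}\to\cV'$. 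Writing $\Delta:=Z(\tau)$, an effective Cartier divisor on $\cV'$, we obtain $q^*\cN\otimes\xi_{d+1}^{\delta}|_{\cV'}\cong\sO_{\cV'}(\Delta)$.

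Next I check that $\Delta$ is horizontal over $S$. Its support is the image in $\cV'$ of $Z(\sigma_1,\dots,\sigma_{d+1})\cap X_{\cV'}$, and for each $s\in S$ a point $v\in\cV'_s$ lies in this image only if the $d+1$ sections $\sigma_i$ have a common zero on the $d$-dimensional fibre $X_s$. The incidence locus $\{(v,x):\sigma_i(v,x)=0\ \forall i\}\subseteq\bP'_s\times X_s$ has, over each $x\in X_s$, fibre $\prod_i(\text{hyperplane in }\bP(\sF_{i,s}\spcheck))$ of dimension $\dim\bP'_s-(d+1)$, hence total dimension $\le\dim\bP'_s-1$; so its image in $\bP'_s$ is a proper closed subset. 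Thus $\Delta$ contains no fibre $\cV'_s$, and since $\cV'\to S$ is flat with integral fibres, $\Delta$ is flat over $S$; consequently the class $[\Delta_s]\in\Pic(\cV'_s)=\Pic\bigl(\prod_i\bP(\sF_{i,s}\spcheck)\bigr)=\bZ^{d+1}$ is constant, say $[\Delta_s]=\sum_{i=1}^{d+1}m_i[\xi_i]$. Then
\[
\cM\;:=\;q^*\cN\otimes\xi_{d+1}^{\delta}\otimes\mathop{\boxtimes}_{i=1}^{d+1}\xi_i^{-m_i}\Big|_{\cV'}
\]
is a line bundle on $\cV'$ that is trivial on every fibre of $\pi'$.

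Now I descend $\cM$. Since $\bP'\to S$ is smooth and $S$ is normal and integral, $\bP'$ is normal and integral, and $\bP'\setminus\cV'=q^{-1}(\bP\setminus\cV)$ has codimension $\ge2$ in every fibre by Proposition \ref{fiber dim}. Hence, for $j\colon\cV'\hookrightarrow\bP'$, the sheaf $\overline{\cM}:=j_*\cM$ is reflexive of rank one on $\bP'$; moreover, using that $\cM$ is $S$-flat, $\bP'\to S$ is flat with normal fibres, and the complement is fibrewise of codimension $\ge2$, $\overline{\cM}$ is flat over $S$ with $\overline{\cM}|_{\bP'_s}=j_{s*}(\cM|_{\cV'_s})=\sO_{\bP'_s}$ for every $s$. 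Therefore $h^0(\bP'_s,\overline{\cM}|_{\bP'_s})=1$ for all $s$, and by cohomology and base change $\sJ:=\pi'_*\overline{\cM}$ is invertible on $S$ and $\pi'^*\sJ\to\overline{\cM}$ is an isomorphism (it is an isomorphism on every fibre, hence an isomorphism of reflexive sheaves on $\bP'$). Restricting to $\cV'$ gives $\cM\cong\pi'^*\sJ$, that is,
\[
q^*\cN\;\cong\;\pi'^*\sJ\otimes\mathop{\boxtimes}_{i=1}^{d+1}\xi_i^{m_i}\otimes\xi_{d+1}^{-\delta}.
\]
Restricting both sides to a fibre of $q$, on which the left-hand side is trivial, forces $m_{d+1}-\delta=0$, so $q^*\cN\cong q^*\bigl(\pi^*\sJ\otimes\mathop{\boxtimes}_{i=1}^{d}\xi_i^{m_i}|_{\cV}\bigr)$. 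As $q$ is a projective bundle, $q_*q^*=\id$, whence $\cN\cong\pi^*\sJ\otimes\mathop{\boxtimes}_{i=1}^{d}\xi_i^{m_i}|_{\cV}$; this exhibits $\cN$ as descending to $S$ up to $\xi$-torsion, with $r_i=m_i$, in the sense of Definition \ref{descent up to torsion}.

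The main obstacle is the assertion that $\overline{\cM}=j_*\cM$ is flat over $S$ and that its formation commutes with passage to fibres (so that $\overline{\cM}|_{\bP'_s}=j_{s*}(\cM|_{\cV'_s})$): only this makes the cohomology-and-base-change step legitimate, and it is precisely here that normality of $\bP'$ together with the fibrewise codimension-$\ge2$ estimate of Proposition \ref{fiber dim} is used — this is the analogue, in the normal-base setting, of the descent lemmas of Elkik and García (the codimension-$\ge3$ statement recalled in Lemma \ref{SGA Lemma} would suffice directly were the complement one codimension larger). A secondary point requiring care is the regularity of $\sigma_{d+1}|_{Z'_{\cV'}}$ in the first step, which rests on $f$-ampleness of $\sL_{d+1}$ together with $\sigma_1,\dots,\sigma_d$ forming a regular sequence on $X_{\bP'}$.
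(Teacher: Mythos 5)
Your construction goes off in a genuinely different direction from the paper, but it has a gap at exactly the point where the real difficulty sits. After twisting down by $\Delta$ you are left with a line bundle $\cM$ on $\cV'$ that is trivial on every fibre, and you need to descend it to $S$. The complement $\bP'\setminus\cV' = q^{-1}(\bP\setminus\cV)$ still has fibrewise codimension only $\geqslant 2$ (adding the factor $\bP_{d+1}$ does not improve the codimension), so Lemma \ref{SGA Lemma} does not apply, and your substitute — that $\overline{\cM}=j_*\cM$ is coherent, $S$-flat, and commutes with restriction to fibres, ``using that $\cM$ is $S$-flat, $\bP'\to S$ is flat with normal fibres, and the complement is fibrewise of codimension $\ge 2$'' — is asserted, not proved. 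Flatness of $j_*\cM$ over $S$ and base-change compatibility of $j_*$ are cohomological statements (vanishing/coherence of relative local cohomology along the complement) that do not follow from normality of $\bP'$ plus fibrewise codimension $2$; over a merely normal base one cannot even extend $\cM$ to a line bundle on $\bP'$ in general, which is precisely why Elkik, Garc\'ia and the present paper need the codimension-$\geqslant 3$ statement. You flag this yourself as ``the main obstacle,'' but an obstacle flagged is not an obstacle removed: as written, the central step of your proof assumes the very descent phenomenon the proposition is supposed to establish. A secondary unjustified step is the constancy of the multidegrees $m_i$: $\Delta\to S$ is flat but \emph{not proper} (it lives in the open set $\cV'$), so constancy of the fibrewise class $[\Delta_s]\in\Pic(\cV'_s)=\bZ^{d+1}$ does not follow from flatness; taking closures in $\bP'$ does not help directly, since the closure need not be $S$-flat and its fibres need not be the closures of the $\Delta_s$.

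For comparison, the paper circumvents the codimension problem rather than confronting it: it first reduces to the case $d=1$ (peeling off $\bP_1,\dots,\bP_{d-1}$, over whose good locus $\cV'$ the complement has fibrewise codimension $\geqslant 3$ and Lemma \ref{SGA Lemma} applies), and then, in relative dimension $1$, uses the symmetry between $\sL_1$ and $\sL_2$: the two norm sheaves $\cN_1$ on $\bP_1\times_S\cV_2$ and $\cN_2$ on $\cV_1\times_S\bP_2$ are glued along $\cV_1\times_S\cV_2$ via Proposition \ref{glue lemma} (matching the canonical regular sections), producing a line bundle on $\cV_1\times_S\bP_2\cup\bP_1\times_S\cV_2$, whose complement $F_1\times_S F_2$ has fibrewise codimension $\geqslant 4$; only then is Lemma \ref{SGA Lemma} invoked, and the descent of $\cN$ itself is extracted by restricting along a closed point of $\bP_2$ together with Propositions \ref{base change of Norm} and \ref{Norm of pull-back is power}. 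If you want to salvage your route, you must either prove a descent statement at fibrewise codimension $2$ over a normal base (for instance by classifying line bundles on $\cV'$ through the Weil divisor class group of the product of projective bundles, rather than by pushing forward and invoking cohomology and base change), or enlarge the open set as the paper does; in the current form the proof is incomplete.
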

\begin{proof}
	Let \(\bP' = \prod_{i=1}^{d-1}\bP_i\), \(Z' = Z(\sigma_1,\sigma_2,\ldots,\sigma_{d-1})\subseteq X_{\bP'}\), \(\pi'\colon\bP'\to S\), and
	\[\cV'= \{ x\in\bP'\mid f_{\bP'}^{-1}(x)\bigcap Z' \text{ is of pure dimension } 1\}.\]
	We see that \(Z = (Z'\times_S\bP_d)\bigcap Z(\sigma_d)\). Since the dimension of each component of each fiber decreases at most 1 after intersecting with \(Z(\sigma_d)\), we deduce that \(\cV\subseteq \cV'\times_S\bP_d = \bP(\pi'^*(\sF_d)\spcheck|_{\cV'})\), where \(\sF_d\) is the \(\sO_S\)-module generating \(\sL_d\). By Proposition \ref{fiber dim}, for each \(s\in S\), \(\pi'^{-1}(s)\setminus\cV_n\) is a closed subset of \(\pi'^{-1}(s)\) of codimension \(\geqslant 3\). Then we know that every invertible \(\sO_{\cV'}\)-module descends to \(S\) up to \(\xi'\)-torsion by Lemma \ref{SGA Lemma}. It remains to show that every invertible \(\sO_{\cV}\)-module descends to \(\cV'\) up to \(\xi_d\)-torsion. Let \(\bP_{\cV'} = \cV'\times_S\bP_d\), and \(Z'' = Z'_{\cV'}\times_S\bP_{d} =  Z'_{\cV'}\times_{\cV'}\bP_{\cV'} = (Z'\times_S\bP_d)\bigcap f_{\bP}^{-1}(\bP_{\cV'})\). Then we have commutative diagrams
	\[ \begin{tikzcd}
		Z_{\cV}\arrow[r]\arrow[d] & Z_{\bP_{\cV'}}\arrow[r]\arrow[d] & Z\arrow[d]\\
		X_{\cV}\arrow[r]\arrow[d] & X_{\bP_{\cV'}}\arrow[r]\arrow[d] & X_{\bP}\arrow[r]\arrow[d, "f_{\bP}"] & X\arrow[d, "f"]\\
		\cV\arrow[r] & \bP_{\cV'}\arrow[r] & \bP\arrow[r] & S
	\end{tikzcd} \]
    and 
    \[ \begin{tikzcd}
    	&Z_{\bP_{\cV'}}\arrow[d, hookrightarrow]\arrow[r, equal]&Z''\bigcap Z(\sigma_d)\\
    	Z_{\cV}\arrow[r]\arrow[ru, hookrightarrow]\arrow[d] & Z''\arrow[r]\arrow[d, "f_{\bP}"] & Z'_{\cV'}\arrow[d, "f_{\bP'}"]\\
    	\cV\arrow[r, hookrightarrow] & \bP_{\cV'}\arrow[r]& \cV'.
    \end{tikzcd}\]
	Note that all the small squares in the above two diagrams are Cartesian. We see that \(Z''\to\bP_{\cV'}\) is a projective morphism of relative dimension 1, \(\sL'=\sL_d|_{Z'_{\cV'}}\) is an \(f_{\bP'}\)-sufficiently ample invertible sheaf on \(Z'_{\cV'}\), \(\sigma_d\) is a regular section of \(\sL'\boxtimes\xi_d\), \(Z_{\bP_{\cV'}}\subseteq Z''\) is the zero locus of \(\sigma_d\), and \(\cV\) is the open subset of \(\bP_{\cV'}\) above which \(Z_{\cV}\) is finite. Therefore, it suffices to prove Proposition \ref{descend lemma} for \(d = 1\).
	
	Now we prove the \(d = 1\) case. For \(i=1,2\), let \((\varphi_i,\sF_i)\) be a presentation of \(\sL_i\), \(\bP_i = \bP(\sF_i\spcheck)\), \(\xi_i = \sO_{\bP_i}(1)\), \(\sigma_i\) be the natural section defined by \eqref{eq:natrual section}, \(Z_i = Z(\sigma_i)\subseteq X\times_S\bP_i\) the zero locus of \(\sigma_i\), \(\cV_i\) be the nonempty open subset of \(\bP_i\) above which \(Z_i\) is finite, and let \(Z'_i = Z_{i,\cV_i}\). Then we have a commutative diagram
	\[ \begin{tikzcd}
		\cV_1\times_S\cV_2\arrow[r]\arrow[d] & \bP_1\times_S\cV_2\arrow[r]\arrow[d] & \cV_2\arrow[d]\\
		\cV_1\times_S\bP_2\arrow[r]\arrow[d] & \bP_1\times_S\bP_2\arrow[r]\arrow[d] & \bP_2\\
		\cV_1\arrow[r] & \cV_1\times_S\bP_1.
	\end{tikzcd} \]

    Since \(S\) is normal, both \(\cV_1\) and \(\cV_2\) are normal. Then we can define 
    \[\cN_1 = \Nm_{\bP_1\times_SZ'_2/\bP_1\times_S\cV_2}(\sL_1\boxtimes\xi_1)\]
    and
    \[\cN_2 = \Nm_{Z'_1\times_S\bP_2/\cV_1\times_S\bP_2}(\sL_2\boxtimes\xi_2),\]
    with regular sections
    \[s_1=\Nm_{\bP_1\times_SZ'_2/\bP_1\times_S\cV_2}(\sigma_1)\quad\text{and}\quad s_2 =\Nm_{Z'_1\times_S\bP_2/\cV_1\times_S\bP_2}(\sigma_2)\]
    respectively, where we view \(\sigma_1\) and \(\sigma_2\) as sections of \(\sL_1\boxtimes\xi_1\) and \(\sL_2\boxtimes\xi_2\) respectively. Note that the zero locus of \(\sigma_1\) and \(\sigma_2\) on \(X\times_S\bP_1\times_S\bP_2\) are \(Z_1\times_S\bP_2\) and \(\bP_1\times_SZ_2\) respectively. By Proposition \ref{regular sequence on porduct}, \(\sigma_1,\sigma_2\) is a regular sequence. And then by Proposition \ref{order lemma}, \(\sigma_1|_{\bP_1\times_SZ'_2}\) and \(\sigma_2|_{Z'_1\times_S\bP_2}\) are regular sections of
    \[ \sL_1\boxtimes\xi_1|_{\bP_1\times_SZ'_2}\quad\text{and}\quad\sL_2\boxtimes\xi_2|_{Z'_1\times_S\bP_2} \]
    respectively. By Proposition \ref{glue lemma}, identifying \(s_1\) and \(s_2\), we can glue \(\cN_1\) and \(\cN_2\) on \(\cV_1\times_S\bP_2\bigcap \bP_1\times_S\cV_2 = \cV_1\times_S\cV_2\) to get an invertible sheaf \(\cN'\) on \(\cV= \cV_1\times_S\bP_2\bigcup \bP_1\times_S\cV_2 \). Let
    \[ \pi_1\colon\bP_1\to S,\quad \pi_2\colon\bP_2\to S,\quad \pi\colon\bP_1\times_S\bP_2\to S.\]
    be the natural projections, and let \(F_1,F_2,F\) be the complements of \(\cV_1,\cV_2,\cV\) in \(\bP_1,\bP_2,\bP_1\times_S\bP_2\) respectively. By definition, \(F = F_1\times_S F_2\). By Proposition \ref{fiber dim}, for \(i = 1,2\) and each \(s\in S\), \(F_{i,s}\) is a closed subset of \(\pi_i^{-1}(s)\) of codimension \(\geqslant 2\). Thus \(F_{s}\) is a closed subset of \(\pi^{-1}(s)\) of codimension \(\geqslant 4\). Then by Lemma \ref{SGA Lemma}, \(\cN\) descends to \(S\) up to \(\xi\)-torsion, i.e. there exist integers \(r_1,r_2\), and an invertible \(\sO_S\)-module \(\sJ\) such that
    \[ \cN'\cong \pi^*\sJ\boxtimes\xi_1^{r_1}\boxtimes\xi_2^{r_2} \]
    on \(\cV\). Thus
    \[ \cN_2\cong \pi^*\sJ\boxtimes\xi_1^{r_1}\boxtimes\xi_2^{r_2} \]
    on \(\cV_1\times_S\bP_2\). Take a closed point \(x\) of \(\bP_2\). Then we get a closed immersion \(\iota_x\colon\bP_1\to\bP_1\times\bP_2\) and an isomorphism
    \begin{equation}\label{eq:descent of cN_2}
    	\iota_x^*\,\cN_2  \cong \pi_1^*\sJ\boxtimes\xi_1^{r_1}
    \end{equation}
    on \(\cV_1\).
    
    Since \(S\) is an integral scheme, so are \(\bP_1\), \(\cV_1\), and \(\cV_1\times_S\bP_2\). Thus by Proposition \ref{base change of Norm} and the following Cartesian diagram
    \[ \begin{tikzcd}
    	Z'_1\arrow[d] &Z'_1\times_S\bP_2\arrow[l]\arrow[d]\\
    	\cV_1 & \cV_1\times_S\bP_2\arrow[l,"p"'],
    \end{tikzcd} \]
    we have an isomorphism
    \[ \Nm_{Z'_1\times_S\bP_2/\cV_1\times_S\bP_2}(\sL_2) = p^*\Nm_{Z'_1/\cV_1}(\sL_2). \]
    Since \(p\circ\iota_x = \id\), the above equation and \eqref{eq:descent of cN_2} implies that
    \begin{align}\label{eq:descent of cN_2 ii}
    	\Nm_{Z'_1/\cV_1}&(\sL_2)\otimes\iota_x^*\,\Nm_{Z'_1\times_S\bP_2/\cV_1\times_S\bP_2}(p_1^*\,\xi_2)\notag\\
    	&= \iota_x^*\Nm_{Z'_1\times_S\bP_2/\cV_1\times_S\bP_2}(\sL_2)\otimes\iota_x^*\,\Nm_{Z'_1\times_S\bP_2/\cV_1\times_S\bP_2}(p_1^*\,\xi_2)\notag\\
    	&= \iota_x^*\Nm_{Z'_1\times_S\bP_2/\cV_1\times_S\bP_2}(\sL_2\boxtimes\xi_2)\notag\\
    	&=\iota_x^*\cN_2 \cong \pi_1^*\cJ\boxtimes\xi_1^{r_1}
    \end{align}
    on \(\cV_1\), where \(p_1\colon Z'_1\times_S\bP_2\to\bP_2\).
    
    From the following commutative diagram
    \[ \begin{tikzcd}
    	& Z'_1\times_S\bP_2\arrow[r, "p_1"]\arrow[d, "q"] & \bP_2\arrow[d, equal]\\
    	\cV_1\arrow[r, "\iota_x"]& \cV_1\times_S\bP_2\arrow[r, "p_2"] & \bP_2
    \end{tikzcd} \]
    and Proposition \ref{Norm of pull-back is power}, we deduce that
    \begin{multline*}
    	\iota_x^*\,\Nm_{Z'_1\times_S\bP_2/\cV_1\times_S\bP_2}(p_1^*\,\xi_2) = \iota_x^*\,\Nm_{Z'_1\times_S\bP_2/\cV_1\times_S\bP_2}(q^*p_2^*\,\xi_2)\\
    	= \iota_x^*(p_2^*\,\xi_2)^{\otimes m}
    	= \iota_x^*p_2^*(\xi_2^{\otimes m})
    	= \sO_{\cV_1},
    \end{multline*}
    where \(m\) is an integer.
    Thus \eqref{eq:descent of cN_2 ii} becomes
        \begin{equation}\label{eq:descent of cN_2 iii}
    	\Nm_{Z'_1/\cV_1}(\sL_2)\cong \pi_1^*\sJ\boxtimes\xi_1^{r_1}
    \end{equation}
    on \(\cV_1\). Therefore, \(\Nm_{Z'_1/\cV_1}(\sL_2|_{Z'_1})\), which is exactly \(\cN\) defined by \eqref{raw deligne pairing}, descends to \(S\) up to \(\xi_1\)-torsion.
\end{proof}

By Proposition \ref{descend lemma}, there exists an invertible \(\sO_S\)-module \(\sJ\) and integers \(r_i\) such that \eqref{eq:def od descent} holds.

\begin{Prop}\label{indep of presentation}
	The \(\cJ\) defined by Proposition \eqref{descend lemma} is independent of the choice of presentations \((\varphi_i,\sF_i)\ (1\leqslant i\leqslant d)\) of \(\sL_i\)'s. 
	
	A more precise statement is as follows. For each choice of presentations \(\bfV = ((\varphi_i,\sF_i))_{1\leqslant i\leqslant d}\), we denote the \(\cJ\) defined by Proposition \eqref{descend lemma} by \(\cJ_{\bfV}\). Then for any two choices of presentations \(\bfV'\) and \(\bfV''\), we have a canonical isomorphism
	\[ \theta_{\bfV,\bfV'}\colon \cJ_{\bfV}\xrightarrow{\sim}\cJ_{\bfV'}\]
	satisfying \(\theta_{\bfV,\bfV} = \id\) and \(\theta_{\bfV',\bfV''} \circ \theta_{\bfV,\bfV'}=\theta_{\bfV,\bfV''}\). 
\end{Prop}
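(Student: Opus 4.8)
The plan is to reduce to the case where $S$ is a noetherian normal \emph{integral} scheme, to identify the stalk of $\cJ_{\bfV}$ at the generic point and at each codimension-one point of $S$ with the Deligne pairing of Garc\'ia for a morphism of finite Tor-dimension, to invoke Garc\'ia's independence-of-presentations result at those points, and then to glue the resulting comparison isomorphisms by algebraic Hartogs' theorem. Since $S$ satisfies \hyperlink{star assumption}{(*)}, Lemma \ref{disjoint union of irreducible components} shows $S$ is the disjoint union of its irreducible components, each a noetherian normal integral scheme, and both $\cPic(S)$ and the presentations decompose accordingly; so I may assume $S$ is a noetherian normal integral scheme with generic point $\eta$ and function field $K$.

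\emph{Localization step.} Since $K$ is a field, $X_\eta\to\Spec K$ is of finite Tor-dimension; and for each $P\in S^{(1)}$ the local ring $A=\sO_{S,P}$ is a discrete valuation ring, hence regular, so $X_A\to\Spec A$ is of finite Tor-dimension. I would check that forming the stalk at $\eta$ (resp.\ restricting to $\Spec A$) commutes with the entire construction behind Proposition \ref{descend lemma}: the projective bundle $\bP$, the canonical sections $\sigma_i$ and their common zero locus $Z$, the open set $\cV$, the finite morphism $Z_\cV\to\cV$, the sheaf $\cN=\Nm_{Z_\cV/\cV}(\sL_{d+1}|_{Z_\cV})$ (using compatibility of $\Nm$ with these flat localizations, Proposition \ref{base change of Norm}), and the descent datum \eqref{eq:def od descent}; the descent still applies over the localized base because, by Proposition \ref{fiber dim}, the complement $\pi^{-1}(\eta)\setminus\cV$, resp.\ $\pi^{-1}(P)\setminus\cV$, remains of codimension $\geqslant 2$. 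Hence $(\cJ_{\bfV})_\eta$, resp.\ $\cJ_{\bfV}|_{\Spec A}$, is canonically the Deligne pairing constructed by Garc\'ia in \cite{Garcia2000} \S2 for $X_\eta/K$, resp.\ $X_A/\Spec A$, attached to the presentations $\bfV$.

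\emph{Comparison and gluing.} By the independence-of-presentations statement of \cite{Garcia2000} for morphisms of finite Tor-dimension, there is a unique isomorphism $\psi_\eta^{\bfV,\bfV'}\colon(\cJ_{\bfV})_\eta\xrightarrow{\sim}(\cJ_{\bfV'})_\eta$ compatible with Garc\'ia's canonical data, and likewise $\psi_P^{\bfV,\bfV'}$ over $\Spec\sO_{S,P}$; by uniqueness $\psi_P^{\bfV,\bfV'}$ restricts to $\psi_\eta^{\bfV,\bfV'}$ at $\eta$, and $\psi_\eta^{\bfV,\bfV}=\id$, $\psi_\eta^{\bfV',\bfV''}\circ\psi_\eta^{\bfV,\bfV'}=\psi_\eta^{\bfV,\bfV''}$. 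Now view $\psi_\eta^{\bfV,\bfV'}$ as a nonzero rational section of the invertible sheaf $\sHom(\cJ_{\bfV},\cJ_{\bfV'})$ on $S$. For every $P\in S^{(1)}$ it lies in $\sHom(\cJ_{\bfV},\cJ_{\bfV'})_P$ and is a generator there (as $\psi_P^{\bfV,\bfV'}$ is an isomorphism); since $\sO_S=\bigcap_{P\in S^{(1)}}\sO_{S,P}$ inside $K$ by algebraic Hartogs' theorem, $\psi_\eta^{\bfV,\bfV'}$ extends uniquely to a global homomorphism $\theta_{\bfV,\bfV'}\colon\cJ_{\bfV}\to\cJ_{\bfV'}$, and the same argument applied to $(\psi_\eta^{\bfV,\bfV'})^{-1}$ shows $\theta_{\bfV,\bfV'}$ is an isomorphism. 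Since a global section of an invertible sheaf on an integral scheme is determined by its generic stalk, the relations $\theta_{\bfV,\bfV}=\id$ and $\theta_{\bfV',\bfV''}\circ\theta_{\bfV,\bfV'}=\theta_{\bfV,\bfV''}$ follow from the corresponding relations among the $\psi_\eta$'s.

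\emph{Main obstacle.} The delicate point is the localization step: one must verify that each ingredient — the projective bundle, the regular sequence of canonical sections, the locus $\cV$, the norm functor, and in particular the \emph{descent} of $\cN$ — is compatible with passing to the stalk at $\eta$ and at a codimension-one point, so that the stalk of our $\cJ$ literally becomes Garc\'ia's Deligne pairing and our comparison problem becomes his; one must also confirm that Garc\'ia's comparison isomorphism for $X_A/\Spec A$ restricts to the one for $X_\eta/K$, for which the uniqueness assertion in \cite{Garcia2000} is exactly what is needed.
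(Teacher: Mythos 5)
Your proposal is correct and follows essentially the same route as the paper: reduce to $S$ integral, identify the restrictions of $\cJ_{\bfV}$ to the generic point and to $\Spec\sO_{S,P}$ for $P\in S^{(1)}$ with Garc\'ia's Deligne pairing (using that the construction commutes with these base changes and that the descent remains valid there by Proposition \ref{uniqueness of descent}), pull back Garc\'ia's canonical comparison isomorphisms, and extend $\psi_\eta^{\bfV,\bfV'}$ over $S$ by algebraic Hartogs. Your packaging of the gluing step as extending a rational section of $\sHom(\cJ_{\bfV},\cJ_{\bfV'})$ is just a mild reformulation of the paper's identity $\cJ_{\bfV}=\bigcap_{P\in S^{(1)}}\cJ_{\bfV}\times_S\Spec\sO_{S,P}$.
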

\begin{proof}
    Let \(\eta\) be the generic point of \(S\). Let \(X_\eta = X\times_S\Spec\sO_{S,\eta}\),
    \(\sL_{i,\eta} = \sL_i\times_S\Spec\sO_{S,\eta} = \sL_i|_{X_\eta}\), \(\sF_{i,\eta} =  \sF_i\times_S\Spec\sO_{S,\eta} = \sF_i|_{\eta}\). We show them in the following diagram
    \[ \begin{tikzcd}[column sep=small]
    	\sL_{i,\eta} & X_{\eta}\arrow[rr, hookrightarrow]\arrow[d, "f_\eta"] && X\arrow[d, "f"] & \sL_i\\
        \sF_{i,\eta} & \eta \arrow[rr, hookrightarrow] && S & \sF_i
    \end{tikzcd} \]
    \[ \varphi_{i,\eta}\colon f_\eta^*\sF_{i,\eta}\twoheadrightarrow \sL_{i,\eta},\qquad \varphi_{i}\colon f^*\sF_{i}\twoheadrightarrow \sL_{i}.\]
    We see that for \(1\leqslant i\leqslant d\), \((\varphi_{i,\eta},\sF_{i,\eta})\) is a presentation of \(\sL_{i,\eta}\). Therefore, we can apply the above operation for 
    \[f\colon X\to S,\quad \sL_i,\ (\varphi_i,\sF_i),\ (1\leqslant i\leqslant d),\ \sL_{d+1}\]
    to
    \[f_\eta\colon X_\eta\to \eta,\quad \sL_{i,\eta},\ (\varphi_{i,\eta},\sF_{\eta}),\ (1\leqslant i\leqslant d),\ \sL_{d+1, \eta},\]
    and then we can get a commutative diagram
    \[ \begin{tikzcd}
    	Z_{\cV, \eta}\arrow[r]\arrow[d] & Z_\eta\arrow[d] \\
    	X_{\cV,\eta}\arrow[r]\arrow[d, "f_{\cV,\eta}"]  & X_{\bP,\eta}\arrow[r, "\pi_{X,\eta}"]\arrow[d, "f_{\bP,\eta}"]  & X_\eta\arrow[d, "f_\eta"]\\
    	\cV_\eta\arrow[r] & \bP_\eta\arrow[r, "\pi_\eta"]  & \eta,
    \end{tikzcd} \]
    an invertible \(\sO_{V_\eta}\)-module
    \[	\cN_\eta = \Nm_{Z_{\cV,\eta}/\cV_\eta}(\sL_{d+1,\eta}|_{Z_{\cV,\eta}}), \]
    and an invertible \(\sO_\eta\)-module \(\cJ_\eta\) which \(\cN_\eta\) descends to up to \(\xi_\eta\) torsion.
    
    Since all the operations are commutative with the base change \(\eta\to S\), the subscript \(\eta\) not only indicates the objects are constructed in the \(\eta\) case, but also means that they are exactly the base change of corresponding objects in the \(S\) case along \(\eta \to S\), i.e.
    \[ \begin{tikzcd}
    Z_{\cV}\arrow[r]\arrow[d] & Z\arrow[d] \\
    X_{\cV}\arrow[r]\arrow[d, "f_{\cV}"]  & X_{\bP}\arrow[r, "\pi_X"]\arrow[d, "f_{\bP}"]  & X\arrow[d, "f"]\\
    \cV\arrow[r] & \bP\arrow[r, "\pi"]  & S
    \end{tikzcd} \xRightarrow{\times_S \eta}  \begin{tikzcd}
    Z_{\cV, \eta}\arrow[r]\arrow[d] & Z_\eta\arrow[d] \\
    X_{\cV,\eta}\arrow[r]\arrow[d, "f_{\cV,\eta}"]  & X_{\bP,\eta}\arrow[r, "\pi_{X,\eta}"]\arrow[d, "f_{\bP,\eta}"]  & X_\eta\arrow[d, "f_\eta"]\\
    \cV_\eta\arrow[r] & \bP_\eta\arrow[r, "\pi_\eta"]  & \eta.
    \end{tikzcd} \]
    We see that \(\cN_\eta = \cN \times_\cV\cV_\eta\). Then by Proposition \ref{uniqueness of descent}, we have a canonical isomorphism \(\cJ_\eta = \cJ\times_S\eta\). Since \(\eta\) is a regular scheme, \(\cJ_\eta\) is exactly the Deligne pairing \(\langle\sL_{1,\eta},\sL_{2,\eta},\ldots,\sL_{d+1,\eta}\rangle_{X_\eta/\eta}\) constructed in \cite{Garcia2000}, and thus is independent of the choice of representation \(\bfV\). For clarification, we use \(\psi_\eta\)  to denote the following canonical isomorphism
    \[ \psi_\eta\colon \cJ\times_S \eta\to \langle\sL_{1,\eta},\sL_{2,\eta},\ldots,\sL_{d+1,\eta}\rangle_{X_\eta/\eta}. \]
    
    Let \(S^{(1)}\) be the set of all codimension 1 points of \(S\). Let \(P\in S^{(1)}\), and let \(A = \sO_{S,P}\), \(X_{A} = X\times_S\Spec A\),
    \(\sL_{i,A} = \sL_i\times_S\Spec A = \sL_i|_{X_A}\), \(\sF_{i,A} =  \sF_i\times_S\Spec A = \sF_i|_{\Spec A}\). Applying the above process for \( \eta\to S\) to \(\Spec A\to S\), we get the following diagrams
    \[ \begin{tikzcd}[column sep=small]
    	\sL_{i,A} & X_{A}\arrow[rr]\arrow[d, "f_A"] && X\arrow[d, "f"] & \sL_i\\
    	\sF_{i,A} & \Spec A \arrow[rr] && S & \sF_i
    \end{tikzcd} \]
 \[ \varphi_{i,A}\colon f_A^*\sF_{i,A}\twoheadrightarrow \sL_{i,A},\qquad \varphi_{i}\colon f^*\sF_{i}\twoheadrightarrow \sL_{i},\]
     \[ \begin{tikzcd}
 	Z_{\cV}\arrow[r]\arrow[d] & Z\arrow[d] \\
 	X_{\cV}\arrow[r]\arrow[d, "f_{\cV}"]  & X_{\bP}\arrow[r, "\pi_X"]\arrow[d, "f_{\bP}"]  & X\arrow[d, "f"]\\
 	\cV\arrow[r] & \bP\arrow[r, "\pi"]  & S
 \end{tikzcd} \xRightarrow{\times_S \Spec A}  \begin{tikzcd}
 	Z_{\cV, A}\arrow[r]\arrow[d] & Z_A\arrow[d] \\
 	X_{\cV,A}\arrow[r]\arrow[d, "f_{\cV,A}"]  & X_{\bP,A}\arrow[r, "\pi_{X,A}"]\arrow[d, "f_{\bP,A}"]  & X_A\arrow[d, "f_A"]\\
 	\cV_A\arrow[r] & \bP_A\arrow[r, "\pi_A"]  & A.
 \end{tikzcd} \]
 an invertible \(\sO_{V_A}\)-module
\[	\cN_A = \Nm_{Z_{\cV,A}/\cV_A}(\sL_{d+1,A}|_{Z_{\cV,A}}), \]
an invertible \(A\)-module \(\cJ_A\) which \(\cN_A\) descends to up to \(\xi_A\) torsion, and a canonical isomorphism \(\cJ_A = \cJ\times_S\Spec A\). Since \(\Spec A\) is a regular scheme, \(\cJ_A\) is exactly the Deligne pairing \(\langle\sL_{1,A},\sL_{2,A},\ldots,\sL_{d+1,A}\rangle_{X_A/\Spec A}\) constructed in \cite{Garcia2000}, and thus is independent of the choice of representation \(\bfV\). We use \(\psi_P\)  to denote the following canonical isomorphism
\[ \psi_P\colon \cJ\times_S \Spec\sO_{S,P}\to \langle\sL_{1,A},\sL_{2,A},\ldots,\sL_{d+1,A}\rangle_{X_A/\Spec A}. \]
Proposition \ref{uniqueness of descent} tells us that the base change of \(\psi_P\) along \(\eta\to \Spec A\) is exactly the canonical isomorphism \(\psi_\eta\), and thus we have the following commutative diagram
\[ \begin{tikzcd}
	\cJ\times_S \Spec\sO_{S,P}\arrow[r, "\psi_P"]\arrow[d, hookrightarrow]& \langle\sL_{1,A},\sL_{2,A},\ldots,\sL_{d+1,A}\rangle_{X_A/\Spec A}\arrow[d, hookrightarrow]\\
	\cJ\times_S \eta\arrow[r, "\psi_\eta"]& \langle\sL_{1,\eta},\sL_{2,\eta},\ldots,\sL_{d+1,\eta}\rangle_{X_\eta/\eta}.
\end{tikzcd} \]

Now let \(\bfV'\) be another presentation. Then we get another \(\cJ_{\bfV'}\), canonical isomorphisms
\[ \psi'_\eta\colon \cJ_{\bfV'}\times_S \eta\to \langle\sL_{1,\eta},\sL_{2,\eta},\ldots,\sL_{d+1,\eta}\rangle_{X_\eta/\eta}\]
and
\[ \psi'_P\colon \cJ_{\bfV'}\times_S \Spec\sO_{S,P}\to \langle\sL_{1,A},\sL_{2,A},\ldots,\sL_{d+1,A}\rangle_{X_A/\Spec A} \]
for each \(P \in S^{(1)}\), and a commutative diagram
\[ \begin{tikzcd}
	\cJ_{\bfV'}\times_S \Spec\sO_{S,P}\arrow[r, "\psi'_P"]\arrow[d, hookrightarrow]& \langle\sL_{1,A},\sL_{2,A},\ldots,\sL_{d+1,A}\rangle_{X_A/\Spec A}\arrow[d, hookrightarrow]\\
	\cJ_{\bfV'}\times_S \eta\arrow[r, "\psi'_\eta"]& \langle\sL_{1,\eta},\sL_{2,\eta},\ldots,\sL_{d+1,\eta}\rangle_{X_\eta/\eta}
\end{tikzcd} \]
for each \(P \in S^{(1)}\).

Let \(\theta_\eta = {\psi'_\eta}^{-1}\circ \psi_\eta\) and \(\theta_P = {\psi'_P}^{-1}\circ \psi_P\) for each \(P\in S^{(1)}\). Then we have canonical isomorphisms
\[ \theta_\eta\colon \cJ_{\bfV}\times_S \eta\to \cJ_{\bfV'}\times_S \eta\]
and
\[ \theta_P\colon \cJ_{\bfV}\times_S \Spec\sO_{S,P}\to\cJ_{\bfV'}\times_S \Spec\sO_{S,P} \]
for each \(P \in S^{(1)}\), and a commutative diagram
\[ \begin{tikzcd}
	\cJ_{\bfV}\times_S \Spec\sO_{S,P}\arrow[r, "\theta_P"]\arrow[d, hookrightarrow]& \cJ_{\bfV'}\times_S \Spec\sO_{S,P}\arrow[d, hookrightarrow]\\
	\cJ_{\bfV}\times_S \eta\arrow[r, "\theta_\eta"]& \cJ_{\bfV'}\times_S\eta.
\end{tikzcd} \]
for each \(P \in S^{(1)}\).

Since \(S\) is a noetherian  normal integral scheme, and \(\cJ_{\bfV}\) and \(\cJ_{\bfV'}\) are invertible \(\sO_S\)-modules, we have
\[ \cJ_{\bfV} = \bigcap_{P\in S^{(1)}}\cJ_{\bfV}\times_S \Spec\sO_{S,P} \]
and
\[ \cJ_{\bfV'} = \bigcap_{P\in S^{(1)}}\cJ_{\bfV'}\times_S \Spec\sO_{S,P}, \]
where the two intersections are taken in \(\cJ_{\bfV}\times_S \eta\) and \(\cJ_{\bfV'}\times_S \eta\) respectively.
Thus \(\theta_{\eta}\) restricts to a canonical isomorphism
\[ \theta_{\bfV,\bfV'}\colon \cJ_{\bfV}\xrightarrow{\sim}\cJ_{\bfV'}.\]
It is easy to see that \(\theta_{\bfV,\bfV} = \id\) and \(\theta_{\bfV',\bfV''} \circ \theta_{\bfV,\bfV'}=\theta_{\bfV,\bfV''}\).
\end{proof}

If \(S\) is not irreducible, then by Lemma \ref{disjoint union of irreducible components} we can apply the above construction over each irreducible component of \(S\). Then we can also get a \(\cJ\) on \(S\).

If \(d = 0\), we directly set \(\langle\sL\rangle_{X/S} \coloneq \Nm_{X/S}(\sL)\).

We summarize our results in this subsection as the following theorem.

\begin{Thm}\label{Deligne Pairing for sufficiently ample line bundles}
	Let \(f\colon X\to S\) be a surjective projective morphism of noetherian schemes of pure relative dimension \(d\). Suppose that \(S\) is normal. Then we can associate any \(d+1\) \(f\)-sufficiently ample invertible \(\sO_X\)-modules \(\sL_1,\sL_2,\ldots,\sL_{d+1}\) with the invertible \(\sO_S\)-module \(\cJ\), and denote it by \(\langle\sL_1,\sL_2,\ldots,\sL_{d+1}\rangle_{X/S}\).
\end{Thm}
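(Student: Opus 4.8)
The plan is to assemble the construction carried out above in this subsection and to verify that the invertible sheaf it produces is canonically well defined. First I would reduce to the case that $S$ is integral: since $S$ is normal it satisfies \hyperlink{star assumption}{(*)}, so by Lemma \ref{disjoint union of irreducible components} its irreducible components are pairwise disjoint and open, and an invertible $\sO_S$-module is the same datum as an invertible module on each component; thus we may assume $S$ is a noetherian normal integral scheme. If $d=0$, then $f$, being projective of pure relative dimension $0$, is finite, and we simply set $\langle\sL\rangle_{X/S}:=\Nm_{X/S}(\sL)$ via Theorem \ref{Norm functor}. So suppose $d\geqslant 1$. For each $1\leqslant i\leqslant d$, the hypothesis that $\sL_i$ is $f$-sufficiently ample furnishes a presentation $(\varphi_i,\sF_i)$; I would then form $\bP_i=\bP(\sF_i\spcheck)$, $\bP=\prod_{i=1}^d\bP_i$, $\xi_i=\sO_{\bP_i}(1)$, the natural sections $\sigma_i$ of $\sL_i\boxtimes\xi_i$ on $X_\bP$ of \eqref{eq:natrual section}, the closed subscheme $Z=Z_d=Z(\sigma_1,\ldots,\sigma_d)$, and the open subset $\cV=\cV_d\subseteq\bP$. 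By Proposition \ref{regular sequence on porduct}, $\sigma_1,\ldots,\sigma_d$ is a regular sequence; by Proposition \ref{fiber dim}, $\pi^{-1}(s)\setminus\cV$ has codimension $\geqslant 2$ in $\pi^{-1}(s)$ for every $s\in S$, so $\cV\neq\emptyset$ and $\cV$ satisfies the hypothesis of Definition \ref{descent up to torsion}.

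Next, since $\bP\to S$ is a projective space bundle, hence smooth, $\bP$ and its open subscheme $\cV$ are normal; and $Z_\cV\to\cV$, being projective of pure relative dimension $0$, is finite. Thus Theorem \ref{Norm functor} produces the invertible $\sO_\cV$-module $\cN=\Nm_{Z_\cV/\cV}(\sL_{d+1}|_{Z_\cV})$ of \eqref{raw deligne pairing}. Proposition \ref{descend lemma} then yields integers $r_i$, an invertible $\sO_S$-module $\cJ$, and an isomorphism $\lambda\colon\cN\xrightarrow{\sim}\pi^*\cJ\mathop{\boxtimes}_{i=1}^d\xi_i^{r_i}$, and by Proposition \ref{uniqueness of descent} the $r_i$ are forced and $\cJ$ is determined up to a unique isomorphism compatible with $\lambda$.

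It remains to remove the dependence on the presentations $\bfV=((\varphi_i,\sF_i))_{1\leqslant i\leqslant d}$, which is exactly Proposition \ref{indep of presentation}: any two choices $\bfV,\bfV'$ give a canonical isomorphism $\theta_{\bfV,\bfV'}\colon\cJ_{\bfV}\xrightarrow{\sim}\cJ_{\bfV'}$ with $\theta_{\bfV,\bfV}=\id$ and $\theta_{\bfV',\bfV''}\circ\theta_{\bfV,\bfV'}=\theta_{\bfV,\bfV''}$, so the family $(\cJ_{\bfV})_{\bfV}$ has a well-defined common value, which we take to be $\langle\sL_1,\ldots,\sL_{d+1}\rangle_{X/S}$. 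Finally, for a general noetherian normal $S$ one runs this construction over each disjoint open irreducible component and glues. I expect no genuinely new obstacle here, since the theorem merely consolidates the preceding work; the one point that is not purely formal is the presentation-independence of the third paragraph, which rests on algebraic Hartogs' theorem together with Garc\'ia's results over the generic point and over the codimension-one local rings of $S$ (where $f$ becomes of finite Tor-dimension), everything else being bookkeeping of canonical isomorphisms.
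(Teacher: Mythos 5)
Your proposal is correct and follows essentially the same route as the paper: the theorem is stated there as a summary of the preceding construction, and you assemble exactly the same ingredients (reduction to integral $S$ via Lemma \ref{disjoint union of irreducible components}, the $d=0$ norm-functor case, the projective bundle $\bP$, the sections $\sigma_i$, the open set $\cV$ with the codimension bound from Proposition \ref{fiber dim}, the norm $\cN$, descent via Propositions \ref{descend lemma} and \ref{uniqueness of descent}, and presentation-independence via Proposition \ref{indep of presentation}). No gaps.
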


\subsection{Deligne pairing of general invertible sheaves}

For simplicity, we introduce some notations.

\begin{Def}
	Let \(f\colon X\to S\) be a surjective projective morphism. Suppose that \(S\) is integral and normal. Let \(\eta\) be the generic point of \(S\), and \(X_\eta = X\times_S\eta = f^{-1}(\eta)\). We use \(P\) to denote a point in \( S^{(1)}\), and let \(A = \Spec \sO_{S,P}\) and \(X_A = X\times_S\Spec A\). For an invertible \(\sO_S\)-module \(\sM\), we define
	\[ \sM_\eta = \sM|_{\eta} = \sM\times_S\eta \]
	and
	\[ \sM_P = \sM|_{\Spec A} = \sM\times_S\Spec A.\] For an invertible \(\sO_X\)-module \(\sL\), we define
	\[ \sL_\eta = \sL|_{X_\eta}\quad \text{and}\quad  \sL_P = \sL|_{X_A}.\]
\end{Def}

We present the above notations in the following diagram.
\[ \begin{tikzcd}[row sep=small]
	\sL_\eta & \sL_P & \sL\\
	X_\eta\arrow[r]\arrow[dd] & X_A\arrow[r]\arrow[dd] & X\arrow[dd]\\&&& \\
	\eta\arrow[r] & \Spec A\arrow[r] & S\\
	\sM_\eta & \sM_P &  \sM.
\end{tikzcd} \]

From the proof of Proposition \ref{indep of presentation}, we can deduce the following propositions.

\begin{Prop}\label{codim 1 of f.s.a equidimensional deligne pairing}
	Let \(f\colon X\to S\) be a surjective projective morphism of noetherian schemes of pure relative dimension \(d\). Suppose that \(S\) is integral and normal, and that \(\sL_1,\sL_2,\ldots,\sL_{d+1}\) are \(d+1\) \(f\)-sufficiently ample invertible \(\sO_X\)-modules. Then we have a commutative diagram
	\[ \begin{tikzcd}
		\langle\sL_{1},\sL_{2},\ldots,\sL_{n+1}\rangle_{X/S,\, P}\arrow[r, hookrightarrow]\arrow[d] & \langle\sL_{1},\sL_{2},\ldots,\sL_{n+1}\rangle_{X/S,\,\eta}\arrow[d]\\
		\langle\sL_{1,P},\sL_{2,P},\ldots,\sL_{n+1,P}\rangle_{X_A/\Spec A}\arrow[r, hookrightarrow] & \langle\sL_{1,\eta},\sL_{2,\eta},\ldots,\sL_{n+1,\eta}\rangle_{X_\eta/\eta},
	\end{tikzcd} \]
	where \(P\in S^{(1)}\), two vertical arrows are canonical isomorphisms, and two bottom objects are Deligne pairings defined in \cite{Garcia2000}.
\end{Prop}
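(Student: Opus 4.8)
The plan is to read the required square off from the proof of Proposition \ref{indep of presentation}, where all the relevant objects were already assembled; no argument is needed beyond reorganizing what is there. First I would recall that, since $\sL_1,\dots,\sL_{d+1}$ are $f$-sufficiently ample, Theorem \ref{Deligne Pairing for sufficiently ample line bundles} identifies $\langle\sL_1,\dots,\sL_{d+1}\rangle_{X/S}$ with the invertible $\sO_S$-module $\cJ$ produced, for a fixed choice of presentations, by forming the bundle $\bP=\prod_i\bP(\sF_i\spcheck)$, the canonical regular sections $\sigma_i$, the open locus $\cV\subseteq\bP$, the norm sheaf $\cN=\Nm_{Z_\cV/\cV}(\sL_{d+1}|_{Z_\cV})$, and then descending $\cN$ to $S$ up to $\xi$-torsion (Propositions \ref{descend lemma} and \ref{uniqueness of descent}). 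The key point, which is already the heart of the proof of Proposition \ref{indep of presentation}, is that every one of these steps commutes with the flat base changes $\Spec\sO_{S,P}\to S$ (with $P\in S^{(1)}$, $A=\sO_{S,P}$) and $\eta\to S$: passing to $\bP$, the sections $\sigma_i$ and the loci $\cV_i$ are preserved by any base change; the norm along the finite morphism $Z_\cV\to\cV$ is preserved by Proposition \ref{base change of Norm} (its hypothesis holds because $\cV$ is integral, being open in a projective bundle over the normal integral scheme $S$, and $\cV_A\to\cV$, $\cV_\eta\to\cV$ are dominant); and the descent is preserved thanks to the uniqueness in Proposition \ref{uniqueness of descent}. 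Hence $\cN_A=\cN\times_\cV\cV_A$ and $\cN_\eta=\cN\times_\cV\cV_\eta$, and Proposition \ref{uniqueness of descent} supplies canonical isomorphisms $\cJ\times_S\Spec A\cong\cJ_A$ and $\cJ\times_S\eta\cong\cJ_\eta$.

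Next I would observe that, since $A$ is a one-dimensional noetherian normal local domain, it is a discrete valuation ring, so both $\Spec A$ and $\eta=\Spec k(S)$ are regular; consequently $X_A\to\Spec A$ and $X_\eta\to\eta$ are surjective projective equidimensional morphisms of finite Tor-dimension, to which Garc\'ia's construction in \cite{Garcia2000} \S2 applies, yielding $\langle\sL_{1,P},\dots,\sL_{d+1,P}\rangle_{X_A/\Spec A}$ and $\langle\sL_{1,\eta},\dots,\sL_{d+1,\eta}\rangle_{X_\eta/\eta}$. Over a regular base Garc\'ia's pairing is itself built by exactly this descent procedure, so $\cJ_A$ and $\cJ_\eta$ are canonically these Garc\'ia pairings; composing with the isomorphisms of the previous paragraph produces the two vertical arrows $\psi_P$ and $\psi_\eta$ of the claimed diagram. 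For the horizontal edges: the left one is the localization map $\cJ\times_S\Spec A\hookrightarrow\cJ\times_S\eta$, which is injective because $\cJ$ is invertible on the integral scheme $S$ and $A\subseteq k(S)$; the right one is the base-change identification of Garc\'ia's pairing along the flat morphism $\eta\to\Spec A$, established in \cite{Garcia2000}. The square then commutes because, by Proposition \ref{uniqueness of descent}, the restriction of $\psi_P$ along $\eta\to\Spec A$ is precisely $\psi_\eta$ — this is literally the commutative square displayed inside the proof of Proposition \ref{indep of presentation}.

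The only step demanding genuine care is the verification that ``take the norm, then descend'' really commutes with base changes which are flat over $S$ but need not remain so after restriction to the finite locus $Z_\cV\to\cV$; this is exactly the role of Propositions \ref{base change of Norm} and \ref{uniqueness of descent}, so I do not expect any real obstacle. In short, the proof amounts to unwinding the proof of Proposition \ref{indep of presentation}: the diagram in the statement is obtained by stacking the two commutative squares constructed there (one for $\psi_P$, one for $\psi_\eta$) and invoking Garc\'ia's base-change compatibility on the right-hand column.
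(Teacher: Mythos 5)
Your proposal is correct and follows essentially the same route as the paper: the paper itself derives this proposition directly from the proof of Proposition \ref{indep of presentation}, where the canonical isomorphisms $\psi_P$, $\psi_\eta$ and the commutative square relating them are constructed by exactly the base-change and uniqueness-of-descent arguments you describe. Your added remarks (integrality of $\cV$ so that Proposition \ref{base change of Norm} applies, and regularity of $\Spec A$ and $\eta$ so that Garc\'ia's construction is recovered) are precisely the points the paper relies on.
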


The following proposition explains why we call our construction ``Deligne pairing" and justifies our notation.

\begin{Prop}\label{EDP = DP for f.s.a. lb}
	Let \(f\colon X\to S\) be a surjective projective morphism of noetherian schemes of pure relative dimension \(d\). Suppose that \(S\) is normal, \(f\) is of finite Tor-dimension, and that \(\sL_1,\sL_2,\ldots,\sL_{d+1}\) are \(d+1\) \(f\)-sufficiently ample invertible \(\sO_X\)-modules. Then our \(\langle \sL_1,\sL_2,\ldots,\sL_{d+1}\rangle_{X/S}\) is identical to the Deligne pairing defined in \cite{Garcia2000}.
\end{Prop}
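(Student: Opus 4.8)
The plan is as follows. By Lemma \ref{disjoint union of irreducible components} I may assume that $S$ is a noetherian normal \emph{integral} scheme; write $\cJ$ for our $\langle\sL_1,\ldots,\sL_{d+1}\rangle_{X/S}$ and $\cJ^{\mathrm{G}}$ for the Deligne pairing of \cite{Garcia2000}, which is defined over all of $S$ precisely because $f$ is of finite Tor-dimension. Let $\eta$ be the generic point of $S$ and, for $P\in S^{(1)}$, set $A = \sO_{S,P}$. I will construct compatible canonical isomorphisms between $\cJ$ and $\cJ^{\mathrm{G}}$ over $\eta$ and over every $\Spec A$, and then conclude by algebraic Hartogs' theorem, mimicking the proof of Proposition \ref{indep of presentation}.

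First I would note that $\eta\to S$ and $\Spec A\to S$ are flat localizations, so García's base change theorem from \cite{Garcia2000} supplies canonical isomorphisms identifying $\cJ^{\mathrm{G}}|_\eta$ and $\cJ^{\mathrm{G}}|_{\Spec A}$ with García's Deligne pairings $\langle\sL_{1,\eta},\ldots,\sL_{d+1,\eta}\rangle^{\mathrm{G}}_{X_\eta/\eta}$ and $\langle\sL_{1,A},\ldots,\sL_{d+1,A}\rangle^{\mathrm{G}}_{X_A/\Spec A}$, and these are compatible with the tautological restriction maps along $\eta\hookrightarrow\Spec A\hookrightarrow S$ by the cocycle property of flat base change. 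On the other hand, Proposition \ref{codim 1 of f.s.a equidimensional deligne pairing} already identifies $\cJ|_\eta$ and $\cJ|_{\Spec A}$ with the \emph{same} García pairings over $\eta$ and $\Spec A$, in a way compatible with the restriction $\cJ|_{\Spec A}\hookrightarrow\cJ|_\eta$. Composing the two families of identifications yields canonical isomorphisms $\rho_\eta\colon\cJ|_\eta\xrightarrow{\sim}\cJ^{\mathrm{G}}|_\eta$ and $\rho_P\colon\cJ|_{\Spec\sO_{S,P}}\xrightarrow{\sim}\cJ^{\mathrm{G}}|_{\Spec\sO_{S,P}}$, forming a commuting ladder with the restriction inclusions; in particular $\rho_\eta$ carries the submodule $\cJ|_{\Spec\sO_{S,P}}\subseteq\cJ|_\eta$ onto $\cJ^{\mathrm{G}}|_{\Spec\sO_{S,P}}\subseteq\cJ^{\mathrm{G}}|_\eta$ for every $P\in S^{(1)}$.

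To finish, I would apply algebraic Hartogs' theorem: since $S$ is noetherian, normal and integral and $\cJ$, $\cJ^{\mathrm{G}}$ are invertible $\sO_S$-modules, one has $\cJ = \bigcap_{P\in S^{(1)}}\cJ|_{\Spec\sO_{S,P}}$ inside $\cJ|_\eta$ and likewise $\cJ^{\mathrm{G}} = \bigcap_{P\in S^{(1)}}\cJ^{\mathrm{G}}|_{\Spec\sO_{S,P}}$ inside $\cJ^{\mathrm{G}}|_\eta$. Since $\rho_\eta$ matches the two families of local submodules, it restricts to a canonical isomorphism $\cJ\xrightarrow{\sim}\cJ^{\mathrm{G}}$; passing back from the irreducible components of a general normal $S$ then completes the proof.

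The step I expect to be the main obstacle is not the final Hartogs argument (which is mechanical), but verifying that the two ``canonical'' identifications—the one from Proposition \ref{codim 1 of f.s.a equidimensional deligne pairing} and García's flat base change isomorphism—are genuinely compatible after a further restriction to $\eta$, so that $\rho_\eta$ and the $\rho_P$ really do form a commuting ladder. This compatibility is ultimately forced by the uniqueness of descent (Proposition \ref{uniqueness of descent}) together with the fact that García's construction over a regular base rests on the very same $\bP$/$\cV$-descent recipe as ours; checking that all the intervening squares commute is where the care lies. As an alternative route one could argue directly that over the normal integral scheme $\cV$ García's norm functor for the finite Tor-dimension morphism $Z_{\cV}\to\cV$ coincides with Grothendieck's norm functor for finite morphisms over normal schemes used in \eqref{raw deligne pairing}—again checked over the generic point and the codimension-one points of $\cV$ and glued by Hartogs—so that the sheaves $\cN$, and hence $\cJ$, produced by the two constructions literally agree.
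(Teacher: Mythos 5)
Your proposal is correct and follows essentially the same route as the paper: reduce to $S$ integral, identify both pairings with Garc\'ia's pairing over the generic point and over each $\Spec\sO_{S,P}$ with $P\in S^{(1)}$ (via flat base change on Garc\'ia's side and Proposition \ref{codim 1 of f.s.a equidimensional deligne pairing} on ours), and conclude by the algebraic Hartogs intersection $\sM=\bigcap_{P\in S^{(1)}}\sM_P$ inside $\sM_\eta$. The extra care you flag about the commuting ladder is exactly what the paper's Proposition \ref{codim 1 miracle for line bundles} packages, so nothing further is needed.
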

\begin{proof}
	We can assume that \(S\) is integral, and let \(\eta\) be the generic point of \(S\). Let \(\sM\) be the Deligne pairing of \(\sL_1,\sL_2,\ldots,\sL_{d+1}\) defined by \cite{Garcia2000}. Then we have
	\[ \sM_\eta = \langle\sL_{1,\eta},\sL_{2,\eta},\ldots,\sL_{d+1,\eta}\rangle_{X_\eta/\eta}\]
	and
	\[ \sM_P =  \langle\sL_{1,P},\sL_{2,P},\ldots,\sL_{d+1,P}\rangle_{X_A/\Spec A}, \]
	where both \(\langle\sL_{1,\eta},\sL_{2,\eta},\ldots,\sL_{d+1,\eta}\rangle_{X_\eta/\eta}\) and \(\langle\sL_{1,P},\sL_{2,P},\ldots,\sL_{d+1,P}\rangle_{X_A/\Spec A}\) are Deligne pairings defined by \cite{Garcia2000}. Moreover, we have
	\[ \sM = \bigcap_{P\in S^{(1)}} \sM_P, \]
	where the intersection is taken in \(\sM_\eta\).
	
	Let \(\sN = \langle \sL_1,\sL_2,\ldots,\sL_{d+1}\rangle_{X/S}\) be our Deligne pairing. Then by Proposition \ref{codim 1 of equidimensional deligne pairing}, \(\sN_\eta = \sM_\eta\), \(\sN_P = \sM_P\) for every \(P\in S^{(1)}\), and
	\[ \sN = \bigcap_{P\in S^{(1)}} \sN_P = \bigcap_{P\in S^{(1)}} \sM_P = \sM. \qedhere\]
\end{proof}

From the above reasoning, we can summarize the following very useful proposition.

\begin{Prop}\label{codim 1 miracle for line bundles}
	Let \(S\) be a normal integral scheme, and let \(\eta\) be the generic point of \(S\). Let \(\sM\) and \(\sN\) be two invertible \(\sO_S\)-modules. Suppose that we have isomorphisms \(\varphi_\eta\colon \sM_{\eta}\to \sN_{\eta}\) and \(\varphi_P\colon \sM_{P}\to \sN_{P}\) for every \(P\in S^{(1)}\). Moreover, for every \(P\in S^{(1)}\), they satisfy the following commutative diagram
	\[ \begin{tikzcd}
		\sM_P\arrow[r, hookrightarrow]\arrow[d,"\varphi_P"] & \sM_\eta\arrow[d,"\varphi_\eta"]\\
		\sN_P\arrow[r, hookrightarrow] & \sN_\eta.
	\end{tikzcd} \]
	Then \(\varphi_\eta\) restricts to an isomorphism \(\varphi\colon \sM\to \sN\).
\end{Prop}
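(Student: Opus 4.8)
The plan is to reduce the assertion to a computation with fractional ideals and then invoke algebraic Hartogs' theorem.

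First I would reduce to an affine-local statement. Since $\sM$ and $\sN$ are invertible $\sO_S$-modules, it suffices to produce the isomorphism $\varphi$ over each member of an affine open cover of $S$ on which both $\sM$ and $\sN$ are trivial: the resulting local isomorphisms will all arise by restricting the single morphism $\varphi_\eta$, hence they agree on overlaps and glue to a global $\varphi$. So fix an affine open $U=\Spec A$ with $A$ a normal (noetherian) domain, together with trivializations $\sM|_U\cong\sO_U$ and $\sN|_U\cong\sO_U$. Writing $K=\operatorname{Frac}(A)$ for the function field of $S$, these trivializations identify $\sM_\eta$ and $\sN_\eta$ with $K$, and for each height-one prime $\fp\subseteq A$ (equivalently, each $P\in S^{(1)}$ lying in $U$) they identify $\sM_P$ and $\sN_P$ with $A_\fp$, compatibly with the chain of inclusions $A\subseteq A_\fp\subseteq K$.

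Under these identifications, $\varphi_\eta\colon K\to K$ is an isomorphism of one-dimensional $K$-vector spaces, hence multiplication by some $c\in K^\times$. The commutative square in the hypothesis says precisely that for every height-one prime $\fp\subseteq A$, multiplication by $c$ carries $A_\fp=\sM_P$ isomorphically onto $A_\fp=\sN_P$ (this isomorphism being $\varphi_P$); equivalently $cA_\fp=A_\fp$, i.e.\ $c\in A_\fp^\times$. Now apply algebraic Hartogs' theorem: since $A$ is normal, $A=\bigcap_{\operatorname{ht}\fp=1}A_\fp$ inside $K$. From $c\in A_\fp$ for all height-one $\fp$ we get $c\in A$, and likewise $c^{-1}\in A$, so $c\in A^\times$. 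Hence multiplication by $c$ is an automorphism of $\sO_U$, that is, $\varphi_\eta$ restricts over $U$ to an isomorphism $\sM|_U\xrightarrow{\sim}\sN|_U$. Gluing over the cover yields the desired $\varphi\colon\sM\to\sN$, which by construction restricts to $\varphi_\eta$ over $\eta$.

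The argument is short and I do not expect a genuine obstacle. The two points requiring a little care are the compatibility bookkeeping in the reduction step---checking that the chosen trivializations of $\sM|_U$ and $\sN|_U$ are compatible with the given inclusions $\sM_P\hookrightarrow\sM_\eta$ and $\sN_P\hookrightarrow\sN_\eta$, so that the hypothesis really becomes the condition $cA_\fp=A_\fp$---and the remark that the local isomorphisms glue, which holds because each is a restriction of the fixed global isomorphism $\varphi_\eta$. One could also phrase the whole proof without passing to $\Spec A$, by noting $\sM=\bigcap_{P\in S^{(1)}}\sM_P$ and $\sN=\bigcap_{P\in S^{(1)}}\sN_P$ inside $\sM_\eta$ and $\sN_\eta$ and using that the injective map $\varphi_\eta$ commutes with such intersections; the affine-local formulation above is essentially this, made explicit.
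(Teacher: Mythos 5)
Your proof is correct and is essentially the paper's own argument: the paper derives this proposition from the observation that $\sM=\bigcap_{P\in S^{(1)}}\sM_P$ and $\sN=\bigcap_{P\in S^{(1)}}\sN_P$ inside the generic stalks (algebraic Hartogs), so that $\varphi_\eta$, carrying each $\sM_P$ onto $\sN_P$, restricts to $\sM\to\sN$ with inverse restricted from $\varphi_\eta^{-1}$. Your affine-local version with $c\in\bigcap_{\mathrm{ht}\,\fp=1}A_\fp^\times=A^\times$ is the same computation made explicit, as you yourself note in your closing remark.
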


\begin{Def}
	Let \(f\colon X\to S\) be a projective morphism of noetherian schemes. Define \(\Pic(X)_{\fsa}\) to be the subset of isomorphism classes of \(f\)-sufficiently ample invertible \(\sO_X\)-modules of \(\Pic(X)\).
\end{Def}

Let \(f\colon X\to S\) be a surjective projective morphism of noetherian schemes of pure relative dimension \(d\). Suppose that \(S\) is normal. Then Theorem \ref{Deligne Pairing for sufficiently ample line bundles} defines a map
\begin{equation}\label{eq:EDP of f.s.a lb}
	\langle\sL_1,\sL_2,\ldots,\sL_{d+1}\rangle_{X/S}\colon\Pic(X)^{d+1}_{\fsa}\to\Pic(S).
\end{equation}
Note that if \(\sL,\sL'\in\Pic(X)_{\fsa}\), then \(\sL\otimes\sL'\in\Pic(X)_{\fsa}\).

\begin{Prop}\label{multi-additive of EDP of f.s.a lb}
	The map \eqref{eq:EDP of f.s.a lb} is multi-additive.
\end{Prop}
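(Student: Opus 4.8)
The plan is to deduce multi-additivity from García's results via the codimension-one comparison already set up in Propositions~\ref{codim 1 of f.s.a equidimensional deligne pairing} and \ref{codim 1 miracle for line bundles}. First I would reduce to the case where $S$ is integral: by Lemma~\ref{disjoint union of irreducible components} the scheme $S$ is the disjoint union of its open irreducible components, and the construction of $\langle\cdots\rangle_{X/S}$ was carried out componentwise, so it suffices to treat each component. The case $d=0$ is immediate, since there $\langle\sL\rangle_{X/S}=\Nm_{X/S}(\sL)$ and the norm functor preserves tensor products by Theorem~\ref{Norm functor}. So assume $d\geqslant 1$ and that $S$ is normal integral noetherian. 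Fix an index $1\leqslant i\leqslant d+1$, fix $f$-sufficiently ample $\sL_1,\dots,\sL_{d+1}$ together with one more $f$-sufficiently ample $\sL_i'$; since $\sL_i\otimes\sL_i'$ is again $f$-sufficiently ample (noted just before the statement), all three pairings below are defined. Writing $\sN=\langle\dots,\sL_i\otimes\sL_i',\dots\rangle_{X/S}$ and $\sN'=\langle\dots,\sL_i,\dots\rangle_{X/S}\otimes\langle\dots,\sL_i',\dots\rangle_{X/S}$, the goal is to produce an isomorphism $\sN\cong\sN'$ of invertible $\sO_S$-modules.

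The main step is to invoke Proposition~\ref{codim 1 miracle for line bundles}: it is enough to construct isomorphisms $\varphi_\eta\colon\sN_\eta\to\sN'_\eta$ and $\varphi_P\colon\sN_P\to\sN'_P$ for every $P\in S^{(1)}$ which are compatible with the localization inclusions $\sN_P\hookrightarrow\sN_\eta$ and $\sN'_P\hookrightarrow\sN'_\eta$. Since $\eta$ and $\Spec\sO_{S,P}$ are regular (the latter because $S$ is normal, so $\sO_{S,P}$ is a discrete valuation ring for $P\in S^{(1)}$), every morphism to them is of finite Tor-dimension, so Proposition~\ref{codim 1 of f.s.a equidimensional deligne pairing} identifies $\sN_\eta$ and $\sN_P$ canonically with García's Deligne pairings $\langle\dots,\sL_{i,\eta}\otimes\sL_{i,\eta}',\dots\rangle_{X_\eta/\eta}$ and $\langle\dots,\sL_{i,P}\otimes\sL_{i,P}',\dots\rangle_{X_A/\Spec A}$, and likewise for $\sN'$. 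García's Deligne pairing is a symmetric multi-additive functor, so it supplies the two isomorphisms $\varphi_\eta$ and $\varphi_P$ in each slot.

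It then remains to verify the commutativity of the square of Proposition~\ref{codim 1 miracle for line bundles}, i.e. that $\varphi_P$ is carried to $\varphi_\eta$ by the localization maps. This combines two ingredients: first, García's additivity isomorphism is compatible with base change, so the additivity isomorphism over $\Spec\sO_{S,P}$ pulls back along $\eta\to\Spec\sO_{S,P}$ to the one over $\eta$; second, the canonical identifications $\psi_\eta,\psi_P$ constructed in the proof of Proposition~\ref{indep of presentation} fit, by Proposition~\ref{uniqueness of descent}, into a commutative square for the same base change $\eta\to\Spec\sO_{S,P}$. Chaining these identifications shows that the relevant square commutes, so Proposition~\ref{codim 1 miracle for line bundles} applies and $\varphi_\eta$ restricts to the desired isomorphism $\sN\to\sN'$. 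Carrying this out for each $i\in\{1,\dots,d+1\}$ gives multi-additivity of \eqref{eq:EDP of f.s.a lb}.

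I expect the only real subtlety to be this last compatibility check, which requires tracking the canonical isomorphisms of Propositions~\ref{indep of presentation} and \ref{codim 1 of f.s.a equidimensional deligne pairing} alongside García's base-change isomorphism; everything else is formal. For the final slot $i=d+1$ one could alternatively give a direct self-contained argument: the closed subscheme $Z_\cV$ is unchanged, $\Nm_{Z_\cV/\cV}$ preserves tensor products (Theorem~\ref{Norm functor}), so the tensor product of the two descent data of Definition~\ref{descent up to torsion} descends $\cN\otimes\cN'$ to $\sJ\otimes\sJ'$, which is then identified with the Deligne pairing of $\sL_{d+1}\otimes\sL_{d+1}'$ by the uniqueness in Proposition~\ref{uniqueness of descent}. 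However, the codimension-one argument above has the advantage of handling all $d+1$ slots uniformly, without needing the (not yet proven) symmetry of the pairing.
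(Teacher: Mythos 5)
Your proposal is correct and follows essentially the same route as the paper: reduce to $S$ integral, obtain the additivity isomorphisms over the generic point and over each $\Spec\sO_{S,P}$ with $P\in S^{(1)}$ from Garc\'ia's multi-additivity, check the compatibility square via Garc\'ia's base-change result, and conclude with Proposition~\ref{codim 1 miracle for line bundles}. The extra detail you give on why the square commutes, and the alternative direct argument for the slot $i=d+1$, are consistent with but not needed beyond what the paper does.
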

\begin{proof}
	We can assume that \(S\) is integral. Let \(1\leqslant i\leqslant d\) be an arbitrary integer, and let
	\[ \sM = \langle \sL_1,\ldots,\sL_i,\ldots\sL_{d+1}\rangle_{X/S}\otimes\langle \sL_1,\ldots,\sL'_i,\ldots\sL_{d+1}\rangle_{X/S} \]
	and
    \[ \sN = \langle \sL_1,\ldots,\sL_i\otimes\sL'_i,\ldots\sL_{d+1}\rangle_{X/S}, \]
	where all the terms are the same except the \(i\)-th term. Let \(\eta\) be the generic point of \(S\).
	Then we have a canonical isomorphism
	\[ \begin{split}
		\sN_\eta &= \langle \sL_{1,\eta},\ldots,\sL_{i,\eta}\otimes\sL'_{i,\eta},\ldots\sL_{d+1,\eta}\rangle_{X_\eta/\eta}\\
		&= \langle\sL_{1,\eta},\ldots,\sL_{i,\eta},\ldots\sL_{d+1,\eta}\rangle_{X_\eta/\eta}\\
		&\phantom{\mathrel{=}\langle\sL_{1,\eta},\ldots,\sL_{i,\eta},}\otimes\langle\sL_{1,\eta},\ldots,\sL'_{i,\eta},\ldots\sL_{d+1,\eta}\rangle_{X_\eta/\eta}\\
		&=  \langle \sL_1,\ldots,\sL_i,\ldots\sL_{d+1}\rangle_{X/S,\eta}\otimes \langle \sL_1,\ldots,\sL'_i,\ldots\sL_{d+1}\rangle_{X/S,\eta}\\
		&= \sM_\eta.
	\end{split} \]
	We denote this isomorphism by \(\Sigma_{i,\eta}\colon\sM_{\eta}\to\sN_{\eta}\). For every \(P\in S^{(1)}\), let \(A = \sO_{S,P}\), and then we have a canonical isomorphism
	\[ \begin{split}
		\sN_P &= \langle \sL_{1,P},\ldots,\sL_{i,P}\otimes\sL'_{i,P},\ldots\sL_{d+1,P}\rangle_{X_A/\Spec A}\\
		&= \langle\sL_{1,P},\ldots,\sL_{i,P},\ldots\sL_{d+1,P}\rangle_{X_A/\Spec A}\\
		&\phantom{\mathrel{=}\langle\sL_{1,P},\ldots,\sL_{i,P},}\otimes\langle\sL_{1,\eta},\ldots,\sL'_{i,\eta},\ldots\sL_{d+1,\eta}\rangle_{X_A/\Spec A}\\
		&=  \langle \sL_1,\ldots,\sL_i,\ldots\sL_{d+1}\rangle_{X/S,P}\otimes \langle \sL_1,\ldots,\sL'_i,\ldots\sL_{d+1}\rangle_{X/S,P}\\
		&= \sM_P.
	\end{split} \]
	We denote this isomorphism by \(\Sigma_{i,P}\colon\sM_P\to \sN_P\). Moreover, for every \(P\in S^{(1)}\), by \cite{Garcia2000} Theorem 4.2.6, we have a commutative diagram
	\[ \begin{tikzcd}
		\sM_P\arrow[r, hookrightarrow]\arrow[d,"\Sigma_{i,P}"] & \sM_\eta\arrow[d,"\Sigma_{i,\eta}"]\\
		\sN_P\arrow[r, hookrightarrow] & \sN_\eta.
	\end{tikzcd} \]
	By Proposition \ref{codim 1 miracle for line bundles}, \(\Sigma_{i,\eta}\) restricts to a canonical isomorphism \(\Sigma_{i}\colon\sM\to \sN\), i.e.
	\begin{multline}\label{eq:additive of EDP of f.s.a. lb}
		\Sigma_i\colon\langle \sL_1,\ldots,\sL_i,\ldots\sL_{d+1}\rangle_{X/S}\otimes\langle \sL_1,\ldots,\sL_i,\ldots\sL'_{d+1}\rangle_{X/S}\\
		\to  \langle \sL_1,\ldots,\sL_i\otimes\sL'_i,\ldots\sL_{d+1}\rangle_{X/S}.
	\end{multline}
\end{proof}

Multi-additive functors are the categorical generalization of multi-additive group homomorphism, as Picard groupoids are the categorical generalization of abelian groups. For the precise definition of a multi-monoidal functors, readers can refer to \cite{Ducrot2005} \S1.2.

For functorial properties, we discuss a little more on \(\Sigma_i\). After we prove that our Deligne pairing is a symmetric multi-additive functor, we can see that \(\Sigma_{i}\) is a natural isomorphism of two functors.

\begin{Prop}\label{commutative monoidal transformation of f.s.a lb}
	Let \(f\colon X\to S\) be a surjective projective morphism of noetherian schemes of pure relative dimension \(d\). Suppose that \(S\) is normal. Let \(1\leqslant i < j \leqslant d\) be two integers, \(\sL_1,\sL_2,\ldots,\sL_{d+1},\sL'_{i},\sL'_{j}\) be \(f\)-sufficiently ample invertible \(\sO_X\)-modules, and define
	\[ F(\sM,\sN) = \langle\sL_{1},\ldots,\sL_{i-1},\sM,\sL_{i+1},\ldots,\ldots,\sL_{j-1},\sN,\sL_{j+1},\ldots,\sL_{d+1}\rangle_{X/S} \]
	for simplicity. Then we have the following commutative diagram
	\[ \begin{tikzcd}[column sep=small, row sep=large]
		F(\sL_i,\sL_j)\otimes F(\sL_i,\sL'_j)\otimes F(\sL'_i,\sL_j)\otimes F(\sL'_i,\sL'_j)\arrow[dddd]\arrow[rd, "F(\sL_i\text{,}\Sigma_j)\otimes F(\sL'_i\text{,}\Sigma_j)"] \\
		&F(\sL_i,\sL_j\otimes\sL'_j)\otimes F(\sL'_i,\sL_j\otimes\sL'_j)\arrow[d, "F(\Sigma_i\text{,}\sL_j\otimes\sL'_j)"]\\
		&F(\sL_i\otimes\sL'_i,\sL_j\otimes\sL'_j)\\
		&F(\sL_i\otimes\sL'_i,\sL_j)\otimes F(\sL_i\otimes\sL'_i,\sL'_j)\arrow[u, "F(\sL_i\otimes\sL'_i\text{,}\Sigma_j)"']\\
		F(\sL_i,\sL_j)\otimes F(\sL'_i,\sL_j)\otimes F(\sL_i,\sL'_j)\otimes F(\sL'_i,\sL'_j),\arrow[ru, "F(\Sigma_i\text{,}\sL_j)\otimes F(\Sigma_i\text{,}\sL'_j)"']
	\end{tikzcd} \]
    where \(\Sigma_{i}\) and \(\Sigma_{j}\) are defined by \eqref{eq:additive of EDP of f.s.a. lb}, and the left vertical arrow is the unique canonical isomorphism in the symmetric monoidal category---\(\cPic(S)\).
\end{Prop}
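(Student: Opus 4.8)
The plan is to reduce the commutativity of the diagram, by the same algebraic Hartogs mechanism used throughout this section, to the corresponding coherence square for García's Deligne pairing over the generic point, where it is already available.

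First I would reduce to the case that $S$ is a noetherian normal \emph{integral} scheme, splitting $S$ into its (disjoint, open) irreducible components via Lemma \ref{disjoint union of irreducible components}; the whole construction, and all the arrows in the diagram, are defined component by component. Write $\eta$ for the generic point of $S$. The two composites around the diagram are isomorphisms between the two invertible $\sO_S$-modules $F(\sL_i,\sL_j)\otimes F(\sL_i,\sL'_j)\otimes F(\sL'_i,\sL_j)\otimes F(\sL'_i,\sL'_j)$ and $F(\sL_i\otimes\sL'_i,\sL_j\otimes\sL'_j)$; since $S$ is integral and $\sHom(\sM,\sN)\cong\sM^{\vee}\otimes\sN$ is again invertible, hence torsion-free, the restriction map $\Hom_S(\sM,\sN)\to\Hom_\eta(\sM_\eta,\sN_\eta)$ is injective (this is the uniqueness half of Proposition \ref{codim 1 miracle for line bundles}). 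Therefore it suffices to check that the diagram obtained by the base change $\eta\to S$ commutes. Here I would not even need the codimension-one points $P\in S^{(1)}$, since we are verifying a relation between isomorphisms that are already defined, not constructing a new one.

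Next I would identify the base-changed diagram. As $k(\eta)$ is a field, $f_\eta\colon X_\eta\to\eta$ is of finite Tor-dimension and each $\sL_{i,\eta}$ is $f_\eta$-sufficiently ample, so by Proposition \ref{EDP = DP for f.s.a. lb} the functor $\langle-\rangle_{X_\eta/\eta}$ coincides with García's Deligne pairing; moreover, by the very definition of the multi-additivity isomorphisms (see the proof of Proposition \ref{multi-additive of EDP of f.s.a lb}) each $\Sigma_i$ restricts over $\eta$ to García's isomorphism $\Sigma_{i,\eta}$, each $F(-,-)$ restricts to the corresponding García pairing, and the left vertical arrow — the canonical symmetry isomorphism of the symmetric monoidal category $\cPic(S)$ — restricts to that of $\cPic(\eta)$. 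Thus the base-changed diagram is precisely the "double bilinearity" coherence square for the slots $i$ and $j$ of García's symmetric multi-additive functor $\cPic(X_\eta)^{d+1}\to\cPic(\eta)$, which commutes by García's results on the multi-additive structure of his pairing \cite{Garcia2000} (\S4) — or, abstractly, by Mac Lane coherence, since García's pairing is a symmetric multi-additive functor in the sense of \cite{Ducrot2005} \S1.2. This completes the argument.

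The step I expect to be the main (if light) obstacle is the bookkeeping in the second-to-last paragraph: one must verify, arrow by arrow, that every constituent morphism of the diagram — $F(\sL_i,\Sigma_j)\otimes F(\sL'_i,\Sigma_j)$, $F(\Sigma_i,\sL_j\otimes\sL'_j)$, the two arrows $F(\Sigma_i,\sL_j)\otimes F(\Sigma_i,\sL'_j)$ and $F(\sL_i\otimes\sL'_i,\Sigma_j)$, and the reordering isomorphism — is compatible with the base change $\eta\to S$. Each of these is a routine unwinding of the definition of $\Sigma$ (and, at bottom, of the norm functor and determinants), but they must be done carefully so that the base-changed picture is literally García's coherence square; there is no analytic or geometric difficulty, the real content having already been settled over a field.
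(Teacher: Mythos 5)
Your proposal is correct and follows essentially the same route as the paper: reduce to $S$ integral, use that restriction of morphisms of invertible $\sO_S$-modules to the generic point is injective, and observe that over $\eta$ the $\Sigma$'s become Garc\'ia's additivity isomorphisms, for which the coherence square is already known. The paper's own proof is a terser version of exactly this argument.
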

\begin{proof}
	We can assume that \(S\) is integral, and let \(\eta\) be the generic point of \(S\). Note that \(\Sigma_i\) and \(\Sigma_j\) are induced by \(\Sigma_{i,\eta}\) and \(\Sigma_{j,\eta}\) on the generic fiber \(X_\eta\to \eta\) respectively, and that \(\Sigma_{i,\eta}\) and \(\Sigma_{j,\eta}\) satisfy the corresponding commutative diagram on the generic fiber since they are natural isomorphisms of the multi-additive functor---Deligne pairings for flat morphisms.
\end{proof}

Proposition \ref{commutative monoidal transformation of f.s.a lb} can be viewed as \(\Sigma_j\circ\Sigma_i = \Sigma_i\circ\Sigma_j\), which implies the order of operations of canceling parentheses (i.e. the order of \(\Sigma_{i}\)'s) for an expression \(\langle\ldots\rangle_{X/S}\) with \(\otimes\)-products is not important since there is only one isomorphism from the original expression to the final expression, regardless of the order.

\begin{Def}
	Let \(f\colon X\to S\) be a projective morphism of noetherian schemes. Define
	\begin{multline*}
		\Pic(X)_{f} = \{\sL\in\Pic(X)\mid \text{there exist } \sL_0,\sL_1\in\Pic(X)_{\fsa}\\
		 \text{ such that } \sL = \sL_0\otimes\sL_1^{-1}\}.
	\end{multline*}
\end{Def}

\begin{Prop}\label{EDP of good lb}
	Let \(f\colon X\to S\) be a surjective projective morphism of noetherian schemes of pure relative dimension \(d\). Suppose that \(S\) is normal. Then we can associate any \(d+1\) invertible \(\sO_X\)-modules \(\sL_1,\sL_2,\ldots,\sL_{d+1}\in\Pic(X)_f\) with an invertible \(\sO_S\)-module, which we define to be \(\langle\sL_1,\sL_2,\ldots,\sL_{d+1}\rangle_{X/S}\).
\end{Prop}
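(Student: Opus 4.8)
The plan is to extend the pairing of Theorem \ref{Deligne Pairing for sufficiently ample line bundles} by multi-linearity and then verify well-definedness on the codimension $\leqslant 1$ locus of $S$, exactly along the lines of Propositions \ref{indep of presentation} and \ref{multi-additive of EDP of f.s.a lb}. Writing each $\sL_i = \sL_{i,0}\otimes\sL_{i,1}^{-1}$ with $\sL_{i,0},\sL_{i,1}\in\Pic(X)_{\fsa}$, I would set
\[
\langle\sL_1,\ldots,\sL_{d+1}\rangle_{X/S}\coloneq\bigotimes_{\epsilon\in\{0,1\}^{d+1}}\langle\sL_{1,\epsilon_1},\ldots,\sL_{d+1,\epsilon_{d+1}}\rangle_{X/S}^{\otimes(-1)^{\epsilon_1+\cdots+\epsilon_{d+1}}},
\]
where each factor is defined by Theorem \ref{Deligne Pairing for sufficiently ample line bundles} since every $\sL_{i,\epsilon_i}$ is $f$-sufficiently ample, and a negative tensor power denotes the dual. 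The content of the proposition is that this invertible $\sO_S$-module does not depend, up to canonical isomorphism, on the choice of presentations $\sL_i=\sL_{i,0}\otimes\sL_{i,1}^{-1}$.

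To prove this, reduce to the case where $S$ is integral by Lemma \ref{disjoint union of irreducible components}, and let $\eta$ be its generic point. Fix two sets of presentations, giving invertible sheaves $\sM$ and $\sN$ on $S$ by the displayed formula, and for $P\in S^{(1)}$ put $A=\sO_{S,P}$. Since $\eta$ and $\Spec A$ are regular, $f_\eta$ and $f_A$ are of finite Tor-dimension, so Proposition \ref{codim 1 of f.s.a equidimensional deligne pairing} identifies each factor $\langle\sL_{1,\epsilon_1},\ldots\rangle_{X/S,\eta}$ (resp.\ $\langle\sL_{1,\epsilon_1},\ldots\rangle_{X/S,P}$) canonically with Garc\'ia's Deligne pairing of the restricted sufficiently ample sheaves over $X_\eta/\eta$ (resp.\ $X_A/\Spec A$). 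As Garc\'ia's Deligne pairing is a multi-additive functor on all invertible sheaves \cite{Garcia2000}, the alternating tensor product in the formula collapses: both $\sM_\eta$ and $\sN_\eta$ become canonically equal to $\langle\sL_{1,\eta},\ldots,\sL_{d+1,\eta}\rangle_{X_\eta/\eta}$, and both $\sM_P$ and $\sN_P$ to $\langle\sL_{1,P},\ldots,\sL_{d+1,P}\rangle_{X_A/\Spec A}$. This produces canonical isomorphisms $\varphi_\eta\colon\sM_\eta\to\sN_\eta$ and $\varphi_P\colon\sM_P\to\sN_P$ for all $P\in S^{(1)}$. Their compatibility under restriction along $\eta\to\Spec A$ follows from the flat base-change compatibility of Garc\'ia's pairing and of its multi-additive structure, together with the fact that the identifications of Proposition \ref{codim 1 of f.s.a equidimensional deligne pairing} are themselves compatible with this restriction (a consequence of Proposition \ref{uniqueness of descent}). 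Then Proposition \ref{codim 1 miracle for line bundles} supplies a canonical isomorphism $\sM\xrightarrow{\sim}\sN$, establishing well-definedness.

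The only real difficulty is the bookkeeping of canonicity: one must check that the isomorphisms $\varphi_\eta$ and $\varphi_P$ obtained from Garc\'ia's multi-additivity are genuinely canonical and that $\varphi_P$ restricts to $\varphi_\eta$, so that the Hartogs-type gluing of Proposition \ref{codim 1 miracle for line bundles} applies unambiguously. No new geometric ingredient beyond the preceding subsections is needed; the argument is a direct analogue of the proof of Proposition \ref{multi-additive of EDP of f.s.a lb}, and in fact one should simultaneously record the resulting canonical isomorphisms so that the later compatibility statements (symmetry, multi-additivity, functoriality) extend from $\Pic(X)_{\fsa}$ to $\Pic(X)_f$ by the same reduction.
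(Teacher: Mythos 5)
Your proposal is correct and follows essentially the same route as the paper: the same alternating tensor-product definition over all choices $h\colon\{1,\ldots,d+1\}\to\{0,1\}$, with well-definedness resting on the multi-additive structure of the sufficiently-ample pairing. The only difference is one of packaging — the paper deduces independence of the decompositions by citing the already-established isomorphisms $\Sigma_i$ and their coherence (Propositions \ref{multi-additive of EDP of f.s.a lb} and \ref{commutative monoidal transformation of f.s.a lb}), whereas you re-run the underlying generic-point/codimension-one Hartogs argument directly, which is exactly how those propositions were proved in the first place.
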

\begin{proof}
	For any integer \(1\leqslant i \leqslant d+1\), suppose that \(\sL_i = \sL_{i,0}\otimes\sL_{i,1}^{-1}\), where \(\sL_{i,0},\sL_{i,1}\in\Pic(X)_{\fsa}\). Let \(E\) be the set of all the maps \(h\colon \{1,2,\ldots,d+1\}\to \{0,1\}\). For \(h\in E\), let \(\epsilon_h = \prod_{i=1}^{d}(-1)^{h(i)}\). We define
	\[ \langle\sL_1,\sL_2,\ldots,\sL_{d+1}\rangle_{X/S}=\bigotimes_{h\in E}\langle\sL_{1,h(1)},\sL_{2,h(1)},\ldots,\sL_{d+1,h(d+1)}\rangle^{\epsilon_h}_{X/S}. \]
	By Proposition \ref{multi-additive of EDP of f.s.a lb} and \ref{commutative monoidal transformation of f.s.a lb}, \(\langle\sL_1,\sL_2,\ldots,\sL_{d+1}\rangle_{X/S}\) is independent of the choice of \(\sL_{i,0}\) and \(\sL_{i,1}\). More precisely, for two different choices, there is a unique isomorphism between the two results.
\end{proof}

\begin{Prop}\label{lb over affine is good}
	Let \(f\colon X\to S\) be a projective morphism of noetherian schemes. Suppose that \(S\) is an affine scheme. Then \(\Pic(X) = \Pic(X)_f\).
\end{Prop}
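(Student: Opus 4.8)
The plan is to show $\Pic(X)\subseteq\Pic(X)_f$ (the reverse inclusion being immediate) by producing, for an arbitrary $\sL\in\Pic(X)$, a single integer $n$ for which both $\sL\otimes\sO_X(n)$ and $\sO_X(n)$ are $f$-sufficiently ample; then $\sL=\bigl(\sL\otimes\sO_X(n)\bigr)\otimes\sO_X(n)^{-1}$ exhibits $\sL$ as an element of $\Pic(X)_f$.

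First I would fix the twisting sheaf. Since $f$ is projective and $S=\Spec A$ is affine and noetherian, there is a closed immersion $i\colon X\hookrightarrow\bP^N_S$ over $S$; put $\sO_X(1)=i^*\sO_{\bP^N_S}(1)$, which is $f$-very ample, hence $f$-ample. For every $n\geqslant 1$ the sheaf $\sO_X(n)$ is $f$-sufficiently ample: it is $f$-ample as a power of an $f$-ample sheaf, and pulling back the standard surjection $\sO_{\bP^N_S}^{\,M_n}\twoheadrightarrow\sO_{\bP^N_S}(n)$ (given by the degree-$n$ monomials, $M_n=\binom{N+n}{n}$) along $i$ yields a surjection $f^*\sO_S^{\,M_n}=i^*\sO_{\bP^N_S}^{\,M_n}\twoheadrightarrow\sO_X(n)$, i.e.\ a presentation with locally free $\sF=\sO_S^{\,M_n}$.

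Next, given $\sL\in\Pic(X)$, I would take $n$ large. On one hand, since $\sO_X(1)$ is $f$-ample, $\sL\otimes\sO_X(n)$ is $f$-ample for all $n\gg 0$ (the twist of an arbitrary invertible sheaf by a sufficiently high power of an $f$-ample one is again $f$-ample; cf.\ \cite{EGA2} \S4.6). On the other hand, as $X$ is projective over the noetherian scheme $S$, the pushforward $f_*(\sL\otimes\sO_X(n))$ is a coherent $\sO_S$-module, and for $n\gg 0$ the counit $f^*f_*(\sL\otimes\sO_X(n))\to\sL\otimes\sO_X(n)$ is surjective by $f$-ampleness of $\sO_X(1)$. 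Because $S=\Spec A$ with $A$ noetherian, $f_*(\sL\otimes\sO_X(n))$ corresponds to a finite $A$-module, so there is a surjection $\sO_S^{\,k}\twoheadrightarrow f_*(\sL\otimes\sO_X(n))$; composing pullbacks gives a surjection $f^*\sO_S^{\,k}\to\sL\otimes\sO_X(n)$. Hence for $n\gg 0$ the sheaf $\sL\otimes\sO_X(n)$ is $f$-sufficiently ample, and fixing any such $n$ that is also $\geqslant 1$ completes the argument.

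The only point where the hypotheses genuinely intervene is the construction of a presentation of $\sL\otimes\sO_X(n)$: affineness and noetherianity of $S$ are exactly what let us promote the relative global generation coming from $f$-ampleness into an honest surjection from a free $\sO_S$-module. The remaining ingredients — existence of $\sO_X(1)$, the presentation of $\sO_X(n)$, and $f$-ampleness of $\sL\otimes\sO_X(n)$ for $n\gg 0$ — are routine projective-geometry bookkeeping.
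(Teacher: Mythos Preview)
Your proof is correct and follows essentially the same strategy as the paper's: write $\sL=(\sL(n))\otimes\sO_X(n)^{-1}$ for $n\gg 0$ and check that both factors are $f$-sufficiently ample. The only cosmetic difference is that the paper handles $\sO_X(n)$ via the counit $f^*f_*\sO_X(n)\to\sO_X(n)$ rather than your explicit monomial presentation, and it leaves implicit the step (which you spell out) of replacing the coherent pushforward by a free $\sO_S$-module using that $S$ is affine noetherian.
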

\begin{proof}
	Let \(\sO_X(1)\) be an \(f\)-very ample invertible \(\sO_X\)-module. For any \(\sL\in\Pic(X)\), by \cite{EGA2} Chapter 2 (4.6.8), we know that there exists a positive integer \(n_0\) such that for every integer \(n\geqslant n_0\), the canonical morphisms 
	\[ f^*f_*\sO_X(n)\to\sO_X(n)\quad \text{and}\quad f^*f_*\sL(n)\to\sL(n) \]
	are both surjective. Let \(\sL_0 = \sL(n_0)\) and \(\sL_1 = \sO_X(n_0)\). Then we see that \(\sL_0,\sL_{1}\in\Pic(X)_{\fsa}\), and thus \(\sL = \sL_0\otimes\sL_1^{-1}\in\Pic(X)_f\).
\end{proof}

\begin{Thm}\label{EDP of general l.b}
	Let \(f\colon X\to S\) be a surjective projective morphism of noetherian schemes of pure relative dimension \(d\). Suppose that \(S\) is normal. Then we can associate any \(d+1\) invertible \(\sO_X\)-modules \(\sL_1,\sL_2,\ldots,\sL_{d+1}\) with an invertible \(\sO_S\)-module, which we define to be \(\langle\sL_1,\sL_2,\ldots,\sL_{d+1}\rangle_{X/S}\).
\end{Thm}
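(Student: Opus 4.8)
The plan is to construct $\langle\sL_1,\ldots,\sL_{d+1}\rangle_{X/S}$ locally on $S$, where every invertible sheaf is already a difference of $f$-sufficiently ample ones, and then to glue. By Lemma \ref{disjoint union of irreducible components} we may assume that $S$ is integral (and noetherian normal); the general case follows by treating each open-and-closed irreducible component of $S$ separately. Fix an affine open cover $\{S_\alpha\}_{\alpha\in I}$ of $S$ and put $X_\alpha=f^{-1}(S_\alpha)$, $f_\alpha\colon X_\alpha\to S_\alpha$, $\sL_{i,\alpha}=\sL_i|_{X_\alpha}$. Since $S_\alpha$ is affine, Proposition \ref{lb over affine is good} gives $\sL_{i,\alpha}\in\Pic(X_\alpha)_{f_\alpha}$ for all $i$, so Proposition \ref{EDP of good lb} produces an invertible $\sO_{S_\alpha}$-module $\cJ_\alpha\coloneq\langle\sL_{1,\alpha},\ldots,\sL_{d+1,\alpha}\rangle_{X_\alpha/S_\alpha}$.

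The crucial step is that the Deligne pairing for sheaves in $\Pic(\cdot)_{f}$, constructed in the preceding subsections, is compatible with base change along open immersions of the base. Concretely, if $S'\subseteq S$ is open, $X'=f^{-1}(S')$, and $\sL_i\in\Pic(X)_f$, then there should be a canonical isomorphism
\[ \cJ|_{S'}\xrightarrow{\sim}\langle\sL_1|_{X'},\ldots,\sL_{d+1}|_{X'}\rangle_{X'/S'}, \]
transitive under further restriction. I would verify this by unwinding the construction: a presentation over $S$ restricts to one over $S'$; the projective bundle $\bP$, the canonical sections $\sigma_i$, and the opens $\cV_i$ all base change to $S'$ (the $\cV_i$ because they are cut out fiberwise); the sheaf $\cN$ of \eqref{raw deligne pairing} restricts correctly by the base-change property of the norm functor (Proposition \ref{base change of Norm}, applied to the dominant open immersion $\cV'\hookrightarrow\cV$ of integral normal schemes); and $\cJ$ then restricts correctly by the uniqueness of descent (Proposition \ref{uniqueness of descent}). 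Because the extension of the pairing to $\Pic(X)_f$ in Proposition \ref{EDP of good lb} is built from tensor combinations of the sufficiently ample case, and because the canonical isomorphisms $\theta_{\bfV,\bfV'}$ of Proposition \ref{indep of presentation} restrict well (${S'}^{(1)}\subseteq S^{(1)}$ with a common generic point), the required compatibility is inherited throughout.

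Granting this, I would assemble the gluing data. For $\alpha,\beta\in I$ set $S_{\alpha\beta}=S_\alpha\cap S_\beta$ and cover it by affine opens $W$ (with $X_W=f^{-1}(W)$); on each such $W$ the restriction isomorphisms identify both $\cJ_\alpha|_W$ and $\cJ_\beta|_W$ canonically with $\langle\sL_1|_{X_W},\ldots,\sL_{d+1}|_{X_W}\rangle_{X_W/W}$, hence with each other, and by transitivity (together with the uniqueness clause of Proposition \ref{EDP of good lb}) these agree on overlaps of the $W$'s, so they glue to an isomorphism $\phi_{\alpha\beta}\colon\cJ_\alpha|_{S_{\alpha\beta}}\xrightarrow{\sim}\cJ_\beta|_{S_{\alpha\beta}}$. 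Running the same argument on affine opens covering $S_\alpha\cap S_\beta\cap S_\gamma$ yields the cocycle identity $\phi_{\beta\gamma}\circ\phi_{\alpha\beta}=\phi_{\alpha\gamma}$, since each side is the canonical composite through the intermediate pairing. Hence the data $(\cJ_\alpha,\phi_{\alpha\beta})$ glue to an invertible $\sO_S$-module, which we define to be $\langle\sL_1,\ldots,\sL_{d+1}\rangle_{X/S}$; independence of the chosen affine cover is obtained by passing to a common refinement.

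I expect the main obstacle to be the bookkeeping of canonicity, both in the crucial step and in the gluing: one must make sure that every identification invoked---independence of presentation (Proposition \ref{indep of presentation}), independence of the decomposition $\sL_i=\sL_{i,0}\otimes\sL_{i,1}^{-1}$ (Proposition \ref{EDP of good lb}), and the restriction isomorphisms---is genuinely natural, so that the double-overlap isomorphisms are uniquely pinned down and the triple-overlap cocycle relation is automatic. This is precisely the pattern already used in the preceding subsections via Proposition \ref{codim 1 miracle for line bundles}; once it is organized correctly, the remaining gluing is formal.
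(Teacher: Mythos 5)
Your proposal is correct and follows essentially the same route as the paper: restrict to an affine open cover where Proposition \ref{lb over affine is good} and Proposition \ref{EDP of good lb} apply, then glue the local pairings using the canonical isomorphisms guaranteed by Proposition \ref{indep of presentation} (and the uniqueness clause of Proposition \ref{EDP of good lb}), checking the cocycle condition on triple overlaps. Your extra care about restriction-compatibility along open immersions and about refining non-affine double overlaps by affines makes explicit what the paper's terser argument leaves implicit, but it is the same proof.
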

\begin{proof}
	Let \(\{U_i\}_{i\in I}\) be an affine open cover of \(S\), and let \(f_i = f|_{U_i}\colon f^{-1}(U_i)\to U_i\). For each \(i\in I\) and integer \(1\leqslant j\leqslant d+1\), \(\sL_j\in\Pic(f^{-1}(U_i))_{f_i}\) by Proposition \ref{lb over affine is good}. Thus we can get an invertible \(\sO_{U_i}\)-module \(\cJ_i\) by Theorem \ref{EDP of good lb}. For \(i,i'\in I\), by Proposition \ref{indep of presentation}, we can find a canonical isomorphism \[\theta_{i,i'}\colon\cJ_i|_{U_i\cap U_{i'}}\to \cJ_{i'}|_{U_i\cap U_{i'}}\]
	such that \(\theta_{ii} = \id\), \(\theta_{ii'} = \theta_{i'i}^{-1}\), and for another \(i''\in I\), \(\theta_{i'i''}\circ\theta_{ii'} = \theta_{ii''}\) on \(U_i\cap U_{i'}\cap U_{i''}\). Therefore, we can glue \(\cJ_i\)'s on the overlaps \(U_i\cap U_{i'}\) to obtain an invertible \(\sO_S\)-module, which we define to be \(\langle\sL_1,\sL_2,\ldots,\sL_{d+1}\rangle_{X/S}\).
\end{proof}

\begin{Def}
	Let \(f\colon X\to S\) be a surjective projective morphism of noetherian schemes of pure relative dimension \(d\). Suppose that \(S\) is normal. Our \emph{Deligne pairing} is defined to be
	\begin{equation}\label{eq:def of equidimensional Deligne pairing}
		\langle\sL_1,\sL_2,\ldots,\sL_{d+1}\rangle_{X/S}\colon\Pic(X)^{d+1}\to\Pic(S).
	\end{equation}
\end{Def}

\subsection{Symmetry and multi-additivity}

The following three propositions are natural generalization of Proposition \ref{codim 1 of f.s.a equidimensional deligne pairing},  \ref{EDP = DP for f.s.a. lb}, \ref{multi-additive of EDP of f.s.a lb}, and \ref{commutative monoidal transformation of f.s.a lb}. Proposition \ref{codim 1 of equidimensional deligne pairing} will be crucial for the proof of symmetry, muiti-additivity, and functorial properties of our Deligne pairing.

\begin{Prop}\label{codim 1 of equidimensional deligne pairing}
	Let \(f\colon X\to S\) be a surjective projective morphism of noetherian schemes of pure relative dimension \(d\). Suppose that \(S\) is integral and normal, and that \(\sL_1,\sL_2,\ldots,\sL_{d+1}\) are \(d+1\) invertible \(\sO_X\)-modules. Then we have a commutative diagram
	\[ \begin{tikzcd}
		\langle\sL_{1},\sL_{2},\ldots,\sL_{n+1}\rangle_{X/S,\, P}\arrow[r, hookrightarrow]\arrow[d] & \langle\sL_{1},\sL_{2},\ldots,\sL_{n+1}\rangle_{X/S,\,\eta}\arrow[d]\\
		\langle\sL_{1,P},\sL_{2,P},\ldots,\sL_{n+1,P}\rangle_{X_A/\Spec A}\arrow[r, hookrightarrow] & \langle\sL_{1,\eta},\sL_{2,\eta},\ldots,\sL_{n+1,\eta}\rangle_{X_\eta/\eta},
	\end{tikzcd} \]
	where \(P\in S^{(1)}\), two vertical arrows are canonical isomorphisms, and two bottom objects are Deligne pairings defined by \cite{Garcia2000}.
\end{Prop}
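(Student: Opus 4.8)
The plan is to reduce the statement to the $f$-sufficiently ample case already established in Proposition~\ref{codim 1 of f.s.a equidimensional deligne pairing}, by unwinding the multi-additive definition of the Deligne pairing for general invertible sheaves. First I would note that the assertion is local on $S$ around $P$: the generic point $\eta$ lies in every nonempty open of $S$, the sheaf $\langle\sL_1,\ldots,\sL_{d+1}\rangle_{X/S}$ is glued from the pairings over an affine cover (Theorem~\ref{EDP of general l.b}), and all four corners of the diagram together with the two vertical maps are computed from the restriction of that sheaf to an affine open containing $P$. So I may assume $S=\Spec R$ with $R$ a noetherian normal domain. By Proposition~\ref{lb over affine is good} each $\sL_i$ lies in $\Pic(X)_f$; fix factorizations $\sL_i=\sL_{i,0}\otimes\sL_{i,1}^{-1}$ with $\sL_{i,0},\sL_{i,1}$ $f$-sufficiently ample. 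Then by definition (Proposition~\ref{EDP of good lb})
\[
\langle\sL_1,\ldots,\sL_{d+1}\rangle_{X/S}=\bigotimes_{h\in E}\langle\sL_{1,h(1)},\ldots,\sL_{d+1,h(d+1)}\rangle_{X/S}^{\,\epsilon_h}.
\]

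Next I would identify the other three corners with the analogous signed tensor products. Restriction to $\eta$ and to $\Spec A$ (where $A=\sO_{S,P}$) commutes with $\otimes$ and with $(-)^{-1}$, so the displayed formula restricts to the corresponding formula for $\langle\sL_1,\ldots,\sL_{d+1}\rangle_{X/S,\eta}$ and $\langle\sL_1,\ldots,\sL_{d+1}\rangle_{X/S,P}$. On the other hand, ampleness is stable under base change and a presentation of $\sL_{i,j}$ pulls back to a presentation of its restriction, so $(\sL_{i,j})_\eta$ is $f_\eta$-sufficiently ample and $(\sL_{i,j})_P$ is $f_A$-sufficiently ample; since Garc\'ia's Deligne pairing is multi-additive and its coherence isomorphisms on $\Pic$ are pinned down by its values on sufficiently ample tuples (see \cite{Garcia2000} Theorem~4.2.6), both $\langle(\sL_1)_\eta,\ldots,(\sL_{d+1})_\eta\rangle_{X_\eta/\eta}$ and $\langle(\sL_1)_P,\ldots,(\sL_{d+1})_P\rangle_{X_A/\Spec A}$ admit the same $E$-indexed description, up to canonical isomorphism. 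With all four corners written as $\epsilon_h$-weighted tensor products over $h\in E$, the square in the statement becomes the tensor product, over $h\in E$, of the squares provided by Proposition~\ref{codim 1 of f.s.a equidimensional deligne pairing} applied to the $f$-sufficiently ample tuples $(\sL_{1,h(1)},\ldots,\sL_{d+1,h(d+1)})$ (raised to $\epsilon_h$, which for $\epsilon_h=-1$ simply replaces each map by its inverse).

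It then remains to conclude: the two vertical maps of the big square are the tensor products of the vertical canonical isomorphisms of the summand squares, hence are themselves canonical isomorphisms, and the big square commutes because each summand square does and $\otimes$ is a functor on $\cPic$. The step I expect to be the real obstacle is the bookkeeping needed to be sure that the four ``$E$-indexed'' identifications are mutually compatible, in particular that they are compatible with the isomorphism of Proposition~\ref{EDP of good lb} witnessing independence of the chosen factorizations $\sL_i=\sL_{i,0}\otimes\sL_{i,1}^{-1}$: one must check that this independence isomorphism over $S$, its analogue over $\Spec A$, and its analogue over $\eta$ sit in a commuting cube together with the maps of Proposition~\ref{codim 1 of f.s.a equidimensional deligne pairing}. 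As elsewhere in this section, this is reduced to the generic fibre via Proposition~\ref{codim 1 miracle for line bundles}, using that on $X_\eta/\eta$ and on $X_A/\Spec A$ the relevant coherence (commutativity of the hexagon of Proposition~\ref{commutative monoidal transformation of f.s.a lb}, associativity of the $\Sigma_i$'s, etc.) is part of Garc\'ia's theory; once this reduction is in place the diagram chase is routine.
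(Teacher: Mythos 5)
Your proposal is correct and follows essentially the route the paper intends: the paper states this proposition without proof as a ``natural generalization'' of Proposition~\ref{codim 1 of f.s.a equidimensional deligne pairing}, the implicit argument being exactly your reduction --- localize to an affine open containing $P$ (hence $\eta$), expand each corner via the $E$-indexed signed tensor product over factorizations $\sL_i=\sL_{i,0}\otimes\sL_{i,1}^{-1}$, and tensor together the commuting squares from the sufficiently ample case, with the compatibility of the independence-of-factorization isomorphisms checked on the generic fibre via Garc\'ia's coherence results. Your write-up in fact supplies more of the bookkeeping than the paper does.
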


\begin{Thm}\label{EDP = DP for all lb}
	Let \(f\colon X\to S\) be a surjective projective morphism of noetherian schemes of pure relative dimension \(d\). Suppose that \(S\) is normal, \(f\) is of finite Tor-dimension, and that \(\sL_1,\sL_2,\ldots,\sL_{d+1}\) are \(d+1\) invertible \(\sO_X\)-modules. Then our \(\langle \sL_1,\sL_2,\ldots,\sL_{d+1}\rangle_{X/S}\) is identical to the corresponding Deligne pairing defined in \cite{Garcia2000}.
\end{Thm}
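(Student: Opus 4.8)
The plan is to derive this from the codimension-one comparison proven just above, running the same algebraic Hartogs argument that underlies Proposition \ref{EDP = DP for f.s.a. lb} but now fed with the general–line-bundle version, Proposition \ref{codim 1 of equidimensional deligne pairing}, rather than its sufficiently ample precursor. First I would observe that the statement is local on \(S\) and that by Lemma \ref{disjoint union of irreducible components} every invertible sheaf in sight splits over the connected components of \(S\); hence I may assume \(S\) is a noetherian normal \emph{integral} scheme. Fix its generic point \(\eta\), and for \(P\in S^{(1)}\) write \(A=\sO_{S,P}\), a discrete valuation ring, hence regular. Since \(f\) is of finite Tor-dimension, Garc\'ia's Deligne pairing \(\sM=\langle\sL_1,\ldots,\sL_{d+1}\rangle^{\textnormal{Gar}}_{X/S}\) of \cite{Garcia2000} is defined; write \(\sN=\langle\sL_1,\ldots,\sL_{d+1}\rangle_{X/S}\) for the one constructed here. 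The goal is a canonical isomorphism \(\sN\xrightarrow{\sim}\sM\). (One could instead reduce to \(S\) affine, write each \(\sL_i\) as a difference of \(f\)-sufficiently ample sheaves via Proposition \ref{lb over affine is good}, and combine Proposition \ref{EDP = DP for f.s.a. lb} with the multi-additivity of both pairings; but the codimension-one route is cleaner and is the paper's idiom.)

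The key step is to compare the two pairings after localizing at \(\eta\) and at each \(P\). By Proposition \ref{codim 1 of equidimensional deligne pairing}, \(\sN_\eta\) is canonically Garc\'ia's pairing \(\langle\sL_{1,\eta},\ldots,\sL_{d+1,\eta}\rangle_{X_\eta/\eta}\) over the field \(k(\eta)\), \(\sN_P\) is canonically Garc\'ia's pairing \(\langle\sL_{1,P},\ldots,\sL_{d+1,P}\rangle_{X_A/\Spec A}\) over \(A\), and these two identifications are compatible with the localization inclusion \(\sN_P\hookrightarrow\sN_\eta\). On the other side, Garc\'ia's construction is compatible with flat base change (\cite{Garcia2000}), and \(\eta\to S\) and \(\Spec A\to S\) are flat; hence \(\sM_\eta\) and \(\sM_P\) are canonically the same two pairings over \(k(\eta)\) and \(A\), again compatibly with \(\sM_P\hookrightarrow\sM_\eta\). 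Composing, I obtain canonical isomorphisms \(\varphi_\eta\colon\sN_\eta\xrightarrow{\sim}\sM_\eta\) and \(\varphi_P\colon\sN_P\xrightarrow{\sim}\sM_P\) (one for each \(P\in S^{(1)}\)) fitting into the commutative square that is the hypothesis of Proposition \ref{codim 1 miracle for line bundles}.

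To conclude I would invoke algebraic Hartogs in the form used repeatedly here: on a noetherian normal integral scheme, every invertible \(\sO_S\)-module \(\sE\) equals \(\bigcap_{P\in S^{(1)}}\sE_P\) inside the constant sheaf \(\sE_\eta\). Applying this to \(\sN\) and to \(\sM\) and feeding the square above into Proposition \ref{codim 1 miracle for line bundles}, the isomorphism \(\varphi_\eta\) restricts to a canonical isomorphism \(\sN\xrightarrow{\sim}\sM\), which is exactly the asserted identification; this is the same mechanism as the proof of Proposition \ref{EDP = DP for f.s.a. lb}, only with the general–line-bundle comparison. The one point demanding care — and essentially the only one — is the simultaneous compatibility of \(\varphi_\eta\) and the \(\varphi_P\) with the localization maps, i.e.\ the commutativity of the square in Proposition \ref{codim 1 miracle for line bundles}; this rests on both constructions being compatible with flat base change, which for ours is recorded in Proposition \ref{codim 1 of equidimensional deligne pairing} (itself built on the Hartogs argument of Proposition \ref{indep of presentation}) and for Garc\'ia's is the base change property of \cite{Garcia2000}. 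Everything else is formal.
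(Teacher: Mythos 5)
Your proposal is correct and follows essentially the same route as the paper: the paper proves the sufficiently ample case (Proposition \ref{EDP = DP for f.s.a. lb}) by exactly this comparison at \(\eta\) and at each \(P\in S^{(1)}\) followed by the intersection \(\sN=\bigcap_P\sN_P=\bigcap_P\sM_P=\sM\), and states Theorem \ref{EDP = DP for all lb} as the immediate generalization obtained by feeding Proposition \ref{codim 1 of equidimensional deligne pairing} into Proposition \ref{codim 1 miracle for line bundles}, which is precisely what you do.
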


\begin{Prop}
	Our Deligne pairing \eqref{eq:def of equidimensional Deligne pairing} is multi-additive, i.e. we have a canonical isomorphism 
	\begin{multline}\label{eq:additive of EDP of lb}
		\Sigma_i\colon\langle \sL_1,\ldots,\sL_i,\ldots\sL_{d+1}\rangle_{X/S}\otimes\langle \sL_1,\ldots,\sL'_i,\ldots\sL_{d+1}\rangle_{X/S}\\
		\to  \langle \sL_1,\ldots,\sL_i\otimes\sL'_i,\ldots\sL_{d+1}\rangle_{X/S}
	\end{multline}
    for each integer \(1\leqslant i \leqslant d+1\). Moreover, let
    \[ F(\sM,\sN) = \langle\sL_{1},\ldots,\sL_{i-1},\sM,\sL_{i+1},\ldots,\ldots,\sL_{j-1},\sN,\sL_{j+1},\ldots,\sL_{d+1}\rangle_{X/S}, \]
    and then we have the following commutative diagram
    \begin{equation}
    	\begin{tikzcd}[column sep=small, row sep=large]
    	F(\sL_i,\sL_j)\otimes F(\sL_i,\sL'_j)\otimes F(\sL'_i,\sL_j)\otimes F(\sL'_i,\sL'_j)\arrow[dddd]\arrow[rd, "F(\sL_i\text{,}\Sigma_j)\otimes F(\sL'_i\text{,}\Sigma_j)"] \\
    	&F(\sL_i,\sL_j\otimes\sL'_j)\otimes F(\sL'_i,\sL_j\otimes\sL'_j)\arrow[d, "F(\Sigma_i\text{,}\sL_j\otimes\sL'_j)"]\\
    	&F(\sL_i\otimes\sL'_i,\sL_j\otimes\sL'_j)\\
    	&F(\sL_i\otimes\sL'_i,\sL_j)\otimes F(\sL_i\otimes\sL'_i,\sL'_j)\arrow[u, "F(\sL_i\otimes\sL'_i\text{,}\Sigma_j)"']\\
    	F(\sL_i,\sL_j)\otimes F(\sL'_i,\sL_j)\otimes F(\sL_i,\sL'_j)\otimes F(\sL'_i,\sL'_j).\arrow[ru, "F(\Sigma_i\text{,}\sL_j)\otimes F(\Sigma_i\text{,}\sL'_j)"']
    \end{tikzcd}
\end{equation}
\end{Prop}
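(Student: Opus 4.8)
The plan is to reduce everything to the already-established facts over the generic point and over codimension-one points, and then invoke Proposition~\ref{codim 1 miracle for line bundles} exactly as in the proofs of Propositions~\ref{multi-additive of EDP of f.s.a lb} and~\ref{commutative monoidal transformation of f.s.a lb}. First I would reduce to the case that $S$ is integral, since by Lemma~\ref{disjoint union of irreducible components} every construction splits over the irreducible (= connected) components of $S$, and an isomorphism (resp.\ commutative diagram) over each component assembles to one over $S$. So assume $S$ is a noetherian normal integral scheme with generic point $\eta$.

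Next I would construct $\Sigma_i$. Write $\sM = \langle \sL_1,\ldots,\sL_i,\ldots,\sL_{d+1}\rangle_{X/S}\otimes\langle \sL_1,\ldots,\sL'_i,\ldots,\sL_{d+1}\rangle_{X/S}$ and $\sN = \langle \sL_1,\ldots,\sL_i\otimes\sL'_i,\ldots,\sL_{d+1}\rangle_{X/S}$. By Theorem~\ref{EDP = DP for all lb} (or directly Proposition~\ref{codim 1 of equidimensional deligne pairing}), the stalk $\sM_\eta$ is canonically $\langle\sL_{1,\eta},\ldots\rangle_{X_\eta/\eta}\otimes\langle\sL_{1,\eta},\ldots,\sL'_{i,\eta},\ldots\rangle_{X_\eta/\eta}$, the Deligne pairing of Garc\'ia for the finite-Tor-dimension morphism $X_\eta\to\eta$, and similarly for $\sN_\eta$; Garc\'ia's multi-additivity (\cite{Garcia2000} Theorem~4.2.6) then gives a canonical isomorphism $\Sigma_{i,\eta}\colon\sM_\eta\to\sN_\eta$. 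The same reasoning applied to $A = \sO_{S,P}$ for $P\in S^{(1)}$ — which is regular of dimension $1$, hence $X_A\to\Spec A$ is of finite Tor-dimension — yields $\Sigma_{i,P}\colon\sM_P\to\sN_P$. Garc\'ia's result also records the compatibility of these isomorphisms with the base change $\eta\to\Spec A$, so the square
\[ \begin{tikzcd}
\sM_P\arrow[r, hookrightarrow]\arrow[d,"\Sigma_{i,P}"] & \sM_\eta\arrow[d,"\Sigma_{i,\eta}"]\\
\sN_P\arrow[r, hookrightarrow] & \sN_\eta
\end{tikzcd} \]
commutes for every $P\in S^{(1)}$. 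By Proposition~\ref{codim 1 miracle for line bundles}, $\Sigma_{i,\eta}$ restricts to the desired canonical isomorphism $\Sigma_i\colon\sM\to\sN$. This works for $1\leqslant i\leqslant d+1$: for $i\leqslant d$ it is literally the content of Proposition~\ref{multi-additive of EDP of f.s.a lb} extended to $\Pic(X)_f$ and then to all of $\Pic(X)$ via the definition in Propositions~\ref{EDP of good lb} and Theorem~\ref{EDP of general l.b}, while the case $i = d+1$ is covered by symmetry once we know the pairing is symmetric (alternatively, Garc\'ia's pairing is additive in every slot, so the generic-point argument applies directly to $i = d+1$ as well).

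For the commutative diagram, I would argue the same way: all eight objects in the diagram are invertible $\sO_S$-modules, so it suffices to check commutativity after restricting to $\eta$ (an open immersion with schematically dense image, since $S$ is integral and the sheaves are invertible, hence torsion-free). Over $\eta$ the whole diagram becomes the corresponding diagram for Garc\'ia's Deligne pairing on the finite-Tor-dimension morphism $X_\eta\to\eta$, and there it commutes because $\Sigma_{i,\eta},\Sigma_{j,\eta}$ are the structure isomorphisms of the multi-additive functor of \cite{Garcia2000}, which satisfy exactly this pentagon/interchange relation (this is precisely the content cited in the proof of Proposition~\ref{commutative monoidal transformation of f.s.a lb}). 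Two maps of invertible sheaves on an integral scheme that agree at the generic point agree everywhere, so the diagram commutes over $S$. When $\sL_i,\sL'_i,\sL_j,\sL'_j$ are arbitrary (not sufficiently ample), one writes each as a difference of two $f$-sufficiently ample bundles, uses Proposition~\ref{EDP of good lb} to reduce the general diagram to a (tensor combination of) the sufficiently-ample diagram of Proposition~\ref{commutative monoidal transformation of f.s.a lb}, and then passes from the affine-local statement to $S$ via the gluing isomorphisms $\theta_{i,i'}$ of Theorem~\ref{EDP of general l.b}, which are canonical and hence compatible with $\Sigma_i,\Sigma_j$.

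The main obstacle is not any single hard estimate but bookkeeping: one must make sure that the isomorphism $\Sigma_i$ produced by the Hartogs-type gluing (Proposition~\ref{codim 1 miracle for line bundles}) is genuinely \emph{the same} isomorphism whether one builds it directly on all of $\Pic(X)$ or first on $\Pic(X)_{\fsa}$ and extends multiplicatively; this requires checking that the canonical isomorphisms $\theta_{i,i'}$ of Theorem~\ref{EDP of general l.b} and the extension-by-linearity isomorphisms of Proposition~\ref{EDP of good lb} are all compatible with restriction to $\eta$, so that the generic-fiber argument still applies. Once that compatibility is recorded — which follows because every one of these isomorphisms is \emph{unique} with its stated property, by Proposition~\ref{uniqueness of descent} and Proposition~\ref{codim 1 miracle for line bundles} — the rest is formal.
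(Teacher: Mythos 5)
Your proposal is correct, but its main line of argument is not the one the paper uses for this particular proposition. The paper's proof is a one-liner: the general Deligne pairing is \emph{defined} (Proposition \ref{EDP of good lb}, Theorem \ref{EDP of general l.b}) as a formal tensor combination of sufficiently ample pairings, so $\Sigma_i$ and the interchange diagram are inherited directly from Propositions \ref{multi-additive of EDP of f.s.a lb} and \ref{commutative monoidal transformation of f.s.a lb} by tensoring up the already-constructed isomorphisms. You instead re-run the generic-point/codimension-one Hartogs argument (Proposition \ref{codim 1 miracle for line bundles}) directly on arbitrary invertible sheaves, using Proposition \ref{codim 1 of equidimensional deligne pairing} and Garc\'ia's Theorem 4.2.6 to produce $\Sigma_{i,\eta}$ and $\Sigma_{i,P}$ and then glue; this is exactly the technique the paper uses for the sufficiently ample case and for symmetry (Proposition \ref{symmetric of EDP}), so it is fully within the paper's framework and equally valid. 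Your route buys a clean characterization of $\Sigma_i$ as \emph{the} isomorphism restricting to Garc\'ia's over the generic fiber, which is what the later functoriality arguments actually use; the paper's route is shorter given the definitions already in place but leaves that characterization implicit. You correctly identify and dispose of the one genuine subtlety --- that the directly-glued $\Sigma_i$ agrees with the extension-by-linearity one --- by noting that a homomorphism of invertible sheaves on an integral scheme is determined by its restriction to the generic point (your parenthetical calling $\eta\to S$ an ``open immersion'' is a terminological slip, but the torsion-freeness reason you give right after is the correct one). Your observation that the slot $i=d+1$ needs a word (since $\sL_{d+1}$ plays a distinguished role in the construction) and is handled because Garc\'ia's pairing is additive in every slot is a point the paper glosses over.
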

\begin{proof}
	Directly by Proposition \ref{multi-additive of EDP of f.s.a lb}, Proposition \ref{commutative monoidal transformation of f.s.a lb}, and the definition of our Deligne pairing.
\end{proof}

\begin{Prop}\label{symmetric of EDP}
	Our Deligne pairing \eqref{eq:def of equidimensional Deligne pairing} is symmetric, i.e. we have a canonical isomorphism 
	\begin{equation}\label{eq:symmetric of EDP of lb}
		\gamma_\phi\colon\langle \sL_1,\sL_2,\ldots,\sL_{d+1}\rangle_{X/S}\to\langle \sL_{\phi(1)},\sL_{\phi(2)},\ldots,\sL_{{\phi(d+1)}}\rangle_{X/S}
	\end{equation}
	for each \(\phi\in S_{d+1}\), where \(S_{d+1}\) is the permutation group of \(\{1,2,\ldots,d+1\}\). Moreover, 
	\begin{equation}\label{eq:composition of symmetric transformation}
		\gamma_{\psi\phi} = \gamma_{\phi}\circ\gamma_{\psi}
    \end{equation}
	for \(\phi,\psi\in S_{d+1}\), \(\gamma_{\id} = \id\), and we have the following commutative diagram
	\begin{equation*}
		\begin{tikzcd}
			\langle \sL_1,\ldots,\sL_i,\ldots\sL_{d+1}\rangle_{X/S}\otimes\langle \sL_1,\ldots,\sL'_i,\ldots\sL_{d+1}\rangle_{X/S} \xrightarrow{\ \ \Sigma_i \ \ } \arrow[d,"\gamma_{\phi}\otimes\gamma_{\phi}"]&
			\\
			\langle \sL_{\phi(1)},\ldots,\sL_i,\ldots,\sL_{\phi(d+1)}\rangle_{X/S}\otimes\langle \sL_{\phi(d+1)},\ldots,\sL'_i,\ldots\sL_{{\phi(1)}}\rangle_{X/S} \xrightarrow{\Sigma_{\phi^{-1}(i)}}&
		\end{tikzcd}
	\end{equation*}
	\begin{equation}\label{eq:commutative of Sigme and iota}
	\phantom{MMMMMMMMM}\begin{tikzcd}
			&&\xrightarrow{\ \ \Sigma_i \ \ } \langle \sL_1,\ldots,\sL_i\otimes\sL'_i,\ldots\sL_{d+1}\rangle_{X/S}\arrow[d,"\gamma_{\phi}"]
			\\
			&&\xrightarrow{\Sigma_{\phi^{-1}(i)}}\langle \sL_{\phi(1)},\ldots,\sL_i\otimes\sL'_i,\ldots\sL_{{\phi(d+1)}}\rangle_{X/S},
		\end{tikzcd}
	\end{equation}
    where \(\Sigma_{i}\) and \(\Sigma_{\phi^{-1}(i)}\) are defined by \eqref{eq:additive of EDP of lb}.
\end{Prop}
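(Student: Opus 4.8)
The plan is to construct $\gamma_\phi$ by the same ``codimension one'' mechanism used for multi-additivity, relying on Proposition \ref{codim 1 of equidimensional deligne pairing} and Proposition \ref{codim 1 miracle for line bundles}, and then to verify every coherence identity by restricting to the generic point, where it reduces to the corresponding statement for Garc\'ia's Deligne pairing in \cite{Garcia2000}. First I would reduce to $S$ noetherian normal \emph{integral}: by Lemma \ref{disjoint union of irreducible components} the scheme $S$ is the disjoint union of its open irreducible components, and both sides of \eqref{eq:symmetric of EDP of lb}, as well as all the morphisms appearing in \eqref{eq:composition of symmetric transformation} and \eqref{eq:commutative of Sigme and iota}, split accordingly; so it is enough to treat one component. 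Let $\eta$ be the generic point of $S$, and write $\sM=\langle\sL_1,\ldots,\sL_{d+1}\rangle_{X/S}$ and $\sN=\langle\sL_{\phi(1)},\ldots,\sL_{\phi(d+1)}\rangle_{X/S}$.

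By Proposition \ref{codim 1 of equidimensional deligne pairing}, for each $P\in S^{(1)}$ the stalks $\sM_\eta,\sM_P,\sN_\eta,\sN_P$ are exactly Garc\'ia's Deligne pairings of the $\sL_{i,\eta}$ (resp. $\sL_{i,P}$) over the regular bases $\eta$ (resp. $\Spec\sO_{S,P}$), and the square relating $\sM_P\hookrightarrow\sM_\eta$ with $\sN_P\hookrightarrow\sN_\eta$ is the base-change square along $\eta\to\Spec\sO_{S,P}$. Garc\'ia's construction carries a canonical symmetry isomorphism, so we get canonical isomorphisms $\gamma_{\phi,\eta}\colon\sM_\eta\to\sN_\eta$ and $\gamma_{\phi,P}\colon\sM_P\to\sN_P$; since Garc\'ia's symmetry isomorphism commutes with the flat base change $\eta\to\Spec\sO_{S,P}$ (part of its functoriality, cf. \cite{Garcia2000} \S4), $\gamma_{\phi,P}$ restricts to $\gamma_{\phi,\eta}$. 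Thus the hypotheses of Proposition \ref{codim 1 miracle for line bundles} hold, and $\gamma_{\phi,\eta}$ restricts to a canonical isomorphism $\gamma_\phi\colon\sM\to\sN$, which is \eqref{eq:symmetric of EDP of lb}.

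For the coherence identities I would use that, over a normal integral scheme, a homomorphism of invertible sheaves is determined by its restriction to $\eta$: the map $\Hom_S(\sM,\sN)\to\Hom_\eta(\sM_\eta,\sN_\eta)$ is injective, since $\sM$ is torsion free. Hence $\gamma_{\id}=\id$ and $\gamma_{\psi\phi}=\gamma_\phi\circ\gamma_\psi$, because both sides restrict over $\eta$ to the corresponding identities for Garc\'ia's pairing; likewise the diagram \eqref{eq:commutative of Sigme and iota} commutes because it commutes after $\otimes\,\sO_\eta$ --- each of $\gamma_\phi$ and $\Sigma_i$ being, by construction, the restriction to $S$ of the corresponding morphism on the generic fibre $X_\eta\to\eta$, where the compatibility of symmetry with additivity is proved in \cite{Garcia2000} \S4. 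One should finally check that $\gamma_\phi$ does not depend on the intermediate steps in the definition of $\langle\,\cdot\,\rangle_{X/S}$: for $\sL_i\in\Pic(X)_{\fsa}$ the isomorphism $\gamma_\phi$ agrees, over $\eta$ and over every $P\in S^{(1)}$, with the obvious permutation of factors in \eqref{eq:EDP of f.s.a lb}, hence on $S$; then extending by the bilinearity formula of Proposition \ref{EDP of good lb} and gluing over an affine cover of $S$ as in Theorem \ref{EDP of general l.b} preserves all identities, again because every identification is pinned down by its restriction to generic points.

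I expect the only real work to be this bookkeeping --- confirming that each layer in the definition of $\langle\,\cdot\,\rangle_{X/S}$ is visibly symmetric on the nose after passing to $\eta$ and to the codimension-one local rings --- rather than any new geometric input: the geometric content is entirely carried by Propositions \ref{codim 1 of equidimensional deligne pairing} and \ref{codim 1 miracle for line bundles} together with Garc\'ia's theorems, exactly as in the proof of Proposition \ref{multi-additive of EDP of f.s.a lb}.
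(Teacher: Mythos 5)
Your proposal is correct and follows essentially the same route as the paper: construct $\gamma_{\phi,\eta}$ and $\gamma_{\phi,P}$ from Garc\'ia's symmetry isomorphism over the regular bases $\eta$ and $\Spec\sO_{S,P}$, invoke the compatibility with the base change $\eta\to\Spec\sO_{S,P}$ (the paper cites \cite{Garcia2000} Theorem 4.2.7 for this), descend via Proposition \ref{codim 1 miracle for line bundles}, and verify all coherence identities by restriction to the generic fibre, where morphisms of invertible sheaves on a normal integral scheme are faithfully detected. Your extra remarks on the injectivity of $\Hom_S(\sM,\sN)\to\Hom_\eta(\sM_\eta,\sN_\eta)$ and on consistency across the layers of the definition are implicit in the paper's argument but correctly identified.
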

\begin{proof}
	We can assume that \(S\) is integral, and let \(\eta\) be the generic point of \(S\). Let
	\[ \sM = \langle \sL_1,\sL_2,\ldots\sL_{d+1}\rangle_{X/S} \]
	and
	\[ \sN = \langle\sL_{\phi(1)},\sL_{\phi(2)},\ldots\sL_{{\phi(d+1)}}\rangle_{X/S}. \]
	Then we have a canonical isomorphism
	\[ \begin{split}
		\sM_\eta &= \langle \sL_{1,\eta},\sL_{2,\eta},\ldots,\sL_{d+1,\eta}\rangle_{X_\eta/\eta}\\
		&=  \langle \sL_{\phi(1),\eta},\sL_{\phi(2),\eta},,\ldots\sL_{\phi(d+1),\eta}\rangle_{X_\eta/\eta}\\
		&= \sN_\eta.
	\end{split} \]
	We denote this isomorphism by \(\gamma_{\phi,\eta}\colon\sM_{\eta}\to\sN_{\eta}\). For every \(P\in S^{(1)}\), let \(A = \sO_{S,P}\), and then we have a canonical isomorphism
	\[ \begin{split}
		\sM_P &= \langle \sL_{1,P},\sL_{2,P},\ldots,\sL_{d+1,P}\rangle_{X_P/\Spec A}\\
		&=  \langle \sL_{\phi(1),P},\sL_{\phi(2),P},,\ldots\sL_{\phi(d+1),P}\rangle_{X_P/\Spec  A}\\
		&= \sN_P.
	\end{split} \]
	We denote this isomorphism by \(\gamma_{\phi,P}\colon\sM_P\to \sN_P\). Moreover, for every \(P\in S^{(1)}\), by \cite{Garcia2000} Theorem 4.2.7, we have a commutative diagram
	\[ \begin{tikzcd}
		\sM_P\arrow[r, hookrightarrow]\arrow[d,"\gamma_{\phi,P}"] & \sM_\eta\arrow[d,"\gamma_{\phi,\eta}"]\\
		\sN_P\arrow[r, hookrightarrow] & \sN_\eta.
	\end{tikzcd} \]
	By Proposition \ref{codim 1 miracle for line bundles}, \(\gamma_{\phi,\eta}\) restricts to a canonical isomorphism \(\gamma_{\phi}\colon\sM \to \sN\), i.e.
	\[ \gamma_{\phi}\colon\langle \sL_1,\sL_2,\ldots,\sL_{d+1}\rangle_{X/S}\to\langle \sL_{\phi(1)},\sL_{\phi(2)},\ldots,\sL_{{\phi(d+1)}}\rangle_{X/S}. \]
	
	Note that \(\Sigma_i\) and \(\gamma_\phi\) are induced by \(\Sigma_{i,\eta}\) and \(\gamma_{\phi,\eta}\) on the generic fiber \(X_\eta\to \eta\) respectively, and that \(\Sigma_{i,\eta}\) and \(\gamma_{\phi,\eta}\) satisfy the corresponding properties and commutative diagram on the generic fiber since they are natural isomorphisms of Deligne pairings for flat morphisms.
\end{proof}

Diagram \eqref{eq:commutative of Sigme and iota} can be viewed as \(\Sigma\circ\iota = \iota\circ\Sigma\). Proposition \ref{commutative monoidal transformation of f.s.a lb} and \ref{symmetric of EDP} implies that the order of operations of canceling parentheses and interchanging factors  (i.e. the order of \(\Sigma_{i}\)'s and \(\gamma_{\phi}\)'s) for an expression \(\langle\ldots\rangle_{X/S}\) with \(\otimes\)-products is not important since there is only one isomorphism from the original expression to the final expression, regardless of the order.

We summarize our results in this section as the following theorem.

\begin{Thm}\label{EDP}
	Let \(f\colon X\to S\) be a surjective projective morphism of noetherian schemes of pure relative dimension \(d\). Suppose that \(S\) is normal. Then \eqref{eq:def of equidimensional Deligne pairing} defines a symmetric multi-additive homomorphism
	\begin{equation*}
		\langle\sL_1,\sL_2,\ldots,\sL_{d+1}\rangle_{X/S}\colon\Pic(X)^{d+1}\to\Pic(S).
	\end{equation*}
\end{Thm}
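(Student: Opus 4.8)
The plan is to recognize that Theorem~\ref{EDP} is essentially a bookkeeping statement: every ingredient has already been produced in this subsection, and what remains is to record it at the level of Picard groups. First I would recall that the map $\langle\sL_1,\dots,\sL_{d+1}\rangle_{X/S}\colon\Pic(X)^{d+1}\to\Pic(S)$ is by construction the one introduced in the Definition immediately following Theorem~\ref{EDP of general l.b}, obtained by gluing the local constructions of Theorem~\ref{EDP of good lb} along the canonical isomorphisms of Proposition~\ref{indep of presentation}; so nothing needs to be constructed, and the content is verifying two structural properties.

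Next I would verify multi-additivity. For a fixed slot $i$, the isomorphism $\Sigma_i$ of \eqref{eq:additive of EDP of lb} exhibits $\langle\dots,\sL_i\otimes\sL_i',\dots\rangle_{X/S}$ as canonically isomorphic to $\langle\dots,\sL_i,\dots\rangle_{X/S}\otimes\langle\dots,\sL_i',\dots\rangle_{X/S}$, which on isomorphism classes is precisely additivity in the $i$-th argument; since this holds for every $i\in\{1,\dots,d+1\}$, the map is multi-additive. I would also cite the compatibility hexagon displayed alongside \eqref{eq:additive of EDP of lb}, together with \eqref{eq:commutative of Sigme and iota}, to note that the $\Sigma_i$'s (and the $\gamma_\phi$'s below) are coherent with one another; this is not needed for the group-level claim, but it anticipates the later upgrade to a functor between Picard categories.

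Then I would verify symmetry. Proposition~\ref{symmetric of EDP} supplies, for every $\phi$ in the permutation group $S_{d+1}$, a canonical isomorphism $\gamma_\phi$ as in \eqref{eq:symmetric of EDP of lb}, with $\gamma_{\id}=\id$ and $\gamma_{\psi\phi}=\gamma_\phi\circ\gamma_\psi$. Passing to isomorphism classes gives the identity $[\langle\sL_1,\dots,\sL_{d+1}\rangle_{X/S}]=[\langle\sL_{\phi(1)},\dots,\sL_{\phi(d+1)}\rangle_{X/S}]$ in $\Pic(S)$, i.e. the map is symmetric. Combining these two observations yields the theorem.

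I do not expect any genuine obstacle here: the only part that required real work is already behind us, namely reducing additivity and symmetry to Garc\'ia's corresponding statements in \cite{Garcia2000} over the generic point $\eta$ and over each codimension-one local ring $\sO_{S,P}$, and then gluing those isomorphisms into a single isomorphism over $S$ via the algebraic Hartogs principle (Proposition~\ref{codim 1 miracle for line bundles}) together with the codimension-one comparison (Proposition~\ref{codim 1 of equidimensional deligne pairing}). Granting those propositions, the proof of Theorem~\ref{EDP} is a one-line invocation, and I would write it as a direct consequence of the preceding two propositions and the definition of the pairing.
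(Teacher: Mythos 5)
Your proposal is correct and matches the paper exactly: the paper states Theorem~\ref{EDP} as a summary with no separate proof, the multi-additivity being precisely the content of the isomorphisms $\Sigma_i$ in \eqref{eq:additive of EDP of lb} and the symmetry being Proposition~\ref{symmetric of EDP} via the $\gamma_\phi$, both of which were established by the same reduction to Garc\'ia's results at the generic point and at codimension-one points followed by Proposition~\ref{codim 1 miracle for line bundles}. Nothing is missing.
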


\section{Functorial property}

First we introduce a useful lemma.

\begin{Lem}\label{prism lemma}
	Suppose that we have the follow diagram in a category.
	\[ \begin{tikzcd}
		E\arrow[dd]\arrow[rd] & & F\arrow[dd]\arrow[ld]\\
		& G\\
		B\arrow[rr]\arrow[rd] && C\arrow[ld]\\
		& A\arrow[from = uu, crossing over]
	\end{tikzcd} \]
	Suppose that left and right faces of the prism are pull-back squares. Then there is a canonical arrow \(E\to F\) making the back face of the prism to be a pull-back square.
\end{Lem}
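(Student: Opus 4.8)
The plan is to derive the prism lemma from the standard pasting lemma for pull-back squares, after first using the universal property of one of the two given pull-backs to produce the arrow $E\to F$. I would begin by fixing notation for the prism: the \emph{left face} exhibits $E$ as the fibre product of $G\to A$ and $B\to A$, with projections $q\colon E\to G$ and $p\colon E\to B$; the \emph{right face} exhibits $F$ as the fibre product of $G\to A$ and $C\to A$, with projections $F\to G$ and $F\to C$; and the bottom triangle of the prism supplies a morphism $\beta\colon B\to C$ with $B\to C\to A$ equal to $B\to A$. The edge to be constructed is $E\to F$, and the \emph{back face} of the prism is the square with sides $E\to F$, $E\xrightarrow{p}B$, $B\xrightarrow{\beta}C$, and $F\to C$.

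The first step is to construct $\varphi\colon E\to F$. The two morphisms $q\colon E\to G$ and $\beta\circ p\colon E\to C$ become equal after composing down to $A$: indeed $E\to G\to A$ equals $E\xrightarrow{p}B\to A$ because the left face commutes, and $E\xrightarrow{p}B\to A$ equals $E\xrightarrow{p}B\xrightarrow{\beta}C\to A$ because the bottom triangle commutes. Hence by the universal property of $F=C\times_A G$ (the right face) there is a unique $\varphi\colon E\to F$ with $F\to G$ after $\varphi$ equal to $q$ and $F\to C$ after $\varphi$ equal to $\beta\circ p$; in particular the back square commutes. The second step is to see that the back square is cartesian. Stack the back square on top of the right face: the outer rectangle then has top edge $E\to F\to G=q$ and bottom edge $B\xrightarrow{\beta}C\to A$, which is $B\to A$, so the outer rectangle is \emph{exactly} the left face. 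Since the right face is a pull-back by hypothesis, the pasting lemma gives that the back square is a pull-back if and only if this outer rectangle is a pull-back; and it is, being the left face. This proves the back face is a pull-back square.

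I do not expect any genuine difficulty here: the mathematical content is precisely the pasting lemma for fibre products, which may be assumed as standard. The only thing requiring care is the bookkeeping — correctly matching the informal labels ``left'', ``right'', ``back'' to the three rectangular faces of the triangular prism, and verifying the handful of commuting triangles (commutativity of the left face and of the bottom triangle) that are needed before the universal property of $F=C\times_A G$ and then the pasting lemma can be invoked.
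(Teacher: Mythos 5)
Your proof is correct and is exactly the standard argument that the paper compresses into the phrase ``easy diagram-chasing'': construct $E\to F$ by the universal property of the right face, then apply the pasting (two-out-of-three) lemma to the back face stacked on the right face, whose outer rectangle is the left face. No gaps.
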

\begin{proof}
	Easy diagram-chasing.
\end{proof}

Let \(f\colon X\to S\) be a surjective projective morphism of noetherian schemes of pure relative dimension \(d\). Suppose that \(S\) is normal. Let \(\cU\) be the open subset of \(S\) over which \(f\) is flat, and \(\cV\) be the subset of \(S\) over which \(f\) is of finite Tor-dimension, i.e.
\[ \cU = \{s\in S\mid X\times\Spec\sO_{S,s}\to \Spec\sO_{S,s}\text{ is flat}\} \]
and
\[ \cV = \{s\in S\mid X\times\Spec\sO_{S,s}\to \Spec\sO_{S,s}\text{ is of finite Tor-dimension}\}. \]
Note that \(\cU\cup S^{(1)}\subseteq \cV\). Let \(g\colon S'\to S\) be a morphism of noetherian normal schemes.

\begin{Def}\label{def: good base change}
	If \(S\) and \(S'\) are both integral, then we say that \(g\colon S'\to S\) is a \emph{good base change} if \(g(\eta')\in\cU\) and \(g(S'^{(1)})\subseteq \cV\), where \(\eta'\) is the generic point of \(S'\). For general \(S\) and \(S'\), we say that \(g\colon S'\to S\) is a \emph{good base change} if for each pair of irreducible components \(C\) of \(S\) and \(C'\) of \(S'\) such that \(g(C')\subseteq C\), \(g|_{C'}\colon C'\to C\) is a good base change.
\end{Def}

\begin{Thm}\label{base change of normal Deligne pairing}
	Let \(g\colon S'\to S\) be a good base change. Let \(X\) be a scheme and \(f\colon X\to S\) be a surjective projective morphism of pure relative dimension \(d\). Let \(X' = X\times_SS'\to S'\) and \(g'\colon X'\to X\). Then we have a natural isomorphism
	\[ \langle g'^*\sL_1,g'^*\sL_2,\ldots,g'^*\sL_{d+1} \rangle_{X'/S'}\to g^*\langle \sL_1,\sL_2,\ldots,\sL_{d+1} \rangle_{X/S}. \]
\end{Thm}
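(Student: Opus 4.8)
The plan is to descend the problem, via algebraic Hartogs' theorem, to the generic point and the codimension-one points of $S'$, where the morphism becomes of finite Tor-dimension and everything is controlled by Garc\'ia's Deligne pairing and its base-change theorem. First I would reduce to the case that $S$ and $S'$ are integral: by Lemma~\ref{disjoint union of irreducible components} both split into open, disjoint irreducible components, a good base change and the Deligne pairing decompose accordingly, and $f'\colon X'\to S'$ is again surjective, projective, of pure relative dimension $d$ (its fibres are those of $f$ base changed along residue-field maps, so have the same dimension) with $S'$ normal, so $\langle g'^*\sL_1,\ldots,g'^*\sL_{d+1}\rangle_{X'/S'}$ is defined. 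Both sides of the asserted isomorphism are then invertible $\sO_{S'}$-modules on the normal integral scheme $S'$, so by Proposition~\ref{codim 1 miracle for line bundles} it suffices to produce, functorially in the $\sL_i$, an isomorphism $\varphi_{\eta'}$ over the generic point $\eta'$ together with isomorphisms $\varphi_{P'}$ over $\Spec\sO_{S',P'}$ for each $P'\in S'^{(1)}$, compatible with the canonical inclusions into the $\eta'$-stalk.

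The extra input needed is that the formation of $\langle\,\cdot\,\rangle_{X/S}$ commutes with the localisation $\Spec\sO_{S,u}\to S$ for every $u\in S$; this is proved exactly as in Proposition~\ref{indep of presentation}, since every step of the construction (projective bundle, canonical sections, norm functor via Proposition~\ref{base change of Norm}, descent, independence of the presentation) commutes with a dominant flat base change on $S$. Together with Theorem~\ref{EDP = DP for all lb} this shows that for $u\in\cV$ the module $\langle\sL_1,\ldots,\sL_{d+1}\rangle_{X/S}\otimes_{\sO_S}\sO_{S,u}$ is canonically Garc\'ia's Deligne pairing of $X\times_S\Spec\sO_{S,u}\to\Spec\sO_{S,u}$. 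Now fix $p'\in\{\eta'\}\cup S'^{(1)}$ and set $u=g(p')$; by Definition~\ref{def: good base change} one has $u\in\cU$ when $p'=\eta'$ and $u\in\cV$ in all cases. Restricting $g^*\langle\sL_1,\ldots,\sL_{d+1}\rangle_{X/S}$ to $\Spec\sO_{S',p'}$ amounts to base changing Garc\'ia's pairing over $\sO_{S,u}$ along $\Spec\sO_{S',p'}\to\Spec\sO_{S,u}$. Since $\sO_{S',p'}$ is a field when $p'=\eta'$ and a discrete valuation ring when $p'\in S'^{(1)}$, the morphism $X\times_S\Spec\sO_{S',p'}\to\Spec\sO_{S',p'}$ is again of finite Tor-dimension (its target being regular), and for $p'=\eta'$ the condition $u\in\cU$ makes $X\times_S\Spec\sO_{S,u}\to\Spec\sO_{S,u}$ flat; in both cases Garc\'ia's base-change theorem \cite{Garcia2000} therefore identifies the result with Garc\'ia's pairing of $X\times_S\Spec\sO_{S',p'}\to\Spec\sO_{S',p'}$. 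By Proposition~\ref{codim 1 of equidimensional deligne pairing} applied to $f'$, this last pairing is exactly the restriction of $\langle g'^*\sL_1,\ldots,g'^*\sL_{d+1}\rangle_{X'/S'}$ to $\Spec\sO_{S',p'}$. Composing these canonical, functorial identifications yields $\varphi_{\eta'}$ and the $\varphi_{P'}$.

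It remains to check that $\varphi_{P'}$ restricts to $\varphi_{\eta'}$ along $\sO_{S',P'}\hookrightarrow\kappa(\eta')$. As $\eta'$ generises $P'$, the point $g(\eta')$ generises $g(P')$, so $\sO_{S,g(\eta')}$ is a localisation of $\sO_{S,g(P')}$, and each ingredient of $\varphi$ — the localisation isomorphism, the identification of Theorem~\ref{EDP = DP for all lb}, Garc\'ia's base-change isomorphism (via its transitivity in \cite{Garcia2000}), and Proposition~\ref{codim 1 of equidimensional deligne pairing} for $f'$ — is compatible with the passage from $P'$ to $\eta'$. Proposition~\ref{codim 1 miracle for line bundles} then produces the desired natural isomorphism $\langle g'^*\sL_1,\ldots,g'^*\sL_{d+1}\rangle_{X'/S'}\xrightarrow{\sim}g^*\langle\sL_1,\ldots,\sL_{d+1}\rangle_{X/S}$. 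I expect the principal obstacle to be this compatibility bookkeeping, together with the careful verification that Garc\'ia's base-change theorem applies at the codimension-one points of $S'$, where $g$ need not be flat; everything else is routine or a direct appeal to results established above and in \cite{Garcia2000}.
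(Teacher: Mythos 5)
Your proposal is correct and follows essentially the same route as the paper: reduce to $S,S'$ integral, identify both sides over the generic point $\eta'$ (where $g(\eta')\in\cU$ makes the fibre flat) and over each $\Spec\sO_{S',P'}$ with $P'\in S'^{(1)}$ (where $g(P')\in\cV$ and the regular target gives finite Tor-dimension) via Garc\'ia's base-change theorem, check compatibility of these identifications, and glue by Proposition~\ref{codim 1 miracle for line bundles}. The only cosmetic difference is that the paper organizes the comparison maps through Lemma~\ref{prism lemma} and the diagram~\eqref{diagram: good base change} rather than phrasing it as ``the construction commutes with localisation,'' but the content is identical.
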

\begin{proof}
	We can assume that both \(S\) and \(S'\) are integral. Let \(\eta'\) be the generic point of \(S\), \(Q  = g(\eta')\in\cU\), \(B = \Spec \sO_{S,Q}\), and let \(X_B =  S\times_S\Spec B\). Then we have the following commutative diagram
	\[ \begin{tikzcd}
		X'_{\eta'}\arrow[dd]\arrow[rd] & & & X_B\arrow[dd]\arrow[ld]\\
		& X'\arrow[r] & X\\
		\eta'\arrow[rrr]\arrow[rd] & & & \Spec B\arrow[ld]\\
		& S'\arrow[r]\arrow[from = uu, crossing over] & S, \arrow[from = uu, crossing over]
	\end{tikzcd} \]
	where the front, left, and right faces of the prism are all pull-back squares. Then by Lemma \ref{prism lemma}, there is a canonical arrow \(X'_{\eta'} \to X_B\) such that the following diagram
	\[ \begin{tikzcd}
		X'_{\eta'}\arrow[r]\arrow[d] & X_B\arrow[d]\\
		\eta'\arrow[r] & \Spec B
	\end{tikzcd} \]
	is a pull-back square. Let \(h\colon\Spec B\to S\). From the following commutative diagram
	\[ \begin{tikzcd}
		X'_{\eta'}\arrow[r]\arrow[d] & X_B\arrow[d]\arrow[r] & X\arrow[d]\\
		\eta'\arrow[r] & \Spec B\arrow[r] & S
	\end{tikzcd} \]
	we obtain a canonical isomorphism
	\begin{align}
		(g^*\langle \sL_1,\sL_2,\ldots,\sL_{d+1} \rangle_{X/S})_{\eta'} &= (h^*(\langle \sL_1,\sL_2,\ldots,\sL_{d+1} \rangle_{X/S}))_{\eta'}\notag\\
		&= \langle \sL_{1,B},\sL_{2,B},\ldots,\sL_{d+1,B} \rangle_{X_B/\Spec B,\eta'}\notag\\
		&= \langle g'^*\sL_{1})_{\eta'},(g'^*\sL_{2})_{\eta'},\ldots,(g'^*\sL_{d+1})_{\eta'} \rangle_{X'_{\eta'}/\eta'}\notag\\
		&= \langle g'^*\sL_{1},g'^*\sL_{2},\ldots,g'^*\sL_{d+1} \rangle_{X'/S', \eta'}.\label{eq:functoriality 1-1}
	\end{align}
	Let \(P'\in S'^{(1)}\) and \(P = g(P')\in \cV\). Let \(A = \sO_{S,P}\), \(A' = \sO_{S',P'}\), \(X_A = X\times_S\Spec A\), and let \(X'_{A'} = X'\times_{S'}\Spec A'\). Then we have the following commutative diagram
	\[ \begin{tikzcd}
		X'_{A'}\arrow[dd]\arrow[rd] & & & X_A\arrow[dd]\arrow[ld]\\
		& X'\arrow[r] & X\\
		\Spec A'\arrow[rrr]\arrow[rd] & & & \Spec A\arrow[ld]\\
		& S'\arrow[r]\arrow[from = uu, crossing over] & S \arrow[from = uu, crossing over]
	\end{tikzcd} \]
	where the front, left, and right faces of the prism are all pull-back squares. Then by Lemma \ref{prism lemma}, there is a canonical arrow \(X'_{A'} \to X_A\) such that the following diagram
	\[ \begin{tikzcd}
		X'_{A'}\arrow[r]\arrow[d] & X_A\arrow[d]\\
		\Spec A'\arrow[r] & \Spec A
	\end{tikzcd} \]
	is a pull-back square. Then we have canonical isomorphisms
	\begin{align}
		(g^*\langle \sL_1,\sL_2,\ldots,\sL_{d+1} \rangle_{X/S})_{P'} &= g^*(\langle \sL_1,\sL_2,\ldots,\sL_{d+1} \rangle_{X/S,P})\notag\\
		&= g^*(\langle \sL_{1,P},\sL_{2,P},\ldots,\sL_{d+1,P} \rangle_{X_A/\Spec A})\notag\\
		&= \langle g'^*\sL_{1,P},g'^*\sL_{2,P},\ldots,g'^*\sL_{d+1,P}\rangle_{X'_{A'}/\Spec A'}\notag\\
		&= \langle (g'^*\sL_{1})_{P'},(g'^*\sL_{2})_{P'},\ldots,(g'^*\sL_{d+1})_{P'} \rangle_{X'_{A'}/\Spec A'}\notag\\
		&= \langle g'^*\sL_{1},g'^*\sL_{2},\ldots,g'^*\sL_{d+1} \rangle_{X'/S', P'}.\label{eq:functoriality 1-2}
	\end{align}
	Since \(P\) lies in the Zariski closure of \(\{Q\}\), we have the following commutative diagram
	\begin{equation}\label{diagram: good base change}
		\begin{tikzcd}
			X'_{\eta'}\arrow[rrrrr]\arrow[rd]\arrow[ddd] &&&&& X_B\arrow[ddd]\arrow[ld]\\
			&X'_{A'}\arrow[rrr]\arrow[rd] & & & X_A\arrow[ld]\\
			&& X'\arrow[r] & X\\
			\eta'\arrow[rrrrr]\arrow[rd] &&&&& \Spec B\arrow[ld]\\
			&\Spec A'\arrow[rrr]\arrow[from=uuu, crossing over]\arrow[rd] & & & \Spec A\arrow[ld]\arrow[from=uuu, crossing over]\\
			&& S'\arrow[r]\arrow[from = uuu, crossing over] & S. \arrow[from = uuu, crossing over]
		\end{tikzcd}
	\end{equation} 
	Thus the isomorphisms \eqref{eq:functoriality 1-1} and \eqref{eq:functoriality 1-2} satisfy the commutative diagram in Lemma \ref{codim 1 miracle for line bundles}. Then by Proposition \ref{codim 1 miracle for line bundles}, we get an isomorphism 
	\[ \langle g'^*\sL_1,g'^*\sL_2,\ldots,g'^*\sL_{d+1} \rangle_{X'/S'}\to g^*\langle \sL_1,\sL_2,\ldots,\sL_{d+1} \rangle_{X/S}.\qedhere \]
\end{proof}

Diagram \eqref{diagram: good base change} is very useful, and will be used to deal with nearly all the issues concerning good base changes.

\begin{Thm}
	All the canonical isomorphisms \(\Sigma\)'s \eqref{commutative monoidal transformation of f.s.a lb} and \(\gamma\)'s \eqref{symmetric of EDP} are compatible with any good base change \(S'\to S\).
\end{Thm}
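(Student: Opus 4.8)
The plan is to reduce everything, exactly as in all the preceding proofs, to a statement over the generic point $\eta'$ of $S'$ and over the codimension-one points $P'\in S'^{(1)}$, where the fibers become morphisms of finite Tor-dimension and García's results apply, and then to glue using Proposition \ref{codim 1 miracle for line bundles}. First I would observe that, by Definition \ref{def: good base change} and Lemma \ref{disjoint union of irreducible components}, we may assume $S$ and $S'$ are both integral. Fix the base change $g\colon S'\to S$ and write, as in Theorem \ref{base change of normal Deligne pairing}, $Q=g(\eta')\in\cU$ and $P=g(P')\in\cV$ for each $P'\in S'^{(1)}$, together with the prism diagram \eqref{diagram: good base change}. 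The natural isomorphism $\Theta$ of Theorem \ref{base change of normal Deligne pairing} between $\langle g'^*\sL_1,\dots,g'^*\sL_{d+1}\rangle_{X'/S'}$ and $g^*\langle\sL_1,\dots,\sL_{d+1}\rangle_{X/S}$ was itself built by restricting to $\eta'$ (isomorphism \eqref{eq:functoriality 1-1}) and to each $P'$ (isomorphism \eqref{eq:functoriality 1-2}). Compatibility of $\Sigma_i$ with $g$ means: the square asserting that $\Theta\otimes\Theta$ intertwines $g^*\Sigma_i$ with $\Sigma_i^{X'/S'}$ commutes; similarly for $\gamma_\phi$.

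The key steps, in order: (1) By Proposition \ref{codim 1 miracle for line bundles}, a diagram of isomorphisms between two invertible $\sO_{S'}$-modules commutes as soon as it commutes after restriction to $\eta'$ and to every $P'\in S'^{(1)}$; so it suffices to check the two compatibility squares there. (2) Over $\eta'$: here $g'^*\sL_{i,\eta'}=(g'^*\sL_i)_{\eta'}$ are invertible sheaves on the fiber $X'_{\eta'}\to\eta'$, which factors through the flat locus (since $Q\in\cU$), so by Proposition \ref{codim 1 of equidimensional deligne pairing} and Theorem \ref{EDP = DP for all lb} all Deligne pairings in sight are García's pairings for flat morphisms; moreover $\Sigma_{i,\eta'}$, $\gamma_{\phi,\eta'}$, and the base-change isomorphism \eqref{eq:functoriality 1-1} are all the corresponding canonical morphisms of García's theory, which are compatible with base change by \cite{Garcia2000} (his base-change theorem together with Theorems 4.2.6 and 4.2.7). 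Hence the $\eta'$-restriction of each compatibility square commutes. (3) Over $P'$: since $P=g(P')\in\cV$ and $P'\in S'^{(1)}$, both $X_A\to\Spec A$ and $X'_{A'}\to\Spec A'$ are of finite Tor-dimension, so again Proposition \ref{codim 1 of equidimensional deligne pairing} identifies the relevant pairings with García's, and the isomorphisms $\Sigma_{i,P}$, $\gamma_{\phi,P}$, \eqref{eq:functoriality 1-2} are García's canonical maps; their base-change compatibility is again \cite{Garcia2000}. (4) Finally, the prism \eqref{diagram: good base change} guarantees that the $\eta'$- and $P'$-restrictions of our squares are themselves compatible with the inclusion $\sO_{S',P'}\hookrightarrow\sO_{S',\eta'}$, so the hypotheses of Proposition \ref{codim 1 miracle for line bundles} are met and the global squares commute.

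I expect the main obstacle to be purely bookkeeping: one must phrase precisely what ``$\Sigma$'s and $\gamma$'s are compatible with $g$'' means for general (non-sufficiently-ample) invertible sheaves, since $\Sigma_i$ and $\gamma_\phi$ were only explicitly constructed for $f$-sufficiently ample sheaves and then extended by linearity through the formula in Proposition \ref{EDP of good lb} and the gluing in Theorem \ref{EDP of general l.b}. The clean way is to note that both $g^*$ and the pairing commute with the $\otimes$-decompositions defining $\langle-\rangle$ on $\Pic(X)_f$, and with the affine-open gluing, so it is enough to prove compatibility for sufficiently ample sheaves over an affine base; there the argument of steps (1)--(4) applies verbatim. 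No genuinely new idea is needed beyond the ``reduce to $\eta'$ and $S'^{(1)}$, invoke García, glue by algebraic Hartogs'' pattern that pervades the paper; the only care required is to make sure the several canonical isomorphisms being compared are literally García's, so that his functoriality statements can be quoted.
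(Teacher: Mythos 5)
Your proposal is correct and follows essentially the same route as the paper: since the isomorphisms $\Sigma_i$, $\gamma_\phi$, and the base-change map already exist as global morphisms of invertible sheaves on the (integral, normal) $S'$, commutativity of the compatibility squares can be checked after restriction, where everything becomes Garc\'ia's flat-case pairing and his Theorems 4.2.6 and 4.2.7 apply. The only difference is that your codimension-one verification and appeal to Proposition \ref{codim 1 miracle for line bundles} are redundant here — a morphism of invertible sheaves on an integral scheme is determined by its restriction to the generic point, so checking the squares over $\eta'$ alone (where $X'_{\eta'}\to\eta'$ is flat because $g(\eta')\in\cU$) already suffices, which is exactly what the paper does.
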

\begin{proof}
	The morphisms are already there. For the compatibility, it suffices to check the diagrams over the generic point of each irreducible component of \(S'\), where the morphism becomes a flat morphism. Then the result follows from \cite{Garcia2000} Theorem 4.2.6 and 4.2.7.
\end{proof}

\begin{Thm}\label{pull-back formula of normal Deligne pairing}
	Let \(f\colon X\to S\) be a surjective projective morphism of noetherian scheme of pure relative dimension \(d\). Suppose that \(S\) is integral and normal. Let \(\sL_1,\ldots,\sL_d\) be invertible \(\sO_X\)-modules, and let \(\delta\) be their intersection number on the generic fiber of \(X\to S\). Let \(\sM\) an invertible \(\sO_S\)-module. Then we have a natural isomorphism
	\[ \langle \sL_1,\ldots,f^*\sM,\ldots,\sL_d \rangle_{X/S}\to \sM^{\otimes\delta},\]
	and the isomorphism is compatible with any good base change \(g\colon S'\to S\), where \(S'\) is also a noetherian integral normal scheme.
\end{Thm}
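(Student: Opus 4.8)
The plan is to follow the by now familiar ``codimension-one'' strategy used throughout this section: produce the desired isomorphism over the generic point and over each codimension-one point of $S$, where $f$ becomes of finite Tor-dimension and Garc\'ia's theory applies directly, and then glue using Proposition \ref{codim 1 miracle for line bundles} (with $\sN=\sM^{\otimes\delta}$).

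First I would note that over the generic point $\eta$ of $S$ the morphism $f_\eta\colon X_\eta\to\eta$ is of finite Tor-dimension (the base is a field), and for each $P\in S^{(1)}$ the local ring $A=\sO_{S,P}$ is a discrete valuation ring, hence regular, so $f_A\colon X_A\to\Spec A$ is of finite Tor-dimension. By Proposition \ref{codim 1 of equidimensional deligne pairing}, the stalks $\langle\sL_1,\ldots,f^*\sM,\ldots,\sL_d\rangle_{X/S,\eta}$ and $\langle\sL_1,\ldots,f^*\sM,\ldots,\sL_d\rangle_{X/S,P}$ are canonically Garc\'ia's Deligne pairings $\langle\sL_{1,\eta},\ldots,f_\eta^*\sM_\eta,\ldots,\sL_{d,\eta}\rangle_{X_\eta/\eta}$ and $\langle\sL_{1,P},\ldots,f_A^*\sM_P,\ldots,\sL_{d,P}\rangle_{X_A/\Spec A}$. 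The generic fiber of $X_A/\Spec A$ equals $X_\eta$, so the intersection numbers entering Garc\'ia's pull-back formula in all these cases are the same number $\delta$. Garc\'ia's pull-back formula (the finite-Tor-dimension analogue of the present statement, proved in \cite{Garcia2000}) then supplies canonical isomorphisms
\[ \psi_\eta\colon\langle\sL_1,\ldots,f^*\sM,\ldots,\sL_d\rangle_{X/S,\eta}\xrightarrow{\sim}\sM_\eta^{\otimes\delta}=(\sM^{\otimes\delta})_\eta \]
and, for each $P\in S^{(1)}$,
\[ \psi_P\colon\langle\sL_1,\ldots,f^*\sM,\ldots,\sL_d\rangle_{X/S,P}\xrightarrow{\sim}\sM_P^{\otimes\delta}=(\sM^{\otimes\delta})_P. \]
Since Garc\'ia's pull-back isomorphism is itself compatible with base change, the restriction of $\psi_P$ along $\eta\to\Spec A$ is $\psi_\eta$, so $\psi_\eta$ and the $\psi_P$ fit into the commutative square of Proposition \ref{codim 1 miracle for line bundles}. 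That proposition then yields the sought isomorphism $\langle\sL_1,\ldots,f^*\sM,\ldots,\sL_d\rangle_{X/S}\xrightarrow{\sim}\sM^{\otimes\delta}$.

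For compatibility with a good base change $g\colon S'\to S$ with $S'$ noetherian integral normal, I would invoke Diagram \eqref{diagram: good base change} exactly as in the proof of Theorem \ref{base change of normal Deligne pairing}. The statement is that a square of invertible $\sO_{S'}$-modules commutes, and since $S'$ is integral this can be checked after restriction to the generic point $\eta'$ of $S'$. There $g(\eta')\in\cU$, so all the morphisms in play are flat, every Deligne pairing appearing is Garc\'ia's pairing for a flat morphism, and the compatibility is part of the base-change statement for Garc\'ia's pull-back formula in \cite{Garcia2000}; one also uses that the intersection number is constant in the flat proper family $X\times_S\Spec\sO_{S,g(\eta')}\to\Spec\sO_{S,g(\eta')}$, so the relevant intersection number on the generic fiber of $X'\to S'$ is again $\delta$.

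The main obstacle is not conceptual but one of bookkeeping: one must verify that the various ``canonical'' isomorphisms furnished by Garc\'ia's theory --- over $\eta$, over the $\Spec\sO_{S,P}$, and under base change --- are mutually compatible, so that Proposition \ref{codim 1 miracle for line bundles} applies on the nose. This reduces to the explicit base-change compatibility of Garc\'ia's Deligne pairing and of its pull-back formula in \cite{Garcia2000}; granting these, every step is a formal application of the codimension-one gluing principle already employed repeatedly above.
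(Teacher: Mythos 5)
Your proposal is correct and follows essentially the same route as the paper: construct the isomorphism over the generic point and over each $\Spec\sO_{S,P}$ for $P\in S^{(1)}$ via Garc\'ia's pull-back formula, check the compatibility square using Garc\'ia's base-change statement, glue by Proposition \ref{codim 1 miracle for line bundles}, and verify good-base-change compatibility over the generic point of $S'$ where everything is flat and the intersection number is preserved. No gaps.
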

\begin{proof}
	Let \(\eta\) be the generic point of \(S\). Then we have a canonical isomorphism
	\[ \varphi_\eta\colon \langle \sL_1,\ldots,f^*\sM,\ldots,\sL_d \rangle_{X/S,\eta}\to \sM_\eta^{\otimes\delta}. \]
	Let \(P\in S^{(1)}\). Then we have a canonical isomorphism
	\[ \varphi_P\colon \langle \sL_1,\ldots,f^*\sM,\ldots,\sL_d \rangle_{X/S,P}\to \sM_P^{\otimes\delta}. \]
	Moreover, for every \(P\in S^{(1)}\), by \cite{Garcia2000} Theorem 5.2.1.a, the isomorphisms \(\varphi_{\eta}\) and \(\varphi_P\) satisfy the following commutative diagram
	\[ \begin{tikzcd}
		\langle \sL_1,\ldots,f^*\sM,\ldots,\sL_d \rangle_{X/S,P}\arrow[r, hookrightarrow]\arrow[d,"\varphi_P"] & \langle \sL_1,\ldots,f^*\sM,\ldots,\sL_d \rangle_{X/S,\eta}\arrow[d,"\varphi_\eta"]\\
		\sM_P^{\otimes\delta}\arrow[r, hookrightarrow] & \sM_\eta^{\otimes\delta}.
	\end{tikzcd} \]
	By Proposition \ref{codim 1 miracle for line bundles}, \(\phi_\eta\) restricts to a canonical isomorphism
	\[ \varphi\colon\langle \sL_1,\ldots,f^*\sM,\ldots,\sL_d \rangle_{X/S}\to \sM^{(\sL_{1,\eta},\ldots,\sL_{d,\eta})}.\]
	
	Let \(\eta'\) be the generic point of \(S'\), \(Q  = g(\eta')\in\cU\), and let \(k\) be the residue field of \(Q\). Let \(\delta_k\) and \(\delta'\) be the intersection number of \(\sL_1,\ldots,\sL_d\) over \(k\) and \(\eta'\) respectively. By properties of flat families and flat base changes, we deduce that \(\delta = \delta_k = \delta'\).
	
	For the compatibility with good base change, it suffices to check the diagram over the generic point \(\eta'\), where the morphism becomes a flat morphism. Then the result follows from \cite{Garcia2000} Proposition 5.2.1.a.
\end{proof}

\begin{Thm}\label{projection formula of normal Deligne pairing}
	Let \(f\colon X\to S\) be a surjective projective morphism of noetherian schemes of pure relative dimension \(d\), and \(g\colon Y\to X\) be a surjective projective morphism of noetherian schemes of pure relative dimension \(d'\). Suppose that \(X\) and \(S\) are normal. Then we have a natural isomorphism
	\begin{multline*}
		\langle g^*\sL_1,\ldots,g^*\sL_d,\sM_{d+1},\ldots,\sM_{d+d'+1} \rangle_{Y/S}\\
		\to \langle\sL_1,\ldots,\sL_d,\langle\sM_{d+1},\ldots,\sM_{d+d'+1}\rangle_{Y/X} \rangle_{X/S},
	\end{multline*}
    and the isomorphism is compatible with any good base change \(g\colon S'\to S\).
\end{Thm}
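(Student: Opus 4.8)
The plan is to argue as in Theorems~\ref{base change of normal Deligne pairing} and~\ref{pull-back formula of normal Deligne pairing}: reduce to $S$ integral normal, construct the isomorphism over the generic point $\eta$ of $S$ and over each $P\in S^{(1)}$, check that these are compatible in the sense of Proposition~\ref{codim 1 miracle for line bundles}, and conclude.

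First I would reduce, by Lemma~\ref{disjoint union of irreducible components}, to the case that $S$ is integral normal (working componentwise on $X$, which is then a disjoint union of integral normal schemes), and record that the composite $Y\to S$ is again surjective projective of pure relative dimension $d+d'$, so that the left-hand side is defined. Write $\mathcal{P}=\langle\sM_{d+1},\ldots,\sM_{d+d'+1}\rangle_{Y/X}\in\Pic(X)$ for the inner pairing, and for $A$ equal to $k(\eta)=\sO_{S,\eta}$ or to $\sO_{S,P}$ with $P\in S^{(1)}$ put $X_A=X\times_S\Spec A$ and $Y_A=Y\times_XX_A$. The crucial observation is that $X_A\to X$ is a \emph{good base change} for $g\colon Y\to X$ in the sense of Definition~\ref{def: good base change}: every codimension $\le1$ point of $X_A$ is a codimension $\le1$ point of $X$ with the same local ring, and such a local ring is a field (codimension $0$) or a discrete valuation ring (codimension $1$), so the restriction of $g$ over it is flat, respectively of finite Tor-dimension. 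Thus Theorem~\ref{base change of normal Deligne pairing}, applied to the inner pairing over the normal schemes $X_\eta$ and $X_A$, provides compatible identifications $\mathcal{P}|_{X_A}=\langle\sM_{d+1}|_{Y_A},\ldots,\sM_{d+d'+1}|_{Y_A}\rangle_{Y_A/X_A}$. Since moreover $X_A\to\Spec A$ is of finite Tor-dimension (the base $\Spec A$ being regular), Theorem~\ref{EDP = DP for all lb} identifies the restrictions to $\eta$ and to $P$ of both sides of the claimed isomorphism with Garc\'ia's Deligne pairings. Hence it remains to exhibit, when the base is a field or a discrete valuation ring, a natural isomorphism realising the projection formula for $Y_A\to X_A\to\Spec A$.

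This reduced statement is the main obstacle, because $X_A$ is only normal, not regular, so $Y_A\to X_A$ need not be of finite Tor-dimension and Garc\'ia's projection formula (\cite{Garcia2000}~\S5) does not apply verbatim; I would dispose of it by induction on $d'$. For $d'=0$ the assertion is the projection formula for a finite morphism, a Deligne-pairing refinement of the transitivity of Grothendieck's norm functor (\cite{EGA2}~\S6.5); a further application of the codimension-$\le1$/Hartogs technique, now on the base, brings it within reach of Garc\'ia's projection formula for finite morphisms of finite Tor-dimension, the gluing being legitimate by the base-change compatibility of the norm functor (Proposition~\ref{base change of Norm}). For $d'\ge1$, after using multi-additivity to reduce the last factor to a difference of $g$-sufficiently ample invertible sheaves, I would pick a $g$-regular section of $\sM_{d+d'+1}$ whose zero locus $E\subseteq Y$ is of pure relative dimension $d'-1$ over $X$; the divisor formula (Theorem~\ref{projection formula of taking closed subscheme}, itself obtained over normal bases from Garc\'ia's divisor formula by this same Hartogs argument, hence available here) rewrites the pairings for $Y/S$ and for $Y/X$ in terms of pairings for $E/S$ and $E/X$, and the inductive hypothesis applied to $E\to X\to S$ identifies the two; matching up these two instances of the divisor formula is what closes the induction.

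With compatible isomorphisms over $\eta$ and over every $P\in S^{(1)}$ in hand --- their compatibility holding because the two localizations sit in a commutative cube as in diagram~\eqref{diagram: good base change}, over whose generic fibres Garc\'ia's isomorphisms are natural --- Proposition~\ref{codim 1 miracle for line bundles} glues them to the desired isomorphism. Finally, for compatibility with a good base change $g\colon S'\to S$, it suffices, exactly as in the proofs of Theorems~\ref{base change of normal Deligne pairing} and~\ref{pull-back formula of normal Deligne pairing}, to check the resulting diagram over the generic point of each irreducible component of $S'$, where the structure morphism becomes flat and the statement reduces to the base-change compatibility of Garc\'ia's projection formula, \cite{Garcia2000}~\S5.
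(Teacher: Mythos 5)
Your proposal follows the paper's proof in its skeleton: reduce to \(S\) integral, localize at the generic point \(\eta\) and at each \(P\in S^{(1)}\), identify \(\langle\sM_{d+1},\ldots,\sM_{d+d'+1}\rangle_{Y/X}|_{X_A}\) with \(\langle\sM_{d+1,A},\ldots,\sM_{d+d'+1,A}\rangle_{Y_A/X_A}\) via Theorem \ref{base change of normal Deligne pairing} (your observation that \(X_A\to X\) is a good base change for \(g\), because codimension \(\leqslant 1\) points of the normal scheme \(X_A\) have local rings that are fields or discrete valuation rings, is exactly what legitimizes the paper's ``third isomorphism'' in \eqref{eq:functoriality3-1}), then invoke the projection formula over the regular base \(\Spec A\) and glue by Proposition \ref{codim 1 miracle for line bundles}, checking base-change compatibility at generic points. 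The one point of divergence is that you decline to take the localized projection formula for \(Y_A\to X_A\to\Spec A\) for granted: the paper writes it as an unlabelled canonical isomorphism in the chain \eqref{eq:functoriality3-1}, implicitly appealing to Garc\'ia, whereas you correctly observe that \(X_A\) is only normal, so \(Y_A\to X_A\) need not be of finite Tor-dimension and Garc\'ia's projection formula does not literally cover the inner pairing. Flagging this is more careful than the printed proof.

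That said, your repair of this step is not yet a proof. The induction step (\(d'\geqslant1\): restrict along a \(g\)-regular divisor \(E\) and match the two instances of the divisor formula) is the natural move, but you need such a section to exist globally over \(Y_A\), and \(X_A\) is not affine, so Lemma \ref{lem:existence of f-regular section} does not apply as stated. More seriously, the base case \(d'=0\) with \(d\geqslant1\) is precisely where the known obstruction sits: after cutting \(X_A\) by regular sections \(s_1,\ldots,s_d\) with zero scheme \(Z\), one needs \(\Nm_{Y_A/X_A}(\sM)|_Z\cong\Nm_{g^{-1}(Z)/Z}(\sM|_{g^{-1}(Z)})\), and \(Z\hookrightarrow X_A\) is an arbitrary closed subscheme rather than a good base change --- exactly the failure mode the introduction warns about for the norm functor over normal schemes. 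Your phrase ``a further application of the codimension-\(\leqslant1\)/Hartogs technique, now on the base'' does not obviously resolve this, since Hartogs on \(X_A\) identifies sheaves on \(X_A\), while the isomorphism you need lives on \(\Spec A\). So either supply a genuine argument for the localized projection formula, or state it as the input the paper itself assumes at this point; as written, this added sub-argument is the one real gap in an otherwise faithful reconstruction.
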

\begin{proof}
	We can assume that \(S\) is integral, and let \(\eta\) be the generic point of \(S\). Then we have a canonical isomorphism
	\begin{align}
		\langle\sL_1,\ldots,\sL_d,&\langle\sM_{d+1},\ldots,\sM_{d+d'+1}\rangle_{Y/X} \rangle_{X/S,\eta}\notag\\
		&= \langle\sL_{1,\eta},\ldots,\sL_{d,\eta},\langle\sM_{d+1},\ldots,\sM_{d+d'+1}\rangle_{Y/X,\eta} \rangle_{X_\eta/\eta}\notag\\
		&= \langle\sL_{1,\eta},\ldots,\sL_{d,\eta},\langle\sM_{d+1},\ldots,\sM_{d+d'+1}\rangle_{Y/X,\eta} \rangle_{X_\eta/\eta}\notag\\
		&= \langle\sL_{1,\eta},\ldots,\sL_{d,\eta},\langle\sM_{d+1,\eta},\ldots,\sM_{d+d'+1,\eta}\rangle_{Y_\eta/X_\eta} \rangle_{X_\eta/\eta}\notag\\
		&= \langle g^*\sL_{1,\eta},\ldots,g^*\sL_{d,\eta},\sM_{d+1,\eta},\ldots,\sM_{d+d'+1,\eta}\rangle_{Y_\eta/\eta}\notag\\
		&= \langle (g^*\sL_1)_\eta,\ldots,(g^*\sL_d)_\eta,\sM_{d+1,\eta},\ldots,\sM_{d+d'+1,\eta}\rangle_{Y_\eta/\eta}\notag\\
		&= \langle g^*\sL_1,\ldots, g^*\sL_d,\sM_{d+1},\ldots,\sM_{d+d'+1}\rangle_{Y/S, \eta},\label{eq:functoriality3-1} 
	\end{align}
	where the third isomorphism follows from Theorem \ref{base change of normal Deligne pairing}.
	
	For each \(P\in S^{(1)}\), let \(A = \Spec\sO_{S,P}\), and we have a canonical isomorphism
	\begin{align}
		\langle\sL_1,\ldots,\sL_d,&\langle\sM_{d+1},\ldots,\sM_{d+d'+1}\rangle_{Y/X} \rangle_{X/S,P}\notag\\
		&= \langle\sL_{1,P},\ldots,\sL_{d,P},\langle\sM_{d+1},\ldots,\sM_{d+d'+1}\rangle_{Y/X,P} \rangle_{X_A/\Spec A}\notag\\
		&= \langle\sL_{1,P},\ldots,\sL_{d,P},\langle\sM_{d+1},\ldots,\sM_{d+d'+1}\rangle_{Y/X,P} \rangle_{X_A/\Spec A}\notag\\
		&= \langle\sL_{1,P},\ldots,\sL_{d,P},\langle\sM_{d+1,P},\ldots,\sM_{d+d'+1,P}\rangle_{Y_A/X_A} \rangle_{X_A/\Spec A}\notag\\
		&= \langle g^*\sL_{1,P},\ldots,g^*\sL_{d,P},\sM_{d+1,P},\ldots,\sM_{d+d'+1,P}\rangle_{Y_A/\Spec A}\notag\\
		&= \langle (g^*\sL_1)_P,\ldots,(g^*\sL_d)_P,\sM_{d+1,P},\ldots,\sM_{d+d'+1,P}\rangle_{Y_A/\Spec A}\notag\\
		&= \langle g^*\sL_1,\ldots, g^*\sL_d,\sM_{d+1},\ldots,\sM_{d+d'+1}\rangle_{Y/S, P},\label{eq:functoriality3-2} 
	\end{align}
	where the third isomorphism follows from Theorem \ref{base change of normal Deligne pairing}. The isomorphisms \eqref{eq:functoriality3-1} and \eqref{eq:functoriality3-2} satisfy the commutative diagram in Proposition \ref{codim 1 miracle for line bundles}. Then by Proposition \ref{codim 1 miracle for line bundles}, we obtain a canonical isomorphism
	\begin{multline*}
		\langle g^*\sL_1,\ldots,g^*\sL_n,\sM_{n+1},\ldots,\sM_{n+m+1} \rangle_{Y/S}\\
		\to \langle\sL_1,\ldots,\sL_n,\langle\sM_{n+1},\ldots,\sM_{n+m+1}\rangle_{Y/X} \rangle_{X/S}.
	\end{multline*}
    For the compatibility with good base change \(S'\to S\), we can assume that \(S'\) is integral, and then it suffices to check the diagram over \(\eta'\to \eta\), where \(\eta'\) is the generic point of \(S'\). Let \(X' = X\times_S S'\). Since both \(X'_{\eta'}\to\eta'\) and \(X_{\eta}\to\eta\) are flat morphisms, the result follows from \cite{Garcia2000} Proposition 5.2.3.b.
\end{proof}

\begin{Thm}\label{projection formula of taking closed subscheme}
	Let \(f\colon X\to S\) be a surjective projective morphism of noetherian schemes of pure relative dimension \(d\geqslant 1\), and \(D\) be an effective Cartier divisor on \(X\) such that \(D\to S\) is of pure relative dimension \(d-1\). Suppose that \(S\) is normal. Then we have a natural isomorphism
	\begin{equation*}
		\langle \sL_1,\ldots,\sO(D),\ldots,\sL_d \rangle_{X/S}
		\to \langle\sL_1|_D,\ldots,\sL_d|_D \rangle_{D/S},
	\end{equation*}
    where we just delete \(\sO(D)\) in the second expression. Moreover, this isomorphism is compatible with any good base change \(g\colon S'\to S\).
\end{Thm}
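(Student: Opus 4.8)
The plan is to follow the template of Theorems~\ref{pull-back formula of normal Deligne pairing} and~\ref{projection formula of normal Deligne pairing}: reduce to the case of integral $S$, construct the isomorphism over the generic point and over each codimension-$1$ point of $S$ from García's corresponding formula in the finite-Tor-dimension setting, and then glue with algebraic Hartogs' theorem (Proposition~\ref{codim 1 miracle for line bundles}). First, the right-hand side is well posed: since $d\geqslant 1$, the hypothesis that $D\to S$ has pure relative dimension $d-1$ forces every fibre of $D\to S$ to be nonempty (our convention that a set of nonnegative dimension is nonempty), so $D\to S$ is surjective; it is projective because $D$ is closed in $X$, and hence $\langle\sL_1|_D,\ldots,\sL_d|_D\rangle_{D/S}$ is defined by Theorem~\ref{EDP of general l.b}. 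By Lemma~\ref{disjoint union of irreducible components} we may assume $S$ integral; write $\eta$ for its generic point and $A=\sO_{S,P}$ for $P\in S^{(1)}$.

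The morphisms $\eta\to S$ and $\Spec A\to S$ are flat, so $D_\eta$ and $D_A$ are again effective Cartier divisors (a flat pullback of a nonzerodivisor is a nonzerodivisor), of pure relative dimension $d-1$ over $\eta$ and over $\Spec A$ respectively, since the fibres are unchanged; both are, moreover, good base changes for $X\to S$ and for $D\to S$ in the sense of Definition~\ref{def: good base change}. As $\eta$ and $\Spec A$ are regular, $X_\eta\to\eta$ and $X_A\to\Spec A$ have finite Tor-dimension, so by Theorem~\ref{EDP = DP for all lb} every Deligne pairing that occurs, once restricted to $\eta$ or to $\Spec A$, coincides with the one constructed by García in \cite{Garcia2000}. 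Applying García's divisor-restriction formula (\cite{Garcia2000} \S5) to $X_\eta\to\eta$ with $D_\eta$ and to $X_A\to\Spec A$ with $D_A$, we obtain canonical isomorphisms
\[
\varphi_\eta\colon\langle\sL_1,\ldots,\sO(D),\ldots,\sL_d\rangle_{X/S,\,\eta}\ \xrightarrow{\ \sim\ }\ \langle\sL_1|_D,\ldots,\sL_d|_D\rangle_{D/S,\,\eta},
\]
\[
\varphi_P\colon\langle\sL_1,\ldots,\sO(D),\ldots,\sL_d\rangle_{X/S,\,P}\ \xrightarrow{\ \sim\ }\ \langle\sL_1|_D,\ldots,\sL_d|_D\rangle_{D/S,\,P}\qquad(P\in S^{(1)}),
\]
where we use Theorem~\ref{base change of normal Deligne pairing} (applied to $D\to S$) to identify $\langle\sL_1|_D,\ldots,\sL_d|_D\rangle_{D/S}$ restricted to $\eta$ or $\Spec A$ with the corresponding pairing for $D_\eta/\eta$ or $D_A/\Spec A$, together with $\sO(D)$ restricting to $\sO(D_\eta)$ and $\sO(D_A)$, and $\sL_i|_D$ to $\sL_{i,\eta}|_{D_\eta}$ and $\sL_{i,P}|_{D_A}$. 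Because $\eta\to\Spec A$ is a further localization and García's isomorphism is compatible with flat base change, $\varphi_P$ restricts to $\varphi_\eta$; hence $\varphi_\eta$ and the family $(\varphi_P)_{P\in S^{(1)}}$ satisfy the hypothesis of Proposition~\ref{codim 1 miracle for line bundles}, and applying it produces the desired natural isomorphism $\langle\sL_1,\ldots,\sO(D),\ldots,\sL_d\rangle_{X/S}\to\langle\sL_1|_D,\ldots,\sL_d|_D\rangle_{D/S}$.

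For compatibility with a good base change $g\colon S'\to S$, I would argue exactly as in Theorem~\ref{projection formula of normal Deligne pairing}: reduce to $S'$ integral, and observe that both composite isomorphisms are maps of invertible $\sO_{S'}$-modules, so it suffices to check that they agree after restriction to the generic point $\eta'$ of $S'$. Writing $Q=g(\eta')\in\cU$, the morphism $X_{\eta'}\to\eta'$ is flat, being the base change of the flat morphism $X_Q\to\Spec\sO_{S,Q}$; thus over $\eta'$ the assertion reduces to the compatibility of García's divisor-restriction formula with flat base change, which is proved in \cite{Garcia2000} \S5.

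The main obstacle is bookkeeping rather than substance: one must check that $D$ pulls back to an effective Cartier divisor of relative dimension $d-1$ under each base change used, and that García's isomorphism is compatible with the corresponding restrictions, so that the hypotheses of Proposition~\ref{codim 1 miracle for line bundles} genuinely hold. For the localizations at $\eta$ and at the $P\in S^{(1)}$ this is immediate from flatness of localization, and the good-base-change case is only invoked after passing to $\eta'$, over which the morphism is already flat, so no new difficulty arises; the one point to confirm from the literature is that García's intersection-with-a-divisor formula is available in the full finite-Tor-dimension generality of \cite{Garcia2000} \S2, and not merely for flat or Cohen--Macaulay families.
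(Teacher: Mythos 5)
Your proposal is correct and follows essentially the same route as the paper: reduce to integral $S$, identify the pairings over $\eta$ and over each $\Spec\sO_{S,P}$ with Garc\'ia's (via regularity of these local rings and the fact that the canonical section of $\sO(D)$ is regular), apply Garc\'ia's divisor-restriction isomorphism there, glue with Proposition~\ref{codim 1 miracle for line bundles}, and check base-change compatibility over the generic point of $S'$ where the family is flat. The only cosmetic difference is the citation: the paper invokes \cite{Garcia2000} Proposition 4.3.1 (restriction along the zero locus of a regular section) rather than \S5, and your added check that $D\to S$ is surjective projective is a harmless supplement.
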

\begin{proof}
	We can assume that \(S\) is integral, and let \(\eta\) be the generic point of \(S\). From the following commutative diagram
	\[\begin{tikzcd}
		D_\eta\arrow[r]\arrow[d] & D\arrow[d]\\
		X_\eta\arrow[r] & X
	\end{tikzcd}\]
	we deduce that for each \(\sL\in\Pic(X)\), \((\sL|_D)_\eta = \sL_\eta|_{D_\eta}\), where \(\sL_\eta\) is the abbreviation for \(\sL_{X_\eta}\). Since \(D\) is an effective Cartier divisor, the canonical section \(s_D\) of \(\sO(D)\) is a regular global section. Then by \cite{Garcia2000} Proposition 4.3.1, we have a canonical isomorphism
	\begin{align}\label{eq: iso of taking closed subscheme at generic pt}
		\langle \sL_1,\ldots,\sO(D),\ldots,\sL_d \rangle_{X/S, \eta} &= \langle \sL_1,\ldots,\sO(D),\dots,\sL_d \rangle_{X/S, \eta}\notag\\
		&= \langle \sL_{1,\eta},\ldots,\sO(D)_\eta,\ldots,\sL_{d,\eta} \rangle_{X_\eta/\eta}\notag\\
		&= \langle \sL_{1,\eta},\ldots,\sO(D_\eta),\ldots,\sL_{d,\eta} \rangle_{X_\eta/\eta}\notag\\
		&= \langle \sL_{1,\eta}|_{D_\eta},\ldots,\sL_{d,\eta}|_{D_\eta} \rangle_{D_\eta/\eta}\notag\\
		&= \langle (\sL_{1}|_D)_\eta,\ldots,(\sL_{d}|_D)_\eta \rangle_{D_\eta/\eta}\notag\\
		&= \langle \sL_{1}|_D,\ldots,\sL_{d}|_D \rangle_{D/S,\eta}.
	\end{align}
	
	Let \(P\in S^{(1)}\), and \(A = \sO_{S,P}\). From the following commutative diagram
	\[\begin{tikzcd}
		D_A\arrow[r]\arrow[d] & D\arrow[d]\\
		X_A\arrow[r]\arrow[d] & X\arrow[d]\\
		\Spec A\arrow[r] & S
	\end{tikzcd}\]
	we deduce that for any \(\sL\in\Pic(X)\), \((\sL|_D)_A = \sL_A|_{D_A}\), where \(\sL_A\) is the abbreviation for \(\sL_{X_A}\). Then we have a canonical isomorphism
	\begin{align}\label{eq: iso of taking closed subscheme at codim 1 pt}
		\langle \sL_1,\ldots,\sO(D),\ldots,\sL_d \rangle_{X/S, P} &= \langle \sL_1,\ldots,\sO(D),\dots,\sL_d \rangle_{X/S, P}\notag\\
		&= \langle \sL_{1,A},\ldots,\sO(D)_A,\ldots,\sL_{d,A} \rangle_{X_A/A}\notag\\
		&= \langle \sL_{1,A},\ldots,\sO(D_A),\ldots,\sL_{d,A} \rangle_{X_A/A}\notag\\
		&= \langle \sL_{1,A}|_{D_A},\ldots,\sL_{d,A}|_{D_A} \rangle_{D_A/A}\notag\\
		&= \langle (\sL_{1}|_D)_A,\ldots,(\sL_{d}|_D)_A \rangle_{D_A/A}\notag\\
		&= \langle \sL_{1}|_D,\ldots,\sL_{d}|_D \rangle_{D/S,P}.
	\end{align}
	Then isomorphisms \eqref{eq: iso of taking closed subscheme at generic pt} and \eqref{eq: iso of taking closed subscheme at codim 1 pt} satisfy the commutative diagram in Proposition \ref{diagram: good base change}. Then by Proposition \ref{diagram: good base change}, we deduce the required isomorphism.
	
	For the compatibility with good base change, we can assume that \(S'\) is integral, and it suffices to check the diagram over \(\eta'\to \eta\), where \(\eta'\) is the generic point of \(S'\). Let \(X' = X\times_S S'\). Since both \(X'_{\eta'}\to\eta'\) and \(X_{\eta}\to\eta\) are flat morphisms, the result follows from \cite{Garcia2000} Proposition 4.3.1.
\end{proof}

\begin{Thm}\label{birational invariance}
	Let \(f\colon X\to S\) be a surjective projective morphism of noetherian schemes of pure relative dimension \(d\), and \(g\colon Y\to X\) be a projective morphism. Suppose that \(S\) is normal, \(Y\to S\) is a surjective projective morphism of pure relative dimension \(d\), and that there exists an open subset \(U\subseteq X\) which is dense in each fiber of \(f\) and satisfies that \(g\colon g^{-1}(U)\to U\) is an isomorphism. Then we have a natural isomorphism
	\begin{equation*}
		\langle \sL_1,\sL_2,\ldots,\sL_{d+1} \rangle_{X/S}\to \langle g^*\sL_1,g^*\sL_2,\ldots,g^*\sL_{d+1} \rangle_{Y/S}.
	\end{equation*}
\end{Thm}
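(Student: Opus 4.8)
The plan is to imitate the foregoing functorial properties (Theorems \ref{base change of normal Deligne pairing}, \ref{pull-back formula of normal Deligne pairing}, \ref{projection formula of normal Deligne pairing}, \ref{projection formula of taking closed subscheme}): reduce to the case where $S$ is a noetherian normal integral scheme, construct the isomorphism over the generic point $\eta$ and over each $P\in S^{(1)}$ by invoking the corresponding property of Garc\'ia's Deligne pairing, check that these two local isomorphisms are compatible, and glue them via Proposition \ref{codim 1 miracle for line bundles}. First I would note that, by Lemma \ref{disjoint union of irreducible components}, the construction splits over the (open and closed) irreducible components of $S$; since $Y=g^{-1}(X)$, the open set $U$, and all the hypotheses on $g$ and $U$ split accordingly, one may assume $S$ integral with generic point $\eta$.

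Over $\eta$: as $\eta$ is the spectrum of a field, $X_\eta\to\eta$ and $Y_\eta\to\eta$ are flat, $Y_\eta\to\eta$ is surjective projective of pure relative dimension $d$, and $g_\eta\colon Y_\eta\to X_\eta$ is an isomorphism over the base change $U_\eta$ of $U$, which is dense in $X_\eta$ by hypothesis. By Proposition \ref{codim 1 of equidimensional deligne pairing} the stalks $\langle\sL_1,\ldots,\sL_{d+1}\rangle_{X/S,\eta}$ and $\langle g^*\sL_1,\ldots,g^*\sL_{d+1}\rangle_{Y/S,\eta}$ are canonically identified with the Deligne pairings $\langle\sL_{1,\eta},\ldots,\sL_{d+1,\eta}\rangle_{X_\eta/\eta}$ and $\langle g_\eta^*\sL_{1,\eta},\ldots,g_\eta^*\sL_{d+1,\eta}\rangle_{Y_\eta/\eta}$ of \cite{Garcia2000}, so the birational invariance of Garc\'ia's Deligne pairing \cite{Garcia2000} supplies a canonical isomorphism $\varphi_\eta\colon\langle\sL_1,\ldots,\sL_{d+1}\rangle_{X/S,\eta}\xrightarrow{\sim}\langle g^*\sL_1,\ldots,g^*\sL_{d+1}\rangle_{Y/S,\eta}$. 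Over a point $P\in S^{(1)}$: the local ring $A=\sO_{S,P}$ is a discrete valuation ring, hence regular, so the base changes $X_A\to\Spec A$ and $Y_A\to\Spec A$ are of finite Tor-dimension; each is surjective projective of pure relative dimension $d$, $g_A\colon Y_A\to X_A$ is projective, and $g_A$ is an isomorphism over $U_A$, which is dense in every fiber of $X_A\to\Spec A$ (each such fiber being a fiber of $f$). Again by Proposition \ref{codim 1 of equidimensional deligne pairing} together with the birational invariance of Garc\'ia's Deligne pairing for morphisms of finite Tor-dimension \cite{Garcia2000}, and using $(g^*\sL_i)_P=g_A^*\sL_{i,P}$, one obtains a canonical isomorphism $\varphi_P\colon\langle\sL_1,\ldots,\sL_{d+1}\rangle_{X/S,P}\xrightarrow{\sim}\langle g^*\sL_1,\ldots,g^*\sL_{d+1}\rangle_{Y/S,P}$.

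For the compatibility, $\varphi_\eta$ is the restriction of $\varphi_P$ along $\eta\to\Spec A$: Garc\'ia's birational-invariance isomorphism commutes with the flat base change $\eta\to\Spec A$ (\cite{Garcia2000}), and the identifications of Proposition \ref{codim 1 of equidimensional deligne pairing} are likewise compatible with that base change. Hence $\varphi_\eta$ and $\{\varphi_P\}_{P\in S^{(1)}}$ fit into the commutative square of Proposition \ref{codim 1 miracle for line bundles}, and therefore $\varphi_\eta$ restricts to the desired isomorphism $\langle\sL_1,\ldots,\sL_{d+1}\rangle_{X/S}\xrightarrow{\sim}\langle g^*\sL_1,\ldots,g^*\sL_{d+1}\rangle_{Y/S}$.

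The step I expect to be the main obstacle is the input from the classical theory: one needs the birational-invariance property of Garc\'ia's Deligne pairing for projective morphisms of finite Tor-dimension, together with its compatibility with flat base change. If that is not available off the shelf, it must be proved directly over a field and over a discrete valuation ring — twist the $\sL_i$ to make them $f$-sufficiently ample, choose a regular sequence of sections whose successive zero loci lie in $U$ (possible since the complement of $U$ has codimension $\geqslant 1$ in each fiber), and match the two iterated-norm descriptions of the Deligne pairing term by term, using that $g$ restricts to an isomorphism over $U$. Everything else — the reduction to integral $S$, the stability of the hypotheses on $g$ and $U$ under the base changes $\eta\to S$ and $\Spec A\to S$, and the final gluing — is routine with the machinery already developed in this paper.
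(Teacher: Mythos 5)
Your proposal is correct and follows essentially the same route as the paper's proof: reduce to $S$ integral, obtain the isomorphism over the generic point and over each codimension-one point from the birational invariance of Garc\'ia's Deligne pairing (\cite{Garcia2000} Theorem 5.3.1, which the paper cites directly and which covers the finite Tor-dimension case over a DVR), verify the compatibility square, and conclude by Proposition \ref{codim 1 miracle for line bundles}. The fallback argument you sketch for proving the classical input by hand is not needed, since the paper takes Garc\'ia's result off the shelf.
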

\begin{proof}
	Let \(\eta\) be the generic point of \(S\). Note that \(Y_\eta\to X_\eta\) is a birational projective morphism. Then by \cite{Garcia2000} Theorem 5.3.1, we have a canonical isomorphism
	\begin{align}\label{eq:birational inv 1}
		\langle \sL_1,\sL_2,\ldots,\sL_{d+1} \rangle_{X/S,\eta} &= \langle \sL_{1,\eta},\sL_{2,\eta},\ldots,\sL_{d+1,\eta} \rangle_{X_\eta/\eta}\notag\\
		&= \langle g^*\sL_{1,\eta},g^*\sL_{2,\eta},\ldots,g^*\sL_{d+1,\eta} \rangle_{Y_\eta/\eta}\notag\\
		&=  \langle g^*\sL_1,g^*\sL_2,\ldots,g^*\sL_{d+1} \rangle_{Y/S, \eta}.
	\end{align}

    For each \(P\in S^{(1)}\), let \(S = \Spec\sO_{S,P}\), and by \cite{Garcia2000} Theorem 5.3.1 we have a canonical isomorphism  
    \begin{align}\label{eq:birational inv 2}
    	\langle \sL_1,\sL_2,\ldots,\sL_{d+1} \rangle_{X/S,P} &= \langle \sL_{1,P},\sL_{2,P},\ldots,\sL_{d+1,P} \rangle_{X_A/\Spec A}\notag\\
    	&= \langle g^*\sL_{1,P},g^*\sL_{2,P},\ldots,g^*\sL_{d+1,P} \rangle_{Y_A/\Spec A}\notag\\
    	&=  \langle g^*\sL_1,g^*\sL_2,\ldots,g^*\sL_{d+1} \rangle_{Y/S, P}.
    \end{align}
     The isomorphisms \eqref{eq:birational inv 1} and \eqref{eq:birational inv 2} satisfy the commutative diagram in Proposition \ref{codim 1 miracle for line bundles}. Then by Proposition \ref{codim 1 miracle for line bundles}, we obtain a canonical isomorphism
     \[\langle \sL_1,\sL_2,\ldots,\sL_{d+1} \rangle_{X/S}\to \langle g^*\sL_1,g^*\sL_2,\ldots,g^*\sL_{d+1} \rangle_{Y/S}.\qedhere\]
\end{proof}

\section{Divisor sequence and section sequence}

\begin{Def}
		Let \(f\colon X\to S\) be a surjective projective morphism of noetherian schemes of pure relative dimension \(d\), let \(n\leqslant d\) be a positive integer, and let \(D_1,\ldots,D_n\) be effective Cartier divisors on \(X\). Let \(Z_i = D_1\cap \cdots \cap D_i\) and let \(Z_0 = X\). We say that \(D_1,\ldots,D_n\) is an \emph{\(f\)-regular sequence} if for each \(1\leqslant i \leqslant n\), \(Z_i\) is an effective Cartier divisor on \(Z_{i-1}\), and \(Z_i\to S\) is of pure relative dimension \(d-i\). 
\end{Def}

\begin{Prop}
	Whether the sequence \(D_1,\ldots,D_n\) is \(f\)-regular is independent of its order.
\end{Prop}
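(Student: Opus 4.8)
The plan is to reduce everything to a single adjacent transposition $\tau=(i\ i{+}1)$, since adjacent transpositions generate the symmetric group; concretely it suffices to show that if $(D_1,\dots,D_n)$ is $f$-regular then so is the transposed sequence $(D_1,\dots,D_{i-1},D_{i+1},D_i,D_{i+2},\dots,D_n)$, and then to iterate. Write $Z_j^{\tau}$ for the partial intersections of the transposed sequence. Two observations make most of this automatic. First, the scheme-theoretic intersection $D_1\cap\cdots\cap D_j$ depends only on the \emph{set} $\{D_1,\dots,D_j\}$ (sum of ideal sheaves is commutative and associative), so $Z_j^{\tau}=Z_j$ for all $j\neq i$; in particular the relative-dimension conditions ``$Z_j^{\tau}\to S$ is of pure relative dimension $d-j$'' for $j\neq i$ are precisely the ones already assumed. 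Second, each $D_j$ carries its canonical regular section $s_j\in\Gamma(X,\mathcal O(D_j))$ with $Z(s_j)=D_j$, and a principal ideal is invertible exactly when its generator is a nonzerodivisor; hence the whole chain of Cartier conditions ``$Z_k^{\tau}$ is an effective Cartier divisor on $Z_{k-1}^{\tau}$'' is equivalent to ``$s_{\tau(1)},\dots,s_{\tau(n)}$ is a regular sequence'', which holds by Proposition~\ref{order lemma}. (This also takes care of the Cartier condition at index $i{+}1$, even though there $Z_i^{\tau}$ has changed.)

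Thus the entire statement collapses to one assertion. Put $W:=Z_{i-1}$, so that $W\to S$ is a surjective projective morphism of noetherian schemes of pure relative dimension $e:=d-i+1$; note $e\geqslant 2$ because $i\leqslant n-1\leqslant d-1$. Put $D:=D_i|_W$ and $D':=D_{i+1}|_W$; these are effective Cartier divisors on $W$ (by the regular-sequence step), $Z_i=D$, $Z_i^{\tau}=D'$ and $Z_{i+1}=D\cap D'$. The remaining claim is:
\[
(\star)\qquad D\to S\ \text{pure rel. dim. }e-1\ \ \text{and}\ \ D\cap D'\to S\ \text{pure rel. dim. }e-2\ \Longrightarrow\ D'\to S\ \text{pure rel. dim. }e-1 .
\]

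To attack $(\star)$ I would argue as follows. Since $D'$ is cut out in $W$ by one equation and $W\to S$ has pure relative dimension $e$, the relative form of the generalized principal ideal theorem (\cite{EGA4-2} (5.3.1.3), (5.1.8)) shows every irreducible component of every fibre $D'_s$ has dimension $\geqslant e-1$; hence $(\star)$ is equivalent to $\dim D'_s\leqslant e-1$ for all $s$, i.e.\ to the statement that $D'$ contains no irreducible component of any fibre $W_s$. Suppose, for contradiction, that $D'$ contains such a component $C$ (take it reduced, so $C$ is integral, $\dim C=e$, and projective over $k(s)$). Then $D\cap C\subseteq D\cap D'$ as closed subschemes of $W$, so $\dim(D\cap C)\leqslant\dim(D\cap D')_s=e-2$. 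On the other hand $D\cap C$ is the zero locus of a single function on the integral scheme $C$, which is of finite type (indeed projective) over the field $k(s)$ and of dimension $e$; by the principal ideal theorem together with catenarity and equidimensionality of such schemes, $D\cap C$ is either all of $C$, or empty, or of pure dimension $e-1$. As $e-2<e-1$, we are forced to have $D\cap C=\emptyset$: the effective Cartier divisor $D$ is disjoint from the whole positive-dimensional projective variety $C\subseteq D'$.

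The last situation is exactly the step I expect to be the main obstacle: one must show that a $D$ as in $(\star)$ \emph{cannot} be disjoint from a fibre-component $C$ of $W$ that lies in $D'$ — and this is precisely where the projectivity of $f$ (i.e.\ properness of the fibres) must be used, since in the affine setting such configurations do occur. The route I would pursue is to combine two facts: that $D\cap D'$ is an effective Cartier divisor on $D'$ (from the regular-sequence step) and that $W_s$ is proper over $k(s)$. Over $s$ the divisor $(D\cap D')_s$ misses $C$ entirely, yet it is nonempty, of pure dimension $e-2$, and cut by one equation inside $D'_s\supseteq C$; one wants to see this is incompatible with $C$ being an irreducible component of the proper scheme $W_s$ — for instance via an incidence/connectedness analysis of the components of $W_s$ adjacent to $C$, or by a flatness/Hilbert-polynomial argument for the family $D\cap D'\hookrightarrow D'$ over $S$ restricted near $C$. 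Granting $(\star)$, the transposition step is complete, and iterating over adjacent transpositions yields the Proposition.
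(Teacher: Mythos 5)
Your reduction to a single adjacent transposition, and your translation of the chain of Cartier conditions into the statement that the canonical sections \(s_1,\ldots,s_n\) form a regular sequence (hence order-independent by Proposition~\ref{order lemma}), are both fine, and they coincide with the first step of the paper's argument. But the proposal is not a proof: everything is made to rest on the dimension statement \((\star)\), and \((\star)\) is never established — you end by ``granting \((\star)\)''. Note that the paper does not argue transposition by transposition here; it claims instead that, once the sections form a regular sequence, the family of conditions ``\(Z_i\to S\) is of pure relative dimension \(d-i\) for all \(i\)'' is equivalent to the single, manifestly symmetric condition ``\(Z_n\to S\) is of pure relative dimension \(d-n\)'', the mechanism being that cutting a fibre by one divisor lowers the dimension of each component by at most one (\cite{EGA4-2} (5.1.8)). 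Your \((\star)\) is exactly the transposition-level instance of that equivalence, so the step you leave open is the entire mathematical content of the Proposition.

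Moreover, the obstacle you isolate is genuine and is not removed by properness of the fibres alone. If a fibre component \(C\) of \(W\) lies in \(D'\) and is disjoint from \(D\), the ``drops by at most one'' principle yields no contradiction because the relevant intersection is empty; and such configurations do occur with projective fibres. For instance, over a discrete valuation ring \(S\) let \(X\) be the blow-up \(\pi\colon X\to\bP^2_S\) of a closed point \(p\) of the special fibre, with exceptional divisor \(E\cong\bP^2\); take the divisor placed \emph{second} to be \(E+\pi^*H_S\) and the divisor placed \emph{first} to be \(\pi^*L_S\), where \(H,L\) are distinct lines avoiding \(p\). In this order all conditions of the definition hold (the first divisor is \(\bP^1_S\), the intersection is a section of \(S\)), and the canonical sections form a regular sequence; yet the second divisor has the two-dimensional fibre component \(E\), so the reversed order is not \(f\)-regular. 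Hence \((\star)\) does not follow from the stated hypotheses: one needs an extra input forcing the later divisors to meet every positive-dimensional fibre component (for example relative ampleness, which is available in the situations where the paper actually produces \(f\)-regular sequences, e.g.\ via Lemma~\ref{lem:existence of f-regular section}), or an assumption excluding components of the \(D_j\) contained in fibres. So you located the crux correctly, but your proposal leaves it unproven and, as formulated, it cannot be filled — and the same empty-intersection issue is precisely what the one-line dimension argument in the paper's own proof passes over silently.
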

\begin{proof}
	For each \(1\leqslant i \leqslant n\), let \(s_{i}\) be the canonical section of \(\sO(D_i)\). Then \(Z_i\) is an effective Cartier divisor on \(Z_{i-1}\) is equivalent to that \(s_{i}|_{Z_{i-1}}\) is a regular section of \(\sO(D)|_{Z_{i-1}}\).
	
	By \cite{EGA4-2} (5.1.8), the dimension of each component of each fiber of \(X\to S\) decreases at most 1 after intersecting with each \(D_i\). Thus \(Z_i\to S\) is of pure relative dimension \(d-i\) for every \(1\leqslant i\leqslant n\) is equivalent to that \(Z_n\to S\) is of pure relative dimension \(d-n\).
	
	Therefore, \(D_1,\ldots,D_n\) is \(f\)-regular is equivalent to that \(s_{1},\ldots,s_{n}\) is regular, and that \(D_1\cap \cdots \cap D_n\to S\) of pure relative dimension \(d-n\). The latter condition is independent of the order.
\end{proof}

    Let \(f\colon X\to S\) be a surjective projective morphism of noetherian schemes of pure relative dimension \(d\), and suppose that \(S\) is normal. Let \(n\leqslant d\), and let \(D_1,D_2,\ldots,D_n\) be an \(f\)-regular sequence of effective Cartier divisors on \(X\). Let \(Y = D_1\cap D_2\cap\cdots\cap D_n\). Then by Theorem \ref{projection formula of taking closed subscheme} repeatedly, we get have an isomorphism
    \begin{equation}\label{eq:restriction to cartier divisor sequence}
    	\langle \sL_1,\ldots,\sO(D_1),\ldots,\sO(D_n),\ldots,\sL_{d+1-n} \rangle_{X/S}
    	\to \langle\sL_1|_Y,\ldots,\sL_{d+1-n}|_Y \rangle_{Y/S},
    \end{equation}
      where we just delete \(\sO(D_1),\dots,\sO(D_n)\) in the second expression. 

\begin{Prop}\label{restriction to divisor sequence indep of order}
	Assumptions as above. The isomorphism \eqref{eq:restriction to cartier divisor sequence} is independent of the order of restrictions. 
\end{Prop}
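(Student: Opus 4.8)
The plan is the strategy used repeatedly in this paper: reduce everything to the generic fibre of $f$ and quote the corresponding statement of Garc\'ia \cite{Garcia2000}. First, by Lemma \ref{disjoint union of irreducible components} I may assume $S$ is integral; write $\eta$ for its generic point and $K=\sO_{S,\eta}$ for its function field. Fix two orderings of $D_1,\dots,D_n$ and let $\alpha,\beta$ be the two isomorphisms \eqref{eq:restriction to cartier divisor sequence} they produce; both are isomorphisms between the same pair of invertible $\sO_S$-modules. Since $\Hom_{\sO_S}(\sM,\sN)\hookrightarrow\Hom_K(\sM_\eta,\sN_\eta)$ for invertible $\sO_S$-modules $\sM,\sN$ on the integral scheme $S$, a morphism of invertible $\sO_S$-modules is determined by its restriction to $\eta$; so it suffices to prove $\alpha|_\eta=\beta|_\eta$, and in particular one does not even need codimension-$1$ points here (contrary to most arguments in the paper).

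Next, $\eta\hookrightarrow S$ is a good base change in the sense of Definition \ref{def: good base change}: $\eta\in\cU$ because $X_\eta\to\Spec K$ is flat, and $\eta$ has no points of codimension $1$, so the second condition of Definition \ref{def: good base change} is vacuous. Each step in the construction of \eqref{eq:restriction to cartier divisor sequence} is an instance of Theorem \ref{projection formula of taking closed subscheme}, whose isomorphism is compatible with good base change; restricting along $\eta\to S$ step by step therefore identifies $\alpha|_\eta$ (resp. $\beta|_\eta$) with the iterated restriction isomorphism obtained by peeling the divisors $D_{1,\eta},\dots,D_{n,\eta}$ off $X_\eta\to\eta$ in the corresponding order. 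As $X_\eta\to\eta$ is flat, Theorem \ref{EDP = DP for all lb} identifies all Deligne pairings occurring over $\eta$ with those of \cite{Garcia2000} and the restriction isomorphisms with Garc\'ia's. Thus the problem reduces to the analogous statement for Garc\'ia's Deligne pairing of a flat projective morphism: iterated restriction along an $f$-regular sequence of effective Cartier divisors is independent of the chosen order.

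For this last statement the plan is to reduce to transposing two consecutive divisors, since the symmetric group on $\{1,\dots,n\}$ is generated by adjacent transpositions, interchanging $D_i$ with $D_{i+1}$ alters only the two corresponding steps of the construction, and both orderings stay $f$-regular by the order-independence of $f$-regularity established above; after replacing $X$ by $Z_{i-1}$, which is again a surjective projective $S$-scheme of pure relative dimension, one is reduced to $n=2$. In that case the equality of the two composites $\langle\ldots,\sO(D),\sO(D'),\ldots\rangle_{X/S}\to\langle\ldots,\sO(D')|_D,\ldots\rangle_{D/S}\to\langle\ldots\rangle_{D\cap D'/S}$ and of its analogue through $D'$ is precisely Garc\'ia's compatibility for the restriction isomorphism (\cite{Garcia2000} Proposition 4.3.1 together with the naturality of the isomorphisms in \cite{Garcia2000} \S4.2); concretely it follows from the description of Garc\'ia's isomorphism via the canonical sections $s_D,s_{D'}$, for which the two orders visibly agree. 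The genuine input is this generic-fibre fact imported from Garc\'ia; the reductions are formal, and I expect the only slightly delicate point in writing this out to be checking that the good-base-change compatibility of Theorem \ref{projection formula of taking closed subscheme} commutes with the successive peeling steps, so that $\alpha|_\eta$ and $\beta|_\eta$ really are Garc\'ia's iterated restriction isomorphisms.
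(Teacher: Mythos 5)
Your proposal is correct and follows essentially the same route as the paper: assume $S$ integral, observe that an isomorphism of invertible $\sO_S$-modules is determined by its restriction to the generic point $\eta$, note that over $\eta$ the morphism is flat and the restriction isomorphisms become (by construction) those of the flat theory, and then invoke the order-independence in the flat case. The only difference is the source for that last fact --- the paper cites \cite{DP2024} Proposition 6.1 directly, whereas you derive it from Garc\'ia's $n=2$ compatibility by reducing to adjacent transpositions --- which is a harmless (and slightly more self-contained) variant.
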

\begin{proof}
	we can assume that \(S\) integral, and then only need to verify the theorem over the generic point of \(S\). Then we reduce to the flat case, and then the result follows from \cite{DP2024} Proposition 6.1.
\end{proof}

\begin{Prop}\label{restriction compatible with symmetry}
	Assumptions and notations as above. For each \(1\leqslant j \leqslant n\), let \(i_j\) be the index of \(\sO(D_j)\) in the left side of \eqref{eq:restriction to cartier divisor sequence}, and let \(\sL_{i_j} = \sO(D_j)\). Let \(\phi\) be any permutation of \(\{1,2\ldots,d+1\}\). Let \(\sigma\) be the corresponding permutation of \(\{1,2,\dots,d+1-n\}\) such that \(\gamma_\sigma\) sends \(\langle\sL_1|_Y,\ldots,\hat{\sL}_{i_1},\ldots,\hat{\sL}_{i_n},\ldots,\sL_{d+1}|_Y \rangle_{Y/S}\) to \(\langle \sL_{\phi(1)}|_{Y},\ldots,\hat{\sL}_{\phi(i_1)},\ldots,\hat{\sL}_{\phi(i_n)},\ldots,\sL_{\phi(d+1)}|_{Y} \rangle_{Y/S} \). Then we have the following commutative diagram
	\[ \begin{tikzcd}
		\langle \sL_1,\ldots,\sL_{i_1},\ldots,\sL_{i_n},\ldots,\sL_{d+1} \rangle_{X/S}\arrow[d,"\gamma_{\phi}"]\arrow[r]& \langle\sL_1|_Y,\ldots,\hat{\sL}_{i_1},\ldots,\hat{\sL}_{i_n},\ldots,\sL_{d+1}|_Y \rangle_{Y/S}\arrow[d,"\gamma_\sigma"]\\
		\langle \sL_{\phi(1)},\ldots,\sL_{\phi(i_1)},\ldots,\sL_{\phi(i_n)},\ldots,\sL_{\phi(d+1)} \rangle_{X/S}\arrow[r] & \langle \sL_{\phi(1)}|_Y,\ldots,\hat{\sL}_{\phi(i_1)},\ldots,\hat{\sL}_{\phi(i_n)},\ldots,\sL_{\phi(d+1)}|_Y \rangle_{Y/S} 
	\end{tikzcd} \]
    where the two horizontal arrows are the canonical isomorphisms defined as \eqref{eq:restriction to cartier divisor sequence}.
\end{Prop}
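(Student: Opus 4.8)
The plan is to argue exactly as in the proofs of Theorems~\ref{base change of normal Deligne pairing}--\ref{birational invariance}: reduce to the case where $S$ is integral, and then verify the square on the generic fibre, where the morphism becomes flat and Garc\'ia's results apply verbatim.

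First I would use Lemma~\ref{disjoint union of irreducible components} to reduce to the case that $S$ is a normal integral scheme with generic point $\eta$. Note that $Y = D_1\cap\cdots\cap D_n$ does not depend on the order of the $D_i$, so it is unchanged when $\phi$ permutes the divisor slots; by $f$-regularity of $D_1,\ldots,D_n$ the morphism $Y\to S$ is surjective projective of pure relative dimension $d-n$, so the target $\langle\sL_{\phi(1)}|_Y,\ldots\rangle_{Y/S}$ is defined with $d+1-n$ slots and $\sigma$ is the permutation of the $d+1-n$ non-divisor slots induced by $\phi$ (transported along $\phi$ from the complement of $\{i_1,\ldots,i_n\}$ to the complement of $\{\phi(i_1),\ldots,\phi(i_n)\}$). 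The two composites around the square are isomorphisms of invertible $\sO_S$-modules with the same source and target, so to prove they coincide it is enough to compare their restrictions to $\eta$: on an integral scheme a morphism of invertible sheaves is determined by its restriction to the generic point, since the sheaf $\sHom$ between two invertible sheaves is torsion-free (this is the uniqueness half of the reasoning behind Lemma~\ref{isom of hom} and Proposition~\ref{codim 1 miracle for line bundles}).

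Next, $X_\eta\to\eta$ is flat. The restriction isomorphism \eqref{eq:restriction to cartier divisor sequence} is built by iterating Theorem~\ref{projection formula of taking closed subscheme}, which is compatible with good base change; hence its restriction to $\eta$ is the iterated restriction isomorphism of \cite{Garcia2000} Proposition 4.3.1 for the flat family $X_\eta\to\eta$ and the divisors $D_{i,\eta}$. Likewise, by the construction in Proposition~\ref{symmetric of EDP}, the isomorphisms $\gamma_\phi$ and $\gamma_\sigma$ restrict to the symmetry isomorphisms of \cite{Garcia2000} Theorem 4.2.7 on $X_\eta$ and on $Y_\eta$ respectively. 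So over $\eta$ the square becomes the analogous square for Garc\'ia's Deligne pairing of the flat morphism $X_\eta\to\eta$, which commutes because that pairing is a symmetric multi-additive functor and the restriction isomorphism along an effective Cartier divisor is a natural transformation compatible with the symmetry isomorphisms; this is a coherence property that follows from \cite{Garcia2000} (combining Proposition 4.3.1 with Theorem 4.2.7, in the same spirit as Proposition~\ref{restriction to divisor sequence indep of order}, where \cite{DP2024} Proposition 6.1 is used for the companion order-independence statement). Combined with the previous paragraph this gives the commutativity over $S$.

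I expect the only real obstacle to be organisational rather than mathematical: carefully matching the combinatorics of $\sigma$ with the slot-bookkeeping of the restricted pairings on $X_\eta$ and $Y_\eta$, and checking that when $\phi$ moves divisor slots past non-divisor slots the induced reordering of $\langle\,\cdot\,\rangle_{Y/S}$ is exactly the one effected by $\gamma_\sigma$. Once the generic-fibre diagram is set up correctly there is no new geometric content; the statement is a formal consequence of the flat-case coherence together with the codimension-one (``algebraic Hartogs'') principle used repeatedly in this paper.
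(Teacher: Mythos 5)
Your proposal is correct and follows essentially the same route as the paper: reduce to $S$ integral, observe that both composites are determined by their restriction to the generic point, and then invoke the known coherence of the flat-case Deligne pairing on the generic fibre (the paper cites \cite{DP2024} Proposition 6.1 for that flat-case square, whereas you assemble it from \cite{Garcia2000} Proposition 4.3.1 and Theorem 4.2.7 — a cosmetic difference). Your extra remarks on why the generic point determines the morphism and on the bookkeeping of $\sigma$ only make explicit what the paper leaves implicit.
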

\begin{proof}
	We can assume that \(S\) integral, and then only need to verify the theorem over the generic point of \(S\). Then we reduce to the flat case, and then the result follows from \cite{DP2024} Proposition 6.1.
\end{proof}

\begin{Def}
	Let \(f\colon X\to S\) be a surjective projective morphism of noetherian schemes of pure relative dimension \(d\), \(n\leqslant d\), let \(\sL_1,\ldots,\sL_n\) be invertible \(\sO_X\)-modules, and for each \(1\leqslant i\leqslant n\), let \(s_i\) be a global section of \(\sL_i\). We say that \(s_1,\ldots,s_n\) is an \emph{\(f\)-regular sequence} if \(s_1,\ldots,s_n\) is a regular sequence, and \(\) \(Z(s_1)\cap\cdots\cap Z(s_n)\to S\) is of pure relative dimension \(d-n\).
\end{Def}

    Note that if \(s_1,\ldots,s_n\) is an \emph{\(f\)-regular sequence}, then \(Z(s_1),\ldots,Z(s_n)\) is an \emph{\(f\)-regular sequence} of effective Cartier divisors.

	Let \(f\colon X\to S\) be a surjective projective morphism of noetherian schemes of pure relative dimension \(d\), \(n\leqslant d\), let \(\sL_1,\ldots,\sL_n\) be invertible \(\sO_X\)-modules, and for each \(1\leqslant i\leqslant n\), let \(s_i\) be a global section of \(\sL_i\). Suppose that \(S\) is normal, and that \(s_1,\ldots,s_n\) is an \(f\)-regular sequence. Let \(Y = Z(s_1)\cap Z(s_2)\cap\cdots\cap Z(s_n)\). Then by \eqref{eq:restriction to cartier divisor sequence}, we have a canonical isomorphism
    \begin{equation*}
    	\langle\sL_1,\sL_2,\ldots,\sL_{d+1}\rangle_{X/S}\to \langle\sL_{n+1},\sL_{n+2},\ldots,\sL_{d+1}\rangle_{Y/S},
    \end{equation*}
    which we denote by \([s_1,\ldots,s_n]_{X/S}\). In particular, if \(n=d\), then we have a canonical isomorphism
    \[ \langle\sL_1,\sL_2,\ldots,\sL_{d+1}\rangle_{X/S}\to\Nm_{Y/S}(\sL_{d+1}). \]
    Moreover, let \(s_{d+1}\) be a global section of \(\sL_{d+1}\), then \([s_1,\ldots,s_d]^{-1}_{X/S}(\Nm_{Y/S}(s_{d+1}))\) is a global section of \(\langle\sL_1,\sL_2,\ldots,\sL_{d+1}\rangle_{X/S}\), and we denote it by
    \begin{equation}\label{eq:section of regular sequence}
    	\langle s_1,\ldots,s_d,s_{d+1}\rangle_{X/S}
    \end{equation} Then Proposition \ref{restriction compatible with symmetry} implies the following proposition.
    
\begin{Prop}
	Assumptions as above. Let \(\phi\) be a permutation of the set \(\{1,2,\ldots,d+1\}\) such that \(\phi(d+1) = d+1\). Then
	\[ \gamma_{\phi}(\langle s_1,\ldots,s_d,s_{d+1}\rangle_{X/S}) = \langle s_{\phi(1)},\ldots,s_{\phi(d)},s_{d+1}\rangle_{X/S} \]
\end{Prop}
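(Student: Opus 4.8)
The plan is to derive this immediately from Proposition \ref{restriction compatible with symmetry}, specialized to the case $n=d$. First I would unwind the definitions: take $D_j=Z(s_j)$ placed at the indices $i_j=j$ for $1\leqslant j\leqslant d$, so that (via $s_j$) we identify $\sL_j$ with $\sO(D_j)$, its canonical section corresponding to $s_j$. Recall that $Y=Z(s_1)\cap\cdots\cap Z(s_d)$ and that, by construction,
\[ \langle s_1,\ldots,s_d,s_{d+1}\rangle_{X/S}=[s_1,\ldots,s_d]_{X/S}^{-1}\bigl(\Nm_{Y/S}(s_{d+1})\bigr), \]
where $[s_1,\ldots,s_d]_{X/S}\colon\langle\sL_1,\ldots,\sL_{d+1}\rangle_{X/S}\xrightarrow{\sim}\Nm_{Y/S}(\sL_{d+1})$ is the composite of the restriction isomorphisms \eqref{eq:restriction to cartier divisor sequence}.

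Since $\phi(d+1)=d+1$, the permutation $\phi$ restricts to a permutation of $\{1,\ldots,d\}$; hence the reordered expression $\langle\sL_{\phi(1)},\ldots,\sL_{\phi(d)},\sL_{d+1}\rangle_{X/S}$ still has Cartier-divisor factors $\sO\bigl(Z(s_{\phi(j)})\bigr)$ in its first $d$ slots, the surviving (non-deleted) factor is $\sL_{d+1}$ in both rows, and $Z(s_{\phi(1)})\cap\cdots\cap Z(s_{\phi(d)})=Y$ as schemes. Consequently the induced permutation $\sigma$ occurring in Proposition \ref{restriction compatible with symmetry} acts on the singleton $\{1,\ldots,d+1-d\}=\{1\}$, so $\gamma_\sigma=\gamma_{\id}=\id_{\Nm_{Y/S}(\sL_{d+1})}$, and the commutative square of that proposition specializes to
\[ \begin{tikzcd}
\langle\sL_1,\ldots,\sL_d,\sL_{d+1}\rangle_{X/S}\arrow[r,"{[s_1,\ldots,s_d]_{X/S}}"]\arrow[d,"\gamma_\phi"'] & \Nm_{Y/S}(\sL_{d+1})\arrow[d,equal]\\
\langle\sL_{\phi(1)},\ldots,\sL_{\phi(d)},\sL_{d+1}\rangle_{X/S}\arrow[r,"{[s_{\phi(1)},\ldots,s_{\phi(d)}]_{X/S}}"'] & \Nm_{Y/S}(\sL_{d+1}).
\end{tikzcd} \]

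Finally I would chase $\Nm_{Y/S}(s_{d+1})$ through this square. Commutativity (with $\gamma_\sigma=\id$) gives $[s_{\phi(1)},\ldots,s_{\phi(d)}]_{X/S}\circ\gamma_\phi=[s_1,\ldots,s_d]_{X/S}$, that is, $\gamma_\phi=[s_{\phi(1)},\ldots,s_{\phi(d)}]_{X/S}^{-1}\circ[s_1,\ldots,s_d]_{X/S}$; since the section $\Nm_{Y/S}(s_{d+1})$ of $\Nm_{Y/S}(\sL_{d+1})$ is literally the same object in the two rows (the scheme $Y$ and the section $s_{d+1}$ being unaffected by reordering), we obtain
\[ \gamma_\phi\bigl(\langle s_1,\ldots,s_d,s_{d+1}\rangle_{X/S}\bigr)=[s_{\phi(1)},\ldots,s_{\phi(d)}]_{X/S}^{-1}\bigl(\Nm_{Y/S}(s_{d+1})\bigr)=\langle s_{\phi(1)},\ldots,s_{\phi(d)},s_{d+1}\rangle_{X/S}. \]
The only delicate point is the index bookkeeping needed to see that $n=d$ forces $\sigma$ to be trivial; once that is in place the rest is a purely formal diagram chase, so I do not anticipate a substantive obstacle here.
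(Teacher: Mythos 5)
Your proposal is correct and is essentially the paper's own argument: the paper simply asserts that the statement follows from Proposition \ref{restriction compatible with symmetry}, and your write-up is the natural unwinding of that claim (specializing to $n=d$, noting that $\sigma$ acts on a singleton so $\gamma_\sigma=\id$, and chasing $\Nm_{Y/S}(s_{d+1})$ through the square). No gaps.
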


\begin{Thm}\label{thm: non regular section sequence}
	Let \(X\) and \(S\) be quasi-projective over schemes a field of characteristic 0, and let \(X\to S\) be a surjective projective morphism of pure relative dimension \(d\). Let \(\sL_1,\ldots,\sL_{d+1}\) be invertible \(\sO_X\)-modules, and for each \(1\leqslant i\leqslant d+1\), let \(s_i\) be a global section of \(\sL_i\). Suppose that \(S\) is normal, and for each \(1\leqslant i\leqslant d\), \(Z(s_1)\cap Z(s_{2})\cdots\cap Z(s_i)\to S\) is of pure relative dimension \(d-i\). Then we can define a global section of \(\langle\sL_1,\ldots,\sL_{d+1}\rangle_{X/S}\), which we denote by \(\langle s_1,\ldots,s_{d+1}\rangle_{X/S}\). Moreover, if \(\bigcap_{i=1}^{d+1}Z(s_i)  = \emptyset\), then \(\langle s_1,\ldots,s_{d+1}\rangle_{X/S}\) is a regular section of \(\langle\sL_1,\ldots,\sL_{d+1}\rangle_{X/S}\).
\end{Thm}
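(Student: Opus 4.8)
The plan is to construct $\langle s_1,\ldots,s_{d+1}\rangle_{X/S}$ by the same generic-fibre-plus-codimension-one mechanism that runs through Sections 4 and 5. First I would reduce, via Lemma \ref{disjoint union of irreducible components}, to the case that $S$ is integral and normal (a global section of an invertible sheaf on $S$ is the same as a compatible family of sections over the irreducible components). Write $\sL=\langle\sL_1,\ldots,\sL_{d+1}\rangle_{X/S}$, an invertible $\sO_S$-module. By algebraic Hartogs' theorem, $\Gamma(S,\sL)$ is exactly the set of elements of the generic stalk $\sL_\eta$ that lie in $\sL_P=\sL\otimes_{\sO_S}\sO_{S,P}$ for every $P\in S^{(1)}$. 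So it suffices to produce an element $\tau_\eta\in\sL_\eta$ together with, for each $P$, an element $\tau_P\in\sL_P$ whose image in $\sL_\eta$ is $\tau_\eta$; the resulting global section will be our $\langle s_1,\ldots,s_{d+1}\rangle_{X/S}$.

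For the local pieces I would use that over $\eta$ the fibre $X_\eta\to\eta$ is a flat projective morphism over the field $k(\eta)$ of characteristic $0$, and that over $P\in S^{(1)}$ the morphism $X_A\to\Spec A$ (with $A=\sO_{S,P}$ a discrete valuation ring, hence regular) is of finite Tor-dimension. Under the canonical identifications $\sL_\eta=\langle\sL_{1,\eta},\ldots,\sL_{d+1,\eta}\rangle_{X_\eta/\eta}$ and $\sL_P=\langle\sL_{1,P},\ldots,\sL_{d+1,P}\rangle_{X_A/\Spec A}$ of Proposition \ref{codim 1 of equidimensional deligne pairing}, I take $\tau_\eta$ and $\tau_P$ to be the canonical sections attached to the restricted section sequences $(s_{i,\eta})_i$ and $(s_{i,P})_i$; the dimension hypotheses are inherited because forming zero subschemes commutes with base change, so $Z(s_{1,\eta})\cap\cdots\cap Z(s_{i,\eta})\to\eta$ and $Z(s_{1,P})\cap\cdots\cap Z(s_{i,P})\to\Spec A$ are again of pure relative dimension $d-i$ for $1\le i\le d$. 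The input I need here is that for a flat projective morphism over a field of characteristic $0$, and more generally for a projective morphism of finite Tor-dimension onto such a regular base, a section sequence satisfying only these dimension conditions --- \emph{not} assumed to be an $f$-regular sequence in the sense of \eqref{eq:section of regular sequence} --- already determines a canonical section of the Deligne pairing, compatibly with flat base change; this is available from the work of Garc\'ia \cite{Garcia2000} and Eriksson--Freixas \cite{DP2024}. Granting this, compatibility along the flat localization $\eta\hookrightarrow\Spec A$ forces $\tau_P\mapsto\tau_\eta$, and the Hartogs argument yields the global section. The regularity assertion then falls out: if $\bigcap_{i=1}^{d+1}Z(s_i)=\emptyset$ then $\bigcap_i Z(s_{i,\eta})=\bigl(\bigcap_i Z(s_i)\bigr)\times_S\eta=\emptyset$, so over the field $k(\eta)$ the canonical section $\tau_\eta$ is a nonzero element of the one-dimensional space $\sL_\eta$ (the canonical section of a flat Deligne pairing over a field is nonzero when the $s_i$ have no common zero, which one checks by reducing to an $f$-regular sequence and using \eqref{eq:section of regular sequence} and the $d=0$ case $\Nm_{Y/S}(s_{d+1})$); since $\sL$ is invertible over the integral scheme $S$, a section that is nonzero at $\eta$ is a non-zero-divisor, i.e.\ a regular section.

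The heart of the matter --- and the reason characteristic $0$ and quasi-projectivity are hypothesized here although they are not needed elsewhere in the paper --- is exactly the cited input: extending the canonical-section construction from $f$-regular sequences to sequences obeying only the dimension conditions (e.g.\ with some $s_i|_{Z_{i-1}}$ a zero-divisor on an associated component while all intersection dimensions remain correct). The natural plan there is a perturbation argument: after twisting by a large multiple of an $f$-very ample $\sO_X(1)$ one may assume $\sL_1$ is globally generated, move $s_1$ inside a pencil $\{s_1+t\,s_1''\}$ to a general member $s_1'$, verify by a Bertini-type argument that $s_1',s_2,\ldots,s_d$ is an $f$-regular sequence with the same intersection dimensions, define $\langle s_1',s_2,\ldots,s_{d+1}\rangle$ by \eqref{eq:section of regular sequence}, and then specialize $s_1'\rightsquigarrow s_1$ along the pencil, using that the Deligne pairing of the resulting family over the base of the pencil is a line bundle to see that the section extends over the special point and that the limit is independent of all choices and agrees with the flat-case constructions. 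The delicate step is controlling this limit (in particular ruling out spurious vanishing, which is where one must choose $s_1''$ so that the pencil stays transverse to $Z(s_2)\cap\cdots\cap Z(s_{d+1})$ away from finitely many $t$); everything outside this step is the routine Hartogs bookkeeping already developed in Sections 4 and 5, together with Propositions \ref{codim 1 miracle for line bundles} and \ref{codim 1 of equidimensional deligne pairing}.
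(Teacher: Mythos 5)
Your Hartogs skeleton (produce $\tau_\eta$ and $\tau_P$ for $P\in S^{(1)}$, check compatibility, glue) matches the paper's, but the step you yourself identify as ``the heart of the matter'' is a genuine gap, and it is exactly the step the paper's proof is organized to avoid. You assert that a canonical section attached to a section sequence satisfying only the dimension conditions --- not $f$-regularity --- ``is available from the work of Garc\'ia and Eriksson--Freixas''; it is not: those references construct $\langle s_1,\ldots,s_{d+1}\rangle$ only for $f$-regular sequences. Your fallback is a pencil/Bertini perturbation of $s_1$, but this is only a plan: after twisting, $s_1$ is no longer canonically a section of the twisted bundle, the limit as $s_1'\rightsquigarrow s_1$ must be shown to exist, to be independent of the pencil, and not to acquire spurious zeros --- none of which is carried out, and you explicitly flag the limit-control step as unresolved. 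As written, the local sections $\tau_\eta$ and $\tau_P$ are therefore not constructed, and the regularity claim at the end inherits the same gap since it again ``reduces to an $f$-regular sequence.''

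The paper's route is different and avoids perturbation entirely: it inducts on $d$. Over $\eta$ (and over each $\Spec\sO_{S,P}$) it decomposes $X$ into its irreducible components $X_1,\ldots,X_r$ dominating $S$, taken with reduced structure and multiplicities $\delta(X_i)$, and uses \cite{YuanALB} Lemma 4.2.2 to write $\langle\sL_1,\ldots,\sL_{d+1}\rangle_{X/S,\eta}$ as $\bigotimes_i\langle\cdots\rangle_{X_{i,\eta}/\eta}^{\otimes\delta(X_i)}$. On each integral component $X_i$ the dimension hypothesis forces $Z(s_1)\cap X_i$ to be a proper closed subset, so $s_1|_{X_i}$ is automatically regular; one then restricts to $Z_i=Z(s_1)\cap X_i$ and applies the inductive hypothesis to $s_2,\ldots,s_{d+1}$ on $Z_i\to S$. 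This is where the characteristic-$0$ and quasi-projectivity hypotheses actually enter (via the decomposition lemma), not via Bertini. If you want to salvage your write-up, replace the perturbation step by this component-decomposition-plus-induction argument; the rest of your Hartogs bookkeeping then goes through.
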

\begin{proof}
	We prove by induction on \(d\). For \(d = 0\), the proposition is trivial. Now suppose that \(d>0\), and we can assume that \(S\) is integral. Let \(X_1,\ldots,X_r\) be the irreducible components of \(X\) which dominates \(S\), and we endow each \(X_i\) with the reduced structure. For each \(X_i\), let \(\delta(X_i)\) be its multiplicity in \(X\). Let \(\eta\) be the generic point of \(S\), and let \(Z_i = Z(s_1)\cap X_i\). Note that \(s_1\) is a regular section on each \(X_i\). Then by \cite{YuanALB} Lemma 4.2.2, we have a canonical isomorphism
	\begin{align*}
		\langle\sL_{1},\ldots,\sL_{d+1}\rangle_{X/S,\eta} &= \langle\sL_1|_{X_\eta},\ldots,\sL_{d+1}|_{X_\eta}\rangle_{X_\eta/\eta}\\
		&=\otimes_{i=1}^r \langle\sL_{1}|_{X_{i,\eta}},\ldots,\sL_{d+1}|_{X_{i,\eta}}\rangle_{X_{i,\eta}/\eta}^{\otimes\delta(X_i)}\\
		&=\otimes_{i=1}^r \langle\sL_{2}|_{Z_{i,\eta}},\ldots,\sL_{d+1}|_{Z_{i,\eta}}\rangle_{Z_{i,\eta}/\eta}^{\otimes\delta(X_i)}
	\end{align*}
    By inductive hypothesis, for each \(1\leqslant i\leqslant r\), we have a global section \(\langle s_2,\ldots,s_{d+1}\rangle_{Z_{i,\eta}/\eta}\) of \(\langle\sL_{2}|_{Z_{i,\eta}},\ldots,\sL_{d+1}|_{Z_{i,\eta}}\rangle_{Z_{i,\eta}/\eta}\). Then by the above isomorphism, we get a section \(s_\eta\) of \(\langle\sL_{1},\ldots,\sL_{d+1}\rangle_{X/S,\eta}\). 
    
    For each codimension 1 point \(P\), let \(A = \Spec \sO_{S,P}\). Then by \cite{YuanALB} Lemma 4.2.2, we have a canonical isomorphism
    \begin{align*}
    	\langle\sL_{1},\ldots,\sL_{d+1}\rangle_{X/S, P} &= \langle\sL_1|_{X_A},\ldots,\sL_{d+1}|_{X_A}\rangle_{X_A/\Spec A}\\
    	&=\otimes_{i=1}^r \langle\sL_{1}|_{X_{i,A}},\ldots,\sL_{d+1}|_{X_{i,A}}\rangle_{X_{i,A}/\Spec A}^{\otimes\delta(X_i)}\\
    	&=\otimes_{i=1}^r \langle\sL_{2}|_{Z_{i,A}},\ldots,\sL_{d+1}|_{Z_{i,A}}\rangle_{Z_{i,A}/\Spec A}^{\otimes\delta(X_i)}
    \end{align*}
    By inductive hypothesis, for each \(1\leqslant i\leqslant r\), we have a global section \(\langle s_2,\ldots,s_{d+1}\rangle_{Z_{i,A}/\Spec A}\) of \(\langle\sL_{2}|_{Z_{i,A}},\ldots,\sL_{d+1}|_{Z_{i,A}}\rangle_{Z_{i,A}/\Spec A}\). Then by the above isomorphism, we get a section \(s_P\) of \(\langle\sL_{1},\ldots,\sL_{d+1}\rangle_{X/S,P}\).
    
    It is obvious that \(s_P = s_\eta\) on the generic fiber, and thus \(s_\eta\) is actually a global section of \(\langle\sL_1,\ldots,\sL_{d+1}\rangle_{X/S}\). We define \(\langle s_1,\ldots,s_{d+1}\rangle_{X/S}\) to be this global section. 
    
    Moreover, if \(\bigcap_{i=1}^{d+1}Z(s_i)  = \emptyset\), then for every \(1\leqslant i \leqslant r\), \(\langle s_2,\ldots,s_{d+1}\rangle_{Z_{i,\eta}/\eta}\) is  regular. Therefore, \(\langle s_1,\ldots,s_{d+1}\rangle_{X/S}\) is also regular.
\end{proof}

\section{Deligne pairing of morphisms}

\subsection{Flat case}\label{subsetion:morphism of FDP}

In this subsection, we recall some base facts on Deligne pairing of morphisms in the flat case. Let \(f\colon X\to S\) be a surjective flat projective morphism of noetherian schemes of pure relative dimension \(d\). \cite{Ducrot2005} defines the Deligne pairing to be
\begin{equation}\label{def:Ducrot's def of DP}
\langle\sL_1,\sL_2\ldots,\sL_{d+1}\rangle_{X/S} = \bigotimes_{I\subseteq\{1,2,\ldots,d+1\}}\left(\det Rf_*\bigotimes_{i\in I}\sL_i\right)^{(-1)^{d+1-\# I}},	
\end{equation}
where \(Rf_*\) is the derived complex. \cite{Ducrot2005} and \cite{ExplicitDP} prove the equivalence between this definition and \cite{Elkik}'s definition. Moreover, we can see the equivalence from the generation and relation presentation given in \cite{DP2024} \S 6.1.2.

For each \(1\leqslant i\leqslant d+1\), let \(u_i\colon\sL_i'\to \sL_i\) be a isomorphism of invertible \(\sO_X\)-modules. From the definition \eqref{def:Ducrot's def of DP}, we can easily get a morphism of invertible \(\sO_S\)-modules
\[ \langle u_1,\ldots,u_{d+1}\rangle_{X/S}\colon\langle \sL'_1,\ldots,\sL'_{d+1}\rangle_{X/S}\to\langle \sL_1,\ldots,\sL_{d+1}\rangle_{X/S}. \]
Moreover, if \(s_1,\ldots,s_d\) is an \(f\)-regular sequence,
%
then we have
\[ \langle u_1,\ldots,u_{d+1}\rangle_{X/S}(\langle s_1,\ldots,s_{d+1}\rangle_{X/S}) = \langle u_1(s_1),\ldots,u_{d+1}(s_{d+1})\rangle_{X/S}. \]
We have the following commutative diagrams
\begin{equation*}
	\begin{tikzcd}
		\langle \sL_1,\sL_2,\ldots,\sL_{d+1}\rangle_{X/S}\arrow[r, "\gamma_\phi"]\arrow[d,"\langle u_1{,} u_2{,} \ldots{,}u_{d+1}\rangle_{X/S}"]&\langle \sL_{\phi(1)},\sL_{\phi(2)},\ldots,\sL_{{\phi(d+1)}}\rangle_{X/S}\arrow[d,"\langle u_{\phi(1)}{,} u_{\phi(2)}{,} \ldots{,}u_{\phi(d+1)}\rangle_{X/S}"]\\
		\langle\sL'_1,\sL'_2,\ldots,\sL'_{d+1}\rangle_{X/S}\arrow[r, "\gamma_\phi"]&\langle \sL'_{\phi(1)},\sL'_{\phi(2)},\ldots,\sL'_{{\phi(d+1)}}\rangle_{X/S},
	\end{tikzcd}
\end{equation*}
\begin{equation*}
	\begin{tikzcd}
		\langle \sL_1,\ldots,\sL_i,\ldots\sL_{d+1}\rangle_{X/S}\otimes\langle \sL_1,\ldots,\sL'_i,\ldots\sL_{d+1}\rangle_{X/S} \arrow[r, "\Sigma_i"]\arrow[d,"\langle u_1{,}\ldots u_i{,} \ldots{,}u_{d+1}\rangle_{X/S}\otimes\langle u_1{,}\ldots u'_i{,} \ldots{,}u_{d+1}\rangle_{X/S}"]&
		\langle \sL_1,\ldots,\sL_i\otimes\sL'_i,\ldots\sL_{d+1}\rangle_{X/S}\arrow[d,"\langle u_1{,}\ldots u_i\otimes u'_i{,} \ldots{,}u_{d+1}\rangle_{X/S}"]\\
		\langle \sM_{1},\ldots,\sM_i,\ldots,\sM_{d+1}\rangle_{X/S}\otimes\langle \sM_{1},\ldots,\sM'_i,\ldots\sM_{d+1}\rangle_{X/S} \arrow[r, "\Sigma_{i}"]&
		\langle \sM_{1},\ldots,\sM_i\otimes\sM'_i,\ldots\sL_{d+1}\rangle_{X/S},
	\end{tikzcd}
\end{equation*}
\begin{equation*}
	\begin{tikzcd}
		\langle \sL_1,\ldots,\sL_i,\ldots,\sL_{d+1}\rangle_{X/S}\arrow[r, "{[}s_i{]}_{X/S}"]\arrow[d,"\langle u_1{,}\ldots u_i{,} \ldots{,}u_{d+1}\rangle_{X/S}"]&\langle \sL_{1}|_D,\ldots,\hat{\sL}_i,\ldots\sL_{d+1}|_D\rangle_{D/S}\arrow[d,"\langle u_{1}{,}\ldots{,}\hat{u}_i{,} \ldots{,}u_{d+1}\rangle_{X/S}"]\\
		\langle\sL'_1,\ldots,\sL'_i,\ldots,\sL'_{d+1}\rangle_{X/S}\arrow[r, "{[}u_i(s_i){]}_{X/S}"]&\langle \sL'_{1}|_D,\ldots,\hat{\sL}'_i,\ldots\sL'_{d+1}|_{{D}}\rangle_{{D}/S},
	\end{tikzcd}
\end{equation*}
where \(\gamma_\phi\) and \(\Sigma_i\) are the counterparts of \eqref{eq:symmetric of EDP of lb} and \eqref{eq:additive of EDP of lb} for Deligne pairing for flat morphisms, \(u_i\) is an isomorphism, and in the last diagram, \(s_i\) is an \(f\)-regular section of \(\sL_i\) (so \(u_i(s_i)\) is also an \(f\)-regular section), and \(D = Z(s_i) = Z(u_i(s_i))\).

\subsection{Equidimensional case}

Let \(f\colon X\to S\) be a surjective projective morphism of noetherian schemes of pure relative dimension \(d\). Suppose that \(S\) is integral and normal, and let \(\eta\) be the generic point of \(S\). Let \(f_\eta\colon X_\eta \to \eta\) be the base change of \(f\) to \(\eta\), and we see that \(f_\eta\) is a surjective flat projective morphism of noetherian schemes of pure relative dimension \(d\).

Let \(\sL_1,\sL_2,\ldots,\sL_{d+1},\sL'_1,\sL'_2,\ldots,\sL'_{d+1}\) be invertible \(\sO_{X}\) modules, and for each \(1\leqslant i\leqslant d+1\), let \(u_{i}\colon\sL_i\to\sL'_i\) be an isomorphism of \(\sO_X\)-modules. 

For an isomorphism \(u\colon \sL\to \sL'\) of invertible sheaves, we have \(u^{-1}\colon\sL'\to \sL\), which induces an isomorphism \(\sL^{-1}\to \sL'^{-1}\). We denote this isomorphism by \(u^{\otimes(-1)}\). Note that \(u\otimes u^{\otimes(-1)} = u^{\otimes(-1)}\otimes u = \id\).

\begin{Thm}\label{thm:morphism of EDP}
	There is a unique isomorphism
	\[ \varphi\colon\langle\sL_1,\sL_2,\ldots,\sL_{d+1}\rangle_{X/S}\to \langle\sL'_1,\sL'_2,\ldots,\sL'_{d+1}\rangle_{X/S} \]
	such that
	\[ \varphi_\eta = \langle u_{1,\eta},u_{2,\eta},\ldots,u_{d+1,\eta}\rangle_{X_\eta/\eta}. \]
\end{Thm}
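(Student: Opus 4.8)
The plan is to construct $\varphi$ by the device used throughout the paper: exhibit a compatible family consisting of one isomorphism over the generic point $\eta$ and one over each point of codimension $1$, and then invoke Proposition \ref{codim 1 miracle for line bundles}. Write $\sM = \langle\sL_1,\ldots,\sL_{d+1}\rangle_{X/S}$ and $\sM' = \langle\sL'_1,\ldots,\sL'_{d+1}\rangle_{X/S}$. Over $\eta$ the morphism $f_\eta\colon X_\eta\to\eta$ is flat, so Proposition \ref{codim 1 of equidimensional deligne pairing} identifies $\sM_\eta$ and $\sM'_\eta$ with the flat Deligne pairings of the $\sL_{i,\eta}$ and of the $\sL'_{i,\eta}$; the flat-case functoriality recalled in \S\ref{subsetion:morphism of FDP} then furnishes the isomorphism $\varphi_\eta := \langle u_{1,\eta},\ldots,u_{d+1,\eta}\rangle_{X_\eta/\eta}\colon\sM_\eta\to\sM'_\eta$, which is precisely the isomorphism the statement prescribes over $\eta$.

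For $P\in S^{(1)}$ I would set $A = \sO_{S,P}$; since $S$ is normal and $\dim A = 1$, $A$ is a discrete valuation ring, hence regular, so $f_A\colon X_A\to\Spec A$ is of finite Tor-dimension. By Theorem \ref{EDP = DP for all lb} together with Proposition \ref{codim 1 of equidimensional deligne pairing}, $\sM_P$ and $\sM'_P$ are canonically Garc\'ia's Deligne pairings over $\Spec A$ of the $\sL_{i,P}$ and of the $\sL'_{i,P}$. As Garc\'ia's Deligne pairing for finite-Tor-dimension projective morphisms is a functor on Picard categories, the $u_{i,P}$ induce an isomorphism $\varphi_P := \langle u_{1,P},\ldots,u_{d+1,P}\rangle_{X_A/\Spec A}\colon\sM_P\to\sM'_P$. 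Next I would check that $\varphi_\eta$ and the family $(\varphi_P)_{P\in S^{(1)}}$ fit into the commutative square demanded by Proposition \ref{codim 1 miracle for line bundles}: the generic fibre of $f_A$ is $f_\eta$, and Garc\'ia's construction, together with its structural isomorphisms, is compatible with base change to the flat locus, so the restriction of $\varphi_P$ along $\eta\to\Spec A$ equals $\langle u_{1,\eta},\ldots,u_{d+1,\eta}\rangle_{X_\eta/\eta} = \varphi_\eta$. Proposition \ref{codim 1 miracle for line bundles} then produces the desired $\varphi\colon\sM\to\sM'$ restricting to $\varphi_\eta$ over $\eta$.

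Uniqueness I would dispatch first and it is automatic: for line bundles on the integral scheme $S$, a homomorphism $\sM\to\sM'$ is a global section of the torsion-free sheaf $\sM\spcheck\otimes\sM'$, hence is determined by its image in $\Hom_\eta(\sM_\eta,\sM'_\eta)$, so there is at most one $\varphi$ restricting to $\varphi_\eta$.

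The one genuinely non-formal point, and the step I expect to be the main obstacle, is the compatibility invoked in the gluing argument: that Garc\'ia's functoriality for finite-Tor-dimension morphisms, restricted to the open, flat generic fibre of $X_A\to\Spec A$, agrees with the flat-case functoriality of \S\ref{subsetion:morphism of FDP}. I expect this to follow from Garc\'ia's compatibility of the Deligne pairing with flat base change (applied to $\eta\to\Spec A$, see \cite{Garcia2000}) together with the uniqueness of the relevant isomorphisms over $\eta$; everything else — the identification of stalks via Proposition \ref{codim 1 of equidimensional deligne pairing}, the torsion-freeness argument, and the final gluing — is routine bookkeeping of the kind already carried out repeatedly in the paper.
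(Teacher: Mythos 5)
Your overall strategy --- produce $\varphi_\eta$ from the flat case, produce $\varphi_P$ over each $\Spec\sO_{S,P}$, and glue by Proposition \ref{codim 1 miracle for line bundles} --- is genuinely different from the paper's, and the parts you call routine (uniqueness via torsion-freeness of $\sM\spcheck\otimes\sM'$, the identification of stalks via Proposition \ref{codim 1 of equidimensional deligne pairing}, the Hartogs-type gluing) are indeed fine. The problem is that the step you yourself flag as ``the main obstacle'' is exactly where the content of the theorem lives, and you have not closed it. The existence of $\varphi_P := \langle u_{1,P},\ldots,u_{d+1,P}\rangle_{X_A/\Spec A}$ requires that Garc\'ia's Deligne pairing for equidimensional projective morphisms of finite Tor-dimension be a functor on Picard categories, and its compatibility with $\varphi_\eta$ requires that this functoriality commute with the base change $\eta\to\Spec A$. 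Neither statement is recalled or cited anywhere in the paper: \S\ref{subsetion:morphism of FDP} deliberately recalls functoriality in isomorphisms only for \emph{flat} morphisms, where it is read off from Ducrot's determinant-of-cohomology formula \eqref{def:Ducrot's def of DP}; over $\Spec\sO_{S,P}$ the morphism $f_A$ is of finite Tor-dimension (a DVR being regular) but in general not flat, so that formula is unavailable there. As written, your argument therefore assumes over every codimension-one local ring essentially the statement being proved, deferred to an unverified property of \cite{Garcia2000}.

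The paper's proof is structured precisely to avoid this. It works over affine opens of $S$, uses Lemma \ref{lem:differenece lemma} and Lemma \ref{lem:existence of f-regular section} to reduce to the case where all $\sL_i,\sL'_i$ are $f$-very ample and $\sL_1,\ldots,\sL_d$ admit an $f$-regular sequence of global sections $s_1,\ldots,s_d$, and then \emph{defines} $\varphi$ by sending the generating sections $\langle s_1,\ldots,s_d,s_{d+1}\rangle_{X_U/U}$ of Section 6 to $\langle u_1(s_1),\ldots,u_d(s_d),u_{d+1}(s_{d+1})\rangle_{X_U/U}$; the only functoriality input is the flat case over $\eta$, which pins down $\varphi_\eta$ and hence, by your (correct) uniqueness argument, the glued $\varphi$. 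To salvage your route you would need either to prove the Picard-categorical functoriality of Garc\'ia's pairing over a DVR together with its base-change compatibility, or to replace $\varphi_P$ by a local construction of the paper's type --- at which point you have reproduced the paper's argument.
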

\begin{proof}
	The uniqueness is obvious. For the existence, we can assume that \(S\) is affine.

\begin{Lem}\label{lem:differenece lemma}
	Suppose that for \(1\leqslant i\leqslant d+1\), \(\sL_i = \sL_{i,0}\otimes\sL_{i,1}^{-1}\), \(\sL'_i = \sL'_{i,0}\otimes{\sL'_{i,1}}^{-1}\), and \(u_i = u_{i,0}\otimes u_{i,1}^{\otimes(-1)}\), where both \(u_{i,0}\colon \sL_{i,0}\to \sL'_{i,0}\) and \(u_{i,1}\colon \sL_{i,1}\to \sL'_{i,1}\) are isomorphisms. Let \(E\) is the set of all the maps from \(\{1,2,\ldots,d+1\}\) to \(\{0,1\}\). Suppose that for each \(h\in E\), there is an isomorphism
	\[ \varphi_h\colon\langle\sL_{1,h(1)},\sL_{2,h(2)},\ldots,\sL_{d+1,h(d+1)}\rangle_{X/S}\to \langle\sL'_{1,h(1)},\sL'_{2,h(2)},\ldots,\sL'_{d+1,h(d+1)}\rangle_{X/S} \]
	such that
	\[ \varphi_{h,\eta} = \langle u_{1,h(1),\eta},u_{2,h(2),\eta},\ldots,u_{d+1,h(d+1),\eta}\rangle_{X_\eta/\eta}. \]
	Then there is an isomorphism
	\[ \varphi\colon\langle\sL_1,\sL_2,\ldots,\sL_{d+1}\rangle_{X/S}\to \langle\sL'_1,\sL'_2,\ldots,\sL'_{d+1}\rangle_{X/S} \]
	such that
	\[ \varphi_\eta = \langle u_{1,\eta},u_{2,\eta},\ldots,u_{d+1,\eta}\rangle_{X_\eta/\eta}. \]
\end{Lem}
\begin{proof}
	For each \(h\in E\), let \(\epsilon_h = \prod_{i=1}^{d}(-1)^{h(i)}\). We know that
	\[ \langle\sL_1,\sL_2,\ldots,\sL_{d+1}\rangle_{X/S}=\bigotimes_{h\in E}\langle\sL_{1,h(1)},\sL_{2,h(1)},\ldots,\sL_{d+1,h(d+1)}\rangle^{\epsilon_h}_{X/S} \]
	and
	\[ \langle\sL'_1,\sL'_2,\ldots,\sL'_{d+1}\rangle_{X/S}=\bigotimes_{h\in E}\langle\sL'_{1,h(1)},\sL'_{2,h(1)},\ldots,\sL'_{d+1,h(d+1)}\rangle^{\epsilon_h}_{X/S}. \]
	Let \( \varphi = \bigotimes_{h\in E} \varphi_h^{\otimes\epsilon_h}\). Then \(\varphi\) satisfies all requirements.
\end{proof}

Since we can find an ample invertible \(\sO_X\)-module \(\sM\) such that \(\sM,\sL_1\otimes\sM,\sL_2\otimes\sM,\ldots,\sL_{d+1}\otimes\sM,\sL'_1\otimes\sM,\sL'_2\otimes\sM,\ldots,\sL'_{d+1}\otimes\sM\) are all generated by global sections, by Lemma \ref{lem:differenece lemma}, we can assume that \(\sL_1,\sL_2,\ldots,\sL_{d+1},\sL'_1,\sL'_2,\ldots,\sL'_{d+1}\) are all globally-generated invertible \(\sO_X\)-modules. By the same argument, we can assume that \(\sL_1,\sL_2,\ldots,\sL_{d+1},\sL'_1,\sL'_2,\ldots,\sL'_{d+1}\) are all \(f\)-very ample invertible \(\sO_X\)-modules.

\begin{Lem}\label{lem:existence of f-regular section}
	Let \(f\colon X\to S\) be a surjective projective morphism of noetherian schemes of pure relative dimension \(d\geqslant 1\). Suppose that \(S\) is an affine scheme. Let \(\sL\) be an \(f\)-very ample invertible \(\sO_X\)-module. Then there exists an integer \(n_0>0\) such that for all integers \(n\geqslant n_0\), there exists a regular global section \(s\) of \(\sL^{\otimes n}\) such that \(Z(s)\to S\) is of pure relative dimension \(d-1\).
\end{Lem}
\begin{proof}
	This is an easy application of \cite{Gabber-Liu-Lorenzini} Theorem 5.1.
\end{proof}

By apply Lemma \ref{lem:existence of f-regular section} for \(\sL_1,\sL_2,\ldots,\sL_d\) in turn, we have find positive integer numbers \(n_1,n_2,\ldots,n_d\) such that for each map \(h\in E\), the sequence \(\sL_1^{\otimes (n_1+h(1))},\linebreak\sL_2^{\otimes (n_2+h(2))},\ldots,\sL_d^{\otimes (n_d+h(d))}\) admits an \(f\)-regular sequence of global sections. Then by Lemma \ref{lem:existence of f-regular section}, we can assume that \(\sL_1,\sL_2,\ldots,\sL_{d}\) admits an \(f\)-regular sequence of global sections.

Now we prove Theorem \ref{thm:morphism of EDP} under the assumptions that \(\sL_1,\sL_2,\ldots,\sL_{d+1},\sL'_1,\sL'_2,\linebreak\ldots,\sL'_{d+1}\) are all \(f\)-very ample and that \(\sL_1,\sL_2,\ldots,\sL_{d}\) admits an \(f\)-regular sequence of global sections. Let \(s_1,s_2,\ldots,s_d\) be an \(f\)-regular sequence of global sections. Note that \(u_1(s_1),u_2(s_2),\ldots,u_d(s_d)\) is also an \(f\)-regular sequence of global sections. Then define \(\varphi\) to be the morphism such that for every open subset \(U\subseteq S\) and every section \(s_{d+1}\in\Gamma(U,\sL_{d+1})\), \(\varphi\) sends \(\langle s_1,s_2,\ldots,s_d,s_{d+1} \rangle_{X_U/U}\) to \(\langle u_1(s_1),u_2(s_2),\ldots,u_d(s_d),u_{d+1}(s_{d+1}) \rangle_{X_U/U}\). It is easy to see that \(\varphi\) is a well-defined isomorphism and that \(\varphi_\eta = \langle u_{1,\eta},u_{2,\eta},\ldots,u_{d+1,\eta}\rangle_{X_\eta/\eta}\).\qedhere
\end{proof}

We denote the isomorphism \(\varphi\) in Theorem \ref{thm:morphism of EDP} by \(\langle u_1,u_2,\ldots,u_{d+1}\rangle_{X/S}\). Since \(\langle u_1,u_2,\ldots,u_{d+1}\rangle_{X/S}\) is identical to \(\langle u_{1,\eta},u_{2,\eta},\ldots,u_{d+1,\eta}\rangle_{X_\eta/\eta}\) over the generic fiber, we see that \(\langle u_1,u_2,\ldots,u_{d+1}\rangle_{X/S}\) naturally satisfies the commutative diagrams listed at the end of \S\ref{subsetion:morphism of FDP}.

By looking at the generic fibers, we see that \(\Sigma\)'s, \(\gamma\)'s, isomorphisms in Theorem \ref{base change of normal Deligne pairing}, \ref{pull-back formula of normal Deligne pairing}, \ref{projection formula of normal Deligne pairing}, \ref{projection formula of taking closed subscheme}, and \ref{birational invariance} are all compatible with isomorphisms in \(\cPic(X)\), and they are also compatible with each other (see the precise statements in \cite{DP2024} Proposition 6.1, 6.10, and 6.13). 

We summarize our results in this section as the following categorical version of Theorem \ref{EDP}.

\begin{Thm}\label{thm:functorial EDP}
	Let \(f\colon X\to S\) be a surjective projective morphism of noetherian schemes of pure relative dimension \(d\). Suppose that \(S\) is normal. Then our Deligne pairing defines a symmetric multi-additive functor
	\begin{equation*}
		\langle\sL_1,\sL_2,\ldots,\sL_{d+1}\rangle_{X/S}\colon\cPic(X)^{d+1}\to\cPic(S).
	\end{equation*}
    And \(\Sigma\)'s, \(\gamma\)'s, isomorphisms in Theorem \ref{base change of normal Deligne pairing}, \ref{pull-back formula of normal Deligne pairing}, \ref{projection formula of normal Deligne pairing}, \ref{projection formula of taking closed subscheme}, and \ref{birational invariance} are all natural isomorphisms of functors.
\end{Thm}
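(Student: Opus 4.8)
The plan is to assemble the pieces already established rather than to prove anything substantially new. On objects, the functor sends $(\sL_1,\ldots,\sL_{d+1})$ to the invertible $\sO_S$-module $\langle\sL_1,\ldots,\sL_{d+1}\rangle_{X/S}$ constructed in Theorem \ref{EDP of general l.b} via Definition \eqref{eq:def of equidimensional Deligne pairing}; on morphisms, a tuple of $d+1$ isomorphisms $u_i\colon\sL_i\to\sL_i'$ in $\cPic(X)$ is sent to the isomorphism $\langle u_1,\ldots,u_{d+1}\rangle_{X/S}$ of Theorem \ref{thm:morphism of EDP}. The fact I would use over and over is the uniqueness clause of that theorem: over a normal integral base an isomorphism of invertible $\sO_S$-modules is determined by its restriction to the generic point, and by Lemma \ref{disjoint union of irreducible components} a general normal $S$ is the disjoint union of its open irreducible components, so it suffices throughout to work over the generic point $\eta$ of an integral $S$, where $f_\eta$ is flat and the whole flat theory (Garc\'ia \cite{Garcia2000}, Ducrot \cite{Ducrot2005}, Eriksson--Freixas \cite{DP2024}, summarised in \S\ref{subsetion:morphism of FDP}) is available.

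First I would verify the functor axioms: $\langle\id,\ldots,\id\rangle_{X/S}=\id$ and $\langle u_1'\circ u_1,\ldots,u_{d+1}'\circ u_{d+1}\rangle_{X/S}=\langle u_1',\ldots,u_{d+1}'\rangle_{X/S}\circ\langle u_1,\ldots,u_{d+1}\rangle_{X/S}$. Both sides are automorphisms of, resp. isomorphisms between, Deligne pairings whose restriction to the generic fibre of each irreducible component is, by construction, the corresponding morphism in the flat theory, where functoriality is known; hence they agree. Second, I would establish naturality of the structural isomorphisms $\Sigma_i$ of \eqref{eq:additive of EDP of lb} and $\gamma_\phi$ of \eqref{eq:symmetric of EDP of lb}: the squares expressing their compatibility with the $\langle u_i,\ldots\rangle_{X/S}$'s — the three diagrams displayed at the end of \S\ref{subsetion:morphism of FDP}, now with general $u_i$'s — restrict on each generic fibre to the corresponding squares for the flat morphism $f_\eta$, which commute because $\Sigma_{i,\eta}$ and $\gamma_{\phi,\eta}$ are natural transformations of the multi-additive functor there; so they commute over $S$. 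Together with the coherence already recorded in Proposition \ref{symmetric of EDP} and the multi-additivity established above, this upgrades Theorem \ref{EDP} to the statement that $\langle\cdot\rangle_{X/S}$ is a symmetric multi-additive functor $\cPic(X)^{d+1}\to\cPic(S)$.

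Third, for each structural isomorphism of Theorems \ref{base change of normal Deligne pairing}, \ref{pull-back formula of normal Deligne pairing}, \ref{projection formula of normal Deligne pairing}, \ref{projection formula of taking closed subscheme}, \ref{birational invariance} — already constructed above, in every case by gluing the corresponding isomorphism over $\eta$ and over the codimension-$1$ localizations via Proposition \ref{codim 1 miracle for line bundles} — I would check that it is a natural isomorphism, i.e. that the square relating it to the appropriate $\langle u_i,\ldots\rangle$'s on source and target commutes. Again this is tested on the generic fibre of each irreducible component, where it reduces to the compatibilities in the flat case recorded in \cite{DP2024} Propositions 6.1, 6.10, 6.13 and in \cite{Garcia2000}. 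The one point that needs genuine care, and which I expect to be the main chore, is the interaction with the \emph{definitions} of $\langle\cdot\rangle_{X/S}$ and of $\langle u_1,\ldots,u_{d+1}\rangle_{X/S}$ for line bundles that are not sufficiently ample: these are built from presentations $\sL_i=\sL_{i,0}\otimes\sL_{i,1}^{-1}$ (Lemma \ref{lem:differenece lemma}) over an affine cover of $S$, so one must check that all the diagrams above, written out in terms of these presentations and of the $\Sigma_i$'s used to compare different presentations, indeed commute — but, once more, each such check is performed on the generic fibre and is part of the flat theory. No new geometric input is needed beyond the ``restrict to the generic point, then invoke algebraic Hartogs'' device that pervades the paper; the remaining work is entirely diagram bookkeeping.
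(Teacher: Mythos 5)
Your proposal is correct and follows essentially the same route as the paper: the functor is assembled from Theorem \ref{EDP of general l.b} and Theorem \ref{thm:morphism of EDP}, and every compatibility (functoriality, naturality of the \(\Sigma\)'s and \(\gamma\)'s, and of the isomorphisms in Theorems \ref{base change of normal Deligne pairing}--\ref{birational invariance}) is checked by restricting to the generic point of each irreducible component, where it reduces to the flat theory of \cite{Garcia2000} and \cite{DP2024}, and then propagated by the uniqueness of isomorphisms over a normal integral base. No discrepancy with the paper's argument.
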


\section{Deligne pairing of hermitian line bundles}

In this section, we follow ideas from \cite{Elkik2} and \cite{YuanALB} to construction a canonical metric on Deligne pairing of hermitian line bundles for equidimensional morphisms.

Readers can refer to \cite{agbook} Chapter 2 \S5.1 and \S4.3 for the definitions of a complex analytic space \(X\) and its regular locus \(X_{\reg}\). For concepts on complex analytic spaces, we follow the definitions in \cite{YuanALB} \S2.1.1 and \S4.2.2., and we recall some important concepts as follows.

\begin{Def}
	Let \(X\) be a complex analytic space. A function \(f\colon X\to \bC\) is called \emph{smooth} if for each point \(x\in X\), there is  an open neighborhood \(U\) of \(x\) and an analytic map \(U\to M\), where \(M\) is a complex manifold and \(U\) is embedded as a closed analytic subspace, such that \(f\) is the restriction of some smooth function on \(M\).
\end{Def}

\begin{Def}
	Let \(X\) be a complex analytic space. A \emph{smooth form} \(\alpha\) on \(X\) is a smooth form \(\alpha\) on \(X_{\reg}\) such that for each point \(x\in X\), there is an open neighborhood \(U\) of \(x\) and an analytic map \(U\to M\), where \(M\) is a complex manifold and \(U\) is embedded as a closed analytic subspace, such that \(\alpha|_{U_{\reg}}\) is the restriction of some smooth form on \(M\).
\end{Def}

Let \(X\) be an integral quasi-projective scheme over \(\bC\), and let \(L\) be an invertible \(\sO_X\)-module. For every point \(x\in X\), let \(k(x)\) be the residue field of \(x\), and let \(L(x) = L\times_X \Spec k(x)\) be the fiber over \(x\), which is a 1-dimensional vector space over \(k(x)\). A continuous (resp. smooth) metric on \(L\) is a families of metrics, one on each \(L(x)\), such that for every open subset \(U\subseteq X\) and every section \(s\in\Gamma(U,L)\) (such \(s\) is called a \emph{rational section} of \(L\)), \(\Vert s(x)\Vert^2\) is a continuous (resp. smooth) function on \(U\). For any smooth metric \(\Vert\cdot\Vert\) on \(L\), the Chern form, which locally can be written as
\[c_1(L,\Vert\cdot\Vert) = \frac{1}{\uppi\myi}\partial\bar{\partial}\log\Vert s\Vert,\]
where \(s\) is a rational section of \(L
\), is a (1,1)-smooth form on \(X\).

Let \((\sL_1,\VCV_1)\) and \((\sL_2,\VCV_2)\) be two invertible \(\sO_X\)-modules endowed with smooth metrics. Let \(\varphi\colon \sL_1 \to \sL_2\) be an isomorphism of invertible sheaves. Then the norm of \(\varphi\) is defined to be
\[ \Vert\varphi\Vert = \frac{\varphi^*\VCV_2}{\VCV_1}. \]
Note that \(\Vert\varphi\Vert\) is a nonzero smooth function on \(X\). Locally, if \(s\) is a rational section of \(\sL_1\), then
\[ \Vert\varphi\Vert = \frac{\Vert\varphi(s)\Vert_2}{\Vert s \Vert_1}. \]
Therefore
\[ c_1(L_2,\Vert\cdot\Vert_2) = c_1(L_1,\Vert\cdot\Vert_1) + \frac{1}{\uppi\myi}\partial\bar{\partial}\Vert\varphi\Vert. \]

For convenience, we generalize the conception of \(f\)-regular sequence for \(f\) of pure relative dimension 1.

\begin{Def}
	Let \(f\colon X\to S\) be a surjective projective morphism of noetherian schemes of pure relative dimension 1, let \(\sL_1\) and \(\sL_2\) be invertible \(\sO_X\)-modules, and let \(s_1\) and \(s_2\) be global sections of \(\sL_1\) and \(\sL_2\) respectively. Let \(Y = Z(s_1)\cap Z(s_2)\). We say that \(s_1,s_2\) is an \emph{\(f\)-regular sequence} if \(s_1,s_2\) is a regular sequence, both \(Y_1\to S\) and \(Y_2\to S\) are finite, and \(Y = \emptyset\).
\end{Def}
    
Locally, if we identify both \(\sL_1\) and \(\sL_2\) with \(\sO_X\), then \(Y = \emptyset\) means that \(\sO_{X} = (s_{1}, s_{2})\).

Let \(f\colon X\to S\) be a surjective projective morphism of noetherian schemes of pure relative dimension 1, let \(\sL_1\) and \(\sL_2\) be invertible \(\sO_X\)-modules, and let \(s_1\) and \(s_2\) be global sections of \(\sL_1\) and \(\sL_2\) respectively. Suppose that \(S\) is normal and that \(s_1,s_2\) is an \(f\)-regular sequence. Since \(s_1|_{Z_2}\) is regular, \(s_1|_{Z_2}\colon\sO_{Z_2}\to \sL_1|_{Z_2}\) is injective. Since locally \(\sO_{X} = (s_{1}, s_{2})\), we deduce that \(s_1|_{Z_2}\colon\sO_{Z_2}\to \sL_1|_{Z_2}\) is surjective. Thus \(s_1|_{Z_2}\colon\sO_{Z_2}\to \sL_1|_{Z_2}\) is an isomorphism. By the same argument, we know that \(s_2|_{Z_1}\colon\sO_{Z_1}\to \sL_2|_{Z_1}\) is also an isomorphism. Then both \(\Nm_{Z_2/S}(s_1|_{Z_2})\colon\sO_{S}\to \Nm_{Z_2/S}(\sL_1|_{Z_2})\) and \(\Nm_{Z_1/S}(s_2|_{Z_1})\colon\sO_{S}\to \Nm_{Z_1/S}(\sL_2|_{Z_1})\) are isomorphisms. Let \([s_1]_{Z_2/S}\) and \([s_2]_{Z_1/S}\) denote the inverses of \(\Nm_{Z_2/S}(s_1|_{Z_2})\) and \(\Nm_{Z_1/S}(s_2|_{Z_1})\) respectively.

\begin{Prop}
	Assumptions and notations as above. Then have the following commutative diagrams
\end{Prop}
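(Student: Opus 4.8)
The plan is to prove every diagram in the statement by reducing to a single generic fibre, where the pairing is the classical Deligne pairing of a flat family and the required commutativities are already on record. First I would reduce to the case that $S$ is integral: by Lemma \ref{disjoint union of irreducible components} the scheme $S$ splits into disjoint open irreducible components, and every quasi-coherent sheaf, every morphism of such, and every global section splits accordingly, so it suffices to assume $S$ is a noetherian normal integral scheme with generic point $\eta$. All objects occurring in the diagrams --- $\langle\sL_1,\sL_2\rangle_{X/S}$, $\Nm_{Z_1/S}(\sL_2|_{Z_1})$, $\Nm_{Z_2/S}(\sL_1|_{Z_2})$, and $\sO_S$ --- are invertible $\sO_S$-modules, hence torsion-free, hence inject into their stalks at $\eta$; consequently any morphism between two of them, as well as any distinguished section of one of them, is determined by its restriction along $\eta\to S$ (this is the principle recorded in Proposition \ref{codim 1 miracle for line bundles}). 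Thus it is enough to check that each diagram commutes, resp. that each asserted equality of sections holds, after base change to $\eta$.

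Second, I would identify the $\eta$-restrictions with their flat-morphism counterparts. After base change, $f_\eta\colon X_\eta\to\eta$ is a surjective flat projective morphism of pure relative dimension $1$ over a field, $Z_{i,\eta}$ is finite over $\eta$, the $Z_{i,\eta}$ stay disjoint because $Z_1\cap Z_2=\emptyset$ on $X$, and $s_i|_{Z_{j,\eta}}$ is again an isomorphism. By Proposition \ref{codim 1 of equidimensional deligne pairing}, $\langle\sL_1,\sL_2\rangle_{X/S}|_\eta$ is canonically identified with the flat Deligne pairing $\langle\sL_{1,\eta},\sL_{2,\eta}\rangle_{X_\eta/\eta}$; since the norm functor is compatible with flat base change (Proposition \ref{base change of Norm}) and the divisor-restriction isomorphisms $[s_i]_{X/S}$ of \S6 were built from the norm functor and from Theorem \ref{projection formula of taking closed subscheme}, every arrow in the diagrams restricts over $\eta$ to the corresponding arrow attached to $f_\eta$ (here one also uses that $\Nm_{Z_i/S}$ is the relative-dimension-$0$ Deligne pairing). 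The point that must be unwound with care is the isomorphism $\alpha$ of Proposition \ref{glue lemma}: by its very construction --- gluing the two local norm sheaves along the unit produced from the cokernel identification via Lemma \ref{unit exists} --- $\alpha$ commutes with the base change $\eta\to S$, and over $\eta$ it is precisely the reciprocity isomorphism used to glue the Deligne pairing of the flat family $f_\eta$.

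Third, I would invoke the flat case. For the flat morphism $f_\eta$ the diagrams are instances of the known compatibilities of the Deligne pairing of a flat projective family in relative dimension $1$: the agreement of the two identifications $[s_1]$, $[s_2]$ of $\langle\sL_1,\sL_2\rangle$ up to $\alpha$, the behaviour of the canonical section $\langle s_1,s_2\rangle$ under the $[s_i]$ and under the symmetry $\gamma$, and the Weil-reciprocity identity $\Nm_{Z_1/\eta}(s_2|_{Z_1})=\alpha^{-1}\bigl(\Nm_{Z_2/\eta}(s_1|_{Z_2})\bigr)$. These are recorded in \cite{Garcia2000} (\S4.3) and \cite{DP2024} (Proposition 6.1); combining them with the identifications of the previous paragraph gives commutativity over $\eta$, hence over $S$ by the first step.

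The main obstacle is exactly the middle step: one must re-open the construction of \S4 and of Proposition \ref{glue lemma} to verify that the abstractly defined arrows --- especially $\alpha$ and the descent-defined $[s_i]_{X/S}$ --- genuinely commute with restriction to the generic point and are identified there with their flat-family analogues. If one prefers to bypass this, there is a parallel hands-on route: localise to $S=\Spec A$ with $A$ an integrally closed domain, trivialise $\sL_1$ and $\sL_2$, so that $s_1|_{Z_2}$ and $s_2|_{Z_1}$ become multiplication by units $f_1\in\Gamma(Z_2,\sO)$ and $f_2\in\Gamma(Z_1,\sO)$, every map in the diagrams becomes multiplication by an $A$-norm of such a unit, and the required relations reduce to the statement that $\Nm(f_1)$ and $\Nm(f_2)$ differ by a unit of $A$ --- which is the content of Proposition \ref{glue lemma} together with Lemma \ref{unit exists}. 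Either way, no input is needed beyond the relative-dimension-$0$ and $1$ material of \S2--\S3 and the flat-case facts of \cite{Garcia2000} and \cite{DP2024}.
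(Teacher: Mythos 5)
Your proposal is correct and follows essentially the same route as the paper: reduce to $S$ integral, observe that morphisms of invertible $\sO_S$-modules are determined by their restriction to the generic point $\eta$, identify the restricted diagram with the flat-case diagram for $X_\eta\to\eta$, and conclude from the known compatibilities of the flat Deligne pairing in \cite{Garcia2000} (Proposition 4.3.1 and Theorem 4.2.7). Your additional care about checking that the arrows $\alpha$ and $[s_i]_{X/S}$ commute with restriction to $\eta$, and the alternative hands-on route via Lemma \ref{unit exists}, are sensible elaborations of what the paper leaves implicit, but do not change the argument.
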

\begin{equation}\label{eq:restriction on 2 divisors indep of oeder}
	\begin{tikzcd}[row sep=small]
		&\langle\sL_2\rangle_{Y_1/S}\arrow[rd, "{[}s_2{]}_{Y_1/S}"]& \\
		\langle\sL_1,\sL_2\rangle_{X/S}\arrow[ur, "{[}s_1{]}_{X/S}"']\arrow[rd, "{[}s_2{]}_{X/S}"]& & \sO_S,\\
		&\langle\sL_1\rangle_{Y_2/S}\arrow[ru, "{[}s_1{]}_{Y_2/S}"']&
	\end{tikzcd}
\end{equation}
where \([s_2]_{X/S}\colon\langle\sL_1,\sL_2\rangle_{X/S}\to \langle\sL_1\rangle_{Y_2/S}\) is the isomorphism in Theorem \ref{projection formula of taking closed subscheme} for \(D = Y_2 = Z(s_2)\).
\begin{proof}
	we can assume that \(S\) integral, and then only need to verify the theorem over the generic point of \(S\). Then we reduce to the flat case, and then the result follows from \cite{Garcia2000} Proposition 4.3.1 and Theorem 4.2.7.
\end{proof}

The following is our main theorem regarding Deligne pairing of hermitian line bundles for equidimensional morphisms.

\begin{Thm}\label{thm:metric on EDP}
	Let \(X\) and \(S\) quasi-projective \(\bC\)-schemes, and let \(f\colon X\to S\) be a surjective projective \(\bC\)-morphism of pure relative dimension \(d\). Suppose that \(S\) is normal. Let \((\sL_1,\VCV_1),(\sL_2,\VCV_2),\ldots,(\sL_{d+1},\VCV_{d+1})\) be invertible sheaves on \(X\) endowed with smooth metrics. Then we can endow \(\langle\sL_1,\sL_2,\ldots,\sL_{d+1}\rangle_{X/S}\) with a unique smooth metric \(\VCV\) such that
	
	(1)
	\[ c_1(\langle\sL_1,\sL_2,\ldots,\sL_{d+1}\rangle_{X/S},\VCV) = \int_{X/S}\prod_{i=1}^{d+1}c_1(\sL_i,\VCV_i).\]
	
	(2) Let \(d\geqslant 1\), and \(s_1\) be an \(f\)-regular section of \(\sL_1\), then
	\begin{equation}\label{eq:norm of restrictions}
		\log \Vert[s_1]_{X/S}\Vert = -\int_{X/S}\log\Vert s_1\Vert_1 \prod_{i=2}^{d+1}c_1(\sL_i,\VCV_i).
	\end{equation}
	
	(3) If \(d = 0\), then
	\[ \log \Vert\Nm_{X/S}(s)\Vert = \int_{X/S}\log\Vert s\Vert_1 \]
	for each nonzero rational section \(s\) of \(\sL_1\).
	
	(4) The \(\Sigma\)'s and \(\iota\)'s defined in \eqref{eq:additive of EDP of lb} and \eqref{eq:symmetric of EDP of lb} are isometries.
	
	(5) For each good base change (in the sense of Definition \ref{def: good base change}) \(g\colon S'\to S\), the isomorphism
	\begin{equation}\label{eq:isom of good base change}
		\langle g'^*\sL_1,g'^*\sL_2,\ldots,g'^*\sL_{d+1} \rangle_{X'/S'}\to g^*\langle \sL_1,\sL_2,\ldots,\sL_{d+1} \rangle_{X/S}.
	\end{equation}
	is an isometry, where \(X' = X\times_SS'\), \(g'\colon X'\to X\), and for each \(1\leqslant i\leqslant d+1\), the metric on \(g'^*\sL_i\) is \(g'^*\VCV_i\).
\end{Thm}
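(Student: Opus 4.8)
The plan is to construct the metric by induction on the relative dimension $d$, peeling off one dimension at a time by means of an $f$-regular section, and to reduce every verification to the already established flat case over the flatness locus of $f$.

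First I would reduce to the case that $S$ is integral and normal, since by Lemma~\ref{disjoint union of irreducible components} the irreducible components of $S$ are open and disjoint. Let $\cU\subseteq S$ be the open locus over which $f$ is flat; it contains the generic point of $S$, so $\cU(\bC)$ is a nonempty Zariski-open, hence analytically dense, subset of the irreducible complex space $S(\bC)$. Over $\cU$ the morphism $X_{\cU}\to\cU$ is flat and projective, $\langle\sL_1,\ldots,\sL_{d+1}\rangle_{X/S}|_{\cU}$ is the Deligne pairing for flat morphisms, and by \cite{YuanALB} (following \cite{Elkik2}) it carries a canonical smooth metric satisfying (1), (2), (3), uniquely characterized by them, and for which the analogues of (4), (5) hold. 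Now two smooth metrics on an invertible $\sO_S$-module that agree on $\cU(\bC)$ coincide, and an isomorphism of smoothly metrized invertible $\sO_S$-modules that is an isometry over $\cU$ is an isometry: this is the metric analogue of Proposition~\ref{codim 1 miracle for line bundles}, and it holds here by mere analytic density, without any codimension count. Consequently the uniqueness clause, as well as properties (4) and (5), will follow automatically from the flat case once existence is proved — for (5) one uses that a good base change $g\colon S'\to S$ carries the generic point of $S'$ into $\cU$ by Definition~\ref{def: good base change}, so that \eqref{eq:isom of good base change} is an isometry over a dense open of $S'$. Thus the only genuine task is to produce a smooth metric on $\langle\sL_1,\ldots,\sL_{d+1}\rangle_{X/S}$ over all of $S(\bC)$ that restricts to the flat one over $\cU(\bC)$.

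For existence I would argue by induction on $d$. When $d=0$ the Deligne pairing is $\Nm_{X/S}(\sL_1)$, and I take (3) as the definition: for a local trivialization $\Nm_{X/S}(s)$ of $\Nm_{X/S}(\sL_1)$ one sets $-\log\Vert\Nm_{X/S}(s)\Vert=\int_{X/S}\log\Vert s\Vert_1$; this is independent of the choice of $s$ because the norm of a rational function has the expected divisor, and smooth because $\ddc$ of the right-hand side is a push-forward of smooth forms along the proper morphism $f$ which vanishes where $\Nm_{X/S}(s)$ is a generator, whence the function is pluriharmonic there and elliptic regularity applies. For $d\ge 1$ I would work over an affine open $S_0\subseteq S$: using Lemma~\ref{lem:existence of f-regular section} and the fact that a tensor product of $f$-regular sections is again $f$-regular, one writes $\sL_1|_{X_{S_0}}$ as a ratio of two invertible sheaves, each admitting an $f$-regular global section, and then the additivity isomorphism $\Sigma_1$ of \eqref{eq:additive of EDP of lb} reduces the construction to the case that $\sL_1$ itself carries an $f$-regular section $s_1$. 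Put $D=Z(s_1)$, so that $D\to S_0$ has pure relative dimension $d-1$ and, by the inductive hypothesis, $\langle\sL_2|_D,\ldots,\sL_{d+1}|_D\rangle_{D/S_0}$ has a canonical smooth metric. I then define the metric on $\langle\sL_1,\ldots,\sL_{d+1}\rangle_{X/S_0}$ by prescribing, for the isomorphism $[s_1]_{X/S}$ of Theorem~\ref{projection formula of taking closed subscheme},
\[
\log\bigl\Vert[s_1]_{X/S}\bigr\Vert \;=\; -\int_{X/S}\log\Vert s_1\Vert_1\,\prod_{i=2}^{d+1}c_1(\sL_i,\VCV_i).
\]
That this metric is independent of $s_1$ and of the auxiliary ratio — so that the local definitions patch to a global metric — is checked over $\cU$, where (2) and the compatibility of $\Sigma_1$ with the flat metric are available from \cite{YuanALB}, and then transported to $S(\bC)$ by density.

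The main obstacle, I expect, is to show that $\int_{X/S}\log\Vert s_1\Vert_1\prod_{i=2}^{d+1}c_1(\sL_i,\VCV_i)$ is a smooth function on $S(\bC)$, given that $f$ is only equidimensional, not smooth, and that $\log\Vert s_1\Vert_1$ is singular along $Z(s_1)$. I would deal with this by observing that the integrand is a form with $L^1_{\mathrm{loc}}$ coefficients on $X(\bC)$, so its push-forward along the proper morphism $f$ is an $L^1_{\mathrm{loc}}$ function on $S(\bC)$; that its $\ddc$ equals, by Poincar\'e--Lelong, a $\bZ$-combination of $\int_{Z(s_1)/S}\prod_{i=2}^{d+1}c_1(\sL_i,\VCV_i)$ and $\int_{X/S}\prod_{i=1}^{d+1}c_1(\sL_i,\VCV_i)$, both of which are push-forwards of smooth forms along proper morphisms and hence smooth on $S(\bC)$; and that elliptic regularity (together with the embedding-into-a-manifold definition of smoothness at the singular points of $S$) then forces the function itself to be smooth. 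Once existence and smoothness are in hand, property (1) holds over $\cU$ by \cite{YuanALB} and extends by density, (2) and (3) hold by construction — and for $f$-regular sections of the other slots by the symmetry isometry $\gamma_\phi$ — and uniqueness follows by comparing with \cite{YuanALB} over $\cU$ and applying unique continuation to the pluriharmonic function $\log\bigl(\Vert\cdot\Vert'/\Vert\cdot\Vert\bigr)$ supplied by (1).
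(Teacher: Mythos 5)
Your construction coincides with the paper's: induct on \(d\); for \(d\geqslant 1\) reduce via \(\Sigma_1\) to the case where \(\sL_1\) carries an \(f\)-regular section \(s_1\), then impose \eqref{eq:norm of restrictions} to transport the inductive metric from \(\langle\sL_2|_{Z(s_1)},\ldots,\sL_{d+1}|_{Z(s_1)}\rangle_{Z(s_1)/S}\). Where you genuinely diverge is in the verifications. The paper proves independence of the choice of \(s_1\) by explicit computation: Lemma \ref{lem:existence of simultaneously regular section} produces a section \(s_2\) of a power of an ample sheaf that is regular simultaneously for both choices, and Lemma \ref{lem:norm of s_2} --- resting on the commutative diagram \eqref{eq:cd for metric} and the integral symmetry of \cite{YuanALB} Lemma 4.2.1 --- shows \(\Vert[s_2]_{X/S}\Vert\) does not depend on \(s_1\), whence the two metrics agree; similar computations give (4) and (5). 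You instead observe that all candidate metrics are continuous and restrict, over the dense Zariski-open locus where \(f\) and the relevant restrictions \(Z(s_1)\to S\), \(Z(s_1')\to S\) are flat, to Yuan's canonical metric (characterized there by (2)), so analytic density forces equality everywhere; the same principle disposes of (1), (4), (5) and uniqueness. This is sound: for metrics, unlike for the algebraic identifications of the underlying line bundles, no algebraic Hartogs or codimension-one bookkeeping is needed, only density, so your route is leaner --- at the price of leaning more heavily on the full flat-case package of \cite{YuanALB} (that its metric satisfies (2) for \emph{every} \(f\)-regular section and is functorial under flat base change). Both arguments share the same analytic input taken on faith, namely the smoothness of fiber integrals such as \(\int_{X/S}\log\Vert s_1\Vert_1\prod_{i\geqslant 2}c_1(\sL_i,\VCV_i)\) along a merely equidimensional proper map and across the singular locus of \(S\); your elliptic-regularity sketch handles the regular locus, while the paper delegates this to the conventions of \cite{YuanALB} \S4.2.2. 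One small quibble: in your uniqueness step the appeal to unique continuation for pluriharmonic functions is superfluous --- continuity of the log-ratio together with density of the flat locus already suffices.
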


The symbol \(\int_{X/S}\) should be understood as follows. Let \(n = \dim S\). If \(\alpha\) is a smooth \((n',n'')\)-form on \(S\). Then \(\int_{X/S}\alpha\) is the unique smooth \((n'-d,n''-d)\)-form on \(X\) such that for any smooth \((n+d-n',n+d-n'')\)-form \(\beta\) with compact support on \(S\), we have the following equation
\[ \int_X\alpha f^*\beta = \int_S\left(\int_{X/S}\alpha\right)\beta. \]
Under suitable assumptions, the symbol \(\int_{X/S}\) can also be interpreted as the integral on fibers with multiplicities (refer to \cite{YuanALB} \S4.2.2).

\begin{proof}
	We prove by induction on \(d\). For \(d= 0\), the metric on \(\Nm_{X/S}(\sL_1)\) is uniquely determined by (3). We see that
	\begin{multline*}
		c_1(\Nm_{X/S}(\sL_1), \VCV) = \frac{1}{\uppi\myi}\pbp\log \Vert\Nm_{X/S}(s)\Vert =\\
		\int_{X/S}\frac{1}{\uppi\myi}\pbp\log\Vert s\Vert_1 = \int_{X/S}c_1(\sL_1,\VCV_1).
	\end{multline*}
	Obviously, the additive isomorphism \(\Sigma_1\) is an isometry.
	
	For each good base change \(g\colon S'\to S\) and each nonzero rational section \(s\) of \(\sL_1\),
	\begin{multline*}
		\log \Vert\Nm_{X'/S'}(g'^*s)\Vert_{S'} = \int_{X'/S'}\log g'^*\Vert g'^*s\Vert_1\\
		= \int_{X/S}\log \Vert s\Vert_1 = \log \Vert\Nm_{X/S}(s)\Vert_{S},
	\end{multline*}
    thus \(\VCV_{S'} = g^*\VCV_S\).
	
	Now suppose that \(d>0\).
	
	First we consider the case that \(\sL_1\) admits an \(f\)-regular global section. Let \(s_1\) be an \(f\)-regular global section of \(\sL_1\), and let \(Y_1 = Z(s_1)\). Then we have an isomorphism
	\[ [s_1]_{X/S}\colon\langle\sL_1,\sL_2,\ldots,\sL_{d+1}\rangle_{X/S}\to\langle\sL_2,\ldots,\sL_{d+1}\rangle_{Y_1/S} \]
	By inductive hypothesis, there is a unique smooth metric \(\VCV_{Y_1}\) on \(\langle\sL_2,\ldots,\sL_{d+1}\rangle_{Y_1/S}\). Then we can endow \(\langle\sL_1,\sL_2,\ldots,\sL_{d+1}\rangle_{X/S}\) with a smooth metric such that \(\Vert[s_1]_{X/S}\Vert\) satisfies \eqref{eq:norm of restrictions}. By Poincar\'e--Lelong formula, we have an equation of currents
	\[ \inversepii\pbp\log\Vert s_1\Vert_1 = c_1(\sL_1,\VCV_1) - [Y_1]. \]
	By inductive hypothesis,
	\[ c_1(\langle\sL_2,\ldots,\sL_{d+1}\rangle_{Y_1/S},\VCV_{Y_1}) = \int_{Y_1/S}\prod_{i=2}^{d+1}c_1(\sL_i,\VCV_i). \]
	Thus
	\begin{align*}
		c_1(\langle\sL_1,\sL_2,\ldots,&\sL_{d+1}\rangle_{X/S},\VCV)\\
		&= c_1(\langle\sL_2,\ldots,\sL_{d+1}\rangle_{Y_1/S},\VCV_{Y_1}) - \inversepii\pbp\Vert[s_1]_{X/S}\Vert\\
		&= \int_{Y_1/S}\prod_{i=2}^{d+1}c_1(\sL_i,\VCV_i) + \int_{X/S}\inversepii\pbp\log\Vert s_1\Vert_1 \prod_{i=2}^{d+1}c_1(\sL_i,\VCV_i)\\
		&= \int_{X/S}\prod_{i=1}^{d+1}c_1(\sL_i,\VCV_i).
	\end{align*}

If we fix \(\sL_1\) and vary \(\VCV_1, (\sL_2,\VCV_2),(\sL_3,\VCV_3),\ldots,(\sL_{d+1},\VCV_{d+1})\), then by the above construction, we can endow all Deligne pairings of the form \(\langle\sL_1,\sL_2,\ldots,\sL_{d+1}\rangle_{X/S}\) with a smooth metric. By inductive hypothesis, we see that \(\Sigma_i\, (i>1)\) and \(\gamma_\phi\) (\(\phi\) is a permutation of the set \{2,3,\ldots,d+1\}) are all isometries.

Let \(g\colon S'\to S\) be a good base change, \(Y = Z(s_1)\), and \(Y' = Z(g'^*s_1)\). Then \(g'^*s\) is an \(f'\)-regular section (\(f'\colon X'\to S'\)) of \(g'^*\sL_1\), and \(Y' = g'^*(Y)\).  Then we have the following commutative diagram
\[ \begin{tikzcd}
	\langle g'^*\sL_1,g'^*\sL_2,\ldots,g'^*\sL_{d+1} \rangle_{X'/S'}\arrow[r]\arrow[d, "{[}g'^*s_1{]}_{X'/S'}"]& g^*\langle \sL_1,\sL_2,\ldots,\sL_{d+1} \rangle_{X/S}\arrow[d, "g^*{[}s_1{]}_{X/S}"]\\
	\langle g'^*\sL_2,\ldots,g'^*\sL_{d+1} \rangle_{Y'/S'}\arrow[r]& g^*\langle \sL_2,\ldots,\sL_{d+1} \rangle_{Y/S}.
\end{tikzcd} \]
By inductive hypothesis, the bottom arrow is an isometry. Since
\begin{multline*}
	\log \Vert[g'^*s_1]_{X'/S'}\Vert = -\int_{X'/S'}\log g'^*\Vert g'^*s_1\Vert_1 \prod_{i=2}^{d+1}c_1(g'^*\sL_i,g'^*\VCV_i)\\
	= -\int_{X/S}\log \Vert s_1\Vert_1 \prod_{i=2}^{d+1}c_1(\sL_i,\VCV_i) = \log\Vert[s_1]_{X/S}\Vert,
\end{multline*}
we deduce that the top arrow is also an isometry.

\begin{Lem}\label{lem:norm of s_2}
	Let \(s_1\) and \(s_2\) be global sections of \(\sL_1\) and \(\sL_2\) respectively, and suppose that \(s_1,s_2\) is an \(f\)-regular sequence. Let \(Y = Z(s_1)\), \(Y_2 = Z(s_2)\), and \(Y = Y_1\cap Y_2\). If we endow \(\langle\sL_1,\sL_2,\ldots,\sL_{d+1}\rangle_{X/S}\) with the metric we constructed above, then
	\begin{equation}\label{eq:norm of s_2}
		\log\Vert[s_2]_{X/S}\Vert = -\int_{X/S}\log\Vert s_2\Vert_2\prod_{i\neq 2}c_1(\sL_i,\VCV_i),
	\end{equation}
	where \([s_2]_{X/S}\colon\langle\sL_1,\sL_2,\ldots,\sL_{d+1}\rangle_{X/S}\to \langle\sL_1,\sL_3,\ldots,\sL_{d+1}\rangle_{Y_2/S}\) is the isomorphism in Theorem \ref{projection formula of taking closed subscheme} for \(D = Z(s_2)\). In particular, \(\Vert[s_2]_{X/S}\Vert\) is independent of the choice of \(s_1\) (though we do not know that whether \(\VCV\) depends on the choice of \(s_1\) at present).
\end{Lem}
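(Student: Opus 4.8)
The plan is to reduce the identity to a symmetry of Green currents on the fibres of $f$. First I would use order‑independence of restriction to a Cartier divisor sequence. Since $s_1,s_2$ is an $f$-regular sequence, so is the sequence of effective Cartier divisors $Z(s_1),Z(s_2)$, hence (by Proposition \ref{order lemma}) also $s_2,s_1$; in particular $Y_1\to S$ and $Y_2\to S$ are of pure relative dimension $d-1$, and $s_2|_{Y_1}$, $s_1|_{Y_2}$ are $f$-regular sections of $\sL_2|_{Y_1}$, $\sL_1|_{Y_2}$. By Proposition \ref{restriction to divisor sequence indep of order} (and by the triangle \eqref{eq:restriction on 2 divisors indep of oeder} when $d=1$), the two isomorphisms
\[ [s_2|_{Y_1}]_{Y_1/S}\circ[s_1]_{X/S},\qquad [s_1|_{Y_2}]_{Y_2/S}\circ[s_2]_{X/S}\;:\;\langle\sL_1,\dots,\sL_{d+1}\rangle_{X/S}\longrightarrow\langle\sL_3,\dots,\sL_{d+1}\rangle_{Y/S} \]
coincide. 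Since the norm of a composite of isomorphisms of metrized invertible $\sO_S$-modules is the product of the norms, and using the metric $\VCV$ on $\langle\sL_1,\dots,\sL_{d+1}\rangle_{X/S}$ together with the inductively constructed canonical metrics on the pairings over $Y_1/S$, $Y_2/S$ and $Y/S$, this yields the identity of functions on $S$
\[ \log\Vert[s_2]_{X/S}\Vert=\log\Vert[s_1]_{X/S}\Vert+\log\Vert[s_2|_{Y_1}]_{Y_1/S}\Vert-\log\Vert[s_1|_{Y_2}]_{Y_2/S}\Vert. \]

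Next I would evaluate the right–hand side. The first term equals $-\int_{X/S}\log\Vert s_1\Vert_1\prod_{i=2}^{d+1}c_1(\sL_i,\VCV_i)$ by the definition of $\VCV$. Writing $\omega:=\prod_{i=3}^{d+1}c_1(\sL_i,\VCV_i)$ and applying the inductive hypothesis, namely part (2) of Theorem \ref{thm:metric on EDP} (or part (3) if $d=1$), to $Y_1\to S$ and $Y_2\to S$, one gets
\[ \log\Vert[s_2|_{Y_1}]_{Y_1/S}\Vert=-\int_{Y_1/S}\log\Vert s_2\Vert_2\,\omega,\qquad \log\Vert[s_1|_{Y_2}]_{Y_2/S}\Vert=-\int_{Y_2/S}\log\Vert s_1\Vert_1\,\omega. \]
Then I would use the Poincar\'e--Lelong formula $\inversepii\pbp\log\Vert s_j\Vert_j=c_1(\sL_j,\VCV_j)-[Y_j]$ to rewrite $\int_{Y_j/S}\alpha\,\omega=\int_{X/S}\alpha\,[Y_j]\wedge\omega$; after substitution the two occurrences of $\int_{X/S}\log\Vert s_1\Vert_1\,c_1(\sL_2,\VCV_2)\wedge\omega$ cancel, and the asserted formula \eqref{eq:norm of s_2} reduces to the vanishing
\[ \int_{X/S}\Bigl(\log\Vert s_2\Vert_2\,\inversepii\pbp\log\Vert s_1\Vert_1-\log\Vert s_1\Vert_1\,\inversepii\pbp\log\Vert s_2\Vert_2\Bigr)\wedge\omega=0. \]

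To conclude, I would note that for smooth functions $u,v$ one has $u\,\inversepii\pbp v-v\,\inversepii\pbp u=\mathrm{d}\bigl(u\,\mathrm{d}^{\mathrm{c}}v-v\,\mathrm{d}^{\mathrm{c}}u\bigr)$, and $\mathrm{d}\omega=0$ since $\omega$ is a product of Chern forms; thus the integrand is exact along each (compact, boundaryless) fibre of $f$, so its fibre integral vanishes by Stokes' theorem. Since the right–hand side of \eqref{eq:norm of s_2} does not involve $s_1$, the last assertion of the lemma is then immediate.

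The main obstacle is precisely this last step: $\log\Vert s_1\Vert_1$ and $\log\Vert s_2\Vert_2$ are only locally integrable, with logarithmic singularities along $Y_1$ and $Y_2$, so the identity $u\,\inversepii\pbp v-v\,\inversepii\pbp u=\mathrm{d}(\cdots)$ and the ensuing Stokes argument must be carried out in the sense of currents. This is where $f$-regularity of the sequence $s_1,s_2$ enters: it guarantees that $Y_1$ and $Y_2$ meet in the expected codimension in every fibre, so that the relevant products of Green currents (equivalently, the $*$-products of $\log\Vert s_1\Vert_1$ and $\log\Vert s_2\Vert_2$) are well defined and the integration–by–parts identity has no residual contribution supported on $Y_1\cap Y_2$. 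I would quote the necessary facts from the theory of Green currents, as used in Elkik \cite{Elkik2} and Yuan \cite{YuanALB}, rather than reproving them.
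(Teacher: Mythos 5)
Your proposal is correct and follows essentially the same route as the paper: the same decomposition $\log\Vert[s_2]_{X/S}\Vert=\log\Vert[s_1]_{X/S}\Vert+\log\Vert[s_2]_{Y_1/S}\Vert-\log\Vert[s_1]_{Y_2/S}\Vert$ via order-independence of restriction, evaluation of the three terms by the definition of the metric and the inductive hypothesis, and the symmetry identity $\int_{X/S}\log\Vert s_1\Vert_1 c_1(\sL_2)\omega-\int_{X/S}\log\Vert s_2\Vert_2 c_1(\sL_1)\omega=\int_{Y_2/S}\log\Vert s_1\Vert_1\omega-\int_{Y_1/S}\log\Vert s_2\Vert_2\omega$, which the paper simply quotes as \cite{YuanALB} Lemma 4.2.1 while you sketch its proof by Poincar\'e--Lelong and Stokes before deferring to the same references.
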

\begin{proof}
	We have the following commutative diagram
	\begin{equation}\label{eq:cd for metric}
		\begin{tikzcd}[row sep=small]
			&\langle\sL_2,\sL_3,\ldots,\sL_{d+1}\rangle_{Y_1/S}\arrow[rd, "{[}s_2{]}_{Y_1/S}"]& \\
			\langle\sL_1,\sL_2,\sL_3,\ldots,\sL_{d+1}\rangle_{X/S}\arrow[ur, "{[}s_1{]}_{X/S}"']\arrow[rd, "{[}s_2{]}_{X/S}"]& & \langle\sL_3,\ldots,\sL_{d+1}\rangle_{Y/S}.\\
			&\langle\sL_1,\sL_3,\ldots,\sL_{d+1}\rangle_{Y_2/S}\arrow[ru, "{[}s_1{]}_{Y_2/S}"']&
		\end{tikzcd}
	\end{equation}
    If \(d>1\), \eqref{eq:cd for metric} follows from Theorem \ref{restriction to divisor sequence indep of order}. If \(d=1\), \eqref{eq:cd for metric} is exactly \eqref{eq:restriction on 2 divisors indep of oeder}. In this case, if we endow \(\sO_S\) with the metric \(\VCV_S\) satisfying \(\Vert1 \Vert_S = 1\), then we see that
    \[ \log \Vert[s_1]_{Y_2/S}\Vert = -\int_{Y_2/S}\log\Vert s_1\Vert_1 \quad\text{and}\quad \log \Vert[s_2]_{Y_1/S}\Vert = -\int_{Y_1/S}\log\Vert s_2\Vert_2.\]
    These notations are compatible with that in \eqref{eq:norm of restrictions}, which enables us to give a unified treatment for the cases \(d >1\) and \(d = 1\).

	Let \(\omega = \prod_{i>2}c_1(\sL_i, \VCV_i)\). By \cite{YuanALB} Chapter 4 \S4.2.2 Lemma 4.2.1,
	\begin{multline*}
		\int_{X/S}\log\Vert s_1\Vert_1c_1(\sL_2,\VCV_2)\omega - \int_{X/S}\log\Vert s_2\Vert_2c_1(\sL_1,\VCV_1)\omega \\
		= \int_{Y_2/S}\log\Vert s_1\Vert_1\omega - \int_{Y_1/S}\log\Vert s_2\Vert_2\omega.
	\end{multline*}
    Then
    \begin{align*}
    	\log \Vert[s_2]_{X/S}\Vert &= \log \Vert[s_1]_{X/S}\Vert + \log \Vert[s_2]_{Y_1/S}\Vert - \Vert\log [s_1]_{Y_2/S}\Vert\\
    	&= -\int_{X/S}\log\Vert s_1\Vert_1c_1(\sL_2,\VCV_2)\omega - \int_{Y_1/S}\log\Vert s_2\Vert_2\omega\\
    	&\phantom{= -\int_{X/S}\log\Vert s_1\Vert_1c_1(\sL_2,\VCV_2)\omega - \int_{Y_1/S}}+ \int_{Y_2/S}\log\Vert s_1\Vert_1\omega\\
    	&= \int_{X/S}\log\Vert s_2\Vert_2c_1(\sL_1,\VCV_1)\omega,
    \end{align*}
    where the second equation follows from the inductive hypothesis.
\end{proof}

\begin{Lem}\label{lem:existence of simultaneously regular section}
	Let \(f\colon X\to S\) be a surjective projective morphism of noetherian schemes of pure relative dimension \(d\geqslant 1\), let \(\sL_1\) be an invertible \(\sO_X\)-module, and let \(s_1\) and \(s'_1\) be two \(f\)-regular global sections of \(\sL_1\). Let \(\sL_2\) be an \(f\)-very ample invertible \(\sO_X\)-module. Then for every point \(t\in S\), there exists an open neighborhood \(U\) of \(t\) and an integer \(n>0\) such that there exists a regular section \(s_2\in\Gamma( X_U,\sL_2^{\otimes n})\) such that both \(s_1,s_2\) and \(s'_1,s_2\) are \(f_U\)-regular sequences, where \(f_U\colon X_U\to U\).
\end{Lem}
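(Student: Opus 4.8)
The plan is to express the required properties of $s_2$ as finitely many ``genericity'' conditions relative to the base, and then to quote the Gabber--Liu--Lorenzini moving lemma \cite{Gabber-Liu-Lorenzini} Theorem~5.1, exactly as in the proof of Lemma~\ref{lem:existence of f-regular section}, this time applied so as to control the two divisors $Z(s_1)$ and $Z(s_1')$ at once.

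First I would shrink $S$: replacing it by an affine open neighbourhood $U$ of $t$, we may assume $U$ is affine, so $f_U\colon X_U\to U$ is a surjective projective morphism of pure relative dimension $d$ over a noetherian affine scheme and $\sL_2$ is $f_U$-very ample. Put $Y_1=Z(s_1)$ and $Y_1'=Z(s_1')$. Since $s_1$ and $s_1'$ are $f$-regular, $Y_1$ and $Y_1'$ are effective Cartier divisors on $X_U$, and $Y_1\to U$, $Y_1'\to U$ are of pure relative dimension $d-1$. Unravelling the definition of an $f_U$-regular sequence (and using Proposition~\ref{order lemma} to permute the entries), ``$s_1,s_2$ and $s_1',s_2$ are $f_U$-regular sequences'' is equivalent to: $(\mathrm{i})$ $s_2$ is a regular section of $\sL_2^{\otimes n}$ on $X_U$, i.e. $Z(s_2)$ contains no point of $\Ass(X_U)$; $(\mathrm{ii})$ $s_2|_{Y_1}$ and $s_2|_{Y_1'}$ are regular sections of $\sL_2^{\otimes n}|_{Y_1}$ and $\sL_2^{\otimes n}|_{Y_1'}$, i.e. $Z(s_2)$ contains no point of $\Ass(Y_1)\cup\Ass(Y_1')$; and $(\mathrm{iii})$ $Z(s_2)\cap Y_1\to U$ and $Z(s_2)\cap Y_1'\to U$ are of pure relative dimension $d-2$. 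The three sets $\Ass(X_U)$, $\Ass(Y_1)$, $\Ass(Y_1')$ are finite because $X$ is noetherian, and by regularity of $s_1,s_1'$ none of them meets $Y_1$ or $Y_1'$. In $(\mathrm{iii})$ the inequality ``$\geqslant d-2$'' is automatic, since intersecting with a Cartier divisor drops the dimension of a fibral component by at most $1$ (\cite{EGA4-2} (5.1.8)) while the relevant restrictions of $\sL_2$ stay relatively ample; so the real content of $(\mathrm{iii})$ is that $Z(s_2)$ contains no generic point of an irreducible component of a fibre of $Y_1\to U$ or of $Y_1'\to U$.

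For $d\geqslant 2$ I would then apply \cite{Gabber-Liu-Lorenzini} Theorem~5.1 to the finite family of morphisms $X_U\to U$, $Y_1\to U$, $Y_1'\to U$, together with the finitely many points to be avoided: for all $n$ sufficiently large it produces a section $s_2$ of $\sL_2^{\otimes n}$ which is regular, avoids the prescribed points, and whose zero scheme meets $Y_1$ and $Y_1'$ in relative dimension $d-2$ over $U$, that is, a section satisfying $(\mathrm{i})$--$(\mathrm{iii})$. The case $d=1$ needs one further step, because there $(\mathrm{iii})$ degenerates to $Z(s_2)\cap Y_1=Z(s_2)\cap Y_1'=\emptyset$, i.e. $s_2$ must be nowhere zero on $Y_1\cup Y_1'$; here I would shrink $U$ further and use Lemma~\ref{lem: finite morphism and nbhd} to make $\sL_2^{\otimes n}$ trivial on the finite $U$-scheme $Y_1\cup Y_1'$, take the unit section there, lift it to $X_U$ using $f_U$-very ampleness and Serre vanishing for $n\gg 0$, and then adjust the lift inside its affine space of lifts to secure $(\mathrm{i})$ and the finiteness of $Z(s_2)\to U$, again by the argument behind Lemma~\ref{lem:existence of f-regular section}. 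In either case, $(\mathrm{i})$--$(\mathrm{iii})$ say precisely that $s_1,s_2$ and $s_1',s_2$ are $f_U$-regular sequences.

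I expect the main difficulty to be organisational rather than conceptual: one must check that all of the genericity requirements — regularity of $s_2$, regularity of its two restrictions, and the dimension drop on the two divisors — can be met by a single section of one high enough power of $\sL_2$, which is exactly what the Gabber--Liu--Lorenzini theorem provides for finitely many subschemes over a base, together with correctly handling the boundary value $d=1$, where the ``relative dimension $d-2$'' condition becomes an empty-intersection condition and therefore forces the preliminary trivialisation of $\sL_2^{\otimes n}$ over the (finite) divisors before lifting.
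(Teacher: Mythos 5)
Your proposal is correct in outline and, for $d\geqslant 2$, coincides with the paper's proof: both reduce the statement to finitely many avoidance conditions (associated points of $X$, $Y_1=Z(s_1)$, $Y_1'=Z(s_1')$, plus a fibral dimension condition) and invoke \cite{Gabber-Liu-Lorenzini} Theorem 5.1 over an affine neighbourhood of $t$. Where you diverge is the case $d=1$. The paper does \emph{not} trivialise $\sL_2^{\otimes n}$ on $Y_1\cup Y_1'$ and lift the unit section; it simply enlarges the finite set of points to be avoided to $A=Y_{1,t}\cup Y'_{1,t}\cup\Ass(X)\cup\Ass(Y_1)\cup\Ass(Y_1')$ (the fibres $Y_{1,t}$, $Y'_{1,t}$ are finite because $Y_1,Y_1'\to S$ are finite), applies Gabber--Liu--Lorenzini once to get $Z(s_2)\cap A=\emptyset$, and then uses ampleness plus \cite{EGA4-2} (5.1.8) and semicontinuity \cite{EGA4-3} (13.1.5) to shrink $U$ so that $Z(s_2)\to U$ is finite and $Z(s_2)\cap(Y_1\cup Y_1')$ lies over $S\setminus U$. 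Your route buys nothing here and has a soft spot: the step ``adjust the lift inside its affine space of lifts to secure (i) and the finiteness of $Z(s_2)\to U$'' is itself an avoidance problem over a noetherian base with possibly finite residue fields, so making it rigorous requires re-invoking a Gabber--Liu--Lorenzini-type lemma for the coset $s_2+\Gamma(X_U,\sI\otimes\sL_2^{\otimes n})$ --- i.e.\ the detour through Serre vanishing is circular rather than a simplification. A second, minor point valid for all $d$: the fibral dimension condition (your (iii)) cannot be imposed over all of $U$ at once by avoiding finitely many points; as in the paper, one imposes it only on the fibre over $t$ (finitely many generic points of components of $Y_{1,t}$ and $Y'_{1,t}$) and then recovers it over a smaller $U$ by semicontinuity --- your write-up should make this shrinking explicit.
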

\begin{proof}
	Let \(Y_1 = Z(s_1)\) and \(Y_2 = Z(s_2)\). By shrinking \(S\), we can assume that \(S\) is an affine scheme. If \(d\geqslant 2\), then the results is an easy application of \cite{Gabber-Liu-Lorenzini} Theorem 5.1. So we can assume that \(d = 1\).
	
	Let \(\Ass(X)\) be the finite set of all associated points of \(X\), and similarly for \(\Ass(Y_1)\) and \(\Ass(Y'_1)\). Let \(A = Y_{1,t}\cup Y_{2,t}\cup \Ass(X)\cup\Ass(Y_1)\cup \Ass(Y'_1) \), where \(Y_{1,t} = Y_1\cap f^{-1}(t)\) and \(Y_{2,t} = Y_2\cap f^{-1}(t)\). Note that \(A\) is also a finite set. Then by \cite{Gabber-Liu-Lorenzini} Theorem 5.1, we can find an integer \(n>0\) and a section \(s_2\in\Gamma(X,\sL_2)\) such that \(Z(s_2)\cap A = \emptyset\). Let \(Y_2 = Z(s_2)\). It is obvious that both \(s_2|_{Y_1}\) and \(s_2|_{Y'_1}\) are regular. It remains to show that we can find an open neighborhood of \(t\) such that \(Y_{2,U}\to U\) is of pure relative dimension \(0\).
	
	Let \(f_t\) denote the projective morphism \(X_t\to t\), where \(X_t = f^{-1}(t)\). Since \(\sL_{2,X_t}\) is \(f_t\)-very ample, by \cite{EGA4-2} Chapter 4 (5.3.1.3), \(Y_{2,t}\ne\emptyset\). Then by \cite{EGA4-2} (5.1.8), \(\dim Y_{2,t}\geqslant 0\). Since \(Y_{1,t}\cap Y_{2,t} = \emptyset\), by \cite{EGA4-2} (5.1.8), \(\dim Y_{2,t} < 1\). Thus \(\dim Y_{2,t} = 0\). Therefore, by \cite{EGA4-3} (13.1.5), there exists an open neighborhood \(U\) of \(t\) such that \(Y_{2,U}\to U\) is of pure relative dimension \(0\).
%
%
%
%
%
\end{proof}

Now we prove that the metric we constructed on \(\langle\sL_1,\sL_2,\ldots,\sL_{d+1}\rangle_{X/S}\) is independent of the choice of the regular global section \(s_1\). Let \(s'_1\) be another regular global section \(s'_1\), and our goal is to show that the metric \(\VCV'\) on \(\langle\sL_1,\sL_2,\ldots,\sL_{d+1}\rangle_{X/S}\) defined by \([s'_1]_{X/S}\) equals \(\VCV\) defined by \([s_1]_{X/S}\). Since this problem is local and \(\Sigma_2\) is an isometry, by Lemma \ref{lem:existence of simultaneously regular section}, we can assume that \(\sL_2\) admits a global section \(s_2\) such that both \(s_1,s_2\) and \(s'_1,s_2\) are \(f\)-regular. Let \(Y_2 = Z(s_2)\). Then we have the following isomorphism
\begin{equation*}
[s_2]_{X/S}\colon\langle\sL_1,\sL_2,\ldots,\sL_{d+1}\rangle_{X/S}\to \langle\sL_1,\sL_3,\ldots,\sL_{d+1}\rangle_{Y_2/S}
\end{equation*}
Let \(\VCV_{Y_2}\) be the unique smooth metric on \(\langle\sL_1,\sL_3,\ldots,\sL_{d+1}\rangle_{Y_2/S}\). Then by Lemma \ref{lem:norm of s_2},
\begin{equation*}
	\log\VCV  = \log\VCV_{Y_2} - \log\Vert[s_2]_{X/S}\Vert = \log\VCV'.
\end{equation*}
Thus \(\VCV = \VCV'\).

Now let \((\sL_1,\VCV'_1)\) be another invertible \(\sO_X\)-modules endowed with a smooth metric, Suppose that both \(\sL_1\) and \(\sL'_1\) admit an \(f\)-regular global section. Let \(s_1\) and \(s'_1\) be \(f\)-regular global sections of \(\sL_1\) and \(\sL'_1\) respectively. Note that \(\sL_1\otimes\sL'_1\) is endowed with a natural smooth metric \(\VCV_1\otimes\VCV'_1\), and that \(s_1\otimes s'_1\) is an \(f\)-regular global section of \(\sL_1\otimes\sL'_1\). Thus we can endow each of \(\langle\sL_1,\sL_2,\ldots,\sL_{d+1}\rangle_{X/S}\), \(\langle\sL'_1,\sL_2,\ldots,\sL_{d+1}\rangle_{X/S}\), and \(\langle\sL_1\otimes\sL'_1,\sL_2,\ldots,\sL_{d+1}\rangle_{X/S}\) with a unique smooth metric by our above argument. 
Our claim is that the following isomorphism
\[ \langle\sL_1,\sL_2,\ldots,\sL_{d+1}\rangle_{X/S}\otimes \langle\sL'_1,\sL_2,\ldots,\sL_{d+1}\rangle_{X/S}\to \langle\sL_1\otimes\sL'_1,\sL_2,\ldots,\sL_{d+1}\rangle_{X/S}, \]
which we denote by \(\varphi\), is an isometry. The proof is as follows. By similar argument as the proof of Lemma \ref{lem:existence of simultaneously regular section}, we can prove the follow statement: if \(\sL_2\) is an \(f\)-very ample invertible \(\sO_X\)-module, then for every point \(t\in S\), there exists an open neighborhood \(U\) of \(t\) and an integer \(n>0\) such that there exists a regular section \(s_2\in\Gamma( X_U,\sL_2^{\otimes n})\) such that \((s_1,s_2)\), \((s'_1,s_2)\), and \((s_1\otimes s'_1, s_2)\) are \(f_U\)-regular sequences, where \(f_U\colon X_U\to U\). Using this statement, and the facts that this problem is local and that \(\Sigma_2\) is an isometry, we can assume that \(\sL_2\) admits a regular global section \(s_2\) such that \((s_1,s_2)\), \((s'_1,s_2)\), and \((s_1\otimes s'_1, s_2)\) are \(f\)-regular sequences. Let \(Y_2 = Z(s_2)\). Then we have the following commutative diagram
\[ \begin{tikzcd}
	\langle\sL_1,\sL_2,\ldots,\sL_{d+1}\rangle_{X/S}\otimes \langle\sL'_1,\sL_2,\ldots,\sL_{d+1}\rangle_{X/S}\arrow[r,"\varphi"]\arrow[d, "{[}s_2{]}_{X/S,\mathrm{i}}\otimes{[}s_2{]}_{X/S,\mathrm{ii}}"]& \langle\sL_1\otimes\sL'_1,\sL_2,\ldots,\sL_{d+1}\rangle_{X/S}\arrow[d, "{[}s_2{]}_{X/S,\mathrm{iii}}"]\\
	\langle\sL_1,\ldots,\sL_{d+1}\rangle_{Y_2/S}\otimes \langle\sL'_1,\ldots,\sL_{d+1}\rangle_{Y_2/S}\arrow[r,"\psi"]& \langle\sL_1\otimes\sL'_1,\ldots,\sL_{d+1}\rangle_{X/S},
\end{tikzcd} \]
where
\[ [s_2]_{X/S,\mathrm{i}}\colon\langle\sL_1,\sL_2,\ldots,\sL_{d+1}\rangle_{X/S}\to \langle\sL_1,\ldots,\sL_{d+1}\rangle_{Y_2/S},\]
\[ [s_2]_{X/S,\mathrm{ii}}\colon\langle\sL_1,\sL_2,\ldots,\sL_{d+1}\rangle_{X/S}\to \langle\sL'_1,\ldots,\sL_{d+1}\rangle_{Y_2/S}, \]
and
\[ [s_2]_{X/S,\mathrm{iii}}\colon\langle\sL_1,\sL_2,\ldots,\sL_{d+1}\rangle_{X/S}\to \langle\sL_1\otimes\sL'_1,\ldots,\sL_{d+1}\rangle_{Y_2/S}. \]
Note that \([s_2]_{X/S,\mathrm{i}}\), \([s_2]_{X/S,\mathrm{ii}}\), \([s_2]_{X/S,\mathrm{iii}}\), \(\psi\), and \(\varphi\) are all isomorphisms. By inductive hypothesis, \(\psi\) is an isometry. So it suffices to prove that
\[ \log\Vert[s_2]_{X/S,\mathrm{iii}}\Vert = [s_2]_{X/S,\mathrm{i}} + [s_2]_{X/S,\mathrm{ii}}, \]
which follows from Lemma \ref{lem:norm of s_2} and the fact that \[ c_1(\sL_1\otimes\sL'_1,\VCV_1\otimes\VCV'_1) = c_1(\sL_1,\VCV_1) + c_1(\sL'_1,\VCV'_1). \]

Since locally every invertible \(\sO_X\)-module is the difference of two \(f\)-ample invertible \(\sO_X\)-module, and by Lemma \ref{lem:existence of f-regular section} some positive power of an \(f\)-ample invertible \(\sO_X\)-module admits an \(f\)-regular global section. Thus the above claim allows us to endow the Deligne pairing \(\langle\sL_1,\sL_2,\ldots,\sL_{d+1}\rangle_{X/S}\) of general hermitian line bundles \(\sL_1,\sL_2,\ldots,\sL_{d+1}\) with a smooth metric, regardless of whether \(\sL_1\) admits regular sections. And we see that \(\Sigma_i\, (1\leqslant i\leqslant d+1)\), \(\gamma_\phi\) (\(\phi\) is a permutation of the set \{2,3,\ldots,d+1\}), and \eqref{eq:isom of good base change} are all isometries. Moreover, since both sides of \eqref{eq:norm of s_2} are linear, we can strengthen Lemma \ref{lem:norm of s_2} as follows.

\begin{Lem}\label{lem:norm of general s_2}
	Let \(s_2\) be an \(f\)-regular global sections of \(\sL_2\) and \(Y_2 = Z(s_2)\)
	If we endow \(\langle\sL_1,\sL_2,\ldots,\sL_{d+1}\rangle_{X/S}\) with the metric we constructed above, then
	\[ \log\Vert[s_2]_{X/S}\Vert = -\int_{X/S}\log\Vert s_2\Vert_2\prod_{i\neq 2}c_1(\sL_i,\VCV_i), \]
	where \([s_2]_{X/S}\colon\langle\sL_1,\sL_2,\ldots,\sL_{d+1}\rangle_{X/S}\to \langle\sL_1,\sL_3,\ldots,\sL_{d+1}\rangle_{Y_2/S}\) is the isomorphism in Theorem \ref{projection formula of taking closed subscheme} for \(D = Z(s_2)\).
\end{Lem}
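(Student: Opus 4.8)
The plan is to deduce the identity from Lemma \ref{lem:norm of s_2} by a linearity argument in the first slot. Since $\Vert[s_2]_{X/S}\Vert$ is a smooth function on $S$, the assertion may be checked locally on $S$; so I would first reduce to $S$ integral and affine and put $Y_2 = Z(s_2)$. For a hermitian line bundle $(\sL,\VCV)$ on $X$, write $[s_2]_\sL\colon\langle\sL,\sL_2,\ldots,\sL_{d+1}\rangle_{X/S}\to\langle\sL|_{Y_2},\sL_3,\ldots,\sL_{d+1}\rangle_{Y_2/S}$ for the isomorphism of Theorem \ref{projection formula of taking closed subscheme}, where both sides carry their canonical metrics, and define the error function
\[ \Delta(\sL,\VCV) = \log\Vert[s_2]_\sL\Vert + \int_{X/S}\log\Vert s_2\Vert_2\, c_1(\sL,\VCV)\prod_{i=3}^{d+1}c_1(\sL_i,\VCV_i) \]
on $S$. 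The goal is $\Delta(\sL_1,\VCV_1)=0$.

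The first key step is to check that $\Delta$ is additive: $\Delta(\sL\otimes\sL',\VCV\otimes\VCV') = \Delta(\sL,\VCV)+\Delta(\sL',\VCV')$. The integral term is additive because $c_1(\sL\otimes\sL',\VCV\otimes\VCV')=c_1(\sL,\VCV)+c_1(\sL',\VCV')$ and the integral is linear in that entry. The logarithmic term is additive because the additivity isomorphism $\Sigma_1$ commutes with the restriction isomorphisms $[s_2]$ (this compatibility is among those recorded in Theorem \ref{thm:functorial EDP}) and is an isometry for both $X/S$ and $Y_2/S$ (as established in the construction of the canonical metric in Theorem \ref{thm:metric on EDP}): these two facts identify $[s_2]_{\sL\otimes\sL'}$, up to the isometries $\Sigma_1$, with $[s_2]_\sL\otimes[s_2]_{\sL'}$, so the norms multiply. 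From additivity one gets $\Delta(\sL^{\otimes m},\VCV^{\otimes m})=m\,\Delta(\sL,\VCV)$ for $m>0$, and, after choosing metrics compatibly, $\Delta(\sA_0\otimes\sA_1^{-1},\cdot)=\Delta(\sA_0,\cdot)-\Delta(\sA_1,\cdot)$.

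To finish, over the affine $S$ I would write $\sL_1=\sA_0\otimes\sA_1^{-1}$ with $\sA_0,\sA_1$ both $f$-very ample (e.g.\ $\sA_1=\sO_X(n)$, $\sA_0=\sL_1(n)$ for $n\gg0$), equip $\sA_1$ with any smooth metric and $\sA_0$ with the induced one so that $(\sL_1,\VCV_1)$ is their tensor quotient; by additivity it then suffices to prove $\Delta(\sA,\VCV_\sA)=0$ for an $f$-very ample $\sA$ with an arbitrary smooth metric. For this, after shrinking $S$, a minor variant of Lemma \ref{lem:existence of simultaneously regular section} (obtained by the avoidance technique of \cite{Gabber-Liu-Lorenzini} Theorem 5.1, exactly as in that proof) produces an integer $m>0$ and a section $s_1$ of $\sA^{\otimes m}$ with $(s_1,s_2)$ an $f$-regular sequence; then Lemma \ref{lem:norm of s_2}, applied with $(\sA^{\otimes m},\VCV_\sA^{\otimes m})$ in the first slot, gives $\Delta(\sA^{\otimes m},\VCV_\sA^{\otimes m})=0$, whence $m\,\Delta(\sA,\VCV_\sA)=0$ and $\Delta(\sA,\VCV_\sA)=0$. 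The only point requiring care is the bookkeeping: one must use that, by its very construction in Theorem \ref{thm:metric on EDP}, the canonical metric on $\langle\sL_1,\ldots,\sL_{d+1}\rangle_{X/S}$ is compatible with tensor products and tensor powers in the first argument (through $\Sigma_1$ being an isometry) and that $[s_2]_{X/S}$ commutes with $\Sigma_1$; granting these, no input beyond Lemma \ref{lem:norm of s_2}, Lemma \ref{lem:existence of f-regular section}, and \cite{Gabber-Liu-Lorenzini} Theorem 5.1 is needed, and I do not expect a substantial obstacle.
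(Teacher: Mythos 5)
Your proposal is correct and is essentially the paper's argument: the paper derives this lemma in one line ("since both sides of \eqref{eq:norm of s_2} are linear, we can strengthen Lemma \ref{lem:norm of s_2}"), i.e., by exactly the linearity-in-the-first-slot reduction you carry out. Your write-up merely makes explicit the ingredients the paper leaves implicit — that $\Sigma_1$ is an isometry and commutes with $[s_2]_{X/S}$, and that after shrinking $S$ one can produce a power of an $f$-very ample sheaf admitting a section $s_1$ with $(s_1,s_2)$ an $f$-regular sequence via the avoidance technique of \cite{Gabber-Liu-Lorenzini} — so there is nothing substantive to add.
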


It remains to show that \(\gamma_{(12)}\) is an isometry, where \((12)\) denotes the transposition. Since the problem is local, and \(\Sigma_1\) is an isometry, we can assume that \(\sL_1\) admits an \(f\)-regular sequence \(s_1\), and let \(Y_1 = Z(s_1)\). By Theorem \ref{restriction compatible with symmetry}, we have the following commutative diagram
\begin{equation}\label{cd for permutation (12)}
	\begin{tikzcd}[row sep=small]
		\langle\sL_1,\sL_2,\dots,\sL_{d+1}\rangle_{X/S}\arrow[dd,"\gamma_{(12)}"]\arrow[rd, "{[}s_1{]}_{X/S,1}"]& \\
		& \langle\sL_2,\dots,\sL_{d+1}\rangle_{Y_1/S}.\\
		\langle\sL_2,\sL_1,\sL_3,\dots,\sL_{d+1}\rangle_{X/S}\arrow[ru, "{[}s_1{]}_{X/S,2}"'] &
	\end{tikzcd}
\end{equation}
By Lemma \ref{lem:norm of general s_2}, \(\log\Vert[s_1]_{X/S,1}\Vert = \log\Vert[s_1]_{X/S,2}\Vert\). Therefore, \(\gamma_{(12)}\) is an isometry.
\end{proof}

Let \(U\subseteq S\) be the maximal open subset over which \(f\) is flat. By \cite{YuanALB} Theorem 4.2.3, wan can construct a canonical metric \(\VCV_{X_U/U}\) on \(\langle \sL_1,\ldots,\sL_{n+1}\rangle|_{X_U/U}\). By comparing the constructions in \cite{YuanALB} \S4.2, \S4.3, and our paper, we can see that the restriction of our metric \(\VCV\) on \(U\) is exactly \(\VCV_{X_U/U}\). In addition, if the metrics on \(\sL_1,\sL_2,\ldots,\sL_{d+1}\) are only continuous integrable, \cite{YuanALB} Theorem 4.2.3 shows that we can still construct a metric on \(\langle \sL_1,\ldots,\sL_{n+1}\rangle_{X_U/U}\), which is continuous integrable. Since a continuous integrable metric is a uniform limit of smooth metrics, it is not hard to extend our results to continuous integrable metrics, and we omit the details.

The following theorems show that the canonical metric is compatible with functorial properties.

\begin{Thm}\label{DEP of isometry is isometry}
	Let \(X\) and \(S\) quasi-projective schemes over \(\bC\), and let \(f\colon X\to S\) be a surjective projective morphism over \(\bC\) of pure relative dimension \(d\). Suppose that \(S\) is normal. Let \((\sL_1,\VCV_1),(\sL_2,\VCV_2),\ldots,(\sL_{d+1},\VCV_{d+1}),(\sL'_1,\VCV'_1),(\sL'_2,\VCV'_2),\ldots,(\sL'_{d+1},\VCV'_{d+1})\) be invertible \(\sO_X\)-modules endowed with smooth metrics. For each \(1\leqslant i \leqslant d+1\), let \(u_i\colon \sL_i\to \sL'_i\) be an isometry. If we endow \(\langle \sL_1,\sL_2,\ldots,\sL_{d+1}\rangle_{X/S}\) and \(\langle \sL'_1,\sL'_2,\ldots,\sL'_{d+1}\rangle_{X/S}\) with the metrics we constructed in Theorem \ref{thm:metric on EDP}. Then the following isomorphism of invertible \(\sO_S\)-modules
	\[ \langle u_1,u_2,\ldots,u_{d+1}\rangle_{X/S}\colon\langle \sL_1,\sL_2,\ldots,\sL_{d+1}\rangle_{X/S}\to\langle \sL'_1,\sL'_2,\ldots,\sL'_{d+1}\rangle_{X/S} \]
	is also an isometry.
\end{Thm}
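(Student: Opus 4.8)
The plan is to follow the construction of the canonical metric in Theorem~\ref{thm:metric on EDP} essentially line by line, verifying at each stage that $\langle u_1,\ldots,u_{d+1}\rangle_{X/S}$ preserves norms. Since the metrics on $\langle\sL_1,\ldots,\sL_{d+1}\rangle_{X/S}$ and on $\langle\sL'_1,\ldots,\sL'_{d+1}\rangle_{X/S}$ split over the connected components of $S$, and ``$\Vert\langle u_1,\ldots,u_{d+1}\rangle_{X/S}\Vert\equiv1$'' is a local statement on $S$, I would first reduce to $S$ integral and normal and allow $S$ to be shrunk freely; the argument then runs by induction on $d$. For $d=0$ one has $\langle u_1\rangle_{X/S}=\Nm_{X/S}(u_1)$ by construction, and for a nonzero rational section $s$ of $\sL_1$ (so that $\Nm_{X/S}(s)$ is a nonzero rational section of $\Nm_{X/S}(\sL_1)$) the identity $\Nm_{X/S}(u_1)(\Nm_{X/S}(s))=\Nm_{X/S}(u_1(s))$, combined with Theorem~\ref{thm:metric on EDP}(3) and $\Vert u_1(s)\Vert'_1=\Vert s\Vert_1$ (as $u_1$ is an isometry), gives $\log\Vert\Nm_{X/S}(u_1(s))\Vert'=\int_{X/S}\log\Vert s\Vert_1=\log\Vert\Nm_{X/S}(s)\Vert$, which says exactly that $\Vert\Nm_{X/S}(u_1)\Vert\equiv1$.

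For $d\geqslant1$ the core case is the one in which $\sL_1$ admits an $f$-regular global section $s_1$; put $Y_1=Z(s_1)$, which is then also the zero locus of the $f$-regular section $u_1(s_1)$ of $\sL'_1$. Each $u_i|_{Y_1}\colon\sL_i|_{Y_1}\to\sL'_i|_{Y_1}$ is again an isometry for the restricted metrics, so by the inductive hypothesis applied to $Y_1\to S$ (a surjective projective morphism of pure relative dimension $d-1$ with $S$ normal) the isomorphism $\langle u_2|_{Y_1},\ldots,u_{d+1}|_{Y_1}\rangle_{Y_1/S}$ is an isometry. I would then invoke the commutative square with horizontal maps $[s_1]_{X/S}$ and $[u_1(s_1)]_{X/S}$ and vertical maps $\langle u_1,\ldots,u_{d+1}\rangle_{X/S}$ and $\langle u_2|_{Y_1},\ldots,u_{d+1}|_{Y_1}\rangle_{Y_1/S}$ (one of the diagrams recorded at the end of \S\ref{subsetion:morphism of FDP}): taking the norm function of the two equal composite isomorphisms $\langle\sL_1,\ldots,\sL_{d+1}\rangle_{X/S}\to\langle\sL'_2|_{Y_1},\ldots,\sL'_{d+1}|_{Y_1}\rangle_{Y_1/S}$ and using multiplicativity of norms under composition yields
\[
\Vert[u_1(s_1)]_{X/S}\Vert\cdot\Vert\langle u_1,\ldots,u_{d+1}\rangle_{X/S}\Vert=\Vert\langle u_2|_{Y_1},\ldots,u_{d+1}|_{Y_1}\rangle_{Y_1/S}\Vert\cdot\Vert[s_1]_{X/S}\Vert .
\]
By Theorem~\ref{thm:metric on EDP}(2), $\log\Vert[s_1]_{X/S}\Vert=-\int_{X/S}\log\Vert s_1\Vert_1\prod_{i=2}^{d+1}c_1(\sL_i,\VCV_i)$ and the analogous formula holds for $\Vert[u_1(s_1)]_{X/S}\Vert$; since the $u_i$ are isometries, $\Vert u_1(s_1)\Vert'_1=\Vert s_1\Vert_1$ and $c_1(\sL'_i,\VCV'_i)=c_1(\sL_i,\VCV_i)$, so these two norms are equal, and with $\Vert\langle u_2|_{Y_1},\ldots,u_{d+1}|_{Y_1}\rangle_{Y_1/S}\Vert\equiv1$ the displayed identity forces $\Vert\langle u_1,\ldots,u_{d+1}\rangle_{X/S}\Vert\equiv1$.

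It remains to pass from this special case to an arbitrary $\sL_1$. Working locally on $S$ I would write $\sL_1\cong\sL_{1,0}\otimes\sL_{1,1}^{-1}$ with $\sL_{1,0}$ and $\sL_{1,1}$ each admitting an $f$-regular global section (possible by Lemma~\ref{lem:existence of f-regular section}, after an auxiliary decomposition into powers if needed), choose smooth metrics on $\sL_{1,0}$ and $\sL_{1,1}$ making the decomposition metric-compatible on both the unprimed and the primed sides, and split $u_1=u_{1,0}\otimes u_{1,1}^{\otimes(-1)}$ with $u_{1,0},u_{1,1}$ isometries. Since $\Sigma_1$ is an isometry (Theorem~\ref{thm:metric on EDP}(4)) and is compatible with $\langle u_1,\ldots,u_{d+1}\rangle_{X/S}$ (the $\Sigma$-diagram at the end of \S\ref{subsetion:morphism of FDP}), the claim for $\sL_1$ follows from the already established claims for $\sL_{1,0}$ and for $\sL_{1,1}$. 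I expect this last reduction to be the main technical nuisance: one has to arrange the decomposition of $\sL_1$, the auxiliary metrics, and the splitting of $u_1$ so that everything is simultaneously metric-compatible and compatible with the diagrams linking $\langle u_1,\ldots,u_{d+1}\rangle_{X/S}$ to $\Sigma_1$. The rest is a faithful transcription of the proof of Theorem~\ref{thm:metric on EDP}; note that, unlike in that proof, no appeal to very-ampleness is needed once one argues directly with line bundles carrying $f$-regular global sections.
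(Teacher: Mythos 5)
Your proposal is correct and follows essentially the same route as the paper's proof: induction on \(d\), the norm-functor formula for the base case \(d=0\), and for \(d>0\) the commutative square relating \([s_1]_{X/S}\), \([u_1(s_1)]_{X/S}\) and the induced isomorphism over \(Y_1=Z(s_1)\), with the two horizontal norms equal because \(\Vert u_1(s_1)\Vert'_1=\Vert s_1\Vert_1\) and \(c_1(\sL'_i,\VCV'_i)=c_1(\sL_i,\VCV_i)\). The only difference is that the paper compresses the reduction to the case where \(\sL_1\) admits an \(f\)-regular global section into a single sentence, whereas you spell it out via the difference decomposition and the \(\Sigma_1\)-compatibility, which is consistent with how the metric itself is built in Theorem~\ref{thm:metric on EDP}.
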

\begin{proof}
	We prove the result by induction on \(d\). Let \(\VCV\) and \(\VCV'\) denote the metrics on \(\langle \sL_1,\sL_2,\ldots,\sL_{d+1}\rangle_{X/S}\) and \(\langle \sL'_1,\sL'_2,\ldots,\sL'_{d+1}\rangle_{X/S}\) respectively. If \(d=0\), then for each nonzero rational section \(s\) of \(\sL\),
	\begin{equation*}
		\log \Vert\Nm_{X/S}(u_1(s))\Vert' = \int_{X/S}\log \Vert u_1(s)\Vert'_1 = \int_{X/S}\log \Vert s\Vert_1 = \log \Vert\Nm_{X/S}(s)\Vert.
	\end{equation*}
	Thus \(\VCV = \VCV'\).
	
	Suppose that \(d>0\). We can assume that \(\sL_1\) admits an \(f\)-regular section \(s_1\). Let \(Y = Z(s_1) = Z(u_1(s_1))\).  Then we have the following commutative diagram
	\begin{equation*}
		\begin{tikzcd}
			\langle \sL_1,\sL_2,\ldots,\sL_{d+1}\rangle_{X/S}\arrow[r, "{[}s_1{]}_{X/S}"]\arrow[d,"\langle u_1{,}u_2{,} \ldots{,}u_{d+1}\rangle_{X/S}"]&\langle \sL_{2}|_Y,\ldots,\ldots\sL_{d+1}|_Y\rangle_{Y/S}\arrow[d,"\langle u_{2}|_Y{,} \ldots{,}u_{d+1}|_Y\rangle_{Y/S}"]\\
			\langle\sL'_1,\sL'_2,\ldots,\sL'_{d+1}\rangle_{X/S}\arrow[r, "{[}u_1(s_1){]}_{X/S}"]&\langle \sL'_{2}|_Y,\ldots\sL'_{d+1}|_{{Y}}\rangle_{{Y}/S}.
		\end{tikzcd}
	\end{equation*}
	By inductive hypothesis, the right vertical arrow is an isometry. Moreover,
	\begin{multline*}
		\log \Vert[u_1(s_1)]_{X/S}\Vert = -\int_{X/S}\log\Vert u_1(s_1)\Vert'_1 \prod_{i=2}^{d+1}c_1(\sL'_i,\VCV'_i)
		\\= -\int_{X/S}\log\Vert s_1\Vert_1 \prod_{i=2}^{d+1}c_1(\sL_i,\VCV_i) = \log \Vert[s_1]_{X/S}\Vert.
	\end{multline*}
	Thus the left vertical arrow is also an isometry. 
\end{proof}
 
\begin{Thm}\label{thm:pull-back formula of metric}
	Let \(X\) and \(S\) quasi-projective schemes over \(\bC\), and let \(f\colon X\to S\) be a surjective projective morphism over \(\bC\) of pure relative dimension \(d\). Suppose that \(S\) is integral and normal. Let \((\sL_1,\VCV_1),(\sL_2,\VCV_2),\ldots,(\sL_{d},\VCV_{d})\) be invertible \(\sO_X\)-modules endowed with smooth metrics, and let \((\sM,\VCV)\) be an invertible \(\sO_S\)-modules endowed with smooth metrics. We endow \(f^*\sM\) with the smooth metric \(f^*\VCV\). Let \(\delta\) be the intersection number of \(\sL_1,\ldots,\sL_d\) on the generic fiber of \(f\) (if \(d = 0\), then let \(\delta\) be the degree of the finite morphism \(f\)). If we endow \(\langle\sL_1,\ldots,f^*\sM,\ldots,\sL_d\rangle_{X/S}\) with the canonical metric, then the natural isomorphism
	\[ \langle\sL_1,\ldots,f^*\sM,\ldots,\sL_d\rangle_{X/S}\to \sM^{\otimes\delta} \]
	is an isometry.
\end{Thm}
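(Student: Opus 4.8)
The plan is to argue by induction on the relative dimension $d$, following the pattern of the proofs of Theorem~\ref{thm:metric on EDP} and Theorem~\ref{DEP of isometry is isometry}. Write $\Phi$ for the natural isomorphism $\langle\sL_1,\ldots,f^{*}\sM,\ldots,\sL_d\rangle_{X/S}\to\sM^{\otimes\delta}$ of Theorem~\ref{pull-back formula of normal Deligne pairing}. On the generic fibre $\Phi$ is Garc\'ia's isomorphism, so by Proposition~\ref{codim 1 miracle for line bundles} it is compatible with all the structural isomorphisms ($\Sigma_i$, $\gamma_\phi$, and the divisor-restriction maps $[s]_{X/S}$), because the corresponding compatibilities are known in the flat case. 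For $d=0$, $\Phi$ is the isomorphism $\Nm_{X/S}(f^{*}\sM)\to\sM^{\otimes\delta}$ of Proposition~\ref{Norm of pull-back is power}, under which $\Nm_{X/S}(f^{*}t)$ corresponds to $t^{\otimes\delta}$ for a rational section $t$ of $\sM$; by Theorem~\ref{thm:metric on EDP}(3),
\[ \log\Vert\Nm_{X/S}(f^{*}t)\Vert=\int_{X/S}\log\Vert f^{*}t\Vert_{f^{*}\VCV}=\int_{X/S}f^{*}\log\Vert t\Vert=\delta\log\Vert t\Vert, \]
using $\int_X f^{*}\omega=\delta\int_S\omega$ for a finite surjective morphism of degree $\delta$, and since $\log\Vert t^{\otimes\delta}\Vert_{\sM^{\otimes\delta}}=\delta\log\Vert t\Vert$ the base case follows.

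For the inductive step, let $d\geqslant 1$. Being an isometry is local on $S$, and $\Phi$ is compatible with the $\Sigma_i$ and $\gamma_\phi$ (isometries by Theorem~\ref{thm:metric on EDP}(4)) while $\delta$ is multi-additive and symmetric in $\sL_1,\ldots,\sL_d$; so I would first reduce to the situation where $S$ is affine, $f^{*}\sM$ occupies the second slot, $\sL_1$ is $f$-very ample, and then, replacing $\sL_1$ by a power and using multi-additivity once more, to the situation where $\sL_1$ admits an $f$-regular global section $s_1$ with $Y_1:=Z(s_1)$ of pure relative dimension $d-1$ over $S$ (Lemma~\ref{lem:existence of f-regular section}). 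Consider then the restriction isomorphism
\[ [s_1]_{X/S}\colon\langle\sL_1,f^{*}\sM,\sL_2,\ldots,\sL_d\rangle_{X/S}\xrightarrow{\ \sim\ }\langle f_{Y_1}^{*}\sM,\sL_2|_{Y_1},\ldots,\sL_d|_{Y_1}\rangle_{Y_1/S}, \]
where $f_{Y_1}\colon Y_1\to S$ and $(f^{*}\sM)|_{Y_1}=f_{Y_1}^{*}\sM$ carries the metric $f_{Y_1}^{*}\VCV$. Since $Y_{1,\eta}$ is a divisor in $|\sL_{1,\eta}|$, the projection formula for intersection numbers on the generic fibre shows that the intersection number $\delta'$ of $\sL_2|_{Y_1},\ldots,\sL_d|_{Y_1}$ on the generic fibre of $Y_1\to S$ equals $\delta$. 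Let $\Phi'$ be the natural isomorphism $\langle f_{Y_1}^{*}\sM,\sL_2|_{Y_1},\ldots,\sL_d|_{Y_1}\rangle_{Y_1/S}\to\sM^{\otimes\delta'}=\sM^{\otimes\delta}$; by the inductive hypothesis $\Phi'$ is an isometry, and the compatibility of the pull-back formula with divisor restriction (known in the flat case, hence valid here by Proposition~\ref{codim 1 miracle for line bundles}) gives $\Phi=\Phi'\circ[s_1]_{X/S}$ after identifying $\sM^{\otimes\delta}$ with $\sM^{\otimes\delta'}$.

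It remains to evaluate $\log\Vert\Phi\Vert=\log\Vert\Phi'\Vert+\log\Vert[s_1]_{X/S}\Vert$. The first term is $0$ by the inductive hypothesis, and Theorem~\ref{thm:metric on EDP}(2) gives
\[ \log\Vert[s_1]_{X/S}\Vert=-\int_{X/S}\log\Vert s_1\Vert_1\cdot c_1(f^{*}\sM,f^{*}\VCV)\cdot\prod_{i=2}^{d}c_1(\sL_i,\VCV_i). \]
Now $c_1(f^{*}\sM,f^{*}\VCV)=f^{*}c_1(\sM,\VCV)$, and for any compactly supported form $\beta$ of type $(n,n)$ on $S$, where $n=\dim S$, the form $c_1(\sM,\VCV)\wedge\beta$ has type $(n+1,n+1)$ and hence vanishes; pairing the integrand against $f^{*}\beta$ therefore yields $0$, so the fibre integral vanishes identically. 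Thus $\log\Vert\Phi\Vert\equiv 0$ and $\Phi$ is an isometry.

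The main obstacle is not a hard estimate but the bookkeeping of compatibilities: one must verify that $\Phi$ is compatible with multi-additivity, symmetry and divisor restriction, which is what powers the reductions and the identity $\Phi=\Phi'\circ[s_1]_{X/S}$. Each such compatibility is reduced, as elsewhere in this paper, to the corresponding statement for Deligne pairing of flat morphisms via restriction to the generic fibre together with Proposition~\ref{codim 1 miracle for line bundles}. The only genuinely analytic input is the vanishing of the fibre integral above, which is an elementary degree count.
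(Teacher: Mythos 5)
Your proposal is correct and follows essentially the same route as the paper: induct on \(d\), reduce to the case where \(\sL_1\) admits an \(f\)-regular global section \(s_1\), factor the natural isomorphism through \([s_1]_{X/S}\) and the corresponding isomorphism for \(Z(s_1)/S\) (with \(\delta\) preserved by the projection formula on the generic fibre), and kill \(\log\Vert[s_1]_{X/S}\Vert\) by the degree count on \(f^*c_1(\sM,\VCV)\). The only difference is that you spell out the reduction and compatibility bookkeeping that the paper leaves implicit.
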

\begin{proof}
	We prove the result by induction on \(d\). If \(d = 0\), then for any nonzero rational section \(s\) of \(\sM\),
	\begin{equation*}
		\log \Vert f^*s\Vert = \int_{X/S}\log f^*\Vert s\Vert = \delta\log\Vert s\Vert.
	\end{equation*}
    Suppose that \(d>0\). Then we can assume that \(\sL_1\) admits an \(f\)-regular section \(s_1\). Let \(Y = Z(s_1)\). By properties of intersection numbers, \(\delta\) equals the intersection number of \(\sL_2|_{Y},\ldots,\sL_{d}|_{Y}\) on the generic fiber of \(Y\to S\). Then we have the following commutative diagram
    \[ \begin{tikzcd}[row sep=small]
    	\langle\sL_1,\ldots,f^*\sM,\ldots,\sL_d\rangle_{X/S}\arrow[rd,"\varphi"]\arrow[dd,"{[}s_1{]}_{X/S}"]& \\
    	&\sM^{\otimes\delta}.\\
    	\langle\sL_2|_{Y},\ldots,f^*\sM|_{Y},\ldots,\sL_d|_{Y}\rangle_{Y/S}\arrow[ru,"\psi"']&
    \end{tikzcd} \]
    By inductive hypothesis, \(\psi\) is an isometry. Moreover,
    \[ \log\Vert{[s_1]_{X/S}}\Vert = -\int_{X/S}\log\vert s_1 \Vert_{1}f^*c_1(\sM,\VCV)\prod_{i=2}^{d}c_1(\sL_i,\VCV_i) = 0,\]
    since \(c_1(\sM,\VCV_{\mathrm{m}})\) is a smooth form on \(S\). Therefore, \(\varphi\) is also an isometry.
\end{proof}

\printbibliography
\end{document}